\documentclass[twoside,11pt]{article}

\usepackage[preprint]{jmlr2e}

\usepackage{blindtext}
\usepackage{mathtools}
\usepackage{bbm}
\usepackage{enumitem}
\usepackage{bm}
\usepackage{lastpage}
\usepackage{booktabs}

\usepackage{dashrule}
\usepackage{array}
\usepackage{arydshln}

\newcommand{\R}{\mathbb R}

\newcommand{\E}{\mathbb E}
\newcommand{\Pp}{\mathbb P}

\newcommand{\Law}{\mathcal L}

\newcommand{\KL}{D_{\mathrm{KL}}}

\newcommand{\Unif}{\operatorname{Unif}}
\newcommand{\Exp}{\operatorname{Exp}}

\newcommand{\argmax}{\operatorname*{arg\,max}}

\newcommand{\Err}{\operatorname{err}}

\newcommand{\rsc}[1]{\breve{#1}}
\newcommand{\rvec}[1]{\breve{\mathbf{#1}}}
\newcommand{\rmat}[1]{\breve{\mathbf{#1}}}

\begin{document}

\title{Near-Optimal Lower Bounds for Randomized Algorithms\\ in
Exact Value Zeroth-Order Convex Optimization }

\author{\name Haihan Zhang \email zhanghaihan@stu.pku.edu.cn \\
       \addr School of Intelligence Science and Technology, Peking University
       \AND
       \name Chenheng Zhang \email chenhengz@stu.pku.edu.cn \\
       \addr School of Intelligence Science and Technology, Peking University
       \AND
       \name Zhiquan Qi \email zqqi@stu.pku.edu.cn \\
       \addr School of Electronics Engineering and Computer Science, Peking University
       \AND
       \name Zhouchen Lin \email ZLIN@pku.edu.cn \\
       \addr School of Intelligence Science and Technology, Peking University}

\maketitle

\begin{abstract}
Whether exact scalar feedback intrinsically incurs the additional dimension
\(d\) paid by known zeroth-order methods remains open even for Lipschitz
convex optimization.  For a universal Lipschitz scale, the value only bound
\(O(d^2\log(d+1)\log(1/\epsilon))\)
of \citet{protasov1996algorithms} and the two-point bound
\(O(d\epsilon^{-2})\)
of \citet{JMLR:v18:16-632} yield the upper bound 
$\widetilde O\left(d\min\{d,\epsilon^{-2}\}\right)$.
By contrast, prior lower bounds for arbitrary \textit{randomized} algorithms
under exact value access yield only the first-order scale
\(\Omega(\min\{d,\epsilon^{-2}\})\)~\citep{7919238}, leaving an
unexplained factor \(d\).
We close this gap, up to logarithmic factors, for arbitrary adaptive
\textit{randomized} algorithms minimizing a  convex 
objective with a universal Lipschitz scale over the \(d\)-dimensional Euclidean unit ball, where each
query returns only the exact scalar value. 
Let \(T_\epsilon\) denote the minimum
number of queries required to return an \(\epsilon\)-suboptimal point
with probability at least \(1/2\), uniformly over the function class.
We prove that
\[
T_\epsilon
\ge
c\,
\frac{
d\min\{d,\epsilon^{-2}\}
}{
\log\!\bigl(\min\{d,\epsilon^{-2}\}\bigr)
},
\]
for \(d\ge d_0\) and \(0<\epsilon\le\epsilon_0\), where
\(c,\epsilon_0>0\) and \(d_0\in\mathbb N\) are universal constants.  
This gives $\Omega\left(
\frac{d}{
\epsilon^2\log(1/\epsilon)
} 
\right)$ in the low-accuracy regime
\(\epsilon\ge d^{-1/2}\) 
and $\Omega\left(
\frac{d^2}{\log d}
\right)$ in the high-accuracy regime
\(\epsilon\le d^{-1/2}\)  
with the latter independent of \(\epsilon\).
These bounds match the corresponding upper bound  up
to logarithmic factors.  To our knowledge, this is the first
near-optimal lower bound for arbitrary adaptive \textit{randomized} algorithms
throughout both accuracy regimes of exact value Lipschitz convex
optimization.  
The proof uses a random
support function hard family and develops a posterior mean energy method
for adaptive exact max observations, in place of first-order zero chain
constructions and noise based transcript inequalities.
\end{abstract}

\begin{keywords}
 zeroth-order optimization, convex optimization, lower bounds, oracle complexity, randomized algorithms
\end{keywords}

\section{Introduction}

Zeroth-order methods replace derivative access by scalar function values,
from which they construct gradient estimates or descent directions through 
finite differences and randomized smoothing~\citep{matyas1965random,conn2009introduction,Nesterov2015RandomGM,JMLR:v18:16-632}. 
This paradigm is fundamental to derivative-free and black-box optimization~\citep{NIPS2015_ab817c93,chen2019zo} 
and has become increasingly
relevant in large scale learning, where backpropagation may be unavailable
or memory intensive~\citep{malladi2023fine,zhang2024revisiting,wang2025scalable}.

Across many problem classes, 
known zeroth-order algorithms incur an additional dimension dependent cost relative to comparable first-order
methods~\citep{ghadimi2013stochastic,duchi2015optimal,Nesterov2015RandomGM, JMLR:v18:16-632,kornowski2024algorithm}. At a heuristic level, this is natural: a first-order oracle returns a vector in
\(\mathbb R^d\), whereas a zeroth-order oracle returns a single real
number.  Yet lower bounds explaining whether this cost is 
intrinsic under exact, noiseless scalar feedback remain limited, and even
for canonical convex problems the optimal joint dependence on dimension
and target accuracy is unresolved. An
exact scalar response may mix information about many coordinates,
and adaptive algorithms may combine several values into derivative
estimates. A basic question is therefore: 
\begin{center}
\textbf{\textit{What is the intrinsic price of replacing gradient vectors
by function values?}}
\end{center}
We study this question in the exact scalar value model.
The objective \(f:B_2^d\to\mathbb R\) is convex and Lipschitz on the
Euclidean unit ball $B_2^d
:=
\{\mathbf x\in\mathbb R^d:\|\mathbf x\|_2\le1\}$,  
and a query \(\mathbf x\in B_2^d\) returns only the exact real number
\(f(\mathbf x)\). The algorithm may be \textit{randomized} and adaptive, but it
receives no gradient, subgradient, or observation noise. Throughout the
paper, oracle complexity counts individual scalar evaluations.

For the canonical Lipschitz convex optimization over a Euclidean ball, first-order
subgradient methods have the familiar
\(O(\epsilon^{-2})\)
complexity~\citep{nesterov2018lectures}.
Under exact scalar value access, the known upper bound landscape is
governed by the lower envelope of two complementary guarantees.
After translating its relative objective error guarantee to fixed
Lipschitz and radius scales, the value only method of
\citet{protasov1996algorithms} gives $O\left(
d^2\log(d+1)\log\frac1\epsilon
\right)$ 
exact evaluations.  The later two-point method of
\citet{JMLR:v18:16-632}, specialized to a fixed objective by setting
\(f_t=f\) in every round, gives $O(d\epsilon^{-2})$ 
exact evaluations.  Consequently, for constant Lipschitz and radius scales, the two results
together imply that \(\epsilon\)-suboptimality can be attained using $O\left(
\min\left\{
d\epsilon^{-2},
\,
d^2\log(d+1)\log\frac1\epsilon
\right\}
\right)$
exact function value evaluations.  Up to logarithmic factors, this
upper bound landscape has polynomial scale $d\min\{d,\epsilon^{-2}\}$.

By contrast, a full factor-\(d\) gap remains between this upper bound
landscape and the best lower bounds applicable to arbitrary \textit{randomized}
exact value algorithms.
\citet{7919238,10411928} establish distributional lower bounds for arbitrary
local oracles, and hence for \textit{randomized} algorithms.
Their previous lower bounds recover only
the first-order polynomial scale $\Omega\left(
\min\{d,\epsilon^{-2}\}
\right)$,
rather than the previous exact value upper bound scale $d\min\{d,\epsilon^{-2}\}$. 
The entire additional factor \(d\), representing the 
information cost of scalar feedback, was therefore unaccounted.
In particular, at the transition
\(\epsilon\asymp d^{-1/2}\), the \textit{randomized} lower bound scale
is only
\(\Omega(d)\),
whereas the upper bound scale is
\(O(d^2)\).
Independent concurrent work by
\citet{kerger2026closing}
establishes a \textit{deterministic} high-accuracy endpoint: for a sufficiently
small universal constant \(\beta>0\), every \textit{deterministic} exact value
algorithm requires $\Omega\left(
\frac{d^2}{\log(d+1)}
\right)$ 
queries at accuracy
\(\beta d^{-1/2}\).
However, its resisting oracle is tied to a \textit{deterministic} transcript and,
as noted in that work, does not yield a common hard distribution for
\textit{randomized} algorithms.  It also does not recover the low-accuracy
regime
\(d\epsilon^{-2}\).
Thus, before the present work, no lower bound for arbitrary adaptive
\textit{randomized} exact value algorithms captured the scalar feedback factor
\(d\) throughout both accuracy regimes.

A complementary theory gives sharp minimax lower bounds for stochastic
zeroth-order optimization, where a query returns random values such as
\(F(\mathbf x;\xi)\) with
\(f(\mathbf x)=\mathbb E_\xi F(\mathbf x;\xi)\)
~\citep{NIPS2012_e555ebe0,duchi2015optimal}. These lower bounds reduce optimization to
statistical testing between noisy observation laws and recover the
dimension dependence of two-point methods. They do not directly settle
the exact value model: conditioned on the objective and the algorithmic seed, the transcript is deterministic, so the
noise induced divergence controls available in the stochastic model are
absent. Thus prior work provides both an exact local oracle baseline and
a sharp stochastic theory, but leaves open whether the factor \(d\) in
exact value upper bounds is an intrinsic cost of scalar feedback.

Our main theorem closes this exact feedback gap up to logarithmic
factors. Let
\(\mathcal F_{L_0}\) be the class of convex \(L_0\)-Lipschitz
functions on \(B_2^d\), where \(L_0>0\) is a universal
constant. Let $T_\epsilon
:=
T_\epsilon(\mathcal A_{\rm rand},\mathcal F_{L_0})$
denote the minimum number of exact scalar
evaluations required by an adaptive \textit{randomized} algorithm to return an
\(\epsilon\)-suboptimal point with probability at least \(1/2\), uniformly
over \(\mathcal F_{L_0}\).  We prove
that there exist universal constants \(c,\epsilon_0>0\) and
\(d_0\in\mathbb N\) such that
\[
        T_\epsilon
        \ge
        c\,
        \frac{
        d\min\{d,\epsilon^{-2}\}
        }{
        \log\!\bigl(\min\{d,\epsilon^{-2}\}\bigr)
        },
\]
for every \(d\ge d_0\) and \(0<\epsilon\le\epsilon_0\).  
In the low-accuracy regime \(\epsilon\ge d^{-1/2}\), the result gives \(T_\epsilon
        =\Omega(d/(\epsilon^2\log(1/\epsilon)))\),
matching the \(O(d\epsilon^{-2})\) two-point exact value upper bound up to a logarithmic factor~\citep{JMLR:v18:16-632}. In the high accuracy regime
\(\epsilon\le d^{-1/2}\), the lower bound
saturates at \(T_\epsilon
        = \Omega(d^2/\log d)\), with no remaining dependence on \(\epsilon\), and 
matches the  \(\widetilde O(d^2)\) evaluation oracle upper bound up to polylogarithmic
factors~\citep{protasov1996algorithms,pmlr-v75-lee18a}. 
Thus the result determines, up to logarithmic factors, the joint
polynomial dependence on dimension and accuracy on both sides of the
transition \(\epsilon\asymp d^{-1/2}\). To our knowledge, this is the
first near-optimal lower bound that isolates the scalar feedback
dimension penalty for arbitrary adaptive \textit{randomized} exact value
algorithms.

The proof requires a mechanism that is neither span based nor
noise based. Exact scalar feedback is low dimensional in format but not
necessarily low information: a value at a dense query may depend on all
hidden directions, and several values may be combined into
finite difference estimates. Consequently, first-order zero chain and
resisting oracle arguments~\citep{nesterov2018lectures,woodworth2017lower,carmon2020lower,10.1007/s10107-019-01431-x}, which constrain the support or span of
returned derivative vectors, do not control the information revealed by
exact values.
A different geometric resisting oracle construction can establish the
\textit{deterministic} high-accuracy endpoint~\citep{kerger2026closing}. However, that construction is specific to a deterministic transcript
and does not provide the fixed hard distribution required for a
\textit{randomized} lower bound through Yao's principle. 
At the same time, the transcript contains no exogenous
observation noise from which to derive a per query statistical
indistinguishability bound~\citep{NIPS2012_e555ebe0,duchi2015optimal}. Our approach instead tracks directly how much posterior information an adaptive sequence of exact values reveals.

We use a random support function hard family
$f_\Xi(\mathbf x)
        =
        \max_{1\le i\le k}
        \langle \rvec a_i,\mathbf x\rangle,
        \ 
        \mathbf x\in B_2^d$, 
where the hidden vectors \(\rvec a_1,\ldots,\rvec a_k\) are
independent normalized truncated Gaussians. 
Let $\rvec s^{\mathrm{sc}}
:=
\sum_{i=1}^k\rvec a_i$ 
be their aggregate direction. The construction has two complementary
geometric properties. First, with constant probability, $ \min_{\mathbf x\in B_2^d} f_\Xi(\mathbf x)
        \lesssim 
        -k^{-1/2}$.
Second, for every \(\mathbf x\in B_2^d\), $f_\Xi(\mathbf x)
\ge
\frac1k
\langle\rvec s^{\mathrm{sc}},\mathbf x\rangle$. 
Hence an approximately optimal output must be substantially aligned with
\(-\rvec  s^{\mathrm{sc}}\). The parameter \(k\) therefore sets the
accuracy scale, and choosing $k\asymp\min\{d,\epsilon^{-2}\}$ 
produces the two regimes of the main theorem.

The information analysis tracks the posterior mean of
\(\rvec  s^{\mathrm{sc}}\). For this purpose, we condition on an
augmented transcript that records, in addition to each observed maximum
value, the identity of a deterministic active linear piece. The actual
algorithm still uses only function values; the augmentation is an
analytical device that exposes the posterior structure. Given the
augmented transcript, a winner block is restricted to an affine slice
and each loser block is restricted by halfspace inequalities. As a
result, the conditional law continues to factor across blocks, and each
block remains strongly log-concave on an affine support.

The main technical result is a one step selection inequality. For
independent strongly log-concave blocks on affine supports, one augmented
maximum observation moves the posterior mean of their unscaled sum by at
most \(O(\log k)\) in expected squared norm. Its proof decomposes the
selection event into a winner slice and loser lower tails. Winner-slice
and loser-tail regression estimates control the corresponding conditional
barycenters, while a two dimensional no spike principle rules out large
typical movements along thin slices.

Applying the one step inequality conditionally at each adaptive query and
using martingale orthogonality yields $\mathbb E
\left\|
\mathbb E[
\rvec s^{\mathrm{sc}}
\mid\mathcal G_T
]
\right\|^2
\lesssim
\frac{T\log(k)}{d}$,
where \(\mathcal G_T=\sigma(\mathsf T_T)\) is the sigma field generated
by the augmented transcript. A conditional
subgaussian bound controls the posterior residual in the algorithm's
output direction. Together with the negative optimum event, these
estimates show that $T
\lesssim
\frac{dk}{\log(k)}$ 
queries cannot produce a sufficiently aligned output with constant
probability. Finally, setting
\(k\asymp\min\{d,\epsilon^{-2}\}\) and applying Yao's minimax principle
gives the lower bound for arbitrary \textit{randomized} algorithms.

\subsection{Our Contributions}
Our contributions are twofold. 
\begin{itemize}
\item  First, we establish a near-optimal lower bound for arbitrary
adaptive \textit{randomized} algorithms minimizing  Lipschitz convex functions with exact scalar value access.  To our knowledge, this
is the first full scale result for \textit{randomized} algorithms in this model:
it captures the scalar feedback dimension penalty and determines, up to
logarithmic factors, the joint polynomial dependence on dimension and
accuracy from
\(d\epsilon^{-2}\) to \(d^2\).

    \item Second, we develop a posterior mean energy framework for
    adaptive noiseless oracle lower bounds. Its core is a
    one-step selection bound that controls exact scalar valued observations through posterior geometry, rather than
    through first-order zero chain growth or noise induced transcript
    contraction.
\end{itemize}

\section{Related Work}

\paragraph{Zeroth-order optimization.} 
Randomized smoothing, finite differences, and random directional
estimators form a classical approach to zeroth-order optimization~\citep{matyas1965random,conn2009introduction,Nesterov2015RandomGM,JMLR:v18:16-632}.
Across the four standard convex regimes, $L$-Lipschitz convex, $\beta$-smooth
convex, $L$-Lipschitz $\mu$-strongly convex, and $\beta$-smooth $\mu$-strongly convex, a recurring phenomenon is that zeroth-order methods retain the accuracy
and condition parameter dependence of comparable first-order methods
while paying an additional dimension dependent cost.  At a schematic
level, the corresponding first-order dependencies are $O\!\left(\frac{L^2}{\epsilon^2}\right)$, $O\!\left(\frac{\beta }{\epsilon}\right)$, $O\!\left(\frac{L^2}{\mu\epsilon}\right)$, and $O\!\left(\frac{\beta}{\mu}\log\frac1\epsilon\right)$ 
respectively~\citep{nesterov2018lectures}.  Under standard randomized zeroth-order estimators, these
rates commonly acquire an additional dimension factor.  
\citet{Nesterov2015RandomGM} develop a broad Gaussian random search
framework covering smooth and nonsmooth convex objectives, strong
convexity, stochastic optimization, and nonconvex stationarity.  Their
framework contains both finite difference function value estimators and
a directional derivative estimator.  In particular, they obtain the sharper
\(O(dL^2/\epsilon^2)\) nonsmooth rate under directional-derivative feedback, whereas their finite-difference analysis incurs an additional dimension factor.  The
\(O(dL^2/\epsilon^2)\) exact scalar value upper bound relevant to our
comparison follows instead from the symmetric two-point method of \citet{JMLR:v18:16-632}. 
The two-point and multi-point literature gives a complementary stochastic
and minimax perspective.  \citet{duchi2015optimal} study stochastic and
nonstochastic convex objectives and establish dimension dependent rates
for smooth and nonsmooth problems using paired or multiple function
evaluations.  Zeroth-order methods for smooth nonconvex stochastic
optimization were developed by \citet{ghadimi2013stochastic}, while
more recent work studies nonsmooth nonconvex stochastic objectives and
the dimension dependence of stationarity guarantees~\citep{lin2022gradient,chen2023faster,kornowski2024algorithm}. 
Exact evaluation oracles also arise in general convex optimization.
For \(L\)-Lipschitz functions on \(B_2^d\),
\citet{protasov1996algorithms} give a deterministic value only algorithm requiring $O\left(
d^2\log(d+1)
\log\frac{4L}{\epsilon}
\right)$ 
evaluations to achieve error \(\epsilon\). 
\citet{pmlr-v75-lee18a} later give a more general randomized algorithm
for minimizing a convex function given an evaluation oracle for the
objective and a membership oracle for the feasible set.  Their method
uses
\(\widetilde O(d^2)\)
oracle calls and
\(\widetilde O(d^3)\)
additional arithmetic operations.  On the explicit Euclidean ball, it
also gives a high-accuracy upper bound in our model.  We use Protasov's
result as the primary comparison because it is 
value only, and stated directly for a known convex domain. 
Together with the two-point upper bound, this yields the
two scale upper bound landscape $O(d\epsilon^{-2})
\wedge
\widetilde O(d^2)$ 
for the Lipschitz convex class studied here. Randomized coordinate and block coordinate methods provide a related
partial information viewpoint
\citep{nesterov2012efficiency,bubeck2015convex}.  These methods replace a
full dimensional update by a randomly selected coordinate or block,
trading less information and cheaper computation per iteration for a
dimension dependent iteration complexity.  This is not the same oracle
model as exact scalar value access: coordinate methods receive selected
components of derivative information, whereas one exact function value
may mix information from many directions.  Nevertheless, they provide a
useful comparison for the broader role of dimension under partial
information.

\paragraph{Lower bounds.}
Oracle lower bound theory is substantially more developed for first- and
higher-order information.  Resisting oracle, zero chain, span based, and
hard instance constructions establish lower bounds for convex and
strongly convex optimization~\citep{nesterov2018lectures,woodworth2017lower,arjevani2019oracle},
finite-sum and composite objectives~\citep{woodworth2016tight,bai2024complexity}, smooth functions satisfying
the Polyak--\L{}ojasiewicz condition
\citep{yue2023lower}, and nonconvex stationary point problems for first-
and higher-order algorithms~\citep{arjevani2020second,arjevani2023lower,fang2018spider,carmon2020lower,10.1007/s10107-019-01431-x}. 
The standard mechanisms, however, do not directly control exact
scalar value feedback.  A zero chain argument restricts the support or
span of derivative vectors revealed to the algorithm.  By contrast, a
value at a dense query may depend on all hidden directions, and several
exact values may be combined into finite difference estimates.  Information theoretic lower bounds are available for stochastic
zeroth-order optimization.  In these models, an oracle returns random
values such as \(F(\mathbf x;\xi)\), and lower bounds compare the induced
observation laws through statistical testing or divergence arguments
\citep{NIPS2012_e555ebe0,duchi2015optimal}.  These results sharply capture
dimension dependence in stochastic convex models, but do not directly
settle the exact value setting, where the oracle
introduces no exogenous observation noise. Noiseless \textit{randomized} baselines are also known.
\citet{7919238,10411928} establish lower bounds for arbitrary local
oracles in nonsmooth convex optimization.  Since exact scalar evaluation
is a local oracle, their Euclidean large scale result implies $\Omega(\epsilon^{-2})$ for  $\epsilon\ge d^{-1/2}$.  This result applies to a broader and potentially more
informative oracle class, but consequently does not isolate the additional
factor \(d\) associated specifically with scalar only feedback. 
Related lower bound questions also arise outside convex optimization. In the noisy stochastic nonsmooth nonconvex setting,
\citet{kornowski2024algorithm} obtain an
\(O(d\delta^{-1}\epsilon^{-3})\) zeroth-order algorithm for finding a
\((\delta,\epsilon)\)-Goldstein-stationary point, where \(\delta\) is the
localization radius in the Goldstein subdifferential and \(\epsilon\) is
the stationarity tolerance. They observe that, although the dependence
on each parameter is separately optimal, no lower bound jointly matching
\(d\), \(\delta\), and \(\epsilon\) was known.
They conjecture that such a result could be obtained by adapting smooth
first-order lower bound analyses to zeroth-order oracles.  Although these
nonconvex stochastic models and stationarity criteria differ from ours,
they illustrate the broader difficulty of establishing zeroth-order lower
bounds that are simultaneously sharp in dimension and accuracy. 
Our work addresses this difficulty in the exact scalar value convex setting.  The posterior mean energy method controls
the information revealed by adaptive noiseless real valued observations
through the geometry of posterior distributions, rather than through
first-order span growth or noise induced transcript contraction.

\paragraph{Independent concurrent \textit{deterministic} exact value lower bound.}
Independent concurrent work by
\citet{kerger2026closing}
proves that arbitrary \textit{deterministic} exact value algorithms require $\Omega\left(
\frac{d^2}{\log(d+1)}
\right)$ 
queries at accuracy
\(\beta d^{-1/2}\)
for a sufficiently small universal constant \(\beta>0\).  
This
agrees with the \textit{deterministic} high-accuracy endpoint implied by
Theorem~\ref{thm:main-exact-zo-lb}. 
The scopes of the two results differ in two essential respects.
First, the result of
\citet{kerger2026closing}
is \textit{deterministic} and is tied to the high-accuracy endpoint, whereas our
theorem applies to arbitrary adaptive \textit{randomized} algorithms and gives
the full interpolation from
\(\widetilde\Omega(d\epsilon^{-2})\)
to
\(\widetilde\Omega(d^2)\). 
Second, the proof mechanisms are different.
Their argument maintains Cartesian products of convex uncertainty sets
under a transcript specific resisting oracle and controls codimension,
intrinsic volume, and aggregate width in order to construct two
transcript compatible objectives with separated minimizers.
Our argument starts from a fixed product distribution over objectives,
constructs measurable product posterior kernels, controls the
posterior mean energy generated by the actual exact observations, and
then invokes Yao's principle.
As explicitly noted by
\citet{kerger2026closing},
their resisting oracle does not yield a hard distribution for \textit{randomized}
algorithms. 
\citet{kerger2026closing} also derives a mixed-integer extension, which
is outside the scope of the present paper.

\paragraph{Basic notation.}
Let $B_2^d
:=
\left\{
\mathbf x\in\mathbb R^d:
\|\mathbf x\|_2\le1
\right\}$ and $\mathbb S^{d-1}
:=
\left\{
\mathbf x\in\mathbb R^d:
\|\mathbf x\|_2=1
\right\}$ 
denote the Euclidean unit ball and sphere.  For \(k\in\mathbb N\), write $[k]:=\{1,\ldots,k\}$. 
Unless explicitly declared otherwise, symbols without a breve denote
deterministic numerical quantities.  Deterministic scalars and parameters
are written with ordinary Roman or Greek letters, in lowercase or
uppercase according to their conventional roles, such as $x$, $\epsilon$ and $T$. Deterministic vectors are written in
bold lowercase Latin letters, such as $\mathbf{x}$ and $\mathbf{y}$, and
deterministic matrices in bold uppercase  Latin letters, such as $\mathbf{A}$ and $\mathbf{B}$.  Deterministic linear maps are
written in calligraphic letters, such as $\mathcal A$ and $\mathcal B$. 
Unless explicitly declared otherwise, named scalar-, vector-, and
matrix-valued quantities without a breve are deterministic.  A breve
marks a named random quantity:
\(\breve X\) denotes a scalar random variable,
\(\breve{\mathbf x}\) a random vector, and
\(\breve{\mathbf A}\) a random matrix.
Random seeds, transcript variables, and other abstract random elements
are declared explicitly when introduced.
Functional and operator notation, such as
\(\mathbb E\), \(\mathbb P\), and
\(\operatorname{err}_T(\mathsf r,f)\), is not separately decorated with a
breve; its randomness is determined by its arguments. 
The Euclidean norm is denoted by \(\|\cdot\|\), and all Lipschitz
constants are with respect to this norm.  The symbols
\(\mathbf I_d\) and \(\mathbf 1_d\) denote, respectively, the
\(d\times d\) identity matrix and the all ones vector in
\(\mathbb R^d\). 
The symbols
\(\mathbf 0_d\) and \(\mathbf O_{m\times n}\)
denote, respectively, the zero vector in \(\mathbb R^d\) and the
\(m\times n\) zero matrix.
For a square zero matrix, we write
\(\mathbf O_d:=\mathbf O_{d\times d}\).
Dimension subscripts are omitted only when they are clear from context. 
For an event or set \(E\), the symbol
\(\mathbf 1_E\) denotes its indicator function. 
For nonnegative quantities \(a,b\), the notation $a\lesssim b$ means $a\le Cb$ 
for a universal positive constant \(C\).  We write
\(a\gtrsim b\) for \(b\lesssim a\), and \(a\asymp b\) when both
comparisons hold. 
For nonnegative functions \(a,b\) of the relevant asymptotic parameters,
the relations $a=O(b)$, $a=\Omega(b)$, and $a=\Theta(b)$ 
mean, respectively, that $a\le Cb$, $a\ge cb$, and $cb\le a\le Cb$, 
throughout the asymptotic regime under consideration, for constants 
\(c,C>0\) independent of the displayed asymptotic parameters. The notation \(\widetilde O(\cdot)\), \(\widetilde\Omega(\cdot)\), and
\(\widetilde\Theta(\cdot)\) hides polylogarithmic factors in the relevant
parameters.

\section{Problem Setup}
\label{sec:setup}
An exact zeroth-order oracle for a function \(f:B_2^d\to\R\) returns the
exact real value $\mathsf O_f(\mathbf x)=f(\mathbf x)$, $\mathbf x\in B_2^d$. 
Each oracle call returns a single scalar function value. Thus an algorithmic
step that evaluates both \(f(\mathbf x+\delta\mathbf u)\) and
\(f(\mathbf x-\delta\mathbf u)\) uses two oracle calls, and a full central
coordinate finite difference estimate uses \(2d\) oracle calls. Throughout
this paper, oracle complexity always counts individual scalar evaluations.

\subsection{Algorithms and Complexity Measures}
\label{subsec:algorithm-classes}

We define \textit{deterministic} and \textit{randomized} algorithms separately at each
fixed query budget.  This fixed-budget formulation is the one used in the
oracle-complexity definition below.

\paragraph{Deterministic algorithms.}

Fix \(T\in\mathbb N\).  A \textit{deterministic} adaptive \(T\)-query exact
zeroth-order algorithm is a tuple $\mathsf d
=
\bigl(
\mathbf x_1,
\pi_2,\ldots,\pi_T,
\pi_{\mathrm{out}}
\bigr)$, 
where $\mathbf x_1\in B_2^d$, \(\pi_t:\mathbb R^{t-1}\to B_2^d\) for  $t=2,\ldots,T$ 
and $\pi_{\mathrm{out}}:
\mathbb R^T\to B_2^d$ 
are Borel measurable.

When \(\mathsf d\) is run on a function
\(f:B_2^d\to\mathbb R\), its first query is the prescribed point
\(\mathbf x_1\).  After observing $y_s
:=
f(\mathbf x_s)$, $s=1,\ldots,t-1$
the algorithm chooses $\mathbf x_t
=
\pi_t(y_1,\ldots,y_{t-1})$, $t=2,\ldots,T$. 
After the \(T\)-th query, it returns
\[
\widehat{\mathbf x}_T(\mathsf d,f)
:=
\pi_{\mathrm{out}}(y_1,\ldots,y_T).
\]
Denote the class of \textit{deterministic} adaptive \(T\)-query exact
zeroth-order algorithms by $\mathcal A_{\mathrm{det}}^{(T)}$.
\paragraph{Randomized algorithms.}
A \textit{randomized} adaptive \(T\)-query exact zeroth-order algorithm
\(\mathsf r\) consists of a probability space $(
\Omega_{\mathsf r},
\mathcal H_{\mathsf r},
\mathbb P_{\mathsf r}
)$ 
and jointly measurable maps
\[
\Pi_1:
\Omega_{\mathsf r}\to B_2^d,\quad
\Pi_t:
\Omega_{\mathsf r}\times\mathbb R^{t-1}
\to
B_2^d,
\quad
t=2,\ldots,T, \quad\text{and }\quad \Pi_{\mathrm{out}}:
\Omega_{\mathsf r}\times\mathbb R^T
\to
B_2^d.
\]
For a fixed seed
\(\omega\in\Omega_{\mathsf r}\), define
\[
\mathbf x_1^\omega
:=
\Pi_1(\omega),\quad
\pi_t^\omega(\mathbf y)
:=
\Pi_t(\omega,\mathbf y),
\quad
t=2,\ldots,T,\quad  \text{and }\quad\pi_{\mathrm{out}}^\omega(\mathbf y)
:=
\Pi_{\mathrm{out}}(\omega,\mathbf y).
\]
Then $\mathsf d_\omega
:=
\bigl(
\mathbf x_1^\omega,
\pi_2^\omega,\ldots,\pi_T^\omega,
\pi_{\mathrm{out}}^\omega
\bigr)
\in
\mathcal A_{\mathrm{det}}^{(T)}$ 
is the \textit{deterministic} algorithm obtained by fixing the seed. 
For a fixed Borel measurable objective
\(f:B_2^d\to\mathbb R\) and a seed
\(\omega\in\Omega_{\mathsf r}\), define the realized query sequence
recursively by $\mathbf x_1(\mathsf r,f;\omega)
:=
\Pi_1(\omega)$, 
and, for \(t=2,\ldots,T\),
\[
\mathbf x_t(\mathsf r,f;\omega)
:=
\Pi_t\left(
\omega,
f(\mathbf x_1(\mathsf r,f;\omega)),
\ldots,
f(\mathbf x_{t-1}(\mathsf r,f;\omega))
\right).
\]
The corresponding realized output is
\[
\begin{aligned}
\widehat{\mathbf x}_T(\mathsf r,f;\omega)
:=
\Pi_{\mathrm{out}}\Bigl(
\omega,\,
f(\mathbf x_1(\mathsf r,f;\omega)),
\ldots, f(\mathbf x_T(\mathsf r,f;\omega))
\Bigr).
\end{aligned}
\]
Equivalently, $\widehat{\mathbf x}_T(\mathsf r,f;\omega)
=
\widehat{\mathbf x}_T(\mathsf d_\omega,f)$. 
The random output of \(\mathsf r\) on \(f\) is the
\(B_2^d\)-valued random vector $\breve{\mathbf x}^{\mathrm{out}}_T(\mathsf r,f)
:
\Omega_{\mathsf r}\to B_2^d$ 
defined by $\breve{\mathbf x}^{\mathrm{out}}_T(\mathsf r,f)(\omega)
:=
\widehat{\mathbf x}_T(\mathsf r,f;\omega)$. 
We denote the class of \textit{randomized} adaptive \(T\)-query exact zeroth-order
algorithms by $\mathcal A_{\mathrm{rand}}^{(T)}$.  
A \textit{deterministic} algorithm is identified with a \textit{randomized} algorithm whose
seed space consists of a single point.  Thus $\mathcal A_{\mathrm{det}}^{(T)}
\subseteq
\mathcal A_{\mathrm{rand}}^{(T)}$.  
For later use, define $\mathcal A_{\mathrm{det}}
:=
\bigcup_{T\ge1}
\mathcal A_{\mathrm{det}}^{(T)}$ and $\mathcal A_{\mathrm{rand}}
:=
\bigcup_{T\ge1}
\mathcal A_{\mathrm{rand}}^{(T)}$.

\paragraph{Function class.}

For \(L>0\), let
\[
\mathcal F_L
:=
\left\{
f:B_2^d\to\mathbb R:
\begin{array}{l}
f\text{ is convex, and}\\[1mm]
|f(\mathbf x)-f(\mathbf y)|
\le
L\|\mathbf x-\mathbf y\|
\text{ for all }
\mathbf x,\mathbf y\in B_2^d
\end{array}
\right\}.
\]
We regard the objective functions as elements of $\mathcal C(B_2^d)$,  
the Banach space of real valued continuous functions on
\(B_2^d\), equipped with the uniform norm $\|f\|_\infty
:=
\sup_{\mathbf x\in B_2^d}
|f(\mathbf x)|$ 
and its Borel sigma field. 
For every \(L>0\), the class
\(\mathcal F_L\)
is a closed, and hence Borel, subset of
\(\mathcal C(B_2^d)\).
Indeed, convexity and the \(L\)-Lipschitz inequality are preserved under
uniform limits. 
No differentiability or gradient-Lipschitz assumption is imposed.
The hard functions constructed below belong to
\(\mathcal F_2\).

\paragraph{Optimization error.}

For \(f\in\mathcal F_L\), write
\[
f^\star
:=
\min_{\mathbf x\in B_2^d}
f(\mathbf x).
\]
The minimum is attained because \(f\) is continuous and
\(B_2^d\) is compact.

For $\mathsf d
\in
\mathcal A_{\mathrm{det}}^{(T)}$, 
define the \textit{deterministic} optimization error
\[
\operatorname{err}_T(\mathsf d,f)
:=
f\left(
\widehat{\mathbf x}_T(\mathsf d,f)
\right)
-
f^\star.
\]

For $\mathsf r
\in
\mathcal A_{\mathrm{rand}}^{(T)}$ 
and a seed
\(\omega\in\Omega_{\mathsf r}\), define the realized optimization error
\[
\operatorname{err}_T(\mathsf r,f;\omega)
:=
f\left(
\widehat{\mathbf x}_T(\mathsf r,f;\omega)
\right)
-
f^\star.
\]
Equivalently,
\[
\operatorname{err}_T(\mathsf r,f;\omega)
=
\operatorname{err}_T(\mathsf d_\omega,f).
\]

The random optimization error of \(\mathsf r\) on \(f\) is the
measurable map
\[
\operatorname{err}_T(\mathsf r,f):
\Omega_{\mathsf r}
\to
[0,\infty)
\]
defined by
\[
\bigl[
\operatorname{err}_T(\mathsf r,f)
\bigr](\omega)
:=
\operatorname{err}_T(\mathsf r,f;\omega).
\]
Equivalently, as an equality of random variables,
\[
\operatorname{err}_T(\mathsf r,f)
=
f\left(
\breve{\mathbf x}^{\mathrm{out}}_T(\mathsf r,f)
\right)
-
f^\star.
\]
\begin{lemma}[Measurability of adaptive exact value algorithms]
\label{lem:algorithm-error-measurability}
The evaluation map $\operatorname{ev}:
\mathcal C(B_2^d)
\times
B_2^d
\to
\mathbb R$ where $\operatorname{ev}(f,\mathbf x)
:=
f(\mathbf x)$ 
is continuous.  Moreover, the optimal value map $\operatorname{val}:
\mathcal C(B_2^d)
\to
\mathbb R$, where  $\operatorname{val}(f)
:=
\min_{\mathbf x\in B_2^d}f(\mathbf x)$, 
is \(1\)-Lipschitz with respect to the uniform norm. 
Consequently, for every
\(\mathsf d\in\mathcal A_{\mathrm{det}}^{(T)}\),
the maps $f
\longmapsto
\widehat{\mathbf x}_T(\mathsf d,f)$ 
and $f
\longmapsto
\Err_T(\mathsf d,f)$ 
are Borel measurable on
\(\mathcal C(B_2^d)\). 
Likewise, for every
\(\mathsf r\in\mathcal A_{\mathrm{rand}}^{(T)}\),
the maps $(\omega,f)
\longmapsto
\widehat{\mathbf x}_T(\mathsf r,f;\omega)$ 
and $(\omega,f)
\longmapsto
\Err_T(\mathsf r,f;\omega)$ 
are measurable with respect to $\mathcal H_{\mathsf r}
\otimes
\mathcal B\bigl(
\mathcal C(B_2^d)
\bigr)$.  
\end{lemma}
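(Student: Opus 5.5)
The plan has three parts: the two analytic claims, then measurability by induction over the query rounds, then the products and compositions.

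\emph{Continuity of the evaluation map.} For \((f,\mathbf x),(g,\mathbf y)\in\mathcal C(B_2^d)\times B_2^d\),
\[
|f(\mathbf x)-g(\mathbf y)|\le \|f-g\|_\infty+|f(\mathbf x)-f(\mathbf y)|.
\]
The first term is small when \(g\) is close to \(f\) in the uniform norm. The second is small by continuity of the fixed \(f\) at \(\mathbf x\); uniform continuity on the compact set \(B_2^d\) also works. This gives joint continuity at \((f,\mathbf x)\).

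\emph{Lipschitz continuity of the optimal value map.} The minimum defining \(\operatorname{val}\) is attained by compactness, as already noted after the definition of \(f^\star\). For every \(\mathbf x\),
\[
f(\mathbf x)\le g(\mathbf x)+\|f-g\|_\infty .
\]
Take the minimum over \(\mathbf x\) on both sides, then swap the roles of \(f\) and \(g\). This yields
\[
|\operatorname{val}(f)-\operatorname{val}(g)|\le\|f-g\|_\infty .
\]

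\emph{Deterministic algorithms.} I will show by induction on \(t\) that \(f\mapsto \mathbf x_t(\mathsf d,f)\) and \(f\mapsto y_t=f(\mathbf x_t)\) are Borel.
\begin{itemize}
\item For \(t=1\), the query \(\mathbf x_1\) is constant. Then \(y_1=\operatorname{ev}(f,\mathbf x_1)\) is continuous in \(f\).
\item For the inductive step, suppose \(f\mapsto (y_1,\ldots,y_{t-1})\in\R^{t-1}\) is Borel. Then \(\mathbf x_t=\pi_t(y_1,\ldots,y_{t-1})\) is a composition of Borel maps, hence Borel.
\item Next, \(f\mapsto (f,\mathbf x_t(f))\) is Borel into the product space. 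Both factors are separable metric spaces: \(\mathcal C(B_2^d)\) is separable by Stone--Weierstrass with rational polynomials. So the Borel \(\sigma\)-field of the product equals the product \(\sigma\)-field, and composing with the continuous \(\operatorname{ev}\) gives that \(y_t\) is Borel.
\end{itemize}
Then \(\widehat{\mathbf x}_T=\pi_{\mathrm{out}}(y_1,\ldots,y_T)\) is Borel. Finally, \(\Err_T=\operatorname{ev}(f,\widehat{\mathbf x}_T)-\operatorname{val}(f)\) is Borel by the same product argument together with the continuity of \(\operatorname{val}\).

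\emph{Randomized algorithms.} I will repeat the induction on the space \(\Omega_{\mathsf r}\times\mathcal C(B_2^d)\) with the \(\sigma\)-field \(\mathcal H_{\mathsf r}\otimes\mathcal B(\mathcal C(B_2^d))\).
\begin{itemize}
\item The map \((\omega,f)\mapsto(\omega,y_1,\ldots,y_{t-1})\) is measurable into \(\mathcal H_{\mathsf r}\otimes\mathcal B(\R^{t-1})\), because each coordinate is measurable.
\item Joint measurability of \(\Pi_t\) then gives measurability of \(\mathbf x_t\).
\item The map \((\omega,f)\mapsto (f,\mathbf x_t)\) is measurable into \(\mathcal B(\mathcal C(B_2^d))\otimes\mathcal B(B_2^d)\), which equals \(\mathcal B(\mathcal C(B_2^d)\times B_2^d)\) by separability. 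Composing with \(\operatorname{ev}\) gives measurability of \(y_t\).
\item The output \(\widehat{\mathbf x}_T\) and the error \(\Err_T\) follow in the same way from \(\Pi_{\mathrm{out}}\), \(\operatorname{ev}\), and \(\operatorname{val}\).
\end{itemize}

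\emph{Main obstacle.} The only delicate point is the identification of the Borel \(\sigma\)-field of \(\mathcal C(B_2^d)\times B_2^d\) with the product \(\sigma\)-field. This is needed so that composing the continuous \(\operatorname{ev}\) with a merely measurable pair map is legitimate. It rests on the separability of \(\mathcal C(B_2^d)\), which I will cite via Stone--Weierstrass.
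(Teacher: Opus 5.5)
Your proposal is correct and follows the paper's proof essentially step for step: the same triangle-inequality argument for continuity of $\operatorname{ev}$, the same swap argument for the $1$-Lipschitz property of $\operatorname{val}$, and the same induction over query rounds on $\mathcal C(B_2^d)$ and on $\Omega_{\mathsf r}\times\mathcal C(B_2^d)$. Your explicit identification $\mathcal B(\mathcal C(B_2^d)\times B_2^d)=\mathcal B(\mathcal C(B_2^d))\otimes\mathcal B(B_2^d)$, via separability of $\mathcal C(B_2^d)$, fills in a point the paper uses only implicitly when it says that continuity of the evaluation map gives measurability of $f\mapsto f(\mathbf x_t(\mathsf d,f))$.
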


\begin{proof}
To prove continuity of the evaluation map, suppose that $f_n\to f$ uniformly and  $\mathbf x_n\to\mathbf x$. 
Then
\[
\begin{aligned}
|f_n(\mathbf x_n)-f(\mathbf x)|
&\le
\|f_n-f\|_\infty
+
|f(\mathbf x_n)-f(\mathbf x)|
\longrightarrow
0,
\end{aligned}
\]
because \(f\) is continuous. 

For the optimal value map, for every 
\(f,g\in\mathcal C(B_2^d)\), $\min_{B_2^d}f
\le
\min_{B_2^d}g
+
\|f-g\|_\infty$. 
Interchanging \(f\) and \(g\) gives $\left|
\min_{B_2^d}f
-
\min_{B_2^d}g
\right|
\le
\|f-g\|_\infty$.

Now fix
\(\mathsf d\in\mathcal A_{\mathrm{det}}^{(T)}\).
The first query is constant and hence Borel measurable as a function of
\(f\).  Suppose inductively that $f
\longmapsto
\mathbf x_s(\mathsf d,f)$ and  $f
\longmapsto
f\bigl(
\mathbf x_s(\mathsf d,f)
\bigr)$ 
are Borel measurable for
\(s<t\).
Since \(\pi_t\) is Borel measurable,
\[
f
\longmapsto
\mathbf x_t(\mathsf d,f)
=
\pi_t\left(
f(\mathbf x_1(\mathsf d,f)),
\ldots,
f(\mathbf x_{t-1}(\mathsf d,f))
\right)
\]
is Borel measurable.  Continuity of the evaluation map then implies that $f
\longmapsto
f\bigl(
\mathbf x_t(\mathsf d,f)
\bigr)$ 
is Borel measurable.  Induction proves that the entire transcript and
the final output are Borel measurable functions of \(f\).

Therefore $f
\longmapsto
f\left(
\widehat{\mathbf x}_T(\mathsf d,f)
\right)
-
\min_{\mathbf x\in B_2^d}f(\mathbf x)$ 
is Borel measurable. 
The randomized assertion follows from the same induction on the product
space $\Omega_{\mathsf r}
\times
\mathcal C(B_2^d)$, 
using the joint measurability of
\(\Pi_1,\ldots,\Pi_T,\Pi_{\mathrm{out}}\).
\end{proof}

\paragraph{Constant-success oracle complexity.}

For a function class \(\mathcal F\), define the \textit{randomized}
constant-success exact value oracle complexity by
\begin{equation}
\begin{aligned}
T_\epsilon
\left(
\mathcal A_{\mathrm{rand}},
\mathcal F
\right)
:=
\inf
\Biggl\{
T\in\mathbb N:
\  \exists\ 
\mathsf r
\in
\mathcal A_{\mathrm{rand}}^{(T)}
\text{ s.t. }
\sup_{f\in\mathcal F}
\mathbb P_{\mathsf r}
\left(
\Err_T(\mathsf r,f)
>
\epsilon
\right)
\le
\frac12
\Biggr\}.
\end{aligned}
\label{eq:oracle-complexity-definition}
\end{equation}
Here
\(\mathbb P_{\mathsf r}\)
denotes probability with respect to the algorithmic seed on $(\Omega_{\mathsf r},
\mathcal H_{\mathsf r},
\mathbb P_{\mathsf r}
)$. 
We use the convention that the infimum of the empty set is
\(+\infty\).

\subsection{The Hard Distribution}
\label{sec:hard-family}

We now define the hard distribution used in the lower bound.  For every
pair \((d,k)\), the distribution is supported on the same function class
\(\mathcal F_2\), while the number \(k\) of linear pieces determines the
accuracy scale of the resulting hard instances.

Let $\rvec{g}
\sim
N(\mathbf 0,\mathbf I_d),$ 
and define $p_d^{\mathrm{tr}}
:=
\mathbb P\left(
\|\rvec{g}\|_2
\le
2\sqrt d
\right)$. 
Since the standard Gaussian density is strictly positive, $p_d^{\mathrm{tr}}>0.$ 
Define the truncated Gaussian probability measure
\(\gamma_d^{\mathrm{tr}}\) on \(\mathbb R^d\) by
\begin{equation}
\gamma_d^{\mathrm{tr}}(E)
:=
\frac{
\mathbb P\left(
\rvec{g}\in E,\,
\|\rvec{g}\|_2\le2\sqrt d
\right)
}{
p_d^{\mathrm{tr}}
},
\qquad
E\in\mathcal B(\mathbb R^d).
\label{eq:truncated-gaussian-law}
\end{equation}
Equivalently,
\begin{equation}
\gamma_d^{\mathrm{tr}}(\mathrm d\mathbf b)
=
\frac{
(2\pi)^{-d/2}
e^{-\|\mathbf b\|_2^2/2}
\mathbf 1_{\{\|\mathbf b\|_2\le2\sqrt d\}}
}{
p_d^{\mathrm{tr}}
}
\,\mathrm d\mathbf b.
\label{eq:truncated-gaussian-density}
\end{equation} 
In conditional-law notation, $\gamma_d^{\mathrm{tr}}
=
\Law\left(
\rvec{g}
\;\middle|\;
\|\rvec{g}\|_2\le2\sqrt d
\right)$. 
Here the vertical bar denotes conditioned on the event
\(\{\|\rvec{g}\|_2\le2\sqrt d\}\); explicitly,
\eqref{eq:truncated-gaussian-law} is the definition of this conditional
law. 
On a probability space $(
\Omega_{\mathrm{fun}},
\mathcal H_{\mathrm{fun}},
\mathbb P_{\mathrm{fun}}
)$,  
let $\rvec{b}_1,\ldots,\rvec{b}_k
\overset{\mathrm{i.i.d.}}{\sim}
\gamma_d^{\mathrm{tr}}$.  
This function side randomness is assumed to be independent of any
algorithmic seed.  We use  $\Xi
:=
\bigl(
\rvec{b}_1,\ldots,\rvec{b}_k
\bigr)$ 
as the joint function seed, and write $\mathbb P_\Xi
=
\left(
\gamma_d^{\mathrm{tr}}
\right)^{\otimes k}$ 
for its law.

Define the scaled random vectors $\rvec{a}_i
:=
\frac{\rvec{b}_i}{\sqrt d}$ for  $i=1,\ldots,k$. 
Then $\|\rvec{a}_i\|_2
\le
2$, $\mathbb P_\Xi$-almost surely. 
Let $\mathsf S_{d,k}
:=
\left(
2\sqrt d\,B_2^d
\right)^k
\subseteq
(\mathbb R^d)^k$. 
By construction, $\mathbb P_\Xi(
\mathsf S_{d,k})
=
1$.  
For a realization $\xi
=
\bigl(
\mathbf b_1,\ldots,\mathbf b_k
\bigr)
\in
\mathsf S_{d,k}$ 
of the seed \(\Xi\), set $\mathbf a_i(\xi)
:=
\frac{\mathbf b_i}{\sqrt d}$, for $i=1,\ldots,k$,  
and define the corresponding \textit{deterministic} objective by
\begin{equation}
f_\xi(\mathbf x)
:=
\max_{1\le i\le k}
\left\langle
\mathbf a_i(\xi),
\mathbf x
\right\rangle,
\qquad
\mathbf x\in B_2^d.
\label{eq:hard-function}
\end{equation}
The random hard objective is denoted by \(f_\Xi\). 
For every realization \(\xi\), the function \(f_\xi\) is convex as the
maximum of finitely many linear functions.  Moreover, for
\(\mathbf x,\mathbf y\in B_2^d\),
\[
\begin{aligned}
|f_\xi(\mathbf x)-f_\xi(\mathbf y)|
\le
\max_{1\le i\le k}
\left|
\left\langle
\mathbf a_i(\xi),
\mathbf x-\mathbf y
\right\rangle
\right|
\le
2\|\mathbf x-\mathbf y\|_2.
\end{aligned}
\]
For every
\(\xi\in\mathsf S_{d,k}\),
the preceding argument gives $f_\xi\in\mathcal F_2$. 
Define $\Phi_{d,k}:
(\mathbb R^d)^k
\to
\mathcal C(B_2^d)$, where  $\Phi_{d,k}(\xi)
:=
f_\xi$.  
For $\xi
=
(\mathbf b_1,\ldots,\mathbf b_k)$ and  $\xi'
=
(\mathbf b_1',\ldots,\mathbf b_k')$, 
the elementary inequality $\left|
\max_i u_i-\max_i v_i
\right|
\le
\max_i|u_i-v_i|$ 
gives
\[
\begin{aligned}
\|f_\xi-f_{\xi'}\|_\infty
&=
\sup_{\mathbf x\in B_2^d}
\left|
\max_{1\le i\le k}
\frac{
\langle\mathbf b_i,\mathbf x\rangle
}{
\sqrt d
}
-
\max_{1\le i\le k}
\frac{
\langle\mathbf b_i',\mathbf x\rangle
}{
\sqrt d
}
\right|
\\
&\le
\frac1{\sqrt d}
\max_{1\le i\le k}
\sup_{\mathbf x\in B_2^d}
\left|
\left\langle
\mathbf b_i-\mathbf b_i',
\mathbf x
\right\rangle
\right|\le
\frac1{\sqrt d}
\max_{1\le i\le k}
\|\mathbf b_i-\mathbf b_i'\|_2.
\end{aligned}
\]
Hence
\(\Phi_{d,k}\)
is continuous, and in particular Borel measurable. 
Therefore $f_\Xi
=
\Phi_{d,k}(\Xi)$ 
is a
\(\mathcal C(B_2^d)\)-valued Borel random element.  Define $\mathfrak D_{d,k}
:=
(\Phi_{d,k})_\sharp
\mathbb P_\Xi
=
\Law(f_\Xi).$ 
Thus
\(\mathfrak D_{d,k}\)
is a well defined Borel probability measure on
\(\mathcal C(B_2^d)\).
Since $\Phi_{d,k}(
\mathsf S_{d,k})
\subseteq
\mathcal F_2$ 
and
\(\mathbb P_\Xi(\mathsf S_{d,k})=1\),
we have $\mathfrak D_{d,k}(\mathcal F_2)
=
1$.  

Define the random matrix $\rmat{A}
:=
\begin{pmatrix}
\rvec{a}_1,
\ldots,
\rvec{a}_k
\end{pmatrix}^\top
\in
\mathbb R^{k\times d}$.  
For a \textit{deterministic} realization $\mathbf A
=
\begin{pmatrix}
\mathbf a_1,
\ldots,
\mathbf a_k
\end{pmatrix}^\top$, 
we also write $f_{\mathbf A}(\mathbf x)
:=
\max_{1\le i\le k}
\langle
\mathbf a_i,
\mathbf x
\rangle$.  
Finally, define the random aggregate directions $\rvec{s}
:=
\sum_{i=1}^k
\rvec{b}_i$ and  $\rvec{s}^{\mathrm{sc}}
:=
\sum_{i=1}^k
\rvec{a}_i
=
\frac{\rvec{s}}{\sqrt d}$. 
For every \(\mathbf x\in B_2^d\), almost surely,
\begin{equation}
\begin{aligned}
f_\Xi(\mathbf x)
=
\max_{1\le i\le k}
\langle
\rvec{a}_i,
\mathbf x
\rangle
\ge
\frac1k
\sum_{i=1}^k
\langle
\rvec{a}_i,
\mathbf x
\rangle
=
\frac1k
\left\langle
\rvec{s}^{\mathrm{sc}},
\mathbf x
\right\rangle.
\end{aligned}
\label{eq:average-lower-bound}
\end{equation}
This elementary inequality is the geometric link between optimization
error and posterior information about the hidden aggregate direction
\(\rvec{s}^{\mathrm{sc}}\).

\subsection{Yao's Reduction}
\label{subsec:yao}

We first prove a distributional lower bound for \textit{deterministic} algorithms
against the random hard distribution
\(\mathfrak D_{d,k}\).  We then invoke the standard lower bound direction
of Yao's minimax principle~\citep{yao1977probabilistic} to obtain a
worst case lower bound for \textit{randomized} algorithms.  We state the resulting
fixed budget implication in the notation of this paper.  By
Lemma~\ref{lem:algorithm-error-measurability}
and the construction in
Subsection~\ref{sec:hard-family},
\(\mathfrak D_{d,k}\)
is a Borel probability measure on
\(\mathcal C(B_2^d)\), and all failure events used below are Borel
measurable.  The corresponding joint measurability for \textit{randomized}
algorithms also permits the applications of Tonelli's theorem in Yao's
reduction.

\begin{proposition}[Yao's lower-bound principle, fixed-budget form]
\label{prop:yao-constant-success}
Let
\(\mathcal F\subseteq\mathcal C(B_2^d)\)
be Borel, and let
\(\mathfrak D\)
be a Borel probability measure on
\(\mathcal C(B_2^d)\)
such that $\mathfrak D(\mathcal F)=1$. Fix $T\in\mathbb N$,  $\epsilon>0$ and $p>\frac12$.  
Suppose that every \textit{deterministic} \(T\)-query algorithm $\mathsf d
\in
\mathcal A_{\mathrm{det}}^{(T)}$
satisfies
\begin{equation}
\mathbb P_{f\sim\mathfrak D}
\left(
\Err_T(\mathsf d,f)>\epsilon
\right)
\ge
p.
\label{eq:yao-distributional-assumption}
\end{equation}
Then $T_\epsilon
\left(
\mathcal A_{\mathrm{rand}},
\mathcal F
\right)
>
T$. 
\end{proposition}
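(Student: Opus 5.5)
We argue by contradiction.  Suppose \(T_\epsilon(\mathcal A_{\mathrm{rand}},\mathcal F)\le T\).  By the definition \eqref{eq:oracle-complexity-definition}, there is some \(T'\le T\) and some \(\mathsf r\in\mathcal A_{\mathrm{rand}}^{(T')}\) with \(\sup_{f\in\mathcal F}\mathbb P_{\mathsf r}(\Err_{T'}(\mathsf r,f)>\epsilon)\le\frac12\).  We first pad \(\mathsf r\) to a \(T\)-query algorithm \(\mathsf r'\).  It issues \(T-T'\) additional dummy queries (say at \(\mathbf 0_d\)) after the first \(T'\) queries.  Its output map ignores the extra values and applies \(\Pi_{\mathrm{out}}\) to the first \(T'\) observations.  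Then \(\Err_T(\mathsf r',f;\omega)=\Err_{T'}(\mathsf r,f;\omega)\) for every \(f\) and \(\omega\), so \(\mathsf r'\in\mathcal A_{\mathrm{rand}}^{(T)}\) has the same worst-case failure probability.  The new maps are still jointly measurable, since they are compositions with coordinate projections.  From here on we assume \(\mathsf r\in\mathcal A_{\mathrm{rand}}^{(T)}\).

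Next we average over \(\mathfrak D\).  Let \(E:=\{(\omega,f):\Err_T(\mathsf r,f;\omega)>\epsilon\}\).  By Lemma~\ref{lem:algorithm-error-measurability}, \(E\) is \(\mathcal H_{\mathsf r}\otimes\mathcal B(\mathcal C(B_2^d))\)-measurable, so Tonelli's theorem applies to \(\mathbf 1_E\) under \(\mathbb P_{\mathsf r}\otimes\mathfrak D\).  Integrating first over \(\omega\) and using \(\mathfrak D(\mathcal F)=1\) together with the uniform bound on \(\mathcal F\) gives
\[
(\mathbb P_{\mathsf r}\otimes\mathfrak D)(E)=\int_{\mathcal F}\mathbb P_{\mathsf r}\bigl(\Err_T(\mathsf r,f)>\epsilon\bigr)\,\mathfrak D(\mathrm df)\le\tfrac12 .
\]
The \(\mathfrak D\)-null complement of \(\mathcal F\) contributes nothing, and the measurability of \(\mathcal F\) is assumed.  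Integrating instead first over \(f\), and using \(\Err_T(\mathsf r,f;\omega)=\Err_T(\mathsf d_\omega,f)\) with \(\mathsf d_\omega\in\mathcal A_{\mathrm{det}}^{(T)}\), hypothesis \eqref{eq:yao-distributional-assumption} gives
\[
(\mathbb P_{\mathsf r}\otimes\mathfrak D)(E)=\int_{\Omega_{\mathsf r}}\mathbb P_{f\sim\mathfrak D}\bigl(\Err_T(\mathsf d_\omega,f)>\epsilon\bigr)\,\mathbb P_{\mathsf r}(\mathrm d\omega)\ge p>\tfrac12 .
\]
These two bounds contradict each other, so \(T_\epsilon(\mathcal A_{\mathrm{rand}},\mathcal F)>T\).

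The only delicate point is the padding step and its measurability.  The padding is needed because the infimum in \eqref{eq:oracle-complexity-definition} may be attained at a budget smaller than \(T\), while the hypothesis is stated only for exactly \(T\) queries.  The padding must preserve joint measurability and the equality of errors.  Once that is in place, the Tonelli exchange is routine, given the joint measurability already supplied by Lemma~\ref{lem:algorithm-error-measurability}.
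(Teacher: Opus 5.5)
Your proof is correct and follows the same route as the paper: the paper invokes the standard direction of Yao's principle with exactly this averaging over \(\mathbb P_{\mathsf r}\otimes\mathfrak D\). Lemma~\ref{lem:algorithm-error-measurability} justifies the Tonelli exchange, and comparing the two iterated integrals forces \(p\le\frac12\). Your padding step lifts a budget \(T'<T\) to exactly \(T\) queries without changing the error; the paper leaves this implicit, but it is genuinely needed for the statement as written, and you handle it correctly.
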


\subsection{Known Upper Bounds for Comparison}
\label{subsec:upper-bounds}

We record two known upper bounds after translating them into the exact
scalar evaluation model and the constant success criterion used in this
paper.  Neither result is used in the lower bound proof.

The first bound follows from the Euclidean two point regret estimate of
\citet[Corollary~2]{JMLR:v18:16-632}.  Setting
\(f_t\equiv f\) in every round and returning the average of the iterates
converts the regret bound into an expected optimization error bound by
convexity.  Each round uses two exact scalar evaluations.  Running the
method on a slight contraction of \(B_2^d\) keeps both perturbed query
points in \(B_2^d\), while Lipschitz continuity controls the error caused
by the contraction.  Finally, running the expected error guarantee at
accuracy \(\epsilon/2\) and applying Markov's inequality yields success
probability at least \(1/2\).

\begin{proposition}[Two-point exact value upper bound]
\label{prop:upper-two-point}
There exists a universal constant \(C>0\) such that, for every
\(d\ge1\), \(L>0\), and \(0<\epsilon\le L\), $T_\epsilon
\left(
\mathcal A_{\mathrm{rand}},
\mathcal F_L
\right)
\le
C
\frac{dL^2}{\epsilon^2}$. 
\end{proposition}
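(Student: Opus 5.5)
We reduce the statement to the two-point regret guarantee of \citet[Corollary~2]{JMLR:v18:16-632}, run on the fixed objective $f_t\equiv f$, and then convert expected error into constant-success probability.

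\emph{Step 1: shrink the domain.} Fix $f\in\mathcal F_L$ and $0<\epsilon\le L$. Set $\delta:=\epsilon/(4L)\le 1/4$ and define the contracted ball $K:=(1-\delta)B_2^d$. The two-point method queries $f(\mathbf w\pm\delta\mathbf u)$ with $\mathbf w\in K$ and $\mathbf u\in\mathbb S^{d-1}$. Every such query lies in $B_2^d$, so it is a legal oracle call in our model.

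\emph{Step 2: apply the regret bound.} For a horizon of $R$ rounds, the Euclidean two-point bound with smoothing radius $\delta$ and step size tuned for radius at most $1$ and Lipschitz constant $L$ gives expected regret against any comparator $\mathbf w^\star\in K$ of order
\[
C_0\bigl(\sqrt d\,L\sqrt R+L\delta R\bigr).
\]
The second term is the smoothing bias, which also covers the gap between the smoothed and true values at the iterates. Let $\bar{\mathbf w}$ be the average of the iterates. Since the objective is fixed, convexity gives
\[
\mathbb E f(\bar{\mathbf w})-\min_K f\le C_0\bigl(\sqrt d\,L/\sqrt R+L\delta\bigr).
\]

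\emph{Step 3: compare the minima over $K$ and $B_2^d$.} If $\mathbf x^\star$ minimizes $f$ over $B_2^d$, then $(1-\delta)\mathbf x^\star\in K$ and $\|(1-\delta)\mathbf x^\star-\mathbf x^\star\|\le\delta$. By the Lipschitz property, $\min_K f\le f^\star+L\delta$.

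\emph{Step 4: choose the parameters.} The total error is at most $C_0\sqrt d\,L/\sqrt R+(C_0+1)L\delta$. If the constant in $\delta=c\epsilon/L$ is taken small enough, the $\delta$ terms contribute at most $\epsilon/4$. Choosing $R=\lceil 16C_0^2dL^2/\epsilon^2\rceil$ makes the first term at most $\epsilon/4$. Hence $\mathbb E\,\Err\le\epsilon/2$.

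\emph{Step 5: count queries and finish.} The query count is $T=2R\le CdL^2/\epsilon^2$. This uses $\epsilon\le L$ and $d\ge1$, so the ceiling costs at most a constant factor. The algorithm is randomized only through the directions $\mathbf u$ and uses measurable maps, so it belongs to $\mathcal A^{(T)}_{\mathrm{rand}}$. By Markov's inequality,
\[
\mathbb P\bigl(\Err_T>\epsilon\bigr)\le \frac{\mathbb E\,\Err_T}{\epsilon}\le \frac{\epsilon/2}{\epsilon}=\frac12,
\]
uniformly over $\mathcal F_L$. This gives the claimed bound on $T_\epsilon(\mathcal A_{\mathrm{rand}},\mathcal F_L)$.

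The main obstacle is Step 2: checking the exact form of Corollary~2. Specifically, one must confirm that its smoothing-bias term scales like $L\delta$ per round, that it applies to a nonsmooth Lipschitz $f$ on the shrunken set, and that the step size can be chosen independently of $f$. If the cited bound is only stated for $\delta$ tied to $R$, I would instead set $\delta\asymp\sqrt{d/R}$ and check that this still gives $\delta\lesssim\epsilon/L$.
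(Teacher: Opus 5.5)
Your proposal is correct and follows essentially the same route as the paper. You specialize the two-point bound of \citet[Corollary~2]{JMLR:v18:16-632} to $f_t\equiv f$, run it on a contraction of $B_2^d$ so that both perturbed queries stay feasible, pass from regret to the error of the averaged iterate by convexity, pay $L\delta$ for the contraction by Lipschitz continuity, and finish with Markov's inequality at expected accuracy $\epsilon/2$. The only bookkeeping point is that $\delta$ should be $\epsilon/(4(C_0+1)L)$ rather than the $\epsilon/(4L)$ fixed in Step~1, which your Step~4 already allows for.
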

The second bound is the \textit{deterministic} value only method of
\citet{protasov1996algorithms}.  Their accuracy parameter is relative to the
objective range.  We translate it into the absolute error convention
used here.
\begin{proposition}[\textit{deterministic} exact-value upper bound]
\label{prop:upper-protasov}
There exists a universal constant \(C>0\) such that, for every
\(d\ge1\), \(L>0\), and \(0<\epsilon\le L\), 
\begin{equation}
   T_\epsilon
\left(
\mathcal A_{\mathrm{rand}},
\mathcal F_L
\right)
\le
C d^2
\log(d+1)
\log\left(
\frac{4L}{\epsilon}
\right).  
\end{equation}
\end{proposition}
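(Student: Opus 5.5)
The statement to prove is Proposition~\ref{prop:upper-protasov}. The plan is a direct reduction to the guarantee of \citet{protasov1996algorithms}, translated from its relative-error convention into the absolute-error, constant-success convention of \eqref{eq:oracle-complexity-definition}. The argument has three steps.

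\textbf{Step 1: import Protasov's guarantee.} Protasov's deterministic value-only method works for a convex function on a ball. After $N(d,\delta)\le C_0 d^2\log(d+1)\log(1/\delta)$ exact evaluations it outputs a point $\widehat{\mathbf x}$ satisfying
\[
f(\widehat{\mathbf x})-f^\star\le\delta\Bigl(\max_{B_2^d}f-f^\star\Bigr).
\]
Here $C_0$ is a universal constant and $\delta\in(0,1/2]$ is the relative accuracy.

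\textbf{Step 2: bound the range and choose $\delta$.} For $f\in\mathcal F_L$, the Lipschitz property on a set of diameter $2$ gives
\[
\max_{B_2^d}f-f^\star\le 2L.
\]
So the output error is at most $2L\delta$. Set $\delta:=\epsilon/(4L)$. Since $\epsilon\le L$, this gives $\delta\le1/4$, so $\delta$ lies in the admissible range. Then $\log(1/\delta)=\log(4L/\epsilon)\ge\log 4>0$, which keeps the bound nondegenerate. The resulting error is at most $\epsilon/2<\epsilon$.

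\textbf{Step 3: package it as an algorithm in the class.} The method is deterministic, so it is an element of $\mathcal A_{\mathrm{det}}^{(T)}\subseteq\mathcal A_{\mathrm{rand}}^{(T)}$ with $T=N(d,\delta)$. Its queries depend measurably on the observed values; if needed, the rule can be replaced by a Borel version. Its failure probability is $0\le 1/2$ uniformly over $\mathcal F_L$. This yields
\[
T_\epsilon\bigl(\mathcal A_{\mathrm{rand}},\mathcal F_L\bigr)\le C_0d^2\log(d+1)\log\frac{4L}{\epsilon}.
\]

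\textbf{Main obstacle.} The only nonroutine point is normalization. One must confirm that Protasov's stated complexity, including its dependence on the domain radius and its relative-accuracy parameter, matches the form used in Step 1 with a universal constant. Any rescaling of the domain to the unit ball, or of the function to unit range, is absorbed by the choice of $\delta$ above. The remaining constant adjustments, such as $\log(1/\delta)$ versus $\log(4L/\epsilon)$, are absorbed into $C$.
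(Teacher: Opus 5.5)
Your proposal is correct and follows essentially the same route as the paper: invoke Protasov's relative-accuracy guarantee, convert it to absolute error by bounding the range of $f$ on $B_2^d$ by $2L$ and taking $\delta\asymp\epsilon/L$ (which produces the $\log(4L/\epsilon)$ factor), and note that a deterministic method with zero failure probability lies in $\mathcal A_{\mathrm{det}}^{(T)}\subseteq\mathcal A_{\mathrm{rand}}^{(T)}$. The paper gives only a one-sentence version of this translation, so your write-up, including the measurability remark, is more detailed than the original.
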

Combining
Propositions~\ref{prop:upper-two-point}
and~\ref{prop:upper-protasov}, up to polylogarithmic
factors, the upper bound scale is $d
\min\left\{
d,
\left(\frac{L}{\epsilon}\right)^2
\right\}$.  
For \(L=\Theta(1)\), this becomes $d\min\{d,\epsilon^{-2}\}$. 

\section{Main Results}
\label{sec:main-results}

We now state the main lower bound results. 
The first result is a fixed budget
distributional lower bound for \textit{deterministic} algorithms.  It is
parameterized by the number \(k\) of linear pieces in the random support
function.  With constant probability, the hard objective has a negative
optimum on the scale \(k^{-1/2}\), while fewer than order $\frac{dk}{\log(ek)}$ 
exact scalar evaluations do not reveal enough information to attain this
scale.  The \textit{randomized} worst case lower bound follows by taking $k
\asymp
\min\{d,\epsilon^{-2}\}$ 
and applying
Proposition~\ref{prop:yao-constant-success}.

\begin{theorem}[Distributional lower bound for \textit{deterministic} algorithms]
\label{thm:deterministic-hard-lb}
There exist universal positive constants $c_{\mathrm{dim}}$, $c_{\mathrm{prob}}$, and $c_{\mathrm{err}}$  
and an integer
\(d_0\in\mathbb N\)
such that the following holds.  For every
\(d\ge d_0\), every integer \(k\) satisfying $1
\le
k
\le
c_{\mathrm{dim}}d$,  
every integer \(T\) satisfying $T
\le
c_{\mathrm{prob}}
\frac{dk}{\log(ek)}$,  
and every \textit{deterministic} algorithm $\mathsf d
\in
\mathcal A_{\mathrm{det}}^{(T)}$,  
we have
\begin{equation}
\mathbb P_\Xi
\left(
\Err_T(
\mathsf d,
f_\Xi)
>
\frac{c_{\mathrm{err}}}{\sqrt k}
\right)
\ge
\frac34.
\label{eq:distributional-prob}
\end{equation}
\end{theorem}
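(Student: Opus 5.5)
The plan follows the outline sketched in the introduction and splits the failure event into a geometric part and an informational part. \emph{Geometric part.} I would show that there is a universal \(c_1>0\) such that \(\mathbb P_\Xi(f_\Xi^\star\le -c_1k^{-1/2})\ge 0.95\) whenever \(k\le c_{\mathrm{dim}}d\). To do this, test the point \(\mathbf x=-\rvec s^{\mathrm{sc}}/\|\rvec s^{\mathrm{sc}}\|\). Then \(\langle\rvec a_i,\mathbf x\rangle=-\bigl(\|\rvec a_i\|^2+\langle\rvec a_i,\sum_{j\ne i}\rvec a_j\rangle\bigr)/\|\rvec s^{\mathrm{sc}}\|\). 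Concentration of truncated Gaussians gives \(\|\rvec a_i\|^2\approx1\) and cross terms of size \(\sqrt{k/d}\), uniformly in \(i\le k\) up to \(\sqrt{\log k/d}\) factors. We also have \(\|\rvec s^{\mathrm{sc}}\|\approx\sqrt k\). So every piece is at most \(-(1-O(\sqrt{k\log k/d}))/\sqrt k\), and this is \(\le -\tfrac12 k^{-1/2}\) once \(c_{\mathrm{dim}}\) is small. The case \(k\) near \(d\) needs the constant \(c_{\mathrm{dim}}\) to absorb the \(\log k\). If the logarithm is troublesome, I would replace the uniform bound by a bound for all but few \(i\) and handle the exceptions separately.

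\emph{Reduction to alignment.} By \eqref{eq:average-lower-bound}, the event \(\Err_T\le c_{\mathrm{err}}k^{-1/2}\) together with \(f_\Xi^\star\le -c_1k^{-1/2}\) forces \(\langle\rvec s^{\mathrm{sc}},\widehat{\mathbf x}\rangle\le -(c_1-c_{\mathrm{err}})\sqrt k\), where \(\widehat{\mathbf x}=\widehat{\mathbf x}_T(\mathsf d,f_\Xi)\) with \(\|\widehat{\mathbf x}\|\le1\). It therefore suffices to show that \(\mathbb P(\langle\rvec s^{\mathrm{sc}},\widehat{\mathbf x}\rangle\le -c_2\sqrt k)\le 0.2\). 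Let \(\mathcal G_T\) be the sigma field of the augmented transcript: the values \(y_t\) together with a deterministic argmax index \(i_t\). Since \(\widehat{\mathbf x}\) is \(\mathcal G_T\)-measurable, decompose \(\langle\rvec s^{\mathrm{sc}},\widehat{\mathbf x}\rangle=\langle\E[\rvec s^{\mathrm{sc}}\mid\mathcal G_T],\widehat{\mathbf x}\rangle+\langle\rvec s^{\mathrm{sc}}-\E[\rvec s^{\mathrm{sc}}\mid\mathcal G_T],\widehat{\mathbf x}\rangle\).

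\emph{Bounding the two terms.} For the residual, I would establish the posterior structure. Conditional on \(\mathcal G_T\), the blocks \(\rvec b_i\) stay independent. Each block has density proportional to the Gaussian restricted to an affine subspace, intersected with convex halfspace constraints and the ball. The law is therefore \(1\)-strongly log-concave on an affine support, so any fixed direction of \(\sum_i\rvec b_i\) is conditionally \(O(\sqrt k)\)-subgaussian. After scaling, the residual in the \(\mathcal G_T\)-measurable direction \(\widehat{\mathbf x}\) is \(O(\sqrt{k/d})\), which is far below \(\sqrt k\) with high probability. For the posterior mean term, Cauchy--Schwarz and Markov reduce matters to \(\E\|\E[\rvec s^{\mathrm{sc}}\mid\mathcal G_T]\|^2\le c_2^2k/100\). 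Using martingale orthogonality,
\(\E\|\E[\rvec s^{\mathrm{sc}}\mid\mathcal G_T]\|^2=\|\E\rvec s^{\mathrm{sc}}\|^2+\sum_{t\le T}\E\|\E[\rvec s^{\mathrm{sc}}\mid\mathcal G_t]-\E[\rvec s^{\mathrm{sc}}\mid\mathcal G_{t-1}]\|^2\),
and \(\E\rvec s^{\mathrm{sc}}=0\). The needed estimate is a one-step bound: for independent strongly log-concave blocks on affine supports and a fixed query \(\mathbf x\), revealing \((\max_i\langle\mathbf b_i,\mathbf x\rangle,\text{argmax})\) moves the posterior mean of \(\sum\mathbf b_i\) by \(O(\log(ek))\) in expected squared norm. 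After dividing by \(d\), this gives a total of \(\lesssim T\log(ek)/d\le c_{\mathrm{prob}}k\), which is small enough.

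\emph{Main obstacle.} The one-step selection inequality is the hard part. My plan is to reduce it to one dimension along \(\mathbf x\): only the projections \(u_i=\langle\mathbf b_i,\mathbf x\rangle\) are observed, and the change in mean is \(\mathbf x\)-directional per block plus covariance-driven components. For the winner block, conditioning on \(u_{i^*}=y\) moves that block's mean by \(O(|y-\E u_{i^*}|)\), which is \(O(\sqrt{\log k})\) because the maximum of \(k\) subgaussians sits at that scale. This yields the \(\log k\) after squaring. For the \(k-1\) loser blocks, the conditioning is \(u_i\le y\), a lower-tail event of probability \(\ge 1-O(1/k)\) typically, and each shifts its mean by \(O(\text{density at }y)\approx O(1/k\cdot\sqrt{\log k})\). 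Summed over losers, these shifts remain \(O(\sqrt{\log k})\). The delicate issue is atypical or thin posteriors, where a block's support is a thin slice and an observation can move the mean a lot. I would first attempt an averaging argument over \(y\), the ``no spike'' bound: the expected squared movement equals an integral of barycenter displacements that Prékopa--Leindler and log-concavity of the marginals of \((u_i)\) keep bounded, independent of how thin the slice is.
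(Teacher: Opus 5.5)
Your treatment of the information side follows the paper's architecture: augmented transcript with a smallest-index active piece, product $1$-strongly log-concave posteriors on affine supports, a conditional subgaussian residual in the $\mathcal G_T$-measurable output direction, martingale orthogonality, and a one-step $O(\log(ek))$ selection bound. However, two steps do not go through as written.

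\textbf{The geometric step fails once $k\gtrsim d/\log d$.} At your test point $\mathbf x=-\rvec s^{\mathrm{sc}}/\|\rvec s^{\mathrm{sc}}\|$, the cross terms $X_i=\langle\rvec a_i,\sum_{j\ne i}\rvec a_j\rangle$ are approximately $N(0,k/d)$. They are only weakly correlated (correlation of order $1/k$), so $\max_i X_i$ really is of order $\sqrt{(k/d)\log k}$. For $k=c_{\mathrm{dim}}d$ this is of order $\sqrt{c_{\mathrm{dim}}\log k}\to\infty$. Hence for large $d$ some piece has $\langle\rvec a_i,\mathbf x\rangle>0$, and $f_\Xi(\mathbf x)>0$. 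No constant $c_{\mathrm{dim}}$ can absorb $\log k$. Your argument therefore covers only $k\lesssim d/\log d$, which loses a logarithm in the $d^2$ regime. Treating ``a few exceptional $i$ separately'' does not help, because $f_\Xi$ is a maximum and a single bad piece suffices. The missing idea is to equalize all pieces. Take $\mathbf x^\star=-\gamma k^{-1/2}\mathbf A^\top(\mathbf A\mathbf A^\top)^{-1}\mathbf 1_k$, so that $\mathbf A\mathbf x^\star=-\gamma k^{-1/2}\mathbf 1_k$ and $\|\mathbf x^\star\|^2\le\gamma^2/\lambda_{\min}(\mathbf A\mathbf A^\top)$. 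Equivalently, use $f^\star=-\min_{\mathbf p\in\Delta_k}\|\mathbf A^\top\mathbf p\|\le-\sqrt{\lambda_{\min}(\mathbf A\mathbf A^\top)/k}$. Then show $\lambda_{\min}(\mathbf A\mathbf A^\top)\ge 1/4$ with probability at least $7/8$ for $k\le d/16$. This follows from the Gaussian smallest-singular-value bound, transferred to the truncated design by conditioning on the row-truncation event.

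\textbf{Your heuristics for the one-step bound do not reach it.} The claims ``the winner moves by $O(|y-\E u_{i^*}|)$'' and ``a loser moves by $O(\text{density at }y)$'' are not scale invariant. They break down on exactly the posteriors an adaptive algorithm creates. Suppose an earlier query $\mathbf z_s=\mathbf x+\delta\mathbf h$ was won by block $j$. Then $\langle\mathbf b_j,\mathbf x\rangle=y_s-\delta\langle\mathbf b_j,\mathbf h\rangle$ has standard deviation of order $\delta$ and determines $\langle\mathbf b_j,\mathbf h\rangle$. A winner slice at $y$ then moves the block by order $|y-\E u_j|/\delta$, so your winner bound is false. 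The density at $y$ is of order $1/\delta$, so your loser bound is not uniform.

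The paper measures everything in Gaussian-quantile units $z_i(y)=\Phi^{-1}(F_i(y))$. The winner-slice displacement is at most $C(1+|z_i(y)|)$, and the loser-tail displacement is at most $C\phi(z_\ell(y))/\Phi(z_\ell(y))$. The winner bound must hold pointwise in $y$ and simultaneously for all unit $\mathbf h\in\mathbf v^\perp$. Averaging over $y$ only gives subgaussianity of $\langle\boldsymbol\beta(\rsc R)-\overline{\mathbf g},\mathbf h\rangle$ for each fixed $\mathbf h$, not of the norm without a dimension loss. The paper obtains the pointwise bound from a two-dimensional reduction: concavity of the log-partition function $\Psi(s,y)$ yields $m'(y)^2\le U''(y)$, and the no-spike lemma converts this into a quantile bound. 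The loser bound comes from Talagrand's $T_2$ inequality, applied to the tail event and to its complement.

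Finally, the posterior blocks are heterogeneous, so the i.i.d. picture ``each loser has $F_\ell(y)\ge 1-O(1/k)$'' is unavailable. The paper aggregates over losers using $\sum_\ell-\log F_\ell(\rsc Y)=-\log H(\rsc Y)\sim\Exp(1)$, together with an entropy bound on the quantiles near one.
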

The probability in
\eqref{eq:distributional-prob}
is taken over the function seed \(\Xi\), or equivalently over the random
hard objective $f_\Xi
\sim
\mathfrak D_{d,k}$. 
Since
\(\mathfrak D_{d,k}\)
is supported on
\(\mathcal F_2\),
Theorem~\ref{thm:deterministic-hard-lb}
is precisely a distributional lower bound of the form required by
Proposition~\ref{prop:yao-constant-success}.
\begin{theorem}[Exact scalar-value oracle lower bound]
\label{thm:main-exact-zo-lb} Set \(L_0:=2\). 
There exist universal constants
\(c>0\),
\(0<\epsilon_0\le L_0\),
and \(d_0\in\mathbb N\)
such that, for every \(d\ge d_0\) and every
\(0<\epsilon\le\epsilon_0\),
\begin{equation}
T_\epsilon
\left(
\mathcal A_{\mathrm{rand}},
\mathcal F_{L_0}
\right)
\ge
c
\frac{
d\min\{d,\epsilon^{-2}\}
}{
\log\left(
\min\{d,\epsilon^{-2}\}
\right)
}.
\label{eq:main-full-scale}
\end{equation}
\end{theorem}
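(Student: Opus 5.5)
We obtain Theorem~\ref{thm:main-exact-zo-lb} from Theorem~\ref{thm:deterministic-hard-lb} and Proposition~\ref{prop:yao-constant-success}. The only work is choosing \(k\) as a function of \(\epsilon\) and \(d\), and checking that the resulting parameters are admissible.

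\textbf{Choice of \(k\).} Let \(c_{\mathrm{dim}},c_{\mathrm{prob}},c_{\mathrm{err}},d_0\) be the constants of Theorem~\ref{thm:deterministic-hard-lb}. We want the hard accuracy \(c_{\mathrm{err}}/\sqrt k\) to be at least \(\epsilon\). So we set
\[
k:=\min\Bigl\{\lfloor c_{\mathrm{dim}} d\rfloor,\ \lfloor c_{\mathrm{err}}^2\epsilon^{-2}\rfloor\Bigr\}.
\]
Enlarging \(d_0\) gives \(c_{\mathrm{dim}}d\ge1\), and choosing \(\epsilon_0\le\min\{c_{\mathrm{err}},L_0\}\) gives \(c_{\mathrm{err}}^2\epsilon^{-2}\ge1\). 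Hence \(1\le k\le c_{\mathrm{dim}}d\). Since \(k\le c_{\mathrm{err}}^2\epsilon^{-2}\), we have \(c_{\mathrm{err}}/\sqrt k\ge\epsilon\). Because \(\lfloor y\rfloor\ge y/2\) for \(y\ge1\), we also get \(k\ge c_1\min\{d,\epsilon^{-2}\}\) with \(c_1:=\tfrac12\min\{c_{\mathrm{dim}},c_{\mathrm{err}}^2\}\), and clearly \(k\le C_1\min\{d,\epsilon^{-2}\}\).

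\textbf{Applying Yao.} Set \(T:=\lfloor c_{\mathrm{prob}}\,dk/\log(ek)\rfloor\). If \(T=0\), use the trivial bound \(T_\epsilon\ge1\), which dominates the right-hand side once \(c\) is small; this case is discussed below. Otherwise, for every \(\mathsf d\in\mathcal A^{(T)}_{\mathrm{det}}\), Theorem~\ref{thm:deterministic-hard-lb} gives
\[
\mathbb P\bigl(\Err_T(\mathsf d,f_\Xi)>\epsilon\bigr)\ge\mathbb P\bigl(\Err_T(\mathsf d,f_\Xi)>c_{\mathrm{err}}/\sqrt k\bigr)\ge\tfrac34,
\]
because the event only grows as the threshold decreases. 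The distribution \(\mathfrak D_{d,k}\) is supported on \(\mathcal F_2=\mathcal F_{L_0}\), which is Borel. Proposition~\ref{prop:yao-constant-success} with \(p=3/4\) therefore yields \(T_\epsilon(\mathcal A_{\mathrm{rand}},\mathcal F_{L_0})>T\).

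\textbf{Bookkeeping.} Write \(m:=\min\{d,\epsilon^{-2}\}\). Then
\[
T+1\ge c_{\mathrm{prob}}\,\frac{d\,c_1 m}{\log(eC_1m)}.
\]
For \(m\ge2\) we have \(\log(eC_1 m)\le C_2\log m\), which gives the bound \(c\,dm/\log m\). Since \(d\ge d_0\) and \(k\ge1\), we have \(dk/\log(ek)\ge d\). After enlarging \(d_0\), this makes \(T\ge1\) and lets the \(+1\) be absorbed. The only delicate point is \(m\) near \(1\), where \(\log m\) is \(0\) or tiny. Taking \(\epsilon_0\le1/\sqrt2\) ensures \(\epsilon^{-2}\ge2\), and \(d\ge d_0\ge2\); hence \(m\ge2\) and \(\log m\ge\log2\). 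This avoids the degenerate denominator, so the proof is routine constant-chasing. No step is a genuine obstacle: all the difficulty lies in Theorem~\ref{thm:deterministic-hard-lb}.
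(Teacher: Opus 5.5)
Your proposal is correct and follows the paper's own proof. Both choose \(k\asymp\min\{d,\epsilon^{-2}\}\) so that \(k\le c_{\mathrm{dim}}d\) and \(c_{\mathrm{err}}/\sqrt k\ge\epsilon\) (the paper takes \(k=\lfloor a_0\min\{d,\epsilon^{-2}\}\rfloor\)), apply Theorem~\ref{thm:deterministic-hard-lb} at a budget \(T\asymp dk/\log(ek)\), invoke Proposition~\ref{prop:yao-constant-success} with \(p=3/4\), and then chase constants. Your explicit requirement \(\epsilon_0\le1/\sqrt2\), \(d_0\ge2\), which guarantees \(\min\{d,\epsilon^{-2}\}\ge2\), is slightly more careful than the paper: its final display carries \(\log(e\min\{d,\epsilon^{-2}\})\) and needs exactly this observation to match the stated \(\log(\min\{d,\epsilon^{-2}\})\).
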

To our knowledge,
Theorem~\ref{thm:main-exact-zo-lb}
is the first full scale near-optimal lower bound for arbitrary adaptive
\textit{randomized} algorithms in the exact noiseless scalar value model.
It captures the dimension penalty specific to scalar feedback and
determines the joint polynomial dependence on dimension and accuracy on
both sides of the transition
\(\epsilon=d^{-1/2}\), up to logarithmic factors. 
Independent concurrent work by
\citet{kerger2026closing}
obtains the same high-accuracy lower bound scale for \textit{deterministic}
algorithms at a universal constant multiple of
\(d^{-1/2}\).
Our theorem additionally applies to \textit{randomized} algorithms and covers the
entire low-accuracy regime through the tunable parameter
\(k\asymp\epsilon^{-2}\).
The two accuracy regimes follow directly from
\eqref{eq:main-full-scale}.
\begin{corollary}[Low-accuracy regime]
\label{cor:low-accuracy}
Under the assumptions of
Theorem~\ref{thm:main-exact-zo-lb}, if $\epsilon
\ge
d^{-1/2}$, 
then
\begin{equation}
T_\epsilon
\left(
\mathcal A_{\mathrm{rand}},
\mathcal F_{L_0}
\right)
\ge
c
\frac{
d
}{
\epsilon^2
\log(\epsilon^{-2})
}.
\label{eq:low-accuracy-lower-bound}
\end{equation}
\end{corollary}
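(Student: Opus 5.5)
The corollary follows from Theorem~\ref{thm:main-exact-zo-lb} by simplifying the minimum, with no new probabilistic input. Assume \(d\ge d_0\), \(0<\epsilon\le\epsilon_0\), and \(\epsilon\ge d^{-1/2}\). Squaring and inverting the last inequality gives \(\epsilon^{-2}\le d\), so
\[
\min\{d,\epsilon^{-2}\}=\epsilon^{-2}.
\]
Substituting this into both the numerator and the logarithm in \eqref{eq:main-full-scale} gives
\[
T_\epsilon\left(\mathcal A_{\mathrm{rand}},\mathcal F_{L_0}\right)\ge c\,\frac{d\,\epsilon^{-2}}{\log(\epsilon^{-2})},
\]
which is exactly \eqref{eq:low-accuracy-lower-bound}, with the same constant \(c\).

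The only point to check is that the right-hand side is meaningful, that is, \(\log(\epsilon^{-2})>0\). This holds because \(\epsilon\le\epsilon_0\le L_0\), and Theorem~\ref{thm:main-exact-zo-lb} implicitly requires \(\min\{d,\epsilon^{-2}\}>1\) for its own bound to make sense. If necessary, we shrink \(\epsilon_0\) below \(1\), and then \(\epsilon^{-2}>1\).

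This step has no real obstacle. All the substance lies in Theorem~\ref{thm:main-exact-zo-lb}, which in turn comes from Theorem~\ref{thm:deterministic-hard-lb} with \(k\asymp\min\{d,\epsilon^{-2}\}\) and Proposition~\ref{prop:yao-constant-success}.
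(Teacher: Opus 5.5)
Your proposal is correct and takes the same approach as the paper, which states that the corollary follows directly from \eqref{eq:main-full-scale} by substituting $\min\{d,\epsilon^{-2}\}=\epsilon^{-2}$ when $\epsilon\ge d^{-1/2}$. One small point: $\epsilon\le\epsilon_0\le L_0=2$ does not by itself give $\epsilon^{-2}>1$, but you do not need to shrink $\epsilon_0$, because the theorem's proof already chooses $\epsilon_0$ so that $a_0 m_\epsilon\ge 2$ with $a_0\le 1/4$, which gives $\epsilon^{-2}=m_\epsilon\ge 8$ in this regime.
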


\begin{corollary}[High-accuracy regime]
\label{cor:high-accuracy}
Under the assumptions of
Theorem~\ref{thm:main-exact-zo-lb}, if $\epsilon
\le
d^{-1/2}$,  
then
\begin{equation}
T_\epsilon
\left(
\mathcal A_{\mathrm{rand}},
\mathcal F_{L_0}
\right)
\ge
c
\frac{
d^2
}{
\log(d)
}.
\label{eq:high-accuracy-lower-bound}
\end{equation}
In particular, the lower bound in this regime has no remaining dependence
on \(\epsilon\).
\end{corollary}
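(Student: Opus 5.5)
The statement immediately above is Corollary~\ref{cor:high-accuracy}. It follows from Theorem~\ref{thm:main-exact-zo-lb} by rewriting the minimum in \eqref{eq:main-full-scale}.

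Assume $\epsilon\le d^{-1/2}$. Then $\epsilon^{-2}\ge d$, so $\min\{d,\epsilon^{-2}\}=d$. Substituting this into \eqref{eq:main-full-scale} gives
\[
T_\epsilon\left(\mathcal A_{\mathrm{rand}},\mathcal F_{L_0}\right)\ge c\,\frac{d\cdot d}{\log d}=c\,\frac{d^2}{\log d}.
\]
This is exactly \eqref{eq:high-accuracy-lower-bound}, with the same constant $c$. Both hypotheses of the theorem, $d\ge d_0$ and $0<\epsilon\le\epsilon_0$, are part of "under the assumptions of Theorem~\ref{thm:main-exact-zo-lb}," so the theorem applies without change.

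Two points need only a brief check.

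\begin{enumerate}
\item The logarithm in the denominator is well defined and positive. We may assume $d_0\ge 3$ (enlarging $d_0$ does not affect the theorem), so $\log d>0$.
\item The regime is nonempty only when $d^{-1/2}\le\epsilon_0$. If it is empty the statement is vacuous. Otherwise it is covered by the argument above.
\end{enumerate}

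The right-hand side $c\,d^2/\log d$ does not involve $\epsilon$. This gives the final remark of the corollary: the lower bound saturates at $\epsilon\asymp d^{-1/2}$.

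All the difficulty lies in Theorem~\ref{thm:main-exact-zo-lb}, which is assumed here. Its proof would take $k\asymp\min\{d,\epsilon^{-2}\}$ in Theorem~\ref{thm:deterministic-hard-lb} and apply Proposition~\ref{prop:yao-constant-success}.
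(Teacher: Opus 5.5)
Your argument is correct and matches the paper, which says only that the two regimes follow directly from \eqref{eq:main-full-scale}: for $\epsilon\le d^{-1/2}$ one has $\min\{d,\epsilon^{-2}\}=d$, so the theorem gives $T_\epsilon\ge c\,d^2/\log d$ with the same constant. One small slip that does not affect the proof: your second check is mistaken, since the admissible range $(0,\min\{\epsilon_0,d^{-1/2}\}]$ is never empty, so there is no vacuous case to set aside.
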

Ignoring logarithmic factors, Corollaries~\ref{cor:low-accuracy}
and~\ref{cor:high-accuracy} yield the lower-bound scales $\widetilde\Omega(d\epsilon^{-2})$  and  $\widetilde\Omega(d^2)$ 
in the low- and high-accuracy regimes, respectively.  Both are
near-optimal: the former matches the \(O(d\epsilon^{-2})\) two-point upper
bound up to a logarithmic factor, and the latter matches the
\(\widetilde O(d^2)\) evaluation-oracle upper bound up to polylogarithmic
factors.  The high-accuracy lower bound is independent of
\(\epsilon\).

\begin{table}[t]
\centering
\small
\caption{
Comparison of exact scalar value oracle bounds for convex
\(L_0\)-Lipschitz functions over \(B_2^d\), with \(L_0=2\).
All bounds count individual exact scalar evaluations.
}
\label{tab:full-scale-comparison}

\begin{tabular}{
@{}
>{\raggedright\arraybackslash}p{0.20\textwidth}
>{\centering\arraybackslash}p{0.32\textwidth}
>{\centering\arraybackslash}p{0.42\textwidth}
@{}
}
\toprule

Regime
&
Low accuracy~
\(\epsilon\gtrsim d^{-1/2}\)
&
High accuracy~
\(\epsilon\lesssim d^{-1/2}\)
\\

\midrule

Known upper bounds
&

\(\displaystyle O(d\epsilon^{-2})\)
\par\smallskip
\citep{JMLR:v18:16-632}
&
\(\displaystyle
O\!\left(
d^2\log(d+1)
\log\frac{1}{\epsilon}
\right)
\)
\par\smallskip
\citep{protasov1996algorithms}
\\[4mm]

Lower bounds
&
\begin{minipage}[t]{\linewidth}
\vspace{0pt}
\centering

\strut\textbf{This paper, \textit{randomized}:}
\par\smallskip

\(\displaystyle
\Omega\!\left(
\frac{
d
}{
\epsilon^2
\log(\epsilon^{-2})
}
\right)
\)

\end{minipage}
&
\begin{minipage}[t]{\linewidth}
\vspace{0pt}

{
\setlength{\tabcolsep}{3pt}
\setlength{\dashlinedash}{2.5pt}
\setlength{\dashlinegap}{2pt}
\setlength{\arrayrulewidth}{0.4pt}

\begin{tabular}[t]{
@{}
>{\centering\arraybackslash}p{0.46\linewidth}
:
>{\centering\arraybackslash}p{0.46\linewidth}
@{}
}

\strut
\textbf{\citet{kerger2026closing}, \textit{deterministic}:}
\par\smallskip

\(\displaystyle
\Omega\!\left(
\frac{d^2}{\log(d+1)}
\right)
\)

&

\strut
\textbf{This paper, \textit{randomized}:}
\par\smallskip

\(\displaystyle
\Omega\!\left(
\frac{d^2}{\log(d)}
\right)
\)

\end{tabular}
}

\end{minipage}
\\[2mm]

\bottomrule
\end{tabular}
\end{table}

\section{Proofs of the Main Lower Bounds}
\label{sec:proof-main}

This section proves
Theorems~\ref{thm:deterministic-hard-lb}
and~\ref{thm:main-exact-zo-lb}.  The distributional lower bound combines
three ingredients.

First, Proposition~\ref{prop:augmented-energy} controls the posterior mean
energy of the hidden aggregate direction under the augmented information
generated by exact maximum values and their active indices:
\[
\mathbb E_\Xi
\left\|
\mathbb E_\Xi
\left[
\rvec{s}^{\mathrm{sc}}
\mid
\mathcal G_T
\right]
\right\|_2^2
\le
C_{\mathrm{en}}
\frac{T\log(ek)}{d}.
\]
Here $\rvec{s}^{\mathrm{sc}}
=
\sum_{i=1}^k
\rvec{a}_i$ 
is the scaled random aggregate direction,
\(\mathsf T_T\) is the augmented transcript, and $\mathcal G_T
=
\sigma(\mathsf T_T)$ 
is the sigma-field generated by that transcript. 
Second, Lemma~\ref{lem:conditional-residual-tail} shows that, conditioned
on \(\mathcal G_T\), the posterior residual of
\(\rvec{s}^{\mathrm{sc}}\) is subgaussian in every
\(\mathcal G_T\)-measurable direction in \(B_2^d\). 
Third, Lemma~\ref{lem:negative-optimum-event} below shows that, with
constant probability, the random support function has minimum value on
the scale $-k^{-1/2}$.  
Together, these estimates imply that a deterministic algorithm using $T
\lesssim
\frac{dk}{\log(ek)}$  
exact scalar evaluations cannot return a point whose value is within
order \(k^{-1/2}\) of the optimum with constant probability. 
Finally, choosing $k
\asymp
\min\{d,\epsilon^{-2}\}$ 
and applying the fixed-budget form of Yao's lower-bound principle in
Proposition~\ref{prop:yao-constant-success}
gives the \textit{randomized} worst-case lower bound.

The proof below uses the augmented posterior energy estimate from
Proposition~\ref{prop:augmented-energy}, because the conditional residual
analysis also requires the product posterior structure available under
\(\mathcal G_T\). Although the same energy bound also holds after conditioning down to the
actual value filtration, the proof below works with
\(\mathcal G_T\), since the residual bound relies on the augmented product
posterior.

The following lemma supplies the geometric component of the proof.
\begin{lemma}[Negative optimum event]
\label{lem:negative-optimum-event}
There exist universal constants
\(c_{\mathrm{dim}},\gamma_{\mathrm{opt}}>0\)
and an integer
\(d_{\mathrm{opt}}\in\mathbb N\)
such that, for every
\(d\ge d_{\mathrm{opt}}\)
and every integer $1
\le
k
\le
c_{\mathrm{dim}}d$,  
we have
\[
\mathbb P_\Xi
\left(
f_\Xi^\star
\le
-\frac{\gamma_{\mathrm{opt}}}{\sqrt k}
\right)
\ge
\frac78,
\]
where $f_\Xi^\star
:=
\min_{\mathbf x\in B_2^d}
f_\Xi(\mathbf x)$.  
\end{lemma}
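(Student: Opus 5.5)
We plan to exhibit an explicit test point. Take the normalized negative aggregate direction $\mathbf x_0:=-\rvec s/\|\rvec s\|_2\in B_2^d$; by \eqref{eq:average-lower-bound} this is the natural candidate, and it suffices to show that with probability at least $7/8$ every piece satisfies $\langle\rvec a_i,\mathbf x_0\rangle\le-\gamma_{\mathrm{opt}}k^{-1/2}$. Write
\[
\langle \rvec b_i,\rvec s\rangle=\|\rvec b_i\|_2^2+\Bigl\langle \rvec b_i,\sum_{j\ne i}\rvec b_j\Bigr\rangle ,
\]
so that
\[
\langle\rvec a_i,\mathbf x_0\rangle=-\frac{\|\rvec b_i\|^2+\langle\rvec b_i,\rvec s_{-i}\rangle}{\sqrt d\,\|\rvec s\|}.
\]
We will show three things with high probability. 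First, $\|\rvec b_i\|^2\ge d/2$ for all $i$. Second, $|\langle\rvec b_i,\rvec s_{-i}\rangle|\le d/4$ for all $i$. Third, $\|\rvec s\|^2\le 2kd$. On the intersection of these events,
\[
\langle\rvec a_i,\mathbf x_0\rangle\le-\frac{d/4}{\sqrt d\sqrt{2kd}}=-\frac{1}{4\sqrt2\,\sqrt k}
\quad\text{for every } i,
\]
so $f_\Xi^\star\le f_\Xi(\mathbf x_0)\le-\gamma_{\mathrm{opt}}/\sqrt k$ with $\gamma_{\mathrm{opt}}=1/(4\sqrt2)$.

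\textbf{Concentration under truncation.} Each bad event is controlled using the fact that the truncated law $\gamma_d^{\mathrm{tr}}$ is the Gaussian conditioned on an event of probability $p_d^{\mathrm{tr}}\ge1/2$, which holds for $d\ge1$ by Chebyshev since $\E\|\rvec g\|^2=d$. Hence any event's probability under $(\gamma_d^{\mathrm{tr}})^{\otimes k}$ is at most $2^k$ times its Gaussian probability. That factor is too large, so instead we use directly that $\gamma_d^{\mathrm{tr}}$ is the Gaussian restricted to a centered ball: it is symmetric, isotropic up to a factor in $[1-e^{-cd},1]$, and its coordinates/linear forms are subgaussian with a constant proxy. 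This holds because the truncated density is the Gaussian times a log-concave indicator, so it is $1$-strongly log-concave, giving Lipschitz concentration with constant $1$ (Bakry--\'Emery).

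The three estimates then go as follows. For the norm, $\|\rvec b_i\|$ is $1$-Lipschitz with median $\approx\sqrt d$, so $\Pp(\|\rvec b_i\|^2<d/2)\le e^{-cd}$; a union bound over $k\le c_{\mathrm{dim}}d$ blocks gives $ke^{-cd}\le1/48$ for $d\ge d_{\mathrm{opt}}$. For the cross terms, conditionally on $\rvec s_{-i}$, the form $\langle\rvec b_i,\rvec s_{-i}\rangle$ is subgaussian with proxy $\|\rvec s_{-i}\|^2$. By the same strong log-concavity of the product, $\|\rvec s_{-i}\|$ (a $\sqrt{k}$-Lipschitz function of the product vector) is at most $2\sqrt{kd}$ except with probability $e^{-cd}$. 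So the $i$-th cross term exceeds $d/4$ with probability at most $2\exp(-d^2/(32\cdot4kd))+e^{-cd}=2e^{-d/(128k)}+e^{-cd}$. The sum $\rvec s$ is handled identically: $\|\rvec s\|$ is $\sqrt k$-Lipschitz with mean at most $\sqrt{kd}$, so $\|\rvec s\|^2\le2kd$ fails with probability at most $e^{-cd}$.

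\textbf{Main obstacle.} The delicate step is the union bound over the $k$ cross terms. It requires $k\cdot e^{-d/(128k)}$ to be small, which holds precisely because $k\le c_{\mathrm{dim}}d$ with $c_{\mathrm{dim}}$ small: then $e^{-d/(128k)}\le e^{-1/(128c_{\mathrm{dim}})}$, and we need $k e^{-1/(128 c_{\mathrm{dim}})}\le1/48$. This fails for $k$ comparable to $d$ unless we sharpen the bound, since the naive estimate loses the factor $k$. To fix this, we would instead demand only $|\langle\rvec b_i,\rvec s_{-i}\rangle|\le d/4$ with the tail $\exp(-d/(128k))$, and choose $c_{\mathrm{dim}}$ so that $d/(128k)\ge \log(48k)$. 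Since $d/k\ge1/c_{\mathrm{dim}}$ while $\log k\le\log d$, this is not automatic. If it fails, we fall back on a cleaner estimate, bounding $\max_i|\langle\rvec b_i,\rvec s_{-i}\rangle|$ via $\|\mathbf B\rvec s\|$-type operator-norm control: $\|\mathbf B\|_{\mathrm{op}}\lesssim\sqrt d+\sqrt k$ for the $k\times d$ matrix $\mathbf B$ with rows $\rvec b_i$, together with the symmetric decoupling of $\rvec s_{-i}$. This yields $\max_i|\cdot|\lesssim\sqrt{kd\log k}$, which is at most $d/4$ once $k\log k\le c\,d$. If needed, we adjust the scale by targeting $\langle\rvec a_i,\mathbf x_0\rangle\le -\gamma/\sqrt k$ with a constant absorbing $\log$ losses, which is the point I expect to need the most care to get a universal $c_{\mathrm{dim}}$.
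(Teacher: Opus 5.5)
Your argument is sound only for $k\le c\,d/\log d$. The obstacle you flag at the end is not a weakness of the union bound but of the test point $\mathbf x_0=-\rvec{s}/\|\rvec{s}\|_2$ itself, so none of your proposed repairs reaches the stated range $k\le c_{\mathrm{dim}}d$.

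Since $\|\rvec{s}\|_2\asymp\sqrt{kd}$, requiring $\langle\rvec{a}_i,\mathbf x_0\rangle\le-\gamma/\sqrt k$ for all $i$ amounts to $\min_i\langle\rvec{b}_i,\rvec{s}\rangle\ge c\,d$ for a constant $c>0$. Because $\|\rvec{b}_i\|_2^2\approx d$, every cross term $\langle\rvec{b}_i,\rvec{s}_{-i}\rangle$ must then exceed $-(1-c)d$ \emph{simultaneously}. Each of these $k$ cross terms has standard deviation about $\sqrt{(k-1)d}$. Their pairwise correlation is only $1/(k-1)$, because the only summand shared by $\langle\rvec{b}_i,\rvec{s}_{-i}\rangle$ and $\langle\rvec{b}_j,\rvec{s}_{-j}\rangle$ is $\langle\rvec{b}_i,\rvec{b}_j\rangle$. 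Their minimum is therefore of order $-\sqrt{2kd\log k}$.

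Fix any $c_{\mathrm{dim}}>0$ and take $k=\lfloor c_{\mathrm{dim}}d\rfloor$ with $d$ large. Then this minimum is far below $-d$, so $\min_i\langle\rvec{b}_i,\rvec{s}\rangle<0$ and $f_\Xi(\mathbf x_0)>0$ with high probability. No universal $\gamma_{\mathrm{opt}}$ can absorb this, since the sign itself is wrong. Both of your fallbacks, requiring $d/(128k)\ge\log(48k)$ or using the operator-norm bound $\sqrt{kd\log k}$, restrict you to $k\log k\lesssim d$.

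This is not cosmetic. Theorem~\ref{thm:main-exact-zo-lb} uses $k\asymp d$ in the high-accuracy regime, and with $k\lesssim d/\log d$ you would only obtain $d^2/\log^2 d$.

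The missing idea is to choose a test point that meets all $k$ constraints at once. Take the minimum-norm solution of $\mathbf A\mathbf x=-\frac{\gamma}{\sqrt k}\mathbf 1_k$, namely $\mathbf x^\star=-\frac{\gamma}{\sqrt k}\mathbf A^\top(\mathbf A\mathbf A^\top)^{-1}\mathbf 1_k$. It makes every linear piece equal $-\gamma/\sqrt k$ exactly, and $\|\mathbf x^\star\|_2^2=\frac{\gamma^2}{k}\mathbf 1_k^\top(\mathbf A\mathbf A^\top)^{-1}\mathbf 1_k\le\gamma^2/\lambda_{\min}(\mathbf A\mathbf A^\top)$.

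Your point is proportional to $-\mathbf A^\top\mathbf 1_k$. In effect it replaces $(\mathbf A\mathbf A^\top)^{-1}$ by a multiple of the identity and leaves the off-diagonal Gram row sums uncorrected. That forces an $\ell_\infty$ bound over $k$ rows, which is exactly where $\sqrt{\log k}$ enters. Inverting the Gram matrix needs only the single spectral event $\mathbf A\mathbf A^\top\succeq\frac14\mathbf I_k$.

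For $k\le d/16$ that event follows from the Gaussian smallest-singular-value bound $s_k(\rmat{G})\ge\sqrt d-\sqrt k-t$ with $t=\sqrt d/4$. You then transfer it to $(\gamma_d^{\mathrm{tr}})^{\otimes k}$ by conditioning on the row-truncation event, which has probability at least $15/16$ by norm concentration. Here the union bound over rows costs only $k e^{-cd}$, which is harmless. With $\gamma=\frac14$ this gives $\|\mathbf x^\star\|_2\le\frac12$ and hence the lemma, which is the paper's proof.
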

\begin{proof}[Proof of Lemma~\ref{lem:negative-optimum-event}]
Let $c_{\mathrm{sv}}
:=
\frac14$. 
By Theorem~\ref{thm:sv-truncation}, there exists
\(d_{\mathrm{opt}}\in\mathbb N\)
such that, for every
\(d\ge d_{\mathrm{opt}}\)
and every
\(k\le d/16\),
\[
\mathbb P_\Xi
\left(
\rmat{A}\rmat{A}^{\top}
\succeq
c_{\mathrm{sv}}\mathbf I_k
\right)
\ge
\frac78.
\]
Set $c_{\mathrm{dim}}
:=
\frac1{16}$.  

Fix a deterministic realization $\mathbf A
=
\begin{pmatrix}
\mathbf a_1^\top,
\ldots,
\mathbf a_k^\top
\end{pmatrix}$ 
satisfying $\mathbf A\mathbf A^\top
\succeq
c_{\mathrm{sv}}\mathbf I_k$.  
Define $\gamma_{\mathrm{opt}}
:=
\frac12\sqrt{c_{\mathrm{sv}}}
=
\frac14$,  
and set
\[
\mathbf x^\star(\mathbf A)
:=
-\frac{\gamma_{\mathrm{opt}}}{\sqrt k}
\mathbf A^\top
(\mathbf A\mathbf A^\top)^{-1}
\mathbf 1_k.
\]
Then $\mathbf A\mathbf x^\star(\mathbf A)
=
-\frac{\gamma_{\mathrm{opt}}}{\sqrt k}
\mathbf 1_k$.
Moreover,
\[
\begin{aligned}
\|\mathbf x^\star(\mathbf A)\|_2^2
&=
\frac{\gamma_{\mathrm{opt}}^2}{k}
\mathbf 1_k^\top
(\mathbf A\mathbf A^\top)^{-1}
\mathbf 1_k
\le
\frac{\gamma_{\mathrm{opt}}^2}{k}
\cdot
\frac1{c_{\mathrm{sv}}}
\|\mathbf 1_k\|_2^2
=
\frac{\gamma_{\mathrm{opt}}^2}{c_{\mathrm{sv}}}
=
\frac14.
\end{aligned}
\]
Hence $\mathbf x^\star(\mathbf A)
\in
B_2^d$.  
Since every coordinate of
\(\mathbf A\mathbf x^\star(\mathbf A)\)
equals
\(-\gamma_{\mathrm{opt}}/\sqrt k\),
\[
\begin{aligned}
f_{\mathbf A}^\star
:=
\min_{\mathbf x\in B_2^d}
f_{\mathbf A}(\mathbf x)
\le
f_{\mathbf A}
\bigl(
\mathbf x^\star(\mathbf A)
\bigr)
=
-\frac{\gamma_{\mathrm{opt}}}{\sqrt k}.
\end{aligned}
\]
Therefore
\[
\left\{
\rmat{A}\rmat{A}^{\top}
\succeq
c_{\mathrm{sv}}\mathbf I_k
\right\}
\subseteq
\left\{
f_\Xi^\star
\le
-\frac{\gamma_{\mathrm{opt}}}{\sqrt k}
\right\}.
\]
Taking probabilities proves the lemma.
\end{proof}

\begin{proof}[Proof of Theorem~\ref{thm:deterministic-hard-lb}]
Let $c_{\mathrm{dim}}^{\mathrm{opt}}$,  $\gamma_{\mathrm{opt}}>0$ and  $d_{\mathrm{opt}}\in\mathbb N$ 
be the universal constants supplied by
Lemma~\ref{lem:negative-optimum-event}.  Thus, whenever $d\ge d_{\mathrm{opt}}$ and  $1\le k\le c_{\mathrm{dim}}^{\mathrm{opt}}d$,  
we have
\begin{equation}
\Pp_\Xi
\left(
f_\Xi^\star
\le
-\frac{\gamma_{\mathrm{opt}}}{\sqrt k}
\right)
\ge
\frac78.
\label{eq:det-lb-opt-event}
\end{equation}

Let \(C_{\mathrm{en}}>0\) be the universal constant in
Proposition~\ref{prop:augmented-energy}.  Set
\[
\eta
:=
\frac{\gamma_{\mathrm{opt}}}{16},\qquad
c_{\mathrm{prob}}
:=
\frac{\eta^2}{16C_{\mathrm{en}}},
\qquad
c_{\mathrm{err}}
:=
\frac{7\gamma_{\mathrm{opt}}}{16},\qquad \text{and} \qquad d_{\mathrm{tail}}
:=
\left\lceil
\frac{2\log 16}{\eta^2}
\right\rceil.
\]
Finally, define $c_{\mathrm{dim}}
:=
c_{\mathrm{dim}}^{\mathrm{opt}}$ and $d_0
:=
\max\{
d_{\mathrm{opt}},
d_{\mathrm{tail}}
\}$. 
Fix $d\ge d_0$, $1\le k\le c_{\mathrm{dim}}d$, 
and an integer \(T\) satisfying $T
\le
c_{\mathrm{prob}}
\frac{dk}{\log(ek)}$. 
Let $\mathsf d
\in
\mathcal A_{\mathrm{det}}^{(T)}$ 
be arbitrary deterministic algorithm.

Since the hard objective \(f_\Xi\) is random, the output of the fixed
deterministic algorithm is a random vector.  Define
\[
\rvec{x}^{\mathrm{out}}_T
:=
\widehat{\mathbf x}_T(
\mathsf d,
f_\Xi).
\]
By the definition of a deterministic exact-value algorithm,
\(\rvec{x}^{\mathrm{out}}_T\) is measurable with respect to the
value-only sigma-field \(\mathcal F_T\).  Moreover, $\rvec{x}^{\mathrm{out}}_T
\in
B_2^d$ almost surely. 
Since $\mathcal F_T
\subseteq
\mathcal G_T$,  
the output is also
\(\mathcal G_T\)-measurable. 
Define the scaled augmented posterior mean by
\[
\rvec{m}^{\mathrm{sc}}_T
:=
\E_\Xi
\left[
\rvec{s}^{\mathrm{sc}}
\mid
\mathcal G_T
\right].
\]
By the scaled conclusion of
Proposition~\ref{prop:augmented-energy},
\begin{equation}
\E_\Xi
\left\|
\rvec{m}^{\mathrm{sc}}_T
\right\|_2^2
\le
C_{\mathrm{en}}
\frac{
T\log(ek)
}{
d
}.
\label{eq:det-lb-mean-energy}
\end{equation}
Define the posterior-mean event
\[
\mathcal E_{\mathrm{mean}}
:=
\left\{
\left\|
\rvec{m}^{\mathrm{sc}}_T
\right\|_2
\le
\eta\sqrt k
\right\}.
\]
By Markov's inequality and
\eqref{eq:det-lb-mean-energy},
\[
\begin{aligned}
\Pp_\Xi(
\mathcal E_{\mathrm{mean}}^c)
&=
\Pp_\Xi
\left(
\left\|
\rvec{m}^{\mathrm{sc}}_T
\right\|_2^2
>
\eta^2k
\right)
\\
&\le
\frac{
\E_\Xi
\left\|
\rvec{m}^{\mathrm{sc}}_T
\right\|_2^2
}{
\eta^2k
}
\\
&\le
\frac{
C_{\mathrm{en}}
T\log(ek)
}{
\eta^2dk
}.
\end{aligned}
\]
Using the assumed upper bound on \(T\), we obtain
\begin{equation}\label{eq:det-lb-mean-prob}
\begin{aligned}
\Pp_\Xi(
\mathcal E_{\mathrm{mean}}^c)
\le
\frac{
C_{\mathrm{en}}
}{
\eta^2
}
c_{\mathrm{prob}}
=
\frac1{16}.
\end{aligned}
\end{equation}
Next define the posterior residual
\[
\rvec{r}_T
:=
\rvec{s}^{\mathrm{sc}}
-
\rvec{m}^{\mathrm{sc}}_T.
\]
Because
\(\rvec{x}^{\mathrm{out}}_T\)
is \(\mathcal G_T\)-measurable and belongs to \(B_2^d\),
Lemma~\ref{lem:conditional-residual-tail} gives, for every \(u>0\),
\[
\Pp_\Xi
\left(
\left\langle
\rvec{r}_T,
\rvec{x}^{\mathrm{out}}_T
\right\rangle
\le
-u
\;\middle|\;
\mathcal G_T
\right)
\le
\exp\left(
-\frac{du^2}{2k}
\right)
\qquad
\text{almost surely}.
\]
Taking expectations and choosing $u
=
\eta\sqrt k$, 
we obtain
\[
\Pp_\Xi
\left(
\left\langle
\rvec{r}_T,
\rvec{x}^{\mathrm{out}}_T
\right\rangle
\le
-\eta\sqrt k
\right)
\le
\exp\left(
-\frac{d\eta^2}{2}
\right).
\]
Since $d
\ge
d_{\mathrm{tail}}
\ge
\frac{2\log16}{\eta^2}$,  
the right-hand side is at most \(1/16\).  Define
\[
\mathcal E_{\mathrm{res}}
:=
\left\{
\left\langle
\rvec{r}_T,
\rvec{x}^{\mathrm{out}}_T
\right\rangle
\ge
-\eta\sqrt k
\right\}.
\]
Then $\Pp_\Xi(
\mathcal E_{\mathrm{res}}^c)
\le
\frac1{16}$. 
Finally, define the negative-optimum event
\[
\mathcal E_{\mathrm{opt}}
:=
\left\{
f_\Xi^\star
\le
-\frac{\gamma_{\mathrm{opt}}}{\sqrt k}
\right\}.
\]
By
\eqref{eq:det-lb-opt-event},
\begin{equation}
\Pp_\Xi(
\mathcal E_{\mathrm{opt}}^c)
\le
\frac18.
\label{eq:det-lb-opt-prob}
\end{equation}
Consider an outcome in $\mathcal E_{\mathrm{opt}}
\cap
\mathcal E_{\mathrm{mean}}
\cap
\mathcal E_{\mathrm{res}}$. 
Using $\rvec{s}^{\mathrm{sc}}
=
\rvec{m}^{\mathrm{sc}}_T
+
\rvec{r}_T$,
we obtain
\[
\begin{aligned}
\left\langle
\rvec{s}^{\mathrm{sc}},
\rvec{x}^{\mathrm{out}}_T
\right\rangle
&=
\left\langle
\rvec{m}^{\mathrm{sc}}_T,
\rvec{x}^{\mathrm{out}}_T
\right\rangle
+
\left\langle
\rvec{r}_T,
\rvec{x}^{\mathrm{out}}_T
\right\rangle
\\
&\ge
-
\left\|
\rvec{m}^{\mathrm{sc}}_T
\right\|_2
\left\|
\rvec{x}^{\mathrm{out}}_T
\right\|_2
-
\eta\sqrt k
\\
&\ge
-2\eta\sqrt k.
\end{aligned}
\]
The last inequality uses $\left\|
\rvec{x}^{\mathrm{out}}_T
\right\|_2
\le1$  
and the definition of
\(\mathcal E_{\mathrm{mean}}\).

Applying the pointwise average lower bound
\eqref{eq:average-lower-bound} at
\(\rvec{x}^{\mathrm{out}}_T\), we get
\[
\begin{aligned}
f_\Xi
\left(
\rvec{x}^{\mathrm{out}}_T
\right)
\ge
\frac1k
\left\langle
\rvec{s}^{\mathrm{sc}},
\rvec{x}^{\mathrm{out}}_T
\right\rangle
\ge
-\frac{2\eta}{\sqrt k}
=
-\frac{\gamma_{\mathrm{opt}}}{8\sqrt k}.
\end{aligned}
\]
On the event
\(\mathcal E_{\mathrm{opt}}\), $f_\Xi^\star
\le
-\frac{\gamma_{\mathrm{opt}}}{\sqrt k}$. 
Consequently,
\[
\begin{aligned}
\Err_T(
\mathsf d,
f_\Xi)
=
f_\Xi
\left(
\rvec{x}^{\mathrm{out}}_T
\right)
-
f_\Xi^\star
\ge
-\frac{\gamma_{\mathrm{opt}}}{8\sqrt k}
+
\frac{\gamma_{\mathrm{opt}}}{\sqrt k}
=
\frac{
7\gamma_{\mathrm{opt}}
}{
8\sqrt k
}
>
\frac{
c_{\mathrm{err}}
}{
\sqrt k
}.
\end{aligned}
\]

No independence between the three events is required.  By the union bound,
\[
\begin{aligned}
&
\Pp_\Xi
\left(
\mathcal E_{\mathrm{opt}}
\cap
\mathcal E_{\mathrm{mean}}
\cap
\mathcal E_{\mathrm{res}}
\right)
\\
&\qquad\ge
1
-
\Pp_\Xi(
\mathcal E_{\mathrm{opt}}^c)
-
\Pp_\Xi(
\mathcal E_{\mathrm{mean}}^c)
-
\Pp_\Xi(
\mathcal E_{\mathrm{res}}^c)
\\
&\qquad\ge
1
-
\frac18
-
\frac1{16}
-
\frac1{16}
\\
&\qquad=
\frac34.
\end{aligned}
\]

Therefore $\Pp_\Xi
\left(
\Err_T(
\mathsf d,
f_\Xi)
>
\frac{
c_{\mathrm{err}}
}{
\sqrt k
}
\right)
\ge
\frac34$. 
Since
\(\mathsf d\in
\mathcal A_{\mathrm{det}}^{(T)}\)
was arbitrary, this proves the theorem.
\end{proof}
\begin{proof}[Proof of Theorem~\ref{thm:main-exact-zo-lb}]
Let $m_\epsilon
:=
\min\{d,\epsilon^{-2}\}$.
Choose $a_0
:=
\min\left\{
\frac{c_{\mathrm{dim}}}{2},
\frac{c_{\mathrm{err}}^2}{16},
\frac14
\right\}$, 
where
\(c_{\mathrm{dim}}\)
and
\(c_{\mathrm{err}}\)
are the constants in
Theorem~\ref{thm:deterministic-hard-lb}. 
By decreasing \(\epsilon_0\) and increasing \(d_0\), if necessary, we may
assume that $a_0m_\epsilon
\ge
2$. 
Set $k
:=
\left\lfloor
a_0m_\epsilon
\right\rfloor$.
Then
\[
\frac{a_0}{2}m_\epsilon
\le
k
\le
a_0m_\epsilon
\le
c_{\mathrm{dim}}d.
\]
Moreover,
\[
\frac{c_{\mathrm{err}}}{\sqrt k}
\ge
\frac{c_{\mathrm{err}}}{\sqrt{a_0m_\epsilon}}
\ge
4m_\epsilon^{-1/2}
\ge
4\epsilon.
\]
Also, $\log(ek)
\le
\log(em_\epsilon)$, 
and therefore
\[
\frac{dk}{\log(ek)}
\ge
\frac{a_0}{2}
\frac{
dm_\epsilon
}{
\log(em_\epsilon)
}.
\]

Define $c_0
:=
\frac{
a_0c_{\mathrm{prob}}
}{
4
}$
and $T_0
:=
\left\lfloor
c_0
\frac{
dm_\epsilon
}{
\log(em_\epsilon)
}
\right\rfloor.$ 
Then
\[
T_0
\le
c_{\mathrm{prob}}
\frac{dk}{\log(ek)}.
\]

Hence, for every $\mathsf d
\in
\mathcal A_{\mathrm{det}}^{(T_0)}$,  
Theorem~\ref{thm:deterministic-hard-lb} gives
\[
\mathbb P_\Xi
\left(
\Err_{T_0}(
\mathsf d,
f_\Xi)
\ge
\frac{c_{\mathrm{err}}}{\sqrt k}
\right)
\ge
\frac34.
\]
Since $\frac{c_{\mathrm{err}}}{\sqrt k}
>
\epsilon$, 
we obtain
\[
\mathbb P_\Xi
\left(
\Err_{T_0}(
\mathsf d,
f_\Xi)
>
\epsilon
\right)
\ge
\frac34.
\]
Equivalently, because $\mathfrak D_{d,k}
=
\Law(f_\Xi)$, 
\[
\mathbb P_{f\sim\mathfrak D_{d,k}}
\left(
\Err_{T_0}(
\mathsf d,
f)
>
\epsilon
\right)
\ge
\frac34
\]
for every
\(\mathsf d\in
\mathcal A_{\mathrm{det}}^{(T_0)}\). 
Applying
Proposition~\ref{prop:yao-constant-success}
with $\mathcal F
=
\mathcal F_2$, $\mathfrak D
=
\mathfrak D_{d,k}$, and $p
=
\frac34$, 
yields
\[
T_\epsilon
\left(
\mathcal A_{\mathrm{rand}},
\mathcal F_2
\right)
>
T_0.
\]

By increasing \(d_0\) and decreasing \(\epsilon_0\) once more, we may
ensure that $c_0
\frac{
dm_\epsilon
}{
\log(em_\epsilon)
}
\ge
2$. 
Consequently,
\[
T_0
\ge
\frac{c_0}{2}
\frac{
dm_\epsilon
}{
\log(em_\epsilon)
}.
\]
After renaming the universal constant and recalling that $L_0=2$, 
we obtain
\[
T_\epsilon
\left(
\mathcal A_{\mathrm{rand}},
\mathcal F_{L_0}
\right)
\ge
c
\frac{
d\min\{d,\epsilon^{-2}\}
}{
\log\left(
e\min\{d,\epsilon^{-2}\}
\right)
}.
\]
\end{proof}

\subsection{Adaptive Posterior Energy under Augmented Information}
\label{sec:adaptive-energy}

This subsection proves the augmented posterior mean energy bound in
Proposition~\ref{prop:augmented-energy}, which is the information estimate
used in the distributional lower bound proof.  We also record its
immediate consequence for the value history actually observed by the
algorithm.

Throughout this subsection, fix a deterministic algorithm
\[
\mathsf d
=
\bigl(
\mathbf x_1,
\pi_2,\ldots,\pi_T,
\pi_{\mathrm{out}}
\bigr)
\in
\mathcal A_{\mathrm{det}}^{(T)}.
\]
The randomness below comes only from the function seed \(\Xi\). 
When the fixed deterministic algorithm \(\mathsf d\) is run on the random
objective \(f_\Xi\), define its random query sequence recursively by $\rvec{x}_1
:=
\mathbf x_1$  
and, for \(t=2,\ldots,T\), $\rvec{x}_t
:=
\pi_t\left(
\rsc{Y}^{\mathrm{sc}}_1,
\ldots,
\rsc{Y}^{\mathrm{sc}}_{t-1}
\right)$,  
where $\rsc{Y}^{\mathrm{sc}}_s
:=
f_\Xi(\rvec{x}_s)$.  
Define its random output by
\[
\rvec{x}^{\mathrm{out}}_T
:=
\pi_{\mathrm{out}}\left(
\rsc{Y}^{\mathrm{sc}}_1,
\ldots,
\rsc{Y}^{\mathrm{sc}}_T
\right)
=
\widehat{\mathbf x}_T(
\mathsf d,
f_\Xi).
\]

We work primarily with the unscaled random blocks $\rvec{b}_1,\ldots,\rvec{b}_k$  
and their aggregate $\rvec{s}
:=
\sum_{i=1}^k
\rvec{b}_i$.  
Recall that $\rvec{s}^{\mathrm{sc}}
=
\frac{\rvec{s}}{\sqrt d}
=
\sum_{i=1}^k
\rvec{a}_i$.  

Define the unscaled random support function
\[
F_\Xi(\mathbf x)
:=
\max_{1\le i\le k}
\left\langle
\rvec{b}_i,
\mathbf x
\right\rangle,
\qquad
\mathbf x\in B_2^d.
\]
Then $F_\Xi(\mathbf x)
=
\sqrt d\,f_\Xi(\mathbf x)$.  

Define the deterministic smallest-index selector
\begin{equation}
\iota(\mathbf v)
:=
\min
\left\{
i\in[k]:
v_i
=
\max_{1\le j\le k}v_j
\right\},
\qquad
\mathbf v\in\mathbb R^k.
\label{eq:active-index-selector}
\end{equation}

For \(t=1,\ldots,T\), define the random active index and the unscaled
observed maximum by
\[
\rsc{I}_t
:=
\iota\left(
\left(
\left\langle
\rvec{b}_i,
\rvec{x}_t
\right\rangle
\right)_{i=1}^k
\right), \quad\text{and} \quad  \rsc{Y}_t
:=
F_\Xi(\rvec{x}_t)
=
\max_{1\le i\le k}
\left\langle
\rvec{b}_i,
\rvec{x}_t
\right\rangle.
\]
Thus $\rsc{Y}_t
=
\left\langle
\rvec{b}_{\rsc{I}_t},
\rvec{x}_t
\right\rangle$.  
The actual oracle returns the scaled value $\rsc{Y}^{\mathrm{sc}}_t
:=
f_\Xi(\rvec{x}_t)
=
\frac{\rsc{Y}_t}{\sqrt d}$. 
Since multiplication by \(\sqrt d\) is a deterministic Borel bijection,
the scaled and unscaled value histories generate the same sigma-field.
We therefore define
\begin{equation}
\mathcal F_t
:=
\sigma\left(
\rsc{Y}^{\mathrm{sc}}_1,
\ldots,
\rsc{Y}^{\mathrm{sc}}_t
\right)
=
\sigma\left(
\rsc{Y}_1,
\ldots,
\rsc{Y}_t
\right),
\qquad
t=1,\ldots,T,
\label{eq:value-filtration-equivalence}
\end{equation}
and let \(\mathcal F_0\) be the trivial sigma-field.

By the deterministic algorithmic recursion, $\rvec{x}_{t+1}$ 
is \(\mathcal F_t\)-measurable for \(t<T\), and 
$\rvec{x}^{\mathrm{out}}_T$ 
is \(\mathcal F_T\)-measurable. The selector \(\iota:\mathbb R^k\to[k]\) is Borel measurable, since
\[
\{\mathbf v:\iota(\mathbf v)=i\}
=
\bigcap_{j<i}\{v_j<v_i\}
\cap
\bigcap_{j>i}\{v_j\le v_i\}.
\]
Consequently, by induction, every
\(\rsc I_t,\rsc Y_t,\rvec x_t\), and \(\mathsf T_t\)
is Borel measurable.

For \(t\ge1\), define the augmented transcript space
\[
\mathsf X_t
:=
\bigl(
[k]\times\mathbb R\times B_2^d
\bigr)^t,
\qquad
\mathsf X_0
:=
\{\ast\},
\]
equipped with its product Borel sigma-field, where \(\ast\) denotes the
empty transcript.

Define the random augmented transcript by
\[
\mathsf T_0
:=
\ast,
\qquad
\mathsf T_t
:=
\bigl(
(\rsc{I}_s,\rsc{Y}_s,\rvec{x}_s)
\bigr)_{s=1}^t
\in
\mathsf X_t,
\qquad
t\ge1,
\]
and define the augmented filtration by
\[
\mathcal G_t
:=
\sigma(\mathsf T_t),
\qquad
t=0,\ldots,T.
\]

We write $\tau_t
=
\bigl(
(i_s,y_s,\mathbf z_s)
\bigr)_{s=1}^t
\in
\mathsf X_t$ 
for a deterministic transcript value.  Here
\(\mathbf z_s\) is the query-coordinate of a generic deterministic
transcript, whereas \(\rvec{x}_s\) is the random query produced when the
fixed deterministic algorithm is run against the random hard objective. 
Since the augmented transcript contains the complete value history, $\mathcal F_t
\subseteq
\mathcal G_t$.  

Define the unscaled and scaled augmented posterior means by
\[
\rvec{m}_t
:=
\mathbb E_\Xi
\left[
\rvec{s}
\mid
\mathcal G_t
\right],\quad \text{and} \quad \rvec{m}^{\mathrm{sc}}_t
:=
\mathbb E_\Xi
\left[
\rvec{s}^{\mathrm{sc}}
\mid
\mathcal G_t
\right]
=
\frac{\rvec{m}_t}{\sqrt d}.
\]
The value only posterior is difficult to analyze directly.  Conditioned
on an exact maximum value without revealing its active block produces a
mixture over possible winners and generally does not preserve a product
structure over the hidden blocks.  Revealing the active index
\(\rsc{I}_t\) converts the observation into separate constraints on the
individual blocks and restores a blockwise posterior description.

If $(\rsc{I}_t,\rsc{Y}_t)
=
(i,y)$,  
then the winner block satisfies the affine equality $\left\langle
\rvec{b}_i,
\rvec{x}_t
\right\rangle
=
y$,  
whereas each loser block satisfies a halfspace constraint.  After strict
lower index loser constraints are replaced by their closed versions on
null boundaries, each posterior block is a Gaussian potential restricted
to a closed convex subset of an affine support.  Hence it remains
\(1\)-strongly log-concave relative to that affine support. 
There is also a measure-theoretic issue: the winner equality usually
defines a probability zero event.  The posterior must therefore be
constructed through jointly measurable regular conditional kernels,
rather than through elementary conditioned on a positive probability
event. 
The preceding discussion gives the geometric form expected of the
posterior.  To make this description rigorous, we first construct
conditional kernels that are jointly measurable in the past transcript
and the newly observed value.  We then identify these kernels with affine
slice and halfspace truncation measures and use them recursively to
construct the full posterior.
\begin{lemma}[Parameterized disintegration]
\label{lem:parameterized-disintegration}
Let
\(\Theta\), \(\mathsf X\), and \(\mathsf Y\) be standard Borel spaces.  Let $K(\theta,\mathrm d x)$ 
be a probability kernel from
\(\Theta\) to \(\mathsf X\), and let $T:
\Theta\times\mathsf X
\to
\mathsf Y$ 
be Borel measurable. Define the image kernel $K_T(\theta,A)
:=
\int_{\mathsf X}
\mathbf 1_A(T(\theta, x))
K(\theta,\mathrm d x)$ for $A\in\mathcal B(\mathsf Y)$. 
Then there exists a probability kernel $Q(\theta,y,\mathrm d x)$ 
from \(\Theta\times\mathsf Y\) to \(\mathsf X\) such that, for every nonnegative $\mathcal B(\Theta)
\otimes
\mathcal B(\mathsf Y)
\otimes
\mathcal B(\mathsf X)$-measurable  
function $h:
\Theta\times\mathsf Y\times\mathsf X
\to
[0,\infty]$, we have
\[
\int_{\mathsf X}
h\bigl(\theta,T(\theta,x),x\bigr)
K(\theta,\mathrm{d} x)
=
\int_{\mathsf Y}
\int_{\mathsf X}
h(\theta,y,x)
Q(\theta,y,\mathrm{d} x)
K_T(\theta,\mathrm{d} y).
\]
In particular, for every \(\theta\),
\(Q(\theta,\cdot,\cdot)\) is a regular conditional distribution of
\(K(\theta,\cdot)\) given \(T(\theta,\cdot)\), and $Q\left(
\theta,y,
\{x:T(\theta,x)=y\}
\right)
=
1$ 
for \(K_T(\theta,\cdot)\)-almost every \(y\).
The kernel \(Q(\theta,y,\cdot)\) is unique only
\(K_T(\theta,\cdot)\)-almost everywhere in \(y\), for each fixed
\(\theta\).
\end{lemma}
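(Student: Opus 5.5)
The plan is to reduce the statement to the classical existence of regular conditional distributions on standard Borel spaces, applied once on an enlarged space so that the dependence on \(\theta\) is automatically measurable.

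First, I would turn the kernel family into a single measure. Fix an auxiliary probability measure \(\mu\) on \(\Theta\) with full support. A cleaner route is to avoid \(\mu\) altogether and use the parameterized disintegration theorem for kernels: for standard Borel \(\mathsf Y\) and \(\mathsf X\), any kernel \(\Theta\to\mathsf Y\times\mathsf X\) disintegrates as its \(\mathsf Y\)-marginal kernel composed with a kernel \(\Theta\times\mathsf Y\to\mathsf X\). This is, for example, Kallenberg's Theorem 6.4 in its kernel form (conditional distributions depending measurably on a parameter).

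Second, I would apply that theorem to the joint kernel \(J(\theta,\cdot)\) on \(\mathsf Y\times\mathsf X\), defined as the image of \(K(\theta,\cdot)\) under \(x\mapsto(T(\theta,x),x)\). Measurability of \(\theta\mapsto J(\theta,A)\) follows by a monotone class argument. For a rectangle \(A=B\times C\), the quantity \(J(\theta,B\times C)=\int \mathbf 1_B(T(\theta,x))\mathbf 1_C(x)K(\theta,\mathrm dx)\) is measurable in \(\theta\), because integrating a jointly measurable function against a kernel yields a measurable function. A \(\pi\)-\(\lambda\) argument then extends this from rectangles to all Borel sets. The \(\mathsf Y\)-marginal of \(J\) is exactly \(K_T\).

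Third, to prove the kernel disintegration itself, I would use a Borel isomorphism of \(\mathsf X\) into \([0,1]\), which reduces \(\mathsf X\) to \([0,1]\). Then define \(F(\theta,y,q)\) for rational \(q\) as a jointly measurable version of the Radon–Nikodym derivative of \(J(\theta,\cdot\times[0,q])\) with respect to \(K_T(\theta,\cdot)\). Joint measurability in \((\theta,y)\) comes from the standard countable-partition martingale construction. Take a countably generated filtration of \(\mathcal B(\mathsf Y)\), form the ratios on partition cells, and take the limsup. The ratios are measurable in \((\theta,y)\), and the martingale convergence theorem gives the density for each \(\theta\).

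Fourth, I would repair the result into distribution functions. The set of \((\theta,y)\) on which \(F\) is monotone and right-continuous over rationals with limits \(0\) and \(1\) is measurable. On its complement, I set \(Q\) to a fixed probability measure. This yields the kernel \(Q\), and the disintegration identity holds for rectangle-indicator \(h\). It then extends by monotone class and monotone convergence to all nonnegative jointly measurable \(h\), inserting \(\theta\) trivially since it is fixed inside the integrals.

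Finally, the concentration \(Q(\theta,y,\{T(\theta,\cdot)=y\})=1\) for \(K_T(\theta,\cdot)\)-almost every \(y\) follows by applying the identity with \(h=\mathbf 1\{T(\theta,x)\ne y\}\), which is jointly measurable because the diagonal of \(\mathsf Y\) is Borel. The left side is then \(0\). Uniqueness almost everywhere follows from the usual uniqueness of regular conditional distributions, using the countable generating \(\pi\)-system of \(\mathsf X\).

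The main obstacle is the joint measurability of the density in \((\theta,y)\). I would handle it by the partition-martingale limsup construction described in the third step, rather than by applying Radon–Nikodym separately for each \(\theta\), since a per-\(\theta\) choice need not be measurable in \(\theta\).
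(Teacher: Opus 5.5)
Your proposal is correct and matches the paper's proof. The paper likewise forms the joint kernel $\delta_{T(\theta,x)}(\mathrm dy)K(\theta,\mathrm dx)$, whose $\mathsf Y$-marginal is $K_T$, cites the disintegration-of-kernels theorem (Kallenberg, \emph{Random Measures}, Theorem~1.25), and gets fiber concentration by plugging the indicator of $\{y=T(\theta,x)\}$, the complement of your choice, into the identity. Your partition-martingale sketch of the kernel disintegration is a sound substitute for that citation, with one missing repair: if your Borel embedding of $\mathsf X$ into $[0,1]$ is not onto, you must also measurably reset $Q(\theta,y,\cdot)$ wherever it fails to give full mass to the image of $\mathsf X$.
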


\begin{proof}[Proof of Lemma~\ref{lem:parameterized-disintegration}]
Define a probability kernel
\(\rho\) from \(\Theta\) to
\(\mathsf Y\times\mathsf X\) by
\[
\rho(\theta,\mathrm{d} y,\mathrm{d} x)
:=
\delta_{T(\theta,x)}(\mathrm{d} y)
K(\theta,\mathrm{d} x).
\]
Its \(\mathsf Y\)-marginal is \(K_T\).
By the disintegration-of-kernels theorem
\citep[Theorem~1.25]{kallenberg2017random}, there exists a probability
kernel \(Q\) from
\(\Theta\times\mathsf Y\) to \(\mathsf X\) such that
\[
\rho(\theta,\mathrm{d} y,\mathrm{d} x)
=
K_T(\theta,\mathrm{d} y)Q(\theta,y,\mathrm{d} x).
\]
Integrating a nonnegative Borel test function gives the stated identity.

To prove the fiber-support assertion, apply the preceding identity with
\[
h(\theta,y,x)
:=
\mathbf 1_{\{y=T(\theta,x)\}}.
\]
Since $h\bigl(
\theta,T(\theta,x),x
\bigr)
=
1$,  
we obtain
\[
1
=
\int_{\mathsf Y}
Q\left(
\theta,y,
\{x:T(\theta,x)=y\}
\right)
K_T(\theta,\mathrm d y).
\]
The integrand takes values in \([0,1]\).  Hence it must equal \(1\)
for \(K_T(\theta,\cdot)\)-almost every \(y\).
\end{proof}
This lemma provides a jointly measurable abstract conditional kernel.
To identify its affine-slice geometry uniformly in the transcript
parameter, we use the parameterized affine-slice result below.

\begin{lemma}[Borel orthonormal frames]
\label{lem:borel-orthonormal-frame}
Let \(\Theta\) be a standard Borel space, and let $\mathbf P:
\Theta
\to
\mathbb R^{d\times d}$ 
be Borel measurable.  Suppose that, for every
\(\theta\in\Theta\), $\mathbf P(\theta)^\top
=
\mathbf P(\theta)$, $\mathbf P(\theta)^2
=
\mathbf P(\theta)$,  
and that $\operatorname{rank}\mathbf P(\theta)
=
r$ 
is constant on \(\Theta\).  Then there exists a Borel map $\mathbf U:
\Theta
\to
\mathbb R^{d\times r}$  
such that $\mathbf U(\theta)^\top
\mathbf U(\theta)
=
\mathbf I_r$  and $\mathbf U(\theta)
\mathbf U(\theta)^\top
=
\mathbf P(\theta)$ 
for every \(\theta\in\Theta\).
For \(r=0\), \(\mathbf U(\theta)\) is the unique
\(d\times0\) matrix.
\end{lemma}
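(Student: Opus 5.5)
The idea is to build the frame column by column with a measurable Gram--Schmidt procedure applied to the columns of \(\mathbf P(\theta)\) themselves. All choices will be made by explicit rules, so every step is a Borel function of \(\theta\). The case \(r=0\) is trivial, so assume \(r\ge1\).

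Write \(\mathbf p_1(\theta),\ldots,\mathbf p_d(\theta)\) for the columns of \(\mathbf P(\theta)\). Each is Borel in \(\theta\), and together they span \(\operatorname{range}\mathbf P(\theta)\), which has dimension \(r\). Run Gram--Schmidt over \(j=1,\ldots,d\) in a fixed order. Start with \(\mathbf Q_0=\mathbf O_d\). At step \(j\), form the residual
\[
\mathbf w_j(\theta):=\bigl(\mathbf I_d-\mathbf Q_{j-1}(\theta)\bigr)\mathbf p_j(\theta).
\]
If \(\|\mathbf w_j(\theta)\|>0\), append \(\mathbf w_j/\|\mathbf w_j\|\) as the next column and set \(\mathbf Q_j=\mathbf Q_{j-1}+\mathbf w_j\mathbf w_j^\top/\|\mathbf w_j\|^2\). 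Otherwise keep \(\mathbf Q_j=\mathbf Q_{j-1}\).

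Measurability follows by induction on \(j\). Each branch is a continuous function on its domain, and the branch condition \(\{\|\mathbf w_j\|>0\}\) is a Borel set. The number of columns accepted so far, \(N_j(\theta)\), is therefore Borel and integer-valued. The \(m\)-th frame vector is defined on the Borel set \(\{N_{j-1}=m-1,\ \|\mathbf w_j\|>0\}\) for each \(j\), and these sets partition \(\Theta\) as \(j\) varies. Hence \(\mathbf U(\theta)\), obtained by gluing over this countable Borel partition, is Borel.

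For the algebra, the accepted vectors are orthonormal by construction and lie in \(\operatorname{range}\mathbf P(\theta)\). They span the span of all the columns, which equals \(\operatorname{range}\mathbf P(\theta)\). So exactly \(r\) vectors are accepted, using the constant-rank hypothesis. This gives \(\mathbf U^\top\mathbf U=\mathbf I_r\). The matrix \(\mathbf U\mathbf U^\top\) is then the orthogonal projector onto \(\operatorname{range}\mathbf P(\theta)\), and since \(\mathbf P(\theta)\) is symmetric and idempotent it is that projector, so \(\mathbf U\mathbf U^\top=\mathbf P(\theta)\).

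The only delicate point is the discontinuity of the branching. It is harmless, because only Borel measurability is needed, not continuity. An exact zero test is a Borel condition, so the branching can be handled without any tolerance.
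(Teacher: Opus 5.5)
Your proof is correct and is essentially the paper's argument: the paper also runs Gram--Schmidt on the columns $\mathbf P(\theta)\mathbf e_j$ and patches the result over a finite Borel partition of $\Theta$. The only difference is that the paper selects the columns up front, as the lexicographically first $r$-tuple with positive Gram determinant, whereas you select them online with the exact zero-residual test; the two rules in fact pick the same columns, and your explicit verification that exactly $r$ vectors are accepted and that $\mathbf U\mathbf U^\top$ is the orthogonal projector onto $\operatorname{range}\mathbf P(\theta)$ spells out algebra the paper leaves implicit.
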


\begin{proof}[Proof of Lemma~\ref{lem:borel-orthonormal-frame}] 
For every ordered \(r\)-tuple $J=(j_1,\ldots,j_r)$ 
of distinct indices in \([d]\), define
\[
D_J(\theta)
:=
\det
\left(
\left[
\left\langle
\mathbf P(\theta)\mathbf e_{j_a},
\mathbf P(\theta)\mathbf e_{j_b}
\right\rangle
\right]_{a,b=1}^r
\right).
\]
Each \(D_J\) is Borel measurable.  Since
\(\operatorname{rank}\mathbf P(\theta)=r\), for every \(\theta\)
there is at least one \(J\) with $D_J(\theta)>0$. 

Order the finitely many tuples \(J\) lexicographically and let
\(\Theta_J\) be the Borel set on which \(J\) is the first tuple with
\(D_J(\theta)>0\).  The sets \(\Theta_J\) form a finite Borel
partition of \(\Theta\). 
On \(\Theta_J\), apply the Gram--Schmidt procedure to $\mathbf P(\theta)\mathbf e_{j_1},
\ldots,
\mathbf P(\theta)\mathbf e_{j_r}$.  
All denominators in this procedure are strictly positive on
\(\Theta_J\), so the resulting orthonormal vectors are Borel functions
of \(\theta\).  Their columns form the required matrix
\(\mathbf U(\theta)\).  Patching these definitions over the finite
Borel partition gives the result.
\end{proof}

\begin{lemma}[Jointly measurable affine-slice disintegration]
\label{lem:linear-disintegration}
Let \(\Theta\) be a standard Borel space, and let
\(m\ge0\).  Let $\mathbf M:
\Theta\to\mathbb R^{m\times d}$, $\mathbf c:
\Theta\to\mathbb R^m$, and $\mathbf x:
\Theta\to\mathbb R^d$ 
be Borel maps. 
Let $\mathcal C
\subseteq
\Theta\times\mathbb R^d$ 
be Borel, and write $C_\theta
:=
\left\{
\mathbf b\in\mathbb R^d:
(\theta,\mathbf b)\in\mathcal C
\right\}$.  
Let $V:
\Theta\times\mathbb R^d
\to
\mathbb R\cup\{+\infty\}$ 
be Borel measurable. 
For \(\theta\in\Theta\), define $H_\theta
:=
\left\{
\mathbf b\in\mathbb R^d:
\mathbf M(\theta)\mathbf b
=
\mathbf c(\theta)
\right\}$. 
Let $\mu(\theta,\mathrm d\mathbf b)$ 
be a probability kernel from
\(\Theta\) to \(\mathbb R^d\). 
Suppose that there is a Borel set $\Theta_0\subseteq\Theta$ 
such that, for every \(\theta\in\Theta_0\),

\begin{enumerate}[label=(\alph*)]

\item \(H_\theta\) is nonempty;
\item \(C_\theta\subseteq H_\theta\); 

\item
\begin{equation}
\mu(\theta,\mathrm d\mathbf b)
=
Z(\theta)^{-1}
e^{-V(\theta,\mathbf b)}
\mathbf 1_{C_\theta}(\mathbf b)
\,
\sigma_{H_\theta}(\mathrm d\mathbf b), \qquad0<Z(\theta)<\infty.
\label{eq:parameterized-affine-prior} 
\end{equation}
\end{enumerate}
Define the image kernel
\[
\lambda(\theta,A)
:=
\int_{\mathbb R^d}
\mathbf 1_A
\left(
\left\langle
\mathbf b,\mathbf x(\theta)
\right\rangle
\right)
\mu(\theta,\mathrm d\mathbf b),
\qquad
A\in\mathcal B(\mathbb R).
\]
Then there exist $\widetilde Q:
\Theta\times\mathbb R
\times
\mathcal B(\mathbb R^d)
\to
[0,1]$,  
a Borel map $J:
\Theta\times\mathbb R
\to
[0,+\infty],$
and a Borel set $\mathsf G
\subseteq
\Theta_0\times\mathbb R$
such that:
\begin{enumerate}[label=(\roman*)]

\item
\(\widetilde Q\) is a probability kernel from
\(\Theta\times\mathbb R\)
to \(\mathbb R^d\);

\item
for every \(\theta\in\Theta_0\),
\(\widetilde Q(\theta,\cdot,\cdot)\)
is a regular conditional distribution of
\(\mu(\theta,\cdot)\)
given the map $\mathbf b
\longmapsto
\left\langle
\mathbf b,\mathbf x(\theta)
\right\rangle;$

\item
for every \(\theta\in\Theta_0\), $\lambda
\left(
\theta,
\mathsf G_\theta
\right)
=
1$, where  $\mathsf G_\theta
:=
\left\{
y\in\mathbb R:
(\theta,y)\in\mathsf G
\right\};$ 

\item
for every
\((\theta,y)\in\mathsf G\),
the affine fiber $H_{\theta,y}
:=
H_\theta
\cap
\left\{
\mathbf b:
\left\langle
\mathbf b,\mathbf x(\theta)
\right\rangle
=
y
\right\}$
is nonempty, $0<J(\theta,y)<\infty$,  
and
\begin{equation}
\widetilde Q
\left(
\theta,y,\mathrm d\mathbf b
\right)
=
J(\theta,y)^{-1}
e^{-V(\theta,\mathbf b)}
\mathbf 1_{C_\theta}(\mathbf b)
\,
\sigma_{H_{\theta,y}}(\mathrm d\mathbf b).
\label{eq:joint-affine-slice-density}
\end{equation}

\end{enumerate}
\end{lemma}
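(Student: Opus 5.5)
The plan is to build the slice density by hand from a Borel orthonormal frame, using a measurable coarea (Fubini) decomposition of $\sigma_{H_\theta}$ along the level sets of $\mathbf b\mapsto\langle\mathbf b,\mathbf x(\theta)\rangle$, and then to define $\widetilde Q$ by the explicit formula \eqref{eq:joint-affine-slice-density} on a good set and by the abstract kernel of Lemma~\ref{lem:parameterized-disintegration} elsewhere.

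\textbf{Step 1: measurable parametrization of $H_\theta$.} Let $\mathbf P_0(\theta)$ be the orthogonal projector onto $\ker\mathbf M(\theta)$. It is Borel in $\theta$, for instance via the Moore--Penrose pseudoinverse, $\mathbf P_0=\mathbf I-\mathbf M^{+}\mathbf M$, which is a Borel function of $\mathbf M$. Let $\mathbf h_0(\theta):=\mathbf M(\theta)^{+}\mathbf c(\theta)$, which lies in $H_\theta$ whenever $H_\theta\neq\emptyset$. Next, let $\mathbf w(\theta):=\mathbf P_0(\theta)\mathbf x(\theta)$; the functional $\langle\cdot,\mathbf x(\theta)\rangle$ restricted to $H_\theta$ is affine with gradient $\mathbf w(\theta)$ in the tangent space. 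Partition $\Theta_0$ into finitely many Borel pieces according to $r=\operatorname{rank}\mathbf P_0(\theta)$ and whether $\mathbf w(\theta)=0$. On each piece with $\mathbf w\neq 0$, apply Lemma~\ref{lem:borel-orthonormal-frame} to the projector $\mathbf P_0-\mathbf w\mathbf w^\top/\|\mathbf w\|^2$, which has constant rank $r-1$. This yields Borel coordinates
\[
\mathbf b=\mathbf h_0(\theta)+t\,\mathbf w(\theta)/\|\mathbf w(\theta)\|+\mathbf U(\theta)\mathbf u,
\qquad (t,\mathbf u)\in\mathbb R\times\mathbb R^{r-1},
\]
in which $\sigma_{H_\theta}$ becomes Lebesgue measure $\mathrm dt\,\mathrm d\mathbf u$ and $\langle\mathbf b,\mathbf x\rangle=\langle\mathbf h_0,\mathbf x\rangle+t\|\mathbf w\|$.

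\textbf{Step 2: slice density and good set.} Define
\[
J(\theta,y):=\int_{\mathbb R^{r-1}}e^{-V}\mathbf 1_{C_\theta}\,\mathrm d\mathbf u,
\]
evaluated at $t=(y-\langle\mathbf h_0,\mathbf x\rangle)/\|\mathbf w\|$. By Tonelli for kernels, $J$ is Borel on $\Theta\times\mathbb R$ with values in $[0,\infty]$. Fubini then gives
\[
\lambda(\theta,\mathrm dy)=Z(\theta)^{-1}\|\mathbf w(\theta)\|^{-1}J(\theta,y)\,\mathrm dy ,
\]
once $\sigma_{H_{\theta,y}}$ is identified with $\mathrm d\mathbf u$ on the fiber. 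Set $\mathsf G:=\{(\theta,y)\in\Theta_0\times\mathbb R:0<J<\infty\}$. Since $\int J\,\mathrm dy=Z\|\mathbf w\|<\infty$ and $\lambda$ has density proportional to $J$, we get $\lambda(\theta,\mathsf G_\theta)=1$. Moreover $J>0$ forces the fiber to be nonempty.

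In the degenerate case $\mathbf w(\theta)=0$, the functional is constant on $H_\theta$, equal to $y_0(\theta)=\langle\mathbf h_0,\mathbf x\rangle$, so $\lambda=\delta_{y_0}$. There we take $\mathsf G_\theta=\{y_0(\theta)\}$, $J=Z$, and note that $H_{\theta,y_0}=H_\theta$. If $r=0$, then $H_\theta$ is a point and counting measure serves as $\sigma$. These cases are Borel pieces and are handled by the same formula.

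\textbf{Step 3: define $\widetilde Q$ and verify (ii).} On $\mathsf G$, let $\widetilde Q$ be given by \eqref{eq:joint-affine-slice-density}. Off $\mathsf G$, use the kernel $Q$ from Lemma~\ref{lem:parameterized-disintegration} applied to $K=\mu$ and $T(\theta,\mathbf b)=\langle\mathbf b,\mathbf x(\theta)\rangle$, or simply a fixed probability measure. Joint measurability follows from Step 1, because in the frame coordinates the formula is an integral of a jointly Borel integrand. Pasting two kernels along the Borel set $\mathsf G$ preserves the kernel property, which gives (i).

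For (ii), integrate a test function $h(y,\mathbf b)$ against $\mu(\theta,\cdot)$. In the coordinates $(t,\mathbf u)$, Fubini splits the integral into $\int\lambda(\mathrm dy)\int h\,\widetilde Q(\theta,y,\mathrm d\mathbf b)$, and the null set $\mathsf G_\theta^c$ does not contribute. This is exactly the disintegration identity, so $\widetilde Q(\theta,\cdot,\cdot)$ is a regular conditional distribution.

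\textbf{Main obstacle.} The main difficulty is the normalization convention for $\sigma_{H_{\theta,y}}$, that is, the coarea factor $\|\mathbf w\|^{-1}$. Any constant factor is absorbed into $J$ and cancels in the normalized kernel, so the real work is ensuring that every object — the frame, $\mathbf h_0$, the case split, and $J$ — is Borel jointly in $(\theta,y)$ rather than only for each fixed $\theta$. Lemma~\ref{lem:borel-orthonormal-frame} combined with the finite Borel partition is designed to handle exactly this.
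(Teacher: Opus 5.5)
Your proposal is correct and follows essentially the same route as the paper. Both use the pseudoinverse basepoint and kernel projector, a finite Borel stratification by rank and by whether $\mathbf w(\theta)$ vanishes, and Borel frames from Lemma~\ref{lem:borel-orthonormal-frame}. Both then apply the explicit change of variables giving $\lambda(\theta,\mathrm dy)=Z(\theta)^{-1}\|\mathbf w(\theta)\|^{-1}J(\theta,y)\,\mathrm dy$, take the good set $\{0<J<\infty\}$ on the nonconstant branch and $\{y_0(\theta)\}$ on the constant branch, and use an arbitrary kernel off $\mathsf G$ (the paper uses Dirac masses there).

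The one detail to make explicit is that $J=Z$ on the constant branch is Borel only because $Z(\theta)$ equals the frame-coordinate integral of $e^{-V}\mathbf 1_{C_\theta}$ over $H_\theta$ (the paper's $\widehat Z_r$). The hypotheses do not assume $Z$ is measurable.
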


\begin{proof}[Proof of Lemma~\ref{lem:linear-disintegration}]
The Moore--Penrose pseudoinverse map $\mathbf A
\longmapsto
\mathbf A^\dagger$ 
is Borel measurable on finite dimensional matrix spaces.  Indeed, it is
continuous on every fixed rank stratum, and there are only finitely many
possible ranks. 
Define
\[
\mathbf b_0(\theta)
:=
\mathbf M(\theta)^\dagger
\mathbf c(\theta)\quad \text{and} \quad \mathbf P(\theta)
:=
\mathbf I_d
-
\mathbf M(\theta)^\dagger
\mathbf M(\theta).
\]
Both maps are Borel measurable. 
For every \(\theta\),
\(\mathbf P(\theta)\)
is the orthogonal projection onto $\ker\mathbf M(\theta)$.  
Fix
\(\theta\in\Theta_0\).
Since \(H_\theta\neq\varnothing\),
we have $\mathbf c(\theta)
\in
\operatorname{range}\mathbf M(\theta)$.  
Therefore $\mathbf M(\theta)
\mathbf M(\theta)^\dagger
\mathbf c(\theta)
=
\mathbf c(\theta)$ 
and hence $\mathbf M(\theta)\mathbf b_0(\theta)
=
\mathbf c(\theta)$.  
Consequently,
\begin{equation}
H_\theta
=
\mathbf b_0(\theta)
+
\ker\mathbf M(\theta).
\label{eq:affine-support-b0-kernel}
\end{equation} 
For $r=0,\ldots,\min\{m,d\}$,  
define the good rank-\(r\) stratum
\begin{equation}
\mathsf A_r
:=
\left\{
\theta\in\Theta_0:
\operatorname{rank}\mathbf M(\theta)=r
\right\}.
\label{eq:good-rank-stratum}
\end{equation}
Each \(\mathsf A_r\) is Borel, and $\Theta_0
=
\bigsqcup_{r=0}^{\min\{m,d\}}
\mathsf A_r$.   
On \(\mathsf A_r\), $\operatorname{rank}\mathbf P(\theta)
=
d-r$.  
Applying
Lemma~\ref{lem:borel-orthonormal-frame}
on the standard Borel space
\(\mathsf A_r\),
there exists a Borel matrix $\mathbf U_r(\theta)
\in
\mathbb R^{d\times(d-r)}$ 
such that
\[
\mathbf U_r(\theta)^\top
\mathbf U_r(\theta)
=
\mathbf I_{d-r}\quad \text{and} \quad \mathbf U_r(\theta)
\mathbf U_r(\theta)^\top
=
\mathbf P(\theta).\]
Thus the columns of
\(\mathbf U_r(\theta)\)
form an orthonormal basis of
\(\ker\mathbf M(\theta)\). 
For
\(\theta\in\mathsf A_r\),
define
\begin{equation}
\begin{aligned}
\widehat Z_r(\theta)
:=
\int_{\mathbb R^{d-r}}
\exp\left(
-
V\left(
\theta,
\mathbf b_0(\theta)
+
\mathbf U_r(\theta)\mathbf z
\right)
\right)\cdot
\mathbf 1_{\mathcal C}
\left(
\theta,
\mathbf b_0(\theta)
+
\mathbf U_r(\theta)\mathbf z
\right)
\,
\mathrm d\mathbf z.
\end{aligned}
\label{eq:measurable-affine-normalizer}
\end{equation}
When \(d-r=0\), integration over
\(\mathbb R^0\)
means evaluation at its unique point. 
The integrand in
\eqref{eq:measurable-affine-normalizer}
is jointly Borel measurable in
\((\theta,\mathbf z)\).
Hence $\theta
\longmapsto
\widehat Z_r(\theta)$ 
is Borel measurable on
\(\mathsf A_r\). 
The map
\[
\mathbf z
\longmapsto
\mathbf b_0(\theta)
+
\mathbf U_r(\theta)\mathbf z
\]
is an affine isometry from
\(\mathbb R^{d-r}\)
onto \(H_\theta\).  Therefore
\[
\widehat Z_r(\theta)
=
\int_{H_\theta}
e^{-V(\theta,\mathbf b)}
\mathbf 1_{C_\theta}(\mathbf b)
\,
\sigma_{H_\theta}(\mathrm d\mathbf b).
\]
Integrating
\eqref{eq:parameterized-affine-prior}
over \(\mathbb R^d\) gives $1
=
Z(\theta)^{-1}
\widehat Z_r(\theta)$,  
and consequently
\begin{equation}
\widehat Z_r(\theta)
=
Z(\theta)
\in
(0,\infty),
\qquad
\theta\in\mathsf A_r.
\label{eq:measurable-normalizer-equals-Z}
\end{equation}
Define the Borel maps
\[
\mathbf q(\theta)
:=
\mathbf P(\theta)\mathbf x(\theta) \quad \text{and} \quad y_0(\theta)
:=
\left\langle
\mathbf b_0(\theta),
\mathbf x(\theta)
\right\rangle. 
\]
For
\(\theta\in\mathsf A_r\)
and
\(\boldsymbol\ell\in\ker\mathbf M(\theta)\),
we have
\[
\left\langle
\boldsymbol\ell,
\mathbf x(\theta)
\right\rangle
=
\left\langle
\boldsymbol\ell,
\mathbf P(\theta)\mathbf x(\theta)
\right\rangle
=
\left\langle
\boldsymbol\ell,
\mathbf q(\theta)
\right\rangle.
\]
Thus the functional $\mathbf b
\longmapsto
\left\langle
\mathbf b,\mathbf x(\theta)
\right\rangle$ 
is nonconstant on \(H_\theta\) exactly when $\mathbf q(\theta)\neq\mathbf 0_d$.  
For every \(r\), split
\(\mathsf A_r\)
into the two Borel sets
\[
\mathsf A_r^{\mathrm{var}}
:=
\left\{
\theta\in\mathsf A_r:
\|\mathbf q(\theta)\|_2>0
\right\}\quad \text{and} \quad \mathsf A_r^{\mathrm{const}}
:=
\left\{
\theta\in\mathsf A_r:
\mathbf q(\theta)=\mathbf 0_d
\right\}. 
\]
\paragraph{The nonconstant functional branch.}

Fix \(r\), and consider
\(\mathsf A_r^{\mathrm{var}}\).
For
\(\theta\in\mathsf A_r^{\mathrm{var}}\),
define
\[
\mathbf e(\theta)
:=
\frac{
\mathbf q(\theta)
}{
\|\mathbf q(\theta)\|_2
}\quad \text{and} \quad \mathbf P_0(\theta)
:=
\mathbf P(\theta)
-
\mathbf e(\theta)\mathbf e(\theta)^\top.
\]
Since $\mathbf e(\theta)
\in
\ker\mathbf M(\theta)$,  
the matrix
\(\mathbf P_0(\theta)\)
is the orthogonal projection onto
\[
L_0(\theta)
:=
\ker\mathbf M(\theta)
\cap
\mathbf q(\theta)^\perp.
\]
Moreover, $\operatorname{rank}\mathbf P_0(\theta)
=
d-r-1$.
Applying
Lemma~\ref{lem:borel-orthonormal-frame}
on
\(\mathsf A_r^{\mathrm{var}}\),
there exists a Borel matrix $\mathbf W_r(\theta)
\in
\mathbb R^{d\times(d-r-1)}$ 
whose columns form an orthonormal basis of
\(L_0(\theta)\). 
For $(\theta,y)
\in
\mathsf A_r^{\mathrm{var}}
\times
\mathbb R$,  
define
\begin{equation}
\mathbf h_r(\theta,y)
:=
\mathbf b_0(\theta)
+
\frac{
y-y_0(\theta)
}{
\|\mathbf q(\theta)\|_2^2
}
\mathbf q(\theta).
\label{eq:affine-fiber-basepoint}
\end{equation}
Since $\mathbf q(\theta)
\in
\ker\mathbf M(\theta)$,  
we have $\mathbf h_r(\theta,y)
\in
H_\theta$.  
Furthermore,
\[
\left\langle
\mathbf q(\theta),
\mathbf x(\theta)
\right\rangle
=
\left\langle
\mathbf P(\theta)\mathbf x(\theta),
\mathbf x(\theta)
\right\rangle
=
\|\mathbf q(\theta)\|_2^2.
\]
Hence $\left\langle
\mathbf h_r(\theta,y),
\mathbf x(\theta)
\right\rangle
=
y$.  
It follows that
\begin{equation}
H_{\theta,y}
=
\mathbf h_r(\theta,y)
+
L_0(\theta).
\label{eq:affine-fiber-parametrization}
\end{equation}
For a Borel set
\(E\subseteq\mathbb R^d\),
define
\begin{equation}
\begin{aligned}
N_{r,E}(\theta,y)
:=
\int_{\mathbb R^{d-r-1}}
&
\mathbf 1_E
\left(
\mathbf h_r(\theta,y)
+
\mathbf W_r(\theta)\mathbf z
\right)
\\
&\cdot
\mathbf 1_{\mathcal C}
\left(
\theta,
\mathbf h_r(\theta,y)
+
\mathbf W_r(\theta)\mathbf z
\right)
\\
&\cdot
\exp\left(
-
V\left(
\theta,
\mathbf h_r(\theta,y)
+
\mathbf W_r(\theta)\mathbf z
\right)
\right)
\,
\mathrm d\mathbf z.
\end{aligned}
\label{eq:parameterized-fiber-integral}
\end{equation}
When
\(d-r-1=0\),
the integral over
\(\mathbb R^0\)
means evaluation at its unique point.
The integrand is jointly Borel measurable in
\((\theta,y,\mathbf z)\).
Consequently, $(\theta,y)
\longmapsto
N_{r,E}(\theta,y)$ 
is Borel measurable on
\(\mathsf A_r^{\mathrm{var}}\times\mathbb R\). 
Since $\mathbf z
\longmapsto
\mathbf h_r(\theta,y)
+
\mathbf W_r(\theta)\mathbf z$ 
is an affine isometry from
\(\mathbb R^{d-r-1}\)
onto \(H_{\theta,y}\),
we also have
\begin{equation}
\begin{aligned}
N_{r,E}(\theta,y)
=
\int_{H_{\theta,y}}
&
\mathbf 1_E(\mathbf b)
e^{-V(\theta,\mathbf b)}
\mathbf 1_{C_\theta}(\mathbf b)
\,
\sigma_{H_{\theta,y}}(\mathrm d\mathbf b).
\end{aligned}
\label{eq:fiber-integral-intrinsic-form}
\end{equation}
Set $J_r^{\mathrm{var}}(\theta,y)
:=
N_{r,\mathbb R^d}(\theta,y)$. 
Define
\[
\widetilde Q_r^{\mathrm{var}}
(\theta,y,E)
:=
\begin{cases}
\dfrac{
N_{r,E}(\theta,y)
}{
J_r^{\mathrm{var}}(\theta,y)
},
&
0<
J_r^{\mathrm{var}}(\theta,y)
<
\infty,
\\[3mm]
\delta_{\mathbf h_r(\theta,y)}(E),
&
\text{otherwise}.
\end{cases}
\]
Then $\widetilde Q_r^{\mathrm{var}}$ 
is a probability kernel from
\(\mathsf A_r^{\mathrm{var}}\times\mathbb R\)
to \(\mathbb R^d\).

We next derive the disintegration identity.  The columns of $\left[
\mathbf e(\theta)
\ \ 
\mathbf W_r(\theta)
\right]$
form an orthonormal basis of
\(\ker\mathbf M(\theta)\).
Thus every
\(\mathbf b\in H_\theta\)
has a unique representation
\[
\mathbf b
=
\mathbf b_0(\theta)
+
s\mathbf e(\theta)
+
\mathbf W_r(\theta)\mathbf z.
\]
Moreover, $\left\langle
\mathbf e(\theta),
\mathbf x(\theta)
\right\rangle
=
\|\mathbf q(\theta)\|_2$.
Therefore $\left\langle
\mathbf b,\mathbf x(\theta)
\right\rangle
=
y_0(\theta)
+
s\|\mathbf q(\theta)\|_2$. 
Changing variables $y
=
y_0(\theta)
+
s\|\mathbf q(\theta)\|_2$ 
gives, for every nonnegative Borel function \(f\),
\begin{equation}
\begin{aligned}
\int_{H_\theta}
f(\mathbf b)
\,\sigma_{H_\theta}(\mathrm d\mathbf b)
=
\int_{\mathbb R}
\frac1{
\|\mathbf q(\theta)\|_2
}
\int_{H_{\theta,y}}
f(\mathbf b)
\,\sigma_{H_{\theta,y}}(\mathrm d\mathbf b)
\,
\mathrm dy.
\end{aligned}
\label{eq:affine-coarea-explicit}
\end{equation} 
Applying
\eqref{eq:affine-coarea-explicit}
to the density in
\eqref{eq:parameterized-affine-prior}
shows that, for
\(\theta\in\mathsf A_r^{\mathrm{var}}\),
\begin{equation}
\lambda(\theta,\mathrm dy)
=
\frac{
J_r^{\mathrm{var}}(\theta,y)
}{
\widehat Z_r(\theta)
\|\mathbf q(\theta)\|_2
}
\,\mathrm dy.
\label{eq:image-density-nonconstant}
\end{equation}
In particular,
\[
\int_{\mathbb R}
J_r^{\mathrm{var}}(\theta,y)
\,\mathrm dy
=
\widehat Z_r(\theta)
\|\mathbf q(\theta)\|_2
<
\infty.
\]
Hence $0<
J_r^{\mathrm{var}}(\theta,y)
<
\infty$
for
\(\lambda(\theta,\cdot)\)-almost every \(y\). 
Let
\(A\in\mathcal B(\mathbb R)\)
and
\(E\in\mathcal B(\mathbb R^d)\).
Using
\eqref{eq:image-density-nonconstant},
the definition of
\(\widetilde Q_r^{\mathrm{var}}\),
and
\eqref{eq:affine-coarea-explicit},
we obtain
\begin{equation}\label{eq:nonconstant-rcd-identity}
\begin{aligned}
\int_A
\widetilde Q_r^{\mathrm{var}}
(\theta,y,E)
\,
\lambda(\theta,\mathrm dy)
&=
\frac1{
\widehat Z_r(\theta)
\|\mathbf q(\theta)\|_2
}
\int_A
N_{r,E}(\theta,y)
\,\mathrm dy\\
&=
\int_{\mathbb R^d}
\mathbf 1_E(\mathbf b)
\mathbf 1_A
\left(
\left\langle
\mathbf b,\mathbf x(\theta)
\right\rangle
\right)
\mu(\theta,\mathrm d\mathbf b).
\end{aligned}
\end{equation}
Thus
\(\widetilde Q_r^{\mathrm{var}}\)
is a regular conditional distribution on the nonconstant branch.

\paragraph{The constant functional branch.}

Fix \(r\), and consider
\(\mathsf A_r^{\mathrm{const}}\). 
Let
\(\theta\in\mathsf A_r^{\mathrm{const}}\).
For every
\(\mathbf b\in H_\theta\),
we may write
\[
\mathbf b
=
\mathbf b_0(\theta)
+
\boldsymbol\ell,
\qquad
\boldsymbol\ell
\in
\ker\mathbf M(\theta).
\]
Since $\mathbf P(\theta)\mathbf x(\theta)
=
\mathbf q(\theta)
=
\mathbf 0_d$,  
we have $\left\langle
\boldsymbol\ell,
\mathbf x(\theta)
\right\rangle
=
\left\langle
\boldsymbol\ell,
\mathbf P(\theta)\mathbf x(\theta)
\right\rangle
=
0$. 
Consequently,
\begin{equation}
\left\langle
\mathbf b,\mathbf x(\theta)
\right\rangle
=
y_0(\theta)
\qquad
\text{for every }\mathbf b\in H_\theta.
\label{eq:constant-functional-on-H}
\end{equation}
Because $C_\theta
\subseteq
H_\theta$  
and
\(\mu(\theta,\cdot)\)
has density
\eqref{eq:parameterized-affine-prior},
we have $\mu(\theta,H_\theta)=1$.  
Combining this with
\eqref{eq:constant-functional-on-H}
gives
\begin{equation}
\lambda(\theta,\cdot)
=
\delta_{y_0(\theta)}.
\label{eq:constant-image-delta}
\end{equation}
Define
\[
\widetilde Q_r^{\mathrm{const}}
(\theta,y,E)
:=
\begin{cases}
\mu(\theta,E),
&
y=y_0(\theta),
\\[1mm]
\delta_{\mathbf b_0(\theta)}(E),
&
y\neq y_0(\theta).
\end{cases}
\]
This is a probability kernel from
\(\mathsf A_r^{\mathrm{const}}\times\mathbb R\)
to \(\mathbb R^d\).
Indeed, the graph $\left\{
(\theta,y):
y=y_0(\theta)
\right\}$ 
is Borel. 
For
\(A\in\mathcal B(\mathbb R)\)
and
\(E\in\mathcal B(\mathbb R^d)\),
equation
\eqref{eq:constant-image-delta}
gives
\[
\begin{aligned}
\int_A
\widetilde Q_r^{\mathrm{const}}
(\theta,y,E)
\,
\lambda(\theta,\mathrm dy)
&=
\mathbf 1_A
\left(
y_0(\theta)
\right)
\mu(\theta,E)
\\
&=
\int_{\mathbb R^d}
\mathbf 1_E(\mathbf b)
\mathbf 1_A
\left(
\left\langle
\mathbf b,\mathbf x(\theta)
\right\rangle
\right)
\mu(\theta,\mathrm d\mathbf b).
\end{aligned}
\]
Thus
\(\widetilde Q_r^{\mathrm{const}}\)
is a regular conditional distribution on the constant branch.

\paragraph{Definition of the global kernel and normalizer.}

Define
\[
\widetilde Q
\left(
\theta,y,E
\right)
:=
\begin{cases}
\widetilde Q_r^{\mathrm{var}}
(\theta,y,E),
&
\theta\in\mathsf A_r^{\mathrm{var}}
\text{ for some }r,
\\[1mm]
\widetilde Q_r^{\mathrm{const}}
(\theta,y,E),
&
\theta\in\mathsf A_r^{\mathrm{const}}
\text{ for some }r,
\\[1mm]
\delta_{\mathbf 0_d}(E),
&
\theta\notin\Theta_0.
\end{cases}
\label{eq:global-affine-slice-kernel}
\]
The finitely many strata in this definition are Borel and pairwise
disjoint.  Hence
\(\widetilde Q\)
is a probability kernel from
\(\Theta\times\mathbb R\)
to \(\mathbb R^d\). 
Define
\[
J(\theta,y)
:=
\begin{cases}
J_r^{\mathrm{var}}(\theta,y),
&
\theta\in\mathsf A_r^{\mathrm{var}}
\text{ for some }r,
\\[1mm]
\widehat Z_r(\theta),
&
\theta\in\mathsf A_r^{\mathrm{const}}
\text{ for some }r
\text{ and }
y=y_0(\theta),
\\[1mm]
0,
&
\text{otherwise}.
\end{cases}
\label{eq:global-affine-slice-normalizer}
\]
The map \(J\) is Borel measurable.

Equations
\eqref{eq:nonconstant-rcd-identity}
and
\eqref{eq:constant-image-delta}
show that, for every
\(\theta\in\Theta_0\),
\(\widetilde Q(\theta,\cdot,\cdot)\)
is a regular conditional distribution of
\(\mu(\theta,\cdot)\)
given
\[
\mathbf b
\longmapsto
\left\langle
\mathbf b,\mathbf x(\theta)
\right\rangle.
\]

\paragraph{Definition of the full measure geometric set.}

Define
\[
\begin{aligned}
\mathsf G
:={}&
\bigcup_{r=0}^{\min\{m,d\}}
\left\{
(\theta,y):
\theta\in\mathsf A_r^{\mathrm{var}},
\quad
0<
J_r^{\mathrm{var}}(\theta,y)
<
\infty
\right\}
\\
&\quad\cup
\bigcup_{r=0}^{\min\{m,d\}}
\left\{
(\theta,y_0(\theta)):
\theta\in\mathsf A_r^{\mathrm{const}}
\right\}.
\end{aligned}
\label{eq:affine-slice-good-set}
\]
This is a Borel subset of
\(\Theta_0\times\mathbb R\). 
If
\(\theta\in\mathsf A_r^{\mathrm{var}}\),
equation
\eqref{eq:image-density-nonconstant}
shows that
\[
\lambda
\left(
\theta,
\left\{
y:
0<
J_r^{\mathrm{var}}(\theta,y)
<
\infty
\right\}
\right)
=
1.
\]
If
\(\theta\in\mathsf A_r^{\mathrm{const}}\),
then
\[
\lambda(\theta,\cdot)
=
\delta_{y_0(\theta)}.
\]
Therefore
\[
\lambda
\left(
\theta,
\mathsf G_\theta
\right)
=
1
\qquad
\text{for every }\theta\in\Theta_0.
\]

Now fix
\((\theta,y)\in\mathsf G\).
If
\(\theta\in\mathsf A_r^{\mathrm{var}}\),
then
\eqref{eq:affine-fiber-parametrization}
shows that $H_{\theta,y}
=
\mathbf h_r(\theta,y)
+
L_0(\theta)$ 
is nonempty.  Moreover,
\eqref{eq:fiber-integral-intrinsic-form}
and the definition of
\(\widetilde Q_r^{\mathrm{var}}\)
give
\[
\widetilde Q
\left(
\theta,y,\mathrm d\mathbf b
\right)
=
J(\theta,y)^{-1}
e^{-V(\theta,\mathbf b)}
\mathbf 1_{C_\theta}(\mathbf b)
\,
\sigma_{H_{\theta,y}}(\mathrm d\mathbf b).
\]
If
\(\theta\in\mathsf A_r^{\mathrm{const}}\),
then
\(y=y_0(\theta)\),
and $H_{\theta,y_0(\theta)}
=
H_\theta.$
Furthermore, $J
\left(
\theta,y_0(\theta)
\right)
=
\widehat Z_r(\theta)
=
Z(\theta)$,  
and
\[
\begin{aligned}
\widetilde Q
\left(
\theta,y_0(\theta),
\mathrm d\mathbf b
\right)
&=
\mu(\theta,\mathrm d\mathbf b)
\\
&=
J
\left(
\theta,y_0(\theta)
\right)^{-1}
e^{-V(\theta,\mathbf b)}
\mathbf 1_{C_\theta}(\mathbf b)
\,
\sigma_{H_{\theta,y_0(\theta)}}(\mathrm d\mathbf b).
\end{aligned}
\]
Thus, for every
\((\theta,y)\in\mathsf G\),
the fiber \(H_{\theta,y}\) is nonempty, $0<J(\theta,y)<\infty$,  
and
\eqref{eq:joint-affine-slice-density}
holds.
\end{proof}

\paragraph{Query reconstruction and consistent transcript extensions.}

Define $\chi_1:
\mathsf X_0
\to
B_2^d$  
by $\chi_1(\ast)
:=
\mathbf x_1$.  
For \(t=2,\ldots,T\), define $\chi_t:
\mathsf X_{t-1}
\to
B_2^d$
by
\[
\chi_t
\left(
\bigl(
(i_s,y_s,\mathbf z_s)
\bigr)_{s=1}^{t-1}
\right)
:=
\pi_t
\left(
\frac{y_1}{\sqrt d},
\ldots,
\frac{y_{t-1}}{\sqrt d}
\right).
\]
The map \(\chi_1\) is constant and hence Borel measurable.  For
\(t\ge2\), the coordinate projection $\bigl(
(i_s,y_s,\mathbf z_s)
\bigr)_{s=1}^{t-1}
\longmapsto
(y_1,\ldots,y_{t-1})$ 
is Borel measurable, the scaling map $(y_1,\ldots,y_{t-1})
\longmapsto
\left(
\frac{y_1}{\sqrt d},
\ldots,
\frac{y_{t-1}}{\sqrt d}
\right)$ 
is continuous, and \(\pi_t\) is Borel measurable.  Therefore every
\(\chi_t\) is Borel measurable.

The map \(\chi_t\) ignores the active-index and recorded-query
coordinates because the fixed deterministic algorithm selects its next
query using only the preceding scalar oracle values.

The algorithmic recursion gives
\begin{equation}
\rvec{x}_t
=
\chi_t(\mathsf T_{t-1}),
\qquad
t=1,\ldots,T,
\quad
\mathbb P_\Xi\text{-almost surely}.
\label{eq:query-reconstruction}
\end{equation}

For $t=1,\ldots,T$,  $\tau_{t-1}\in\mathsf X_{t-1}$,  $i\in[k]$, and $y\in\mathbb R$ 
define the consistent transcript extension
\[
\tau_{t-1}\oplus_t(i,y)
:=
\bigl(
\tau_{t-1},
(i,y,\chi_t(\tau_{t-1}))
\bigr)
\in
\mathsf X_t.
\]
The map $(\tau_{t-1},i,y)
\longmapsto
\tau_{t-1}\oplus_t(i,y)$ 
is Borel measurable.  Moreover,
\begin{equation}
\mathsf T_t
=
\mathsf T_{t-1}
\oplus_t
(\rsc{I}_t,\rsc{Y}_t)
\qquad
\mathbb P_\Xi\text{-almost surely}.
\label{eq:transcript-recursion}
\end{equation} 
The preceding disintegration lemmas provide measurable conditional
kernels for affine equalities, while the maps
\(\chi_t\) and \(\oplus_t\) encode the adaptive query recursion.  We can
now construct the posterior kernels on the entire transcript space.
Their geometric identification will be asserted only
\(\mathbb P_{\mathsf T_t}\)-almost everywhere.

\begin{lemma}[Measurable product posterior]
\label{lem:regular-product-posterior}
For every $t\in\{0,1,\ldots,T\}$,  
there exist probability kernels $\mu_{j,t}:
\mathsf X_t\times\mathcal B(\mathbb R^d)
\to[0,1]$, $j\in[k]$ 
and a Borel set $\mathsf X_t^{\mathrm{post}}
\subseteq
\mathsf X_t$ 
such that $\mathbb P_{\mathsf T_t}
\left(
\mathsf X_t^{\mathrm{post}}
\right)
=1$.  
Define the product kernel
\[
K_t
\left(
\tau_t,
\mathrm d\mathbf b_1,\ldots,\mathrm d\mathbf b_k
\right)
:=
\bigotimes_{j=1}^k
\mu_{j,t}
\left(
\tau_t,
\mathrm d\mathbf b_j
\right).
\]
Then \(K_t\) is a regular conditional distribution of $(\rvec{b}_1,\ldots,\rvec{b}_k)$ 
given \(\mathsf T_t\). Equivalently, for every $E\in\mathcal B\bigl((\mathbb R^d)^k\bigr)$ and $A\in\mathcal B(\mathsf X_t)$ 
we have
\begin{equation}
\mathbb P_\Xi
\left(
(\rvec{b}_1,\ldots,\rvec{b}_k)\in E,\,
\mathsf T_t\in A
\right)
=
\int_A
K_t(\tau_t,E)
\,\mathbb P_{\mathsf T_t}(\mathrm d\tau_t).
\label{eq:posterior-kernel-disintegration}
\end{equation}
Since $\mathcal G_t
=
\sigma(\mathsf T_t)$, 
it follows that
\begin{equation}
\Law\left(
\rvec{b}_1,\ldots,\rvec{b}_k
\mid
\mathcal G_t
\right)(\omega,\cdot)
=
K_t\left(
\mathsf T_t(\omega),\cdot
\right)
\qquad
\mathbb P_\Xi\text{-almost surely}.
\label{eq:posterior-product-kernel}
\end{equation}
For every deterministic transcript value $\tau_t
=
\bigl((i_s,y_s,\mathbf z_s)\bigr)_{s=1}^t
\in
\mathsf X_t$, 
define
\[
H_{j,t}(\tau_t)
:=
\left\{
\mathbf b\in\mathbb R^d:
\langle\mathbf b,\mathbf z_s\rangle=y_s
\text{ whenever }i_s=j
\right\},
\]
and
\[
C_{j,t}(\tau_t)
:=
\left\{
\mathbf b\in H_{j,t}(\tau_t):
\|\mathbf b\|_2\le2\sqrt d,\quad
\langle\mathbf b,\mathbf z_s\rangle\le y_s
\text{ whenever }i_s\ne j
\right\}.
\]
For \(t=0\), these constraints are interpreted vacuously, so that
\[
H_{j,0}(\ast)=\mathbb R^d,
\qquad
C_{j,0}(\ast)
=
\{\mathbf b:\|\mathbf b\|_2\le2\sqrt d\}.
\] 
For every $\tau_t\in\mathsf X_t^{\mathrm{post}}$ 
and every \(j\in[k]\), the set
\(H_{j,t}(\tau_t)\) is a nonempty affine subspace,
\(C_{j,t}(\tau_t)\) is a nonempty closed convex subset of
\(H_{j,t}(\tau_t)\), and
\begin{equation}
\mu_{j,t}
\left(
\tau_t,\mathrm d\mathbf b
\right)
=
Z_{j,t}(\tau_t)^{-1}
e^{-\|\mathbf b\|_2^2/2}
\mathbf 1_{C_{j,t}(\tau_t)}(\mathbf b)
\,
\sigma_{H_{j,t}(\tau_t)}
\left(
\mathrm d\mathbf b
\right),
\label{eq:posterior-block-density}
\end{equation}
where $0<Z_{j,t}(\tau_t)<\infty$. 

Consequently, every posterior block is \(1\)-strongly log-concave
relative to its affine support and has a closed convex effective domain. 
If $\dim H_{j,t}(\tau_t)=0$,  
then the corresponding posterior block is a Dirac measure.
\end{lemma}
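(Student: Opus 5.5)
Proof by induction on \(t\), building the kernels \(\mu_{j,t}\) recursively from \(\mu_{j,t-1}\) with Lemma~\ref{lem:linear-disintegration} and the transcript recursion \eqref{eq:transcript-recursion}.

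\emph{Base case \(t=0\).} Take \(\mu_{j,0}:=\gamma_d^{\mathrm{tr}}\) and \(\mathsf X_0^{\mathrm{post}}:=\{\ast\}\). Then \eqref{eq:posterior-block-density} is just \eqref{eq:truncated-gaussian-density}, with \(Z_{j,0}=(2\pi)^{d/2}p_d^{\mathrm{tr}}\). The product structure holds because the \(\rvec b_i\) are i.i.d.

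\emph{Inductive step: what the new observation reveals.} Fix \(\tau_{t-1}\in\mathsf X_{t-1}^{\mathrm{post}}\) and set \(\mathbf z:=\chi_t(\tau_{t-1})\). Under \(K_{t-1}(\tau_{t-1},\cdot)\), the new observation \((\rsc I_t,\rsc Y_t)\) is a Borel function of \((\mathbf b_1,\dots,\mathbf b_k)\). The event \(\{\rsc I_t=i,\ \rsc Y_t\in A\}\) is the intersection of three kinds of conditions:
\begin{itemize}
\item \(\langle\mathbf b_i,\mathbf z\rangle\in A\) for the winner;
\item \(\langle\mathbf b_j,\mathbf z\rangle<\langle\mathbf b_i,\mathbf z\rangle\) for \(j<i\);
\item \(\langle\mathbf b_j,\mathbf z\rangle\le\langle\mathbf b_i,\mathbf z\rangle\) for \(j>i\).
\end{itemize}

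\emph{Disintegrating the winner block.} Apply Lemma~\ref{lem:linear-disintegration} with \(\Theta=\mathsf X_{t-1}\), \(\mathbf x(\theta)=\chi_t(\theta)\), \(V=\|\mathbf b\|^2/2\), and \((\mathbf M,\mathbf c)\) encoding the past winner equalities for block \(i\). This gives a jointly measurable slice kernel \(\widetilde Q_i(\tau_{t-1},y,\cdot)\) on \(H_{i,t-1}\cap\{\langle\mathbf b,\mathbf z\rangle=y\}\), together with the one-dimensional marginal \(\lambda_i(\tau_{t-1},\mathrm dy)\).

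\emph{Loser blocks.} For \(j\neq i\), let \(F_j(\tau_{t-1},y):=\mu_{j,t-1}(\tau_{t-1},\{\langle\mathbf b,\mathbf z\rangle\le y\})\), which is measurable in \((\tau_{t-1},y)\). Fubini then gives
\[
\Pp\bigl(\rsc I_t=i,\rsc Y_t\in A\mid \mathsf T_{t-1}=\tau_{t-1}\bigr)=\int_A \prod_{j\ne i}F_j(\tau_{t-1},y)\,\lambda_i(\tau_{t-1},\mathrm dy),
\]
up to the strict inequalities for \(j<i\). These are handled by one of two routes: on \(\lambda_i\)-a.e.\ \(y\) where \(\mu_{j,t-1}\{\langle\mathbf b,\mathbf z\rangle=y\}=0\) nothing changes, and otherwise, when \(\langle\cdot,\mathbf z\rangle\) is constant on \(H_{j,t-1}\), the tie forces the index to the lower block.

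\emph{Defining the new kernels.} Set
\begin{itemize}
\item \(\mu_{i,t}(\tau_{t-1}\oplus_t(i,y),\cdot):=\widetilde Q_i(\tau_{t-1},y,\cdot)\);
\item \(\mu_{j,t}(\cdot):=\mu_{j,t-1}(\tau_{t-1},\cdot\cap\{\langle\mathbf b,\mathbf z\rangle\le y\})/F_j\) for \(j\neq i\), defined where \(F_j>0\), with a Dirac default elsewhere.
\end{itemize}
Since the transcript is \(\tau_t=\tau_{t-1}\oplus_t(i,y)\) and the last recorded query is determined by \(\tau_{t-1}\), these formulas define Borel kernels on all of \(\mathsf X_t\). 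Off the consistent set \(\{\tau_t:\mathbf z_t=\chi_t(\tau_{t-1})\}\) we use Dirac defaults.

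\emph{Verifying \eqref{eq:posterior-kernel-disintegration}.} Chain the inductive identity for \(K_{t-1}\) with the disintegration identity of Lemma~\ref{lem:parameterized-disintegration}/\ref{lem:linear-disintegration} on rectangles \(E=\prod E_j\). The product structure of \(K_{t-1}\), conditional independence given \(\tau_{t-1}\), and the factorized event above show that the joint law of \((\mathbf b,\rsc I_t,\rsc Y_t)\) equals
\[
\lambda_i(\mathrm dy)\prod_{j\ne i}F_j(y)\cdot \widetilde Q_i(y,\cdot)\otimes\bigotimes_{j\neq i}\mu_{j,t}(\cdot).
\]
Since the marginal law of \((\rsc I_t,\rsc Y_t)\) is \(\lambda_i\prod F_j\), this is exactly the product posterior. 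A \(\pi\)-\(\lambda\) argument extends the identity from rectangles to all of \(\mathcal B((\mathbb R^d)^k)\).

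\emph{Geometric form.} Define \(\mathsf X_t^{\mathrm{post}}\) as the set of consistent extensions of \(\tau_{t-1}\in\mathsf X_{t-1}^{\mathrm{post}}\) with \((\tau_{t-1},y)\in\mathsf G\) (the good set of Lemma~\ref{lem:linear-disintegration}) and all \(F_j(\tau_{t-1},y)>0\). This set has full \(\mathbb P_{\mathsf T_t}\)-measure, because both conditions fail only on null sets of the joint law. On it, the formula \eqref{eq:joint-affine-slice-density} and the halfspace restriction yield \eqref{eq:posterior-block-density}, with the new constraint added to \(H_{i,t}\) or \(C_{j,t}\). Replacing strict by closed halfspaces changes nothing a.e., except in the degenerate constant-functional case, where \(\langle\cdot,\mathbf z\rangle\) is constant on the support and consistency forces the closed inequality to hold anyway. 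Closedness, convexity and nonemptiness of \(C_{j,t}\) follow from positivity of the normalizer.

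The main obstacle is the measure-theoretic bookkeeping in this last step. The geometric identification must hold on a single Borel full-measure set, uniformly in the transcript parameter, and the zero-probability winner slice together with the tie conventions for strict inequalities (especially in the constant-functional branch) must be handled without breaking joint measurability. I would organize this by reusing the good set \(\mathsf G\) of Lemma~\ref{lem:linear-disintegration} and intersecting with the Borel set \(\{F_j>0\}\).
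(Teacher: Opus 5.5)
Your proposal is correct and follows essentially the paper's own route: induction on $t$, the winner slice from Lemma~\ref{lem:linear-disintegration} with $\Theta_0=\mathsf X_{t-1}^{\mathrm{post}}$, halfspace truncations for the losers, a monotone-class extension from product rectangles, and a Borel full-measure good set built from the slice good set and positivity of the loser masses, with ties resolved by the nonconstant/constant-functional dichotomy.

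One point needs to be stated precisely. The weight in the observation law must use the strict masses $\mu_{j,t-1}(\tau,\{\langle\mathbf b,\mathbf z\rangle<y\})$ for $j<i$, as in the paper's $w_i$. With non-strict $F_j$, the formula $\lambda_i\prod_{j\ne i}F_j$ is wrong at an atom $y=c$ of $\lambda_i$ where some lower-index block has $\langle\cdot,\mathbf z\rangle\equiv c$ on its support. Your closed-halfspace loser kernels are still valid. On $\mathsf X_{t-1}^{\mathrm{post}}$, the strict and closed truncated masses coincide wherever the strict mass is positive. This is exactly what the paper's condition $\beta_j=0$ encodes when it replaces $Q_j^<$ by $Q_j^\le$.
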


\begin{proof}[Proof of Lemma~\ref{lem:regular-product-posterior}]
We prove simultaneously by induction on \(t\) that:

\begin{enumerate}[label=(\roman*)]

\item there exist probability kernels
\[
\mu_{j,t}:
\mathsf X_t
\times
\mathcal B(\mathbb R^d)
\to
[0,1],
\qquad
j\in[k],
\]
whose product is a regular conditional distribution of the hidden blocks
given \(\mathsf T_t\);

\item there exists a Borel set $\mathsf X_t^{\mathrm{post}}
\subseteq
\mathsf X_t$ 
of full
\(\mathbb P_{\mathsf T_t}\)-measure;

\item for every
\(\tau_t\in\mathsf X_t^{\mathrm{post}}\),
the block kernels have the geometric density
\eqref{eq:posterior-block-density}.

\end{enumerate}

\paragraph{Base case.}

Set $\mathsf X_0^{\mathrm{post}}
:=
\{\ast\}$. 
Clearly, $\mathbb P_{\mathsf T_0}
\left(
\mathsf X_0^{\mathrm{post}}
\right)
=
1$.  
Define
\[
Z_0
:=
\int_{\mathbb R^d}
e^{-\|\mathbf b\|_2^2/2}
\mathbf 1_{\{
\|\mathbf b\|_2\le2\sqrt d
\}}(\mathbf b)
\,
\mathrm d\mathbf b,
\]
and, for every \(j\in[k]\), define
\[
\mu_{j,0}
\left(
\ast,
\mathrm d\mathbf b
\right)
:=
Z_0^{-1}
e^{-\|\mathbf b\|_2^2/2}
\mathbf 1_{\{
\|\mathbf b\|_2\le2\sqrt d
\}}(\mathbf b)
\,
\mathrm d\mathbf b.
\]
By construction of the hard distribution,
\[
\Law
\left(
\rvec b_1,\ldots,\rvec b_k
\right)
=
\bigotimes_{j=1}^k
\mu_{j,0}(\ast,\cdot).
\]
Since
\(\mathsf T_0=\ast\)
is deterministic, the product kernel
\[
K_0
\left(
\ast,
\mathrm d\mathbf b_1,\ldots,
\mathrm d\mathbf b_k
\right)
:=
\bigotimes_{j=1}^k
\mu_{j,0}
\left(
\ast,
\mathrm d\mathbf b_j
\right)
\]
is a regular conditional distribution of $(\rvec b_1,\ldots,\rvec b_k)$ 
given \(\mathsf T_0\).

Moreover,
\[
H_{j,0}(\ast)
=
\mathbb R^d,
\qquad
C_{j,0}(\ast)
=
\left\{
\mathbf b:
\|\mathbf b\|_2\le2\sqrt d
\right\},
\]
so
\eqref{eq:posterior-block-density}
holds at time \(0\).

\paragraph{Induction hypothesis.}

Fix $t\in\{1,\ldots,T\}$,  
and suppose the assertion holds at time \(t-1\).  Thus there are
probability kernels $\mu_{j,t-1}$, $j\in[k]$,  
and a Borel set $\mathsf X_{t-1}^{\mathrm{post}}
\subseteq
\mathsf X_{t-1}$ 
such that $\mathbb P_{\mathsf T_{t-1}}
\left(
\mathsf X_{t-1}^{\mathrm{post}}
\right)
=
1,$
the product kernel
\[
K_{t-1}
\left(
\tau,
\mathrm d\mathbf b_1,\ldots,
\mathrm d\mathbf b_k
\right)
:=
\bigotimes_{j=1}^k
\mu_{j,t-1}
\left(
\tau,
\mathrm d\mathbf b_j
\right)
\]
is a regular conditional distribution of the hidden blocks given
\(\mathsf T_{t-1}\), and the geometric density representation holds at
every $\tau
\in
\mathsf X_{t-1}^{\mathrm{post}}$.  

For $\tau\in\mathsf X_{t-1}$,  
write
\[
\mu_j^\tau
:=
\mu_{j,t-1}(\tau,\cdot),
\qquad
\mathbf x(\tau)
:=
\chi_t(\tau).
\]
The map $\tau
\longmapsto
\mathbf x(\tau)$ 
is Borel measurable.

\paragraph{Parameterized affine supports.}

Write a deterministic past transcript as $\tau
=
\bigl(
(i_s,y_s,\mathbf z_s)
\bigr)_{s=1}^{t-1}$.  
For every block \(j\in[k]\), define $\mathbf M_{j,t-1}(\tau)
\in
\mathbb R^{(t-1)\times d}$ 
rowwise by
\[
\left[
\mathbf M_{j,t-1}(\tau)
\right]_{s,\cdot}
:=
\mathbf 1_{\{i_s=j\}}
\mathbf z_s^\top,
\qquad
s=1,\ldots,t-1,
\]
and define $\mathbf c_{j,t-1}(\tau)
\in
\mathbb R^{t-1}$ 
by $\left[
\mathbf c_{j,t-1}(\tau)
\right]_s
:=
\mathbf 1_{\{i_s=j\}}
y_s$.  
When \(t=1\), these are the unique empty matrix and empty vector.

Both maps $\tau
\longmapsto
\mathbf M_{j,t-1}(\tau)$ 
and $\tau
\longmapsto
\mathbf c_{j,t-1}(\tau)$ 
are Borel measurable, and
\[
H_{j,t-1}(\tau)
=
\left\{
\mathbf b:
\mathbf M_{j,t-1}(\tau)\mathbf b
=
\mathbf c_{j,t-1}(\tau)
\right\}.
\]

Define the graph of the current closed constraint set by
\[
\mathcal C_{j,t-1}
:=
\left\{
(\tau,\mathbf b)
\in
\mathsf X_{t-1}\times\mathbb R^d:
\mathbf b\in C_{j,t-1}(\tau)
\right\}.
\]
This is a Borel subset of
\(\mathsf X_{t-1}\times\mathbb R^d\), since it is defined by finitely
many Borel equalities and inequalities.

\paragraph{Winner-slice kernels.}

For each \(j\in[k]\), define the Borel projection map $R_j:
\mathsf X_{t-1}
\times
\mathbb R^d
\to
\mathbb R$ 
by
\[
R_j(\tau,\mathbf b)
:=
\left\langle
\mathbf b,
\mathbf x(\tau)
\right\rangle.
\]
Define its image kernel
\[
\lambda_j:
\mathsf X_{t-1}
\times
\mathcal B(\mathbb R)
\to
[0,1]
\]
by
\[
\lambda_j(\tau,A)
:=
\int_{\mathbb R^d}
\mathbf 1_A
\left(
R_j(\tau,\mathbf b)
\right)
\mu_{j,t-1}
\left(
\tau,
\mathrm d\mathbf b
\right),
\qquad
A\in\mathcal B(\mathbb R).
\]
First apply
Lemma~\ref{lem:parameterized-disintegration}.
It gives a jointly Borel probability kernel
\[
Q_{j,\mathrm{abs}}^=
\left(
\tau,y,
\mathrm d\mathbf b
\right)
\]
such that, for every fixed \(\tau\), $Q_{j,\mathrm{abs}}^=
(\tau,\cdot,\cdot)$ 
is a regular conditional distribution of
\(\mu_j^\tau\)
given $\mathbf b
\longmapsto
R_j(\tau,\mathbf b)$.   

Next apply 
Lemma~\ref{lem:linear-disintegration}, with parameter space $\Theta
=
\mathsf X_{t-1}$;  
good parameter set $\Theta_0
=
\mathsf X_{t-1}^{\mathrm{post}}$; 
affine data $\mathbf M
=
\mathbf M_{j,t-1}$, $\mathbf c
=
\mathbf c_{j,t-1}$;  
query direction $\mathbf x
=
\chi_t$; 
potential $V(\tau,\mathbf b)
=
\frac12
\|\mathbf b\|_2^2$,  
and constraint graph $\mathcal C
=
\mathcal C_{j,t-1}$.   
The lemma gives a jointly Borel probability kernel
\[
\widetilde Q_{j,t}^=
\left(
\tau,y,
\mathrm d\mathbf b
\right)
\]
and a Borel set
\[
\mathsf G_{j,t}^=
\subseteq
\mathsf X_{t-1}^{\mathrm{post}}
\times
\mathbb R
\]
such that, for every $\tau
\in
\mathsf X_{t-1}^{\mathrm{post}}$,  
\[
\lambda_j
\left(
\tau,
(\mathsf G_{j,t}^=)_\tau
\right)
=
1,
\]
where
\[
(\mathsf G_{j,t}^=)_\tau
:=
\left\{
y:
(\tau,y)
\in
\mathsf G_{j,t}^=
\right\}.
\]
Moreover, for every $(\tau,y)
\in
\mathsf G_{j,t}^=,$
the affine slice
\[
H_{j,t-1}(\tau)
\cap
\left\{
\mathbf b:
R_j(\tau,\mathbf b)=y
\right\}
\]
is nonempty and
\begin{equation}
\begin{aligned}
&
\widetilde Q_{j,t}^=
\left(
\tau,y,
\mathrm d\mathbf b
\right)
\\=&
\left[
Z_{j,t}^{=}(\tau,y)
\right]^{-1}
e^{-\|\mathbf b\|_2^2/2}
\mathbf 1_{
C_{j,t-1}(\tau)
\cap
\{
R_j(\tau,\mathbf b)=y
\}
}(\mathbf b)
\cdot
\sigma_{
H_{j,t-1}(\tau)
\cap
\{
R_j(\tau,\mathbf b)=y
\}
}
\left(
\mathrm d\mathbf b
\right),
\end{aligned}
\label{eq:parameterized-winner-slice-density}
\end{equation}
where $0
<
Z_{j,t}^{=}(\tau,y)
<
\infty$.

Define a global winner-slice kernel by
\begin{equation}
Q_j^=
\left(
\tau,y,E
\right)
:=
\begin{cases}
\widetilde Q_{j,t}^=
\left(
\tau,y,E
\right),
&
\tau
\in
\mathsf X_{t-1}^{\mathrm{post}},
\\[1mm]
Q_{j,\mathrm{abs}}^=
\left(
\tau,y,E
\right),
&
\tau
\notin
\mathsf X_{t-1}^{\mathrm{post}}.
\end{cases}
\label{eq:patched-winner-kernel}
\end{equation}
Since
\(\mathsf X_{t-1}^{\mathrm{post}}\)
is Borel, \(Q_j^=\) is a probability kernel.  For every fixed \(\tau\),
it is a regular conditional distribution of
\(\mu_j^\tau\)
given \(R_j(\tau,\cdot)\), and on the good past-transcript set it has
the jointly measurable geometric form
\eqref{eq:parameterized-winner-slice-density}.

\paragraph{Loser truncation kernels.}

For every $E
\in
\mathcal B(\mathbb R^d)$,  
define
\[
N_j^<(\tau,y,E)
:=
\int_{\mathbb R^d}
\mathbf 1_E(\mathbf b)
\mathbf 1_{\{
R_j(\tau,\mathbf b)<y
\}}
\mu_{j,t-1}
\left(
\tau,
\mathrm d\mathbf b
\right),
\]
and
\[
N_j^\le(\tau,y,E)
:=
\int_{\mathbb R^d}
\mathbf 1_E(\mathbf b)
\mathbf 1_{\{
R_j(\tau,\mathbf b)\le y
\}}
\mu_{j,t-1}
\left(
\tau,
\mathrm d\mathbf b
\right).
\]
The integrands are jointly Borel in
\((\tau,y,\mathbf b)\).  Hence, by parameterized integration,
\[
(\tau,y)
\longmapsto
N_j^<(\tau,y,E)
\]
and
\[
(\tau,y)
\longmapsto
N_j^\le(\tau,y,E)
\]
are Borel measurable.

Set
\[
F_j^<(\tau,y)
:=
N_j^<(\tau,y,\mathbb R^d),
\qquad
F_j^\le(\tau,y)
:=
N_j^\le(\tau,y,\mathbb R^d).
\]
Define
\[
Q_j^<(\tau,y,E)
:=
\begin{cases}
\dfrac{
N_j^<(\tau,y,E)
}{
F_j^<(\tau,y)
},
&
F_j^<(\tau,y)>0,
\\[3mm]
\mu_{j,t-1}(\tau,E),
&
F_j^<(\tau,y)=0,
\end{cases}
\]
and
\[
Q_j^\le(\tau,y,E)
:=
\begin{cases}
\dfrac{
N_j^\le(\tau,y,E)
}{
F_j^\le(\tau,y)
},
&
F_j^\le(\tau,y)>0,
\\[3mm]
\mu_{j,t-1}(\tau,E),
&
F_j^\le(\tau,y)=0.
\end{cases}
\]
These are probability kernels from $\mathsf X_{t-1}
\times
\mathbb R$ 
to
\(\mathbb R^d\).
The definitions on zero-denominator parameter values are immaterial,
because the observation law introduced below assigns zero mass to those
values. 
Define also the boundary-mass function
\begin{equation}
\begin{aligned}
\beta_j(\tau,y)
:=
F_j^\le(\tau,y)
-
F_j^<(\tau,y)
=
\mu_{j,t-1}
\left(
\tau,
\left\{
\mathbf b:
R_j(\tau,\mathbf b)=y
\right\}
\right).
\end{aligned}
\label{eq:posterior-boundary-mass}
\end{equation}
The map $(\tau,y)
\longmapsto
\beta_j(\tau,y)$
is Borel measurable.

\paragraph{Conditional law of the next observation.}

For every possible winner \(i\in[k]\), define
\[
w_i(\tau,y)
:=
\prod_{j<i}
F_j^<(\tau,y)
\prod_{j>i}
F_j^\le(\tau,y),
\]
and define the subprobability kernel
\[
\Lambda_i:
\mathsf X_{t-1}
\times
\mathcal B(\mathbb R)
\to
[0,1]
\]
by
\[
\Lambda_i(\tau,A)
:=
\int_A
w_i(\tau,y)
\lambda_i(\tau,\mathrm dy).
\]
Equivalently,
\[
\Lambda_i(\tau,\mathrm dy)
=
w_i(\tau,y)
\lambda_i(\tau,\mathrm dy).
\]

The strict factors for
\(j<i\)
and the non-strict factors for
\(j>i\)
are precisely those imposed by the deterministic smallest-index
tie-breaking rule.

For $(\tau,i,y)
\in
\mathsf X_{t-1}
\times
[k]
\times
\mathbb R$, 
define the updated block kernels
\[
Q_j^{\tau,i,y}
:=
\begin{cases}
Q_j^<(\tau,y,\cdot),
&
j<i,
\\
Q_i^=(\tau,y,\cdot),
&
j=i,
\\
Q_j^\le(\tau,y,\cdot),
&
j>i.
\end{cases}
\]
Define the product kernel
\[
Q_t^i
\left(
\tau,y,
\mathrm d\mathbf b_1,\ldots,
\mathrm d\mathbf b_k
\right)
:=
\bigotimes_{j=1}^k
Q_j^{\tau,i,y}
\left(
\mathrm d\mathbf b_j
\right).
\]
A finite product of probability kernels is again a probability kernel.

For fixed \(\tau\), define $\mathbb P_\tau
:=
\bigotimes_{j=1}^k
\mu_j^\tau$. 
On
\((\mathbb R^d)^k\),
define
\[
\mathcal Y_\tau
\left(
\mathbf b_1,\ldots,\mathbf b_k
\right)
:=
\max_{1\le j\le k}
R_j(\tau,\mathbf b_j),
\]
and
\[
\mathcal I_\tau
\left(
\mathbf b_1,\ldots,\mathbf b_k
\right)
:=
\min
\argmax_{1\le j\le k}
R_j(\tau,\mathbf b_j).
\]
Fix a product rectangle
\[
E
=
E_1\times\cdots\times E_k
\subseteq
(\mathbb R^d)^k
\]
and a Borel set $A
\subseteq
\mathbb R$.  
Independence under
\(\mathbb P_\tau\),
followed by disintegration of the \(i\)-th block, gives
\[
\begin{aligned}
&
\mathbb P_\tau
\left(
(\mathbf b_1,\ldots,\mathbf b_k)\in E,\,
\mathcal I_\tau=i,\,
\mathcal Y_\tau\in A
\right)
\\
&\quad=
\int_A
Q_i^=(\tau,y,E_i)
\left[
\prod_{j<i}
N_j^<(\tau,y,E_j)
\right]
\left[
\prod_{j>i}
N_j^\le(\tau,y,E_j)
\right]
\lambda_i(\tau,\mathrm dy).
\end{aligned}
\]
By construction,
\[
N_j^<(\tau,y,E_j)
=
F_j^<(\tau,y)
Q_j^<(\tau,y,E_j),
\]
and
\[
N_j^\le(\tau,y,E_j)
=
F_j^\le(\tau,y)
Q_j^\le(\tau,y,E_j).
\]
These identities remain true when the corresponding denominator is zero,
because then both sides vanish.  Hence
\begin{equation}
\begin{aligned}
\mathbb P_\tau
\left(
(\mathbf b_1,\ldots,\mathbf b_k)\in E,\,
\mathcal I_\tau=i,\,
\mathcal Y_\tau\in A
\right)
=
\int_A
Q_t^i(\tau,y,E)
\Lambda_i(\tau,\mathrm dy).
\end{aligned}
\label{eq:one-step-kernel-disintegration}
\end{equation}
A monotone-class argument extends
\eqref{eq:one-step-kernel-disintegration}
from product rectangles to all $E
\in
\mathcal B\bigl((\mathbb R^d)^k\bigr)$. 
Taking $E
=
(\mathbb R^d)^k$ 
and summing over \(i\in[k]\) gives
\[
\sum_{i=1}^k
\Lambda_i(\tau,\mathbb R)
=
1.
\]
Define a probability kernel $\Lambda:
\mathsf X_{t-1}
\times
\mathcal B([k]\times\mathbb R)
\to
[0,1]$ 
by
\begin{equation}
\Lambda(\tau,D)
:=
\sum_{i=1}^k
\int_{\mathbb R}
\mathbf 1_D(i,y)
\Lambda_i(\tau,\mathrm dy),
\qquad
D\in\mathcal B([k]\times\mathbb R).
\label{eq:next-observation-kernel}
\end{equation}
We write $\Lambda
\left(
\tau,
\mathrm d\ell,
\mathrm dy
\right)$ 
for this kernel, where \(\ell\in[k]\) denotes its discrete coordinate. 
Equation
\eqref{eq:one-step-kernel-disintegration}
shows that $\Lambda(\tau,\cdot)
=
(\mathcal I_\tau,\mathcal Y_\tau)_\sharp
\mathbb P_\tau.$
Thus \(\Lambda(\tau,\cdot)\) is the law of the next augmented observation
under the conditional product law associated with the past transcript
\(\tau\).

\paragraph{Definition of the updated posterior kernels.}

Every $\tau_t\in\mathsf X_t$ 
has a unique decomposition $\tau_t
=
\bigl(
\tau,
(i,y,\mathbf z)
\bigr)$,  
where $\tau\in\mathsf X_{t-1}$, $i\in[k]$, $y\in\mathbb R$, and  $\mathbf z\in B_2^d$. 
Define
\[
\mu_{j,t}(\tau_t,E)
:=
Q_j^{\tau,i,y}(E),
\qquad
E\in\mathcal B(\mathbb R^d).
\]
This definition ignores the final recorded query coordinate
\(\mathbf z\) on transcripts that are inconsistent with the fixed
algorithm.  The coordinate map $\tau_t
\longmapsto
(\tau,i,y)$ 
is Borel measurable.  Hence every $\mu_{j,t}:
\mathsf X_t
\times
\mathcal B(\mathbb R^d)
\to
[0,1]$
is a probability kernel. 
Define
\[
K_t
\left(
\tau_t,
\mathrm d\mathbf b_1,\ldots,
\mathrm d\mathbf b_k
\right)
:=
\bigotimes_{j=1}^k
\mu_{j,t}
\left(
\tau_t,
\mathrm d\mathbf b_j
\right).
\]

\paragraph{Global regular conditional distribution identity.}

The induction hypothesis,
\eqref{eq:one-step-kernel-disintegration},
and
\eqref{eq:query-reconstruction}
imply that $\Lambda(\mathsf T_{t-1},\cdot)$ 
is a version of the conditional law of $(\rsc I_t,\rsc Y_t)$ 
given \(\mathsf T_{t-1}\).  Combining this fact with
\eqref{eq:transcript-recursion},
we obtain the transition identity: for every nonnegative Borel function
\[
\varphi:
\mathsf X_t
\to
[0,\infty],
\]
\begin{equation}
\begin{aligned}
\int_{\mathsf X_t}
\varphi(\tau_t)
\,
\mathbb P_{\mathsf T_t}
\left(
\mathrm d\tau_t
\right)=
\int_{\mathsf X_{t-1}}
\int_{[k]\times\mathbb R}
\varphi
\left(
\tau\oplus_t(\ell,y)
\right)
\Lambda
\left(
\tau,
\mathrm d\ell,
\mathrm dy
\right)
\mathbb P_{\mathsf T_{t-1}}
\left(
\mathrm d\tau
\right).
\end{aligned}
\label{eq:transcript-transition-identity}
\end{equation}
Let $A
\in
\mathcal B(\mathsf X_t)$ 
and $E
\in
\mathcal B\bigl((\mathbb R^d)^k\bigr)$.  
Conditioning on
\(\mathsf T_{t-1}\)
and applying
\eqref{eq:one-step-kernel-disintegration}
gives
\[
\begin{aligned}
&
\mathbb P_\Xi
\left(
(\rvec b_1,\ldots,\rvec b_k)\in E,\,
\mathsf T_t\in A
\right)
\\
&\quad=
\int_{\mathsf X_{t-1}}
\int_{[k]\times\mathbb R}
\mathbf 1_A
\left(
\tau\oplus_t(\ell,y)
\right)
Q_t^\ell(\tau,y,E)\cdot
\Lambda
\left(
\tau,
\mathrm d\ell,
\mathrm dy
\right)
\mathbb P_{\mathsf T_{t-1}}
\left(
\mathrm d\tau
\right).
\end{aligned}
\]
For every consistent extension,
\[
K_t
\left(
\tau\oplus_t(\ell,y),
E
\right)
=
Q_t^\ell(\tau,y,E).
\]
Hence the preceding expression is
\[
\begin{aligned}
\int_{\mathsf X_{t-1}}
\int_{[k]\times\mathbb R}
\mathbf 1_A
\left(
\tau\oplus_t(\ell,y)
\right)
K_t
\left(
\tau\oplus_t(\ell,y),
E
\right)\cdot
\Lambda
\left(
\tau,
\mathrm d\ell,
\mathrm dy
\right)
\mathbb P_{\mathsf T_{t-1}}
\left(
\mathrm d\tau
\right).
\end{aligned}
\]
Applying
\eqref{eq:transcript-transition-identity}
to
\[
\varphi(\tau_t)
:=
\mathbf 1_A(\tau_t)
K_t(\tau_t,E)
\]
gives
\[
\mathbb P_\Xi
\left(
(\rvec b_1,\ldots,\rvec b_k)\in E,\,
\mathsf T_t\in A
\right)
=
\int_A
K_t(\tau_t,E)
\,
\mathbb P_{\mathsf T_t}
\left(
\mathrm d\tau_t
\right).
\]
Thus \(K_t\) is a regular conditional distribution of the hidden blocks
given \(\mathsf T_t\), proving
\eqref{eq:posterior-kernel-disintegration}
and
\eqref{eq:posterior-product-kernel}.

\paragraph{Borel good observation sets.}

We now construct a Borel full-measure set on which the geometric
description is valid.

For every possible winner \(i\in[k]\), define
\begin{equation}
\begin{aligned}
\mathsf G_{t,i}
:=
\Bigl\{
(\tau,y)
\in
\mathsf X_{t-1}\times\mathbb R:
\;&
\tau
\in
\mathsf X_{t-1}^{\mathrm{post}},
\ 
(\tau,y)
\in
\mathsf G_{i,t}^=,
\\
&
F_j^<(\tau,y)>0
\text{ and }
\beta_j(\tau,y)=0
\quad
\text{for every }j<i,
\\
&
F_j^\le(\tau,y)>0
\quad
\text{for every }j>i
\Bigr\}.
\end{aligned}
\label{eq:good-one-step-observation-set}
\end{equation}
This is a Borel set. 
We claim that, for every $\tau
\in
\mathsf X_{t-1}^{\mathrm{post}}$, 
\begin{equation}
\Lambda_i
\left(
\tau,
(\mathsf G_{t,i})_\tau
\right)
=
\Lambda_i(\tau,\mathbb R),
\qquad
i\in[k].
\label{eq:good-observation-full-mass}
\end{equation}
First, $\lambda_i
\left(
\tau,
(\mathsf G_{i,t}^=)_\tau
\right)
=
1$ and  $\Lambda_i(\tau,\cdot)
\ll
\lambda_i(\tau,\cdot)$. 
Hence the winner-slice good set has full
\(\Lambda_i(\tau,\cdot)\)-measure. 
Second, if \(j<i\), then the factor $F_j^<(\tau,y)$ 
appears in \(w_i(\tau,y)\).  Therefore
\[
\Lambda_i
\left(
\tau,
\{y:F_j^<(\tau,y)=0\}
\right)
=
0.
\]
Similarly, if \(j>i\), then $F_j^\le(\tau,y)$ 
appears in \(w_i(\tau,y)\), so
\[
\Lambda_i
\left(
\tau,
\{y:F_j^\le(\tau,y)=0\}
\right)
=
0.
\]
It remains to consider
\(\beta_j(\tau,y)\)
for \(j<i\).
Fix such a block \(j\). 
If the functional $\mathbf b
\longmapsto
R_j(\tau,\mathbf b)$ 
is nonconstant on
\(H_{j,t-1}(\tau)\),
then, for every \(y\),
\[
H_{j,t-1}(\tau)
\cap
\left\{
\mathbf b:
R_j(\tau,\mathbf b)=y
\right\}
\]
is a proper affine hyperplane in
\(H_{j,t-1}(\tau)\).  It has zero
\(\sigma_{H_{j,t-1}(\tau)}\)-measure, and therefore
\[
\beta_j(\tau,y)
=
0
\qquad
\text{for every }y.
\]
If the functional is constant on
\(H_{j,t-1}(\tau)\), say $R_j(\tau,\mathbf b)
\equiv c$,  
then
\[
F_j^<(\tau,y)
=
\mathbf 1_{\{c<y\}},
\qquad
\beta_j(\tau,y)
=
\mathbf 1_{\{y=c\}}.
\]
Since \(F_j^<(\tau,y)\) is a factor of
\(w_i(\tau,y)\), the measure
\(\Lambda_i(\tau,\cdot)\)
assigns zero mass to \(y=c\).  Hence
\[
\beta_j(\tau,y)=0
\]
for
\(\Lambda_i(\tau,\cdot)\)-almost every \(y\). 
This proves
\eqref{eq:good-observation-full-mass}.

\paragraph{Borel full-measure transcript set.}

Define
\begin{equation}
\begin{aligned}
\mathsf X_t^{\mathrm{post}}
:=
\bigcup_{i=1}^k
\Bigl\{
\bigl(
\tau,
(i,y,\mathbf z)
\bigr)
\in
\mathsf X_t:
\;
\tau
\in
\mathsf X_{t-1}^{\mathrm{post}},\ 
\mathbf z
=
\chi_t(\tau),
\ 
(\tau,y)
\in
\mathsf G_{t,i}
\Bigr\}.
\end{aligned}
\label{eq:posterior-good-transcript-set}
\end{equation}
The predecessor-coordinate maps, the maps
\(\chi_t\), and all sets
\(\mathsf G_{t,i}\)
are Borel.  Hence
\(\mathsf X_t^{\mathrm{post}}\)
is Borel. 
Using
\eqref{eq:transcript-transition-identity},
\eqref{eq:good-observation-full-mass},
and the induction hypothesis,
\[
\begin{aligned}
\mathbb P_{\mathsf T_t}
\left(
\mathsf X_t^{\mathrm{post}}
\right)
&=
\int_{\mathsf X_{t-1}^{\mathrm{post}}}
\sum_{i=1}^k
\Lambda_i
\left(
\tau,
(\mathsf G_{t,i})_\tau
\right)
\mathbb P_{\mathsf T_{t-1}}
\left(
\mathrm d\tau
\right)
\\
&=
\int_{\mathsf X_{t-1}^{\mathrm{post}}}
\sum_{i=1}^k
\Lambda_i(\tau,\mathbb R)
\mathbb P_{\mathsf T_{t-1}}
\left(
\mathrm d\tau
\right)
\\
&=
\mathbb P_{\mathsf T_{t-1}}
\left(
\mathsf X_{t-1}^{\mathrm{post}}
\right)
\\
&=
1.
\end{aligned}
\]

\paragraph{Geometric identification on the good transcript set.}

Fix $\tau_t
\in
\mathsf X_t^{\mathrm{post}}$.
Then, for a unique $i\in[k]$,  
we can write
\[
\tau_t
=
\tau\oplus_t(i,y)
=
\bigl(
\tau,
(i,y,\chi_t(\tau))
\bigr),
\]
where $\tau
\in
\mathsf X_{t-1}^{\mathrm{post}}$ 
and $(\tau,y)
\in
\mathsf G_{t,i}$.  
Write
\[
H_j
:=
H_{j,t-1}(\tau),
\qquad
C_j
:=
C_{j,t-1}(\tau),
\qquad
\mathbf x
:=
\chi_t(\tau).
\]

For the winner block \(j=i\),
\eqref{eq:parameterized-winner-slice-density}
gives
\[
\begin{aligned}
&
\mu_{i,t}
\left(
\tau_t,
\mathrm d\mathbf b
\right)
\\
&=
\left[
Z_{i,t}^{=}(\tau,y)
\right]^{-1}
e^{-\|\mathbf b\|_2^2/2}
\mathbf 1_{
C_i
\cap
\{
\langle\mathbf b,\mathbf x\rangle=y
\}
}(\mathbf b)\cdot
\sigma_{
H_i
\cap
\{
\langle\mathbf b,\mathbf x\rangle=y
\}
}
\left(
\mathrm d\mathbf b
\right).
\end{aligned}
\]
The updated affine support is therefore
\[
H_{i,t}(\tau_t)
=
H_i
\cap
\left\{
\mathbf b:
\langle\mathbf b,\mathbf x\rangle=y
\right\}, \quad \text{and} \quad C_{i,t}(\tau_t)
=
C_i
\cap
\left\{
\mathbf b:
\langle\mathbf b,\mathbf x\rangle=y
\right\}. 
\]

Now consider a lower-index loser
\(j<i\).
Since $(\tau,y)
\in
\mathsf G_{t,i}$,  
we have
\[
F_j^<(\tau,y)>0, \quad \text{and} \quad \beta_j(\tau,y)=0. 
\]
Thus
\[
N_j^\le(\tau,y,E)
-
N_j^<(\tau,y,E)
=
\mu_{j,t-1}
\left(
\tau,
E
\cap
\{
R_j(\tau,\mathbf b)=y
\}
\right)
=
0
\]
for every Borel \(E\).  Consequently,
\[
N_j^\le(\tau,y,E)
=
N_j^<(\tau,y,E), \quad F_j^\le(\tau,y)
=
F_j^<(\tau,y),\quad \text{and}\quad Q_j^<(\tau,y,\cdot)
=
Q_j^\le(\tau,y,\cdot).
\]
Using the induction hypothesis,
\[
\begin{aligned}
&
\mu_{j,t}
\left(
\tau_t,
\mathrm d\mathbf b
\right)
\\
=&
Q_j^\le
\left(
\tau,y,
\mathrm d\mathbf b
\right)=
\left[
Z_{j,t-1}(\tau)
F_j^\le(\tau,y)
\right]^{-1}
e^{-\|\mathbf b\|_2^2/2}
\mathbf 1_{
C_j
\cap
\{
\langle\mathbf b,\mathbf x\rangle\le y
\}
}(\mathbf b)
\,
\sigma_{H_j}
\left(
\mathrm d\mathbf b
\right).
\end{aligned}
\]

For an upper-index loser
\(j>i\),
we have $F_j^\le(\tau,y)>0$,  
and directly obtain
\[
\begin{aligned}
&\mu_{j,t}
\left(
\tau_t,
\mathrm d\mathbf b
\right)\\
=&
Q_j^\le
\left(
\tau,y,
\mathrm d\mathbf b
\right)=
\left[
Z_{j,t-1}(\tau)
F_j^\le(\tau,y)
\right]^{-1}
e^{-\|\mathbf b\|_2^2/2}
\mathbf 1_{
C_j
\cap
\{
\langle\mathbf b,\mathbf x\rangle\le y
\}
}(\mathbf b)
\,
\sigma_{H_j}
\left(
\mathrm d\mathbf b
\right).
\end{aligned}
\]

Thus, for every loser \(j\neq i\),
\[
H_{j,t}(\tau_t)
=
H_j, \quad \text{and} \quad C_{j,t}(\tau_t)
=
C_j
\cap
\left\{
\mathbf b:
\langle\mathbf b,\mathbf x\rangle\le y
\right\}.
\]
Iterating these updates from time \(0\) gives exactly
\[
H_{j,t}(\tau_t)
=
\left\{
\mathbf b:
\langle\mathbf b,\mathbf z_s\rangle=y_s
\text{ whenever }i_s=j
\right\},
\]
and
\[
C_{j,t}(\tau_t)
=
\left\{
\mathbf b\in H_{j,t}(\tau_t):
\|\mathbf b\|_2\le2\sqrt d,\quad
\langle\mathbf b,\mathbf z_s\rangle\le y_s
\text{ whenever }i_s\neq j
\right\}.
\]
The set
\(H_{j,t}(\tau_t)\)
is an affine subspace, being an intersection of finitely many affine
hyperplanes.  The set
\(C_{j,t}(\tau_t)\)
is closed and convex, being the intersection within that affine subspace
of a closed Euclidean ball and finitely many closed halfspaces.

The relevant normalizing constants are strictly positive by the
definition of the good observation sets.  They are finite because
\[
C_{j,t}(\tau_t)
\subseteq
\left\{
\mathbf b:
\|\mathbf b\|_2\le2\sqrt d
\right\}
\]
is bounded and
\(\sigma_{H_{j,t}(\tau_t)}\)
is locally finite.  Therefore $H_{j,t}(\tau_t)$ 
and $C_{j,t}(\tau_t)$ 
are nonempty and
\[
0
<
Z_{j,t}(\tau_t)
:=
\int_{H_{j,t}(\tau_t)}
e^{-\|\mathbf b\|_2^2/2}
\mathbf 1_{C_{j,t}(\tau_t)}(\mathbf b)
\,
\sigma_{H_{j,t}(\tau_t)}
\left(
\mathrm d\mathbf b
\right)
<
\infty.
\]
We have proved
\[
\mu_{j,t}
\left(
\tau_t,
\mathrm d\mathbf b
\right)
=
Z_{j,t}(\tau_t)^{-1}
e^{-\|\mathbf b\|_2^2/2}
\mathbf 1_{C_{j,t}(\tau_t)}(\mathbf b)
\,
\sigma_{H_{j,t}(\tau_t)}
\left(
\mathrm d\mathbf b
\right).
\]

\paragraph{Strong log-concavity.}

For a set \(C\), define its extended-valued convex indicator by
\[
\operatorname{ind}_C(\mathbf b)
:=
\begin{cases}
0,
&
\mathbf b\in C,
\\
+\infty,
&
\mathbf b\notin C.
\end{cases}
\]
The negative log-density of the \(j\)-th posterior block relative to
\(\sigma_{H_{j,t}(\tau_t)}\) is
\[
\mathcal V_{j,t}^{\tau_t}(\mathbf b)
=
\frac12
\|\mathbf b\|_2^2
+
\operatorname{ind}_{C_{j,t}(\tau_t)}(\mathbf b)
+
\log Z_{j,t}(\tau_t).
\]
Since
\(C_{j,t}(\tau_t)\)
is nonempty, closed, and convex, $\operatorname{ind}_{C_{j,t}(\tau_t)}$ 
is proper, lower semicontinuous, and convex.  Hence $\mathcal V_{j,t}^{\tau_t}(\mathbf b)
-
\frac12
\|\mathbf b\|_2^2$ 
is convex on
\(H_{j,t}(\tau_t)\).
Thus the posterior block is \(1\)-strongly log-concave relative to its
affine support.

If $\dim H_{j,t}(\tau_t)=0$,  
then
\(H_{j,t}(\tau_t)\)
consists of a single point.  Since
\(C_{j,t}(\tau_t)\)
is nonempty and contained in this singleton, the posterior block is the
Dirac measure at that point.

This completes the induction.
\end{proof}

\begin{theorem}[One-step selection bound on affine supports]
\label{thm:one-step}
Let \(d,k\ge1\), and let $\rvec{u}_1,\ldots,\rvec{u}_k$ 
be independent random vectors in \(\mathbb R^d\).  For each
\(i\in[k]\), let $H_i
\subseteq
\mathbb R^d$ 
be a nonempty affine subspace, possibly of dimension zero, and suppose
that the law \(\mu_i\) of \(\rvec{u}_i\) is
\[
\mu_i(\mathrm d\mathbf u)
=
\mathcal Z_i^{-1}
e^{-\mathcal V_i(\mathbf u)}
\,\sigma_{H_i}(\mathrm  d\mathbf u),
\]
where
\[
0
<
\mathcal Z_i
:=
\int_{H_i}
e^{-\mathcal V_i(\mathbf u)}
\,\mathrm  d\sigma_{H_i}(\mathbf u)
<
\infty,
\]
and $\mathcal V_i:
H_i
\to
\mathbb R\cup\{+\infty\}$ 
is proper and lower semicontinuous.  Assume that $\mathbf u
\to
\mathcal V_i(\mathbf u)
-
\frac12\|\mathbf u\|_2^2$ 
is convex on \(H_i\). 
Here
\(\sigma_{H_i}\)
denotes the Lebesgue--Hausdorff measure on \(H_i\); when
\(\dim H_i=0\), it is the unit point mass on \(H_i\). 
Fix $\mathbf v
\in
\mathbb S^{d-1}$, 
and define $\rsc{R}_i
:=
\left\langle
\rvec{u}_i,
\mathbf v
\right\rangle$ for $i=1,\ldots,k$,  $\rsc{Y}
:=
\max_{1\le i\le k}
\rsc{R}_i$, $\rsc{I}
:=
\min
\argmax_{1\le i\le k}
\rsc{R}_i$ and $\rvec{s}
:=
\sum_{i=1}^k
\rvec{u}_i.$ 
Then there exists a universal constant \(C>0\) such that
\begin{equation}
\mathbb E
\left\|
\mathbb E
\left[
\rvec{s}
\mid
\sigma(\rsc{I},\rsc{Y})
\right]
-
\mathbb E\rvec{s}
\right\|_2^2
\le
C\log(ek).
\label{eq:one-step-selection}
\end{equation}
Equivalently, using the standard conditional-expectation shorthand,
\[
\mathbb E
\left\|
\mathbb E
\left[
\rvec{s}
\mid
\rsc{I},\rsc{Y}
\right]
-
\mathbb E\rvec{s}
\right\|_2^2
\le
C\log(ek).
\]
\end{theorem}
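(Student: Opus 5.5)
We would prove \eqref{eq:one-step-selection} by conditioning on the pair $(\rsc I,\rsc Y)=(i,y)$. The key structural fact is that, given this pair, the blocks remain independent. This is the same computation as \eqref{eq:one-step-kernel-disintegration} in Lemma~\ref{lem:regular-product-posterior}:
\begin{itemize}
\item the winner $\rvec u_i$ has the slice law $\mu_i(\cdot\mid \rsc R_i=y)$;
\item each loser $j<i$ has the lower tail law $\mu_j(\cdot\mid \rsc R_j<y)$;
\item each loser $j>i$ has the lower tail law $\mu_j(\cdot\mid \rsc R_j\le y)$.
\end{itemize}
Writing $\mathbf m_j:=\mathbb E\rvec u_j$, the posterior mean shift therefore splits as
\[
\mathbb E[\rvec s\mid i,y]-\mathbb E\rvec s
=\bigl(\mathbb E[\rvec u_i\mid \rsc R_i=y]-\mathbf m_i\bigr)+\sum_{j\ne i}\bigl(\mathbb E[\rvec u_j\mid \rsc R_j\le y]-\mathbf m_j\bigr).
\]
By $(a+b)^2\le 2a^2+2b^2$, it suffices to bound the expected squared norm of the winner term and of the loser sum separately by $O(\log(ek))$. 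Throughout we would use two consequences of $1$-strong log-concavity on $H_j$. First, by Brascamp--Lieb, $\operatorname{Cov}(\rvec u_j)\preceq \mathbf I$ on the direction space of $H_j$. Second, every one-dimensional marginal $\rsc R_j$ is $1$-strongly log-concave, hence subgaussian with variance proxy $1$ about its mean. Blocks with $\dim H_j=0$, or with $\mathbf v$ orthogonal to the direction space of $H_j$, are deterministic in $\rsc R_j$. They contribute no shift and are discarded.

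\emph{Loser tails.} For each $j$ write $p_j(y):=\mathbb P(\rsc R_j>y)$. Since $\mathbb E[(\rvec u_j-\mathbf m_j)]=\mathbf 0$, we have
\[
\mathbb E[\rvec u_j\mid \rsc R_j\le y]-\mathbf m_j=-\frac{\mathbb E[(\rvec u_j-\mathbf m_j)\mathbf 1_{\{\rsc R_j>y\}}]}{1-p_j(y)}.
\]
Testing the numerator against a unit vector $\mathbf w$ and applying the covariance bound together with a log-concave tail estimate gives a regression estimate: the shift has norm $\lesssim p_j(y)\bigl(1+\sqrt{\log(1/p_j(y))}\bigr)/(1-p_j(y))$. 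We then sum over $j$ and integrate against the law of $\rsc Y$, using the identity
\[
\mathbb E\Bigl[\sum_{j}p_j(\rsc Y)\Bigr]=\sum_j \mathbb P(\rsc R'_j>\rsc Y),
\]
where $\rsc R'_j$ is an independent copy of $\rsc R_j$. By exchangeability-type comparisons, $\sum_j p_j(\rsc Y)$ has $O(1)$ moments. Indeed, $\prod_j(1-p_j(\rsc Y))$ is the conditional probability that an independent fresh replica lies below the maximum, which forces $\sum_j p_j(\rsc Y)$ to be typically bounded. Combined with $\sqrt{\log(1/p_j)}\lesssim\sqrt{\log(ek)}$ on the relevant range, and a separate crude bound on the rare event $\sum_jp_j(\rsc Y)>\log(ek)$, this should give a loser contribution of order $\log(ek)$. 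We would control that rare event via the subgaussian lower tail of the maximum.

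\emph{Winner slice; the main obstacle.} Here we must bound $\mathbb E\,\|\mathbb E[\rvec u_{\rsc I}\mid \rsc R_{\rsc I}=\rsc Y]-\mathbf m_{\rsc I}\|^2$. Write $\sigma_j^2:=\operatorname{Var}(\rsc R_j)\le1$.
\begin{itemize}
\item \emph{Heuristic (Gaussian case).} There the regression map $r\mapsto \mathbb E[\rvec u_j\mid \rsc R_j=r]$ is affine with slope $\Sigma_j\mathbf v/\sigma_j^2$ of norm $\le 1/\sigma_j$. Its squared deviation at $r$ is at most $(r-\mathbb E\rsc R_j)^2/\sigma_j^2$ plus a bounded term. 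The winner typically sits $O(\sqrt{\log k})$ standard deviations above its own mean, which gives $O(\log k)$.
\item \emph{First step: a two-dimensional estimate.} For general strongly log-concave blocks, we would first prove such a bound by restricting to the two-dimensional marginal of $(\rsc R_j,\langle\rvec u_j,\mathbf w\rangle)$ for each unit $\mathbf w$. This marginal is again $1$-strongly log-concave by Pr\'ekopa--Leindler. In it we would show $|\mathbb E[\langle \rvec u_j-\mathbf m_j,\mathbf w\rangle\mid \rsc R_j=r]|\lesssim 1+|r-\mathbb E\rsc R_j|/\sigma_j$.
\item \emph{Second step: the no-spike principle.} The danger is blocks with tiny $\sigma_j$, i.e.\ thin slices. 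For such blocks the factor $1/\sigma_j$ could blow up, unless we show that such a block rarely wins at a level many $\sigma_j$ above its mean. We would handle this by writing
\[
\mathbb E[\cdot]=\sum_i\int(\cdot)\,w_i(y)\,\lambda_i(\mathrm dy),
\qquad w_i(y)=\prod_{j\ne i}\mathbb P(\rsc R_j\lesssim y).
\]
Bounding $w_i\le1$ and using the $1/\sigma_i$-subgaussian tail of $(\rsc R_i-\mathbb E\rsc R_i)/\sigma_i$ bounds each block's own squared-deviation integral by $O(1)$, without any weights. That alone would give $O(k)$, too weak. The $\log k$ must come from the weights: only $O(1)$ blocks have non-negligible $w_i$-mass at level $t$ standard deviations above the mean unless $t\gtrsim\sqrt{\log k}$. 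We would make this precise by a layer-cake decomposition in the standardized height $t=(y-\mathbb E\rsc R_i)/\sigma_i$, bounding $\sum_i\int t^2w_i\,\mathrm d\lambda_i$ via $\sum_i\mathbb P(\rsc I=i,\,t_i>t)\le\min\{1,\,k e^{-t^2/2}\}$ and integrating.
\end{itemize}
We expect this weighted winner-slice estimate, uniform over heterogeneous $\sigma_j$ and affine supports, to be the technical heart of the proof.
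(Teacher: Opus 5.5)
Your reduction is the paper's: conditionally on $(\rsc I,\rsc Y)=(i,y)$ the blocks are independent, with a winner-slice law and loser lower-tail laws, and the posterior shift splits accordingly. Your loser-tail treatment is also close to the paper's: a transport or subgaussian regression bound, summed using $\sum_j-\log F_j(\rsc Y)=-\log H(\rsc Y)\sim\operatorname{Exp}(1)$, where $H=\prod_jF_j$ and $F_j=1-p_j$. You do, however, need the two-sided estimate $\min\{\sqrt{2\log(1/F_j)},\,(p_j/F_j)\sqrt{2\log(1/p_j)}\}$. Your bound alone behaves like $1/F_j(\rsc Y)$ when $F_j(\rsc Y)$ is small, and $\mathbb E F_j(\rsc Y)^{-2}$ can be infinite; already for two i.i.d.\ blocks, $\mathbb P(F(\rsc Y)\le\epsilon)=\epsilon^2$.

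The genuine gap is in the winner term. Your layer-cake step uses $\sum_i\mathbb P(\rsc I=i,\,t_i>t)\le\min\{1,ke^{-t^2/2}\}$ for the standardized heights $t_i=(\rsc R_i-\mathbb E\rsc R_i)/\sigma_i$. As you observed a few lines earlier, $1$-strong log-concavity of $\rsc R_i$ only makes $t_i$ subgaussian with proxy $\sigma_i^{-2}$; a log-concave law with small variance can have merely exponential standardized tails. Concretely, let every $H_i=\mathbb R\mathbf v$ with $\mathcal V_i(r\mathbf v)=\frac12r^2+\lambda r+\operatorname{ind}_{[0,\infty)}(r)$ and $\lambda\gg\log k$; this satisfies all hypotheses of the theorem. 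Then $\lambda\rsc R_i$ is approximately $\operatorname{Exp}(1)$, $\sigma_i\approx1/\lambda$, and $t_{\rsc I}\approx\log k$. So your inequality fails (for instance at $t=2\sqrt{2\log k}$), and the quantity $\mathbb E(1+t_{\rsc I})^2$ you would end up bounding is of order $\log^2k$. Even granting your unproved two-dimensional estimate $\lesssim1+|r-\mathbb E\rsc R_j|/\sigma_j$, the argument gives at best $O(\log^2(ek))$. The $1/\sigma_j$ normalization is the lossy ingredient: here the true slice shift is only about $(\log k)/\lambda$.

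The missing idea is to measure the winner's height by its own quantile, $z_i(y)=\Phi^{-1}(F_i(y))$, rather than in units of $\sigma_i$. Because $z_i(\rsc R_i)\sim N(0,1)$ exactly for every block, whatever its scale or shape, a union bound gives $\mathbb P(|z_{\rsc I}(\rsc Y)|\ge r)\le2k\Phi(-r)$. Hence $\mathbb E z_{\rsc I}(\rsc Y)^2\lesssim\log(ek)$, with no thin-slice issue at all. The cost is that the slice regression must be proved in quantile form, $\|\mathbb E[\rvec u_i-\mathbb E\rvec u_i\mid\rsc R_i=y]\|_2\lesssim1+|z_i(y)|$, and this is what the paper's no-spike lemma supplies. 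On each two-dimensional marginal $(\rsc R,\langle\rvec u,\mathbf w\rangle)$, with $U$ the potential of $\rsc R$, the regression displacement $h$ has two properties. First, $h(\rsc R)$ is $1$-subgaussian. Second, $h'(y)^2\le U''(y)$, which follows from concavity of $\Psi(s,y)=\log\int e^{-(w-s)^2/2-Q(y,w)}\,\mathrm dw$ via the $2\times2$ determinant, where $Q$ is the convex residual potential of the marginal. Convexity of $U$ then forces $h(y)^2\lesssim1+\log\bigl(1/\min\{F(y),1-F(y)\}\bigr)$, independently of the scale of $\rsc R$. So the no-spike principle is not a device for controlling how often thin blocks win, as in your plan, but a scale-free pointwise regression bound. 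The paper proves it for smooth full-dimensional laws, then reaches general affine-support laws by Gaussian smoothing, a priority perturbation for ties, and lower semicontinuity of the posterior energy.
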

The proof of Theorem~\ref{thm:one-step} is given in
Section~\ref{sec:one-step}.  We now apply this local estimate to the
adaptive posterior process.  Conditionally on the past augmented
transcript, the hidden blocks are independent and \(1\)-strongly
log-concave on affine supports, while the next query direction is fixed.
The theorem therefore controls the one-step movement of the posterior
mean of their sum.
\begin{proposition}[Augmented posterior energy]
\label{prop:augmented-energy}
There exists a universal constant
\(C_{\mathrm{en}}>0\)
such that, for every $\mathsf d
\in
\mathcal A_{\mathrm{det}}^{(T)}$,  
the augmented posterior means satisfy
\begin{equation}
\mathbb E_\Xi
\left\|
\rvec{m}_T
\right\|_2^2
\le
C_{\mathrm{en}}
T\log(ek).
\label{eq:augmented-energy-unscaled}
\end{equation}
Consequently,
\begin{equation}
\mathbb E_\Xi
\left\|
\rvec{m}^{\mathrm{sc}}_T
\right\|_2^2
\le
C_{\mathrm{en}}
\frac{T\log(ek)}{d}.
\label{eq:augmented-energy-scaled}
\end{equation}
\end{proposition}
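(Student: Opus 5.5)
The plan is to write \(\rvec m_T\) as a telescoping sum of martingale increments and bound each increment with Theorem~\ref{thm:one-step}.

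\textbf{Initial mean.} By symmetry of the truncated Gaussian law \(\gamma_d^{\mathrm{tr}}\), each \(\rvec b_i\) has mean zero. Since \(\mathcal G_0\) is trivial, this gives \(\rvec m_0=\E_\Xi\rvec s=\mathbf 0_d\).

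\textbf{Telescoping and orthogonality.} Write
\[
\rvec m_T=\sum_{t=1}^T\bigl(\rvec m_t-\rvec m_{t-1}\bigr).
\]
Because \(\|\rvec s\|_2\le 2k\sqrt d\), \((\rvec m_t)\) is a square-integrable vector martingale with respect to \((\mathcal G_t)\). Hence its increments are orthogonal in \(L^2\), and
\[
\E_\Xi\|\rvec m_T\|_2^2=\sum_{t=1}^T\E_\Xi\|\rvec m_t-\rvec m_{t-1}\|_2^2 .
\]
It therefore suffices to show that, almost surely,
\[
\E_\Xi\bigl[\|\rvec m_t-\rvec m_{t-1}\|_2^2\mid\mathcal G_{t-1}\bigr]\le C\log(ek)
\]
for each \(t\). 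The scaled bound \eqref{eq:augmented-energy-scaled} then follows by dividing by \(d\).

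\textbf{One-step bound via the posterior kernels.} Fix a past transcript \(\tau\in\mathsf X_{t-1}^{\mathrm{post}}\), a set of full \(\mathbb P_{\mathsf T_{t-1}}\)-measure. By Lemma~\ref{lem:regular-product-posterior}, the conditional law of the blocks is the product \(\bigotimes_j\mu_{j,t-1}(\tau,\cdot)\). Each factor has the form required by Theorem~\ref{thm:one-step}: its affine support is \(H_{j,t-1}(\tau)\), and its potential is \(\mathcal V_j=\tfrac12\|\mathbf b\|^2+\operatorname{ind}_{C_{j,t-1}(\tau)}\), which is proper, lower semicontinuous, and \(1\)-strongly convex because \(C_{j,t-1}(\tau)\) is nonempty, closed, and convex.

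The next query is the deterministic vector \(\mathbf x=\chi_t(\tau)\). The Theorem requires a unit vector, so I would handle three cases.
\begin{itemize}
\item If \(\mathbf x=\mathbf 0_d\), then \(\rsc I_t=1\) and \(\rsc Y_t=0\) deterministically, so the increment vanishes.
\item Otherwise set \(\mathbf v=\mathbf x/\|\mathbf x\|\). The pair \((\rsc I_t,\rsc Y_t)\) is a bijective rescaling of \(\bigl(\min\argmax_i\langle\mathbf b_i,\mathbf v\rangle,\ \max_i\langle\mathbf b_i,\mathbf v\rangle\bigr)\). The positive scaling preserves both the argmax and the smallest-index tie-breaking, so the two pairs generate the same sigma-field.
\end{itemize}

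\textbf{Identifying the conditional expectations.} Using \eqref{eq:one-step-kernel-disintegration} and the transition identity \eqref{eq:transcript-transition-identity}, the kernel \(K_t(\tau\oplus_t(\ell,y),\cdot)=Q_t^\ell(\tau,y,\cdot)\) is the conditional law under \(\mathbb P_\tau\) given \((\mathcal I_\tau,\mathcal Y_\tau)=(\ell,y)\). This yields two identifications:
\begin{itemize}
\item \(\rvec m_t\) is \(\E_{\mathbb P_\tau}[\mathbf s\mid\mathcal I_\tau,\mathcal Y_\tau]\) evaluated at the realized observation;
\item \(\rvec m_{t-1}\) is \(\E_{\mathbb P_\tau}\mathbf s\).
\end{itemize}
Integrating the squared difference against \(\Lambda(\tau,\cdot)\), which is the conditional law of the next observation, gives exactly the left side of \eqref{eq:one-step-selection} for the product law \(\mathbb P_\tau\). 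Hence it is at most \(C\log(ek)\).

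\textbf{Main obstacle.} The hard step is making this identification rigorous. One must check that the version of \(\E[\rvec s\mid\mathcal G_t]\) computed from \(K_t\) agrees almost surely with the abstract conditional expectation under \(\mathbb P_\tau\), and that the resulting bound holds for \(\mathbb P_{\mathsf T_{t-1}}\)-almost every \(\tau\). This follows from the disintegration identities already established, together with measurability of the barycenter maps \(\tau_t\mapsto\int\mathbf s\,K_t(\tau_t,\mathrm d\mathbf b)\). The substantive analytic content is deferred to Theorem~\ref{thm:one-step}.
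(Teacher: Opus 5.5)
Your proposal is correct and follows essentially the same route as the paper's proof. Both arguments use $\rvec m_0=\mathbf 0$ by symmetry, $L^2$-orthogonality of the martingale increments, and a vanishing increment for a zero query. Both then normalize to a unit direction, which leaves the observation sigma-field unchanged, identify the conditional increment through the product posterior kernels of Lemma~\ref{lem:regular-product-posterior}, and apply Theorem~\ref{thm:one-step} before dividing by $d$.
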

The proof uses the affine support one step result Theorem~\ref{thm:one-step}.
\begin{proof}[Proof of Proposition~\ref{prop:augmented-energy}]
The process
\[
\rvec{m}_t
=
\mathbb E_\Xi
\left[
\rvec{s}
\mid
\mathcal G_t
\right],
\qquad
t=0,\ldots,T,
\]
is a square-integrable
\((\mathcal G_t)\)-martingale.  Indeed, for
\(t=1,\ldots,T\),
\begin{equation}\label{eq:posterior-mean-martingale}
\begin{aligned}
\mathbb E_\Xi
\left[
\rvec{m}_t
\mid
\mathcal G_{t-1}
\right]
=
\mathbb E_\Xi
\left[
\mathbb E_\Xi[
\rvec{s}\mid\mathcal G_t]
\mid
\mathcal G_{t-1}
\right]
=
\mathbb E_\Xi
\left[
\rvec{s}
\mid
\mathcal G_{t-1}
\right]
=
\rvec{m}_{t-1}.
\end{aligned}
\end{equation}

The truncated Gaussian law
\(\gamma_d^{\mathrm{tr}}\)
is centrally symmetric, so $\mathbb E_\Xi\rvec{b}_i
=
\mathbf 0$ for every $i$. 
Consequently,
\[
\rvec{m}_0
=
\mathbb E_\Xi\rvec{s}
=
\mathbf 0.
\]
Since martingale increments are orthogonal in \(L^2\),
\begin{equation}
\mathbb E_\Xi
\left\|
\rvec{m}_T
\right\|_2^2
=
\sum_{t=1}^T
\mathbb E_\Xi
\left\|
\rvec{m}_t-\rvec{m}_{t-1}
\right\|_2^2.
\label{eq:posterior-martingale-orthogonality}
\end{equation}
Fix $t\in\{1,\ldots,T\}$.

For every $\tau_{t-1}
\in
\mathsf X_{t-1}^{\mathrm{post}}$,  
Lemma~\ref{lem:regular-product-posterior}
gives the conditional product law
\[
\mathbb P_{\tau_{t-1}}
:=
\bigotimes_{i=1}^k
\mu_{i,t-1}
\left(
\tau_{t-1},
\cdot
\right),
\]
and every factor is \(1\)-strongly log-concave relative to an affine
support. Since $\mathbb P_{\mathsf T_{t-1}}
\left(
\mathsf X_{t-1}^{\mathrm{post}}
\right)
=
1$,  
the resulting conditional increment estimate holds
\(\mathbb P_\Xi\)-almost surely. 
Let $\mathbf q_t(\tau_{t-1})
:=
\chi_t(\tau_{t-1})$ 
be the deterministic query corresponding to this past transcript, and set $a_t(\tau_{t-1})
:=
\left\|
\mathbf q_t(\tau_{t-1})
\right\|_2$.
Under
\(\mathbb P_{\tau_{t-1}}\),
let $\rvec{u}_1^{\,\tau_{t-1}},
\ldots,
\rvec{u}_k^{\,\tau_{t-1}}$ 
denote the coordinate random vectors, and define $\rvec{s}^{\,\tau_{t-1}}
:=
\sum_{i=1}^k
\rvec{u}_i^{\,\tau_{t-1}}$.

If $a_t(\tau_{t-1})=0$, 
then all projected values are zero.  Hence the newly revealed observation
is deterministically $(\rsc I_t,\rsc Y_t)
=
(1,0)$ 
under the conditional law.  It carries no information about the hidden
blocks, and therefore
\begin{equation}\label{eq:zero-query-increment}
\mathbb E_\Xi
\left[
\left\|
\rvec{m}_t-\rvec{m}_{t-1}
\right\|_2^2
\;\middle|\;
\mathsf T_{t-1}=\tau_{t-1}
\right]
=
0.
\end{equation}

Suppose now that $a_t(\tau_{t-1})>0$,  
and define the unit direction
\[
\mathbf v_t(\tau_{t-1})
:=
\frac{
\mathbf q_t(\tau_{t-1})
}{
a_t(\tau_{t-1})
}
\in
\mathbb S^{d-1}.
\]
Under
\(\mathbb P_{\tau_{t-1}}\), define
\[
\rsc{R}_{i,t}^{\,\tau_{t-1}}
:=
\left\langle
\rvec{u}_i^{\,\tau_{t-1}},
\mathbf v_t(\tau_{t-1})
\right\rangle,\quad  \rsc{Y}_t^{\,\tau_{t-1}}
:=
\max_{1\le i\le k}
\rsc{R}_{i,t}^{\,\tau_{t-1}},\quad\text{and}\quad
\rsc{I}_t^{\,\tau_{t-1}}
:=
\min
\argmax_{1\le i\le k}
\rsc{R}_{i,t}^{\,\tau_{t-1}}.
\]
The actual unscaled maximum observation conditioned on 
\(\mathsf T_{t-1}=\tau_{t-1}\)
is $a_t(\tau_{t-1})
\rsc{Y}_t^{\,\tau_{t-1}}$, 
and positive rescaling does not change the active index.  Since
\(a_t(\tau_{t-1})>0\) is deterministic under the conditional law,
\[
\sigma\left(
\rsc{I}_t^{\,\tau_{t-1}},
a_t(\tau_{t-1})
\rsc{Y}_t^{\,\tau_{t-1}}
\right)
=
\sigma\left(
\rsc{I}_t^{\,\tau_{t-1}},
\rsc{Y}_t^{\,\tau_{t-1}}
\right).
\] 
Because the query-coordinate is already determined by the past
transcript, the only new information at time \(t\) is this active-index
and maximum-value pair.  The posterior-kernel disintegration therefore
gives
\begin{equation}
\begin{aligned}
&
\mathbb E_\Xi
\left[
\left\|
\rvec{m}_t-\rvec{m}_{t-1}
\right\|_2^2
\;\middle|\;
\mathsf T_{t-1}=\tau_{t-1}
\right]
\\
=
&\mathbb E_{\tau_{t-1}}
\left\|
\mathbb E_{\tau_{t-1}}
\left[
\rvec{s}^{\,\tau_{t-1}}
\mid
\rsc{I}_t^{\,\tau_{t-1}},
\rsc{Y}_t^{\,\tau_{t-1}}
\right]
-
\mathbb E_{\tau_{t-1}}
\rvec{s}^{\,\tau_{t-1}}
\right\|_2^2.
\end{aligned}
\label{eq:conditional-increment-identification}
\end{equation}

The conditional block laws satisfy all assumptions of
Theorem~\ref{thm:one-step}.  Applying that theorem in the direction
\(\mathbf v_t(\tau_{t-1})\) yields
\begin{equation}\label{eq:conditional-increment-bound-transcript}
\mathbb E_\Xi
\left[
\left\|
\rvec{m}_t-\rvec{m}_{t-1}
\right\|_2^2
\;\middle|\;
\mathsf T_{t-1}=\tau_{t-1}
\right]
\le
C\log(ek).
\end{equation}

Combining
\eqref{eq:zero-query-increment}
and
\eqref{eq:conditional-increment-bound-transcript},
we obtain, for
\(\mathbb P_{\mathsf T_{t-1}}\)-almost every
\(\tau_{t-1}\),
\[
\mathbb E_\Xi
\left[
\left\|
\rvec{m}_t-\rvec{m}_{t-1}
\right\|_2^2
\;\middle|\;
\mathsf T_{t-1}=\tau_{t-1}
\right]
\le
C\log(ek).
\]
Equivalently,
\begin{equation}
\mathbb E_\Xi
\left[
\left\|
\rvec{m}_t-\rvec{m}_{t-1}
\right\|_2^2
\mid
\mathcal G_{t-1}
\right]
\le
C\log(ek)
\qquad
\mathbb P_\Xi\text{-almost surely}.
\label{eq:conditional-increment-bound}
\end{equation}

Taking expectations in
\eqref{eq:conditional-increment-bound}
and summing over \(t\), equation
\eqref{eq:posterior-martingale-orthogonality}
gives
\[
\mathbb E_\Xi
\left\|
\rvec{m}_T
\right\|_2^2
\le
CT\log(ek).
\]
Thus
\eqref{eq:augmented-energy-unscaled}
holds with $C_{\mathrm{en}}
:=
C$.

Finally,
\[
\rvec{m}^{\mathrm{sc}}_T
=
\mathbb E_\Xi
\left[
\rvec{s}^{\mathrm{sc}}
\mid
\mathcal G_T
\right]
=
\frac{
\rvec{m}_T
}{
\sqrt d
},
\]
and hence
\[
\mathbb E_\Xi
\left\|
\rvec{m}^{\mathrm{sc}}_T
\right\|_2^2
=
\frac1d
\mathbb E_\Xi
\left\|
\rvec{m}_T
\right\|_2^2
\le
C_{\mathrm{en}}
\frac{T\log(ek)}{d}.
\]
\end{proof}
\paragraph{Value-only consequence.}

The actual oracle reveals only the scalar value history.  Recall that
\[
\mathcal F_T
=
\sigma\left(
\rsc{Y}^{\mathrm{sc}}_1,
\ldots,
\rsc{Y}^{\mathrm{sc}}_T
\right)
=
\sigma\left(
\rsc{Y}_1,
\ldots,
\rsc{Y}_T
\right),
\]
and $\mathcal F_T
\subseteq
\mathcal G_T$.  
By the tower property,
\[
\mathbb E_\Xi
\left[
\rvec{s}
\mid
\mathcal F_T
\right]
=
\mathbb E_\Xi
\left[
\rvec{m}_T
\mid
\mathcal F_T
\right].
\]
Conditional Jensen's inequality and
Proposition~\ref{prop:augmented-energy}
therefore give
\[
\begin{aligned}
\mathbb E_\Xi
\left\|
\mathbb E_\Xi
\left[
\rvec{s}
\mid
\mathcal F_T
\right]
\right\|_2^2
=
\mathbb E_\Xi
\left\|
\mathbb E_\Xi
\left[
\rvec{m}_T
\mid
\mathcal F_T
\right]
\right\|_2^2
\le
\mathbb E_\Xi
\left\|
\rvec{m}_T
\right\|_2^2
\le
C_{\mathrm{en}}
T\log(ek).
\end{aligned}
\]
Consequently,
\[
\mathbb E_\Xi
\left\|
\mathbb E_\Xi
\left[
\rvec{s}^{\mathrm{sc}}
\mid
\mathcal F_T
\right]
\right\|_2^2
\le
C_{\mathrm{en}}
\frac{T\log(ek)}{d}.
\]

Thus the same posterior-mean energy bound holds for the value history
actually observed by the algorithm.  The lower-bound proof nevertheless
continues to condition on \(\mathcal G_T\), because the conditional
residual analysis relies on the product posterior structure available
after the active indices are revealed.

The posterior-mean estimate does not by itself control the centered
posterior residual in a direction selected from the transcript.  Define $\rvec{r}_T
:=
\rvec{s}^{\mathrm{sc}}
-
\rvec{m}^{\mathrm{sc}}_T$. 
Conditionally on \(\mathcal G_T\), this residual is a sum of independent
centered posterior blocks.  The following lemma gives the required
subgaussian tail bound.
\begin{lemma}[Conditional residual tail]
\label{lem:conditional-residual-tail}
Define $\rvec{r}_T
:=
\rvec{s}^{\mathrm{sc}}
-
\mathbb E_\Xi
\left[
\rvec{s}^{\mathrm{sc}}
\mid
\mathcal G_T
\right]
=
\rvec{s}^{\mathrm{sc}}
-
\rvec{m}^{\mathrm{sc}}_T$. 
Let $\rvec{v}_T$ 
be any
\(\mathcal G_T\)-measurable random vector taking values in
\(B_2^d\).  Then, for every \(u>0\),
\begin{equation}
\mathbb P_\Xi
\left(
\left\langle
\rvec{r}_T,
\rvec{v}_T
\right\rangle
\le
-u
\;\middle|\;
\mathcal G_T
\right)
\le
\exp\left(
-\frac{du^2}{2k}
\right)
\qquad
\mathbb P_\Xi\text{-almost surely}.
\label{eq:conditional-residual-tail}
\end{equation}
Consequently,
\begin{equation}
\mathbb P_\Xi
\left(
\left\langle
\rvec{r}_T,
\rvec{v}_T
\right\rangle
\le
-u
\right)
\le
\exp\left(
-\frac{du^2}{2k}
\right).
\label{eq:unconditional-residual-tail}
\end{equation}
\end{lemma}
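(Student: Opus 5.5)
Conditionally on \(\mathcal G_T\), Lemma~\ref{lem:regular-product-posterior} gives, on the full-measure transcript set \(\mathsf X_T^{\mathrm{post}}\), a product posterior \(K_T(\mathsf T_T,\cdot)=\bigotimes_j\mu_{j,T}(\mathsf T_T,\cdot)\), where each block is \(1\)-strongly log-concave relative to its affine support \(H_{j,T}\). Because \(\rvec v_T\) is \(\mathcal G_T\)-measurable, it is a Borel function of \(\mathsf T_T\). By the disintegration \eqref{eq:posterior-kernel-disintegration}, the conditional probability in \eqref{eq:conditional-residual-tail} therefore equals, for \(\mathbb P_{\mathsf T_T}\)-a.e.\ \(\tau\), the probability under \(K_T(\tau,\cdot)\) that \(\langle\sum_j(\mathbf b_j-\mathbf m_j(\tau)),\mathbf v\rangle/\sqrt d\le -u\). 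Here \(\mathbf v=\mathbf v(\tau)\in B_2^d\) is fixed and \(\mathbf m_j(\tau)\) is the barycenter of \(\mu_{j,T}(\tau,\cdot)\). We also identify \(\rvec m_T^{\mathrm{sc}}\) with \(\sum_j\mathbf m_j(\mathsf T_T)/\sqrt d\) almost surely. It thus suffices to prove a deterministic bound for a fixed product of such blocks and a fixed vector.

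For a single block \(\mu\) on an affine space \(H=\mathbf h+L\) with density \(e^{-\mathcal V}\), where \(\mathcal V-\frac12\|\cdot\|^2\) is convex, we claim that every \(1\)-Lipschitz function \(g\) satisfies \(\log\mathbb E e^{\lambda(g-\mathbb E g)}\le\lambda^2/2\). This is the Bakry--\'Emery/Herbst consequence of strong log-concavity: \(1\)-strongly log-concave measures satisfy a log-Sobolev inequality with constant \(1\), and Herbst's argument gives subgaussianity with variance proxy \(1\). We apply it after identifying \(H\) isometrically with \(\mathbb R^{\dim L}\). The potential may take the value \(+\infty\) through the convex indicator of \(C_{j,T}\), so we approximate it by the Moreau envelopes of the indicator. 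The approximating potentials remain \(1\)-strongly convex, the log-Sobolev inequality holds uniformly for them, and the MGF bound passes to the limit by dominated convergence. An alternative route is the Caffarelli contraction to the standard Gaussian on \(L\), which yields the same MGF bound directly. In the dimension-zero case the block is a Dirac mass and the bound is trivial. Applying this to the linear function \(\mathbf b\mapsto\langle\mathbf b,\mathbf v\rangle\), which has Lipschitz constant \(\|P_L\mathbf v\|\le1\), gives variance proxy at most \(1\) per block.

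By independence of the blocks under \(K_T(\tau,\cdot)\), the MGFs multiply. Hence \(X:=\sum_j\langle\mathbf b_j-\mathbf m_j,\mathbf v\rangle\) satisfies \(\log\mathbb E e^{-\lambda X}\le k\lambda^2/2\). Chernoff's bound then gives \(\mathbb P(X\le-\sqrt d\,u)\le\exp(-du^2/(2k))\). This is \eqref{eq:conditional-residual-tail} for a.e.\ \(\tau\), and therefore holds \(\mathbb P_\Xi\)-almost surely. Taking expectations yields \eqref{eq:unconditional-residual-tail}.

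The main obstacle is the single-block subgaussian bound for strongly log-concave measures with hard convex constraints on an affine support. In particular, one must justify the constant-\(1\) variance proxy despite the indicator potential. I would handle this by the Moreau-envelope approximation combined with the standard log-Sobolev argument. The measurability points, namely the \(\tau\)-dependence of \(\mathbf v\) and of the barycenters, are routine given Lemma~\ref{lem:regular-product-posterior}.
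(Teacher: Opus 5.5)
Your proposal is correct and follows the paper's proof essentially step for step. Both arguments use the product posterior kernel on the full-measure set $\mathsf X_T^{\mathrm{post}}$, a Doob--Dynkin reduction to a fixed direction $\mathbf v(\tau)\in B_2^d$ with fixed block barycenters, and a per-block exponential-moment bound with variance proxy at most $1$. They then multiply over the $k$ independent blocks, apply a conditional Chernoff bound, and finish with the tower property.

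The only difference is how the per-block bound is obtained. The paper cites Lemma~\ref{lem:slc-subgaussian-affine}, which handles the indicator potential and the affine support by Gaussian smoothing plus Brascamp--Lieb on linearly tilted measures; the added Gaussian factor cancels exactly, so no limit is needed. Your log-Sobolev/Herbst route with Moreau-envelope approximation (or Caffarelli's contraction) gives the stronger statement for all $1$-Lipschitz functions. It costs heavier machinery and a limiting argument, while only linear functionals are needed here.
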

\begin{proof}[Proof of Lemma~\ref{lem:conditional-residual-tail}]

By Lemma~\ref{lem:regular-product-posterior}, there exist probability
kernels
\[
\mu_{i,T}:
\mathsf X_T
\times
\mathcal B(\mathbb R^d)
\to
[0,1],
\qquad
i=1,\ldots,k,
\]
such that
\[
K_T
\left(
\tau,
\mathrm  d\mathbf b_1,\cdots ,\mathrm  d\mathbf b_k
\right)
:=
\bigotimes_{i=1}^k
\mu_{i,T}(\tau,\mathrm  d\mathbf b_i)
\]
is a regular conditional distribution of $(\rvec b_1,\ldots,\rvec b_k)$ 
given \(\mathsf T_T\).

Moreover, there exists a Borel set $\mathsf X_T^{\mathrm{post}}
\subseteq
\mathsf X_T$ 
such that $\mathbb P_{\mathsf T_T}
\left(
\mathsf X_T^{\mathrm{post}}
\right)
=
1$ 
and, for every $\tau\in
\mathsf X_T^{\mathrm{post}}$ 
and every \(i\in[k]\), the measure $\mu_{i,T}(\tau,\cdot)$ 
is \(1\)-strongly log-concave relative to an affine support.  
For
\(\tau\in\mathsf X_T\), define
\[
\overline{\mathbf b}_i(\tau)
:=
\int_{\mathbb R^d}
\mathbf b\,
\mathbf 1_{\{
\|\mathbf b\|_2\le2\sqrt d
\}}
\mu_{i,T}(\tau,\mathrm d\mathbf b).
\]
The integrand is bounded. Hence parameterized integration shows that $\tau
\longmapsto
\overline{\mathbf b}_i(\tau)$ 
is Borel measurable on all of
\(\mathsf X_T\).
For $\tau\in\mathsf X_T^{\mathrm{post}},$ 
the posterior factor is supported on $\{
\|\mathbf b\|_2\le2\sqrt d
\}$,  
so the preceding integral is its actual posterior mean.
Consequently,
\begin{equation}\label{eq:posterior-block-mean-kernel}
\overline{\mathbf b}_i(\mathsf T_T)
=
\mathbb E_\Xi
\left[
\rvec b_i
\mid
\mathcal G_T
\right]
\qquad
\mathbb P_\Xi\text{-almost surely}.
\end{equation}
Since $\mathcal G_T
=
\sigma(\mathsf T_T)$ 
and \(\rvec v_T\) is
\(\mathcal G_T\)-measurable with values in \(B_2^d\), the Doob--Dynkin
lemma gives a Borel map $\mathbf v:
\mathsf X_T
\to
B_2^d$ 
such that
\begin{equation}
\rvec v_T
=
\mathbf v(\mathsf T_T)
\qquad
\mathbb P_\Xi\text{-almost surely}.
\label{eq:direction-doob-dynkin}
\end{equation}
In particular,
\[
\|\mathbf v(\tau)\|_2
\le
1
\qquad
\text{for every }\tau\in\mathsf X_T.
\]
Fix
\(\tau\in\mathsf X_T^{\mathrm{post}}\)
and \(i\in[k]\).
Then
\(\mu_{i,T}(\tau,\cdot)\)
is \(1\)-strongly log-concave relative to its affine support, and
\(\overline{\mathbf b}_i(\tau)\)
is its mean.  Hence
Lemma~\ref{lem:slc-subgaussian-affine} gives, for every
\(a\in\mathbb R\),
\begin{equation}
\begin{aligned}
\int_{\mathbb R^d}
\exp\left(
a
\left\langle
\mathbf b
-
\overline{\mathbf b}_i(\tau),
\mathbf v(\tau)
\right\rangle
\right)
\mu_{i,T}(\tau,\mathrm d\mathbf b)
\le
\exp\left(
\frac{a^2}{2}
\|\mathbf v(\tau)\|_2^2
\right)
\le
\exp\left(
\frac{a^2}{2}
\right).
\end{aligned}
\label{eq:conditional-block-mgf}
\end{equation}
If the affine support is zero-dimensional, the centered block is
identically zero, and the same inequality remains valid. 
By linearity of conditional expectation,
\[
\begin{aligned}
\rvec r_T
=
\rvec s^{\mathrm{sc}}
-
\mathbb E_\Xi
\left[
\rvec s^{\mathrm{sc}}
\mid
\mathcal G_T
\right]
=
\frac1{\sqrt d}
\sum_{i=1}^k
\left(
\rvec b_i
-
\mathbb E_\Xi[
\rvec b_i\mid\mathcal G_T]
\right).
\end{aligned}
\]
Therefore,
\begin{equation}
\left\langle
\rvec r_T,
\rvec v_T
\right\rangle
=
\frac1{\sqrt d}
\sum_{i=1}^k
\left\langle
\rvec b_i
-
\mathbb E_\Xi[
\rvec b_i\mid\mathcal G_T],
\rvec v_T
\right\rangle
\qquad
\mathbb P_\Xi\text{-almost surely}.
\label{eq:residual-block-decomposition}
\end{equation}
Fix $\lambda\in\mathbb R$. 
Define, for
\(\tau\in\mathsf X_T\),
\[
\begin{aligned}
M_\lambda(\tau)
:=
\int_{(\mathbb R^d)^k}
\exp\Biggl(
\frac{\lambda}{\sqrt d}
\sum_{i=1}^k
\left\langle
\mathbf b_i
-
\overline{\mathbf b}_i(\tau),
\mathbf v(\tau)
\right\rangle
\Biggr)
K_T
\left(
\tau,
\mathrm  d\mathbf b_1,\cdots, \mathrm  d\mathbf b_k
\right).
\end{aligned}
\]
Since
\(\mathbb P_{\mathsf T_T}
(\mathsf X_T^{\mathrm{post}})=1\),
it suffices to bound
\(M_\lambda(\tau)\)
for
\(\tau\in\mathsf X_T^{\mathrm{post}}\). 
Since \(K_T\) is the product kernel, Tonelli's theorem gives, for every
\(\tau\in\mathsf X_T^{\mathrm{post}}\),
\[
\begin{aligned}
M_\lambda(\tau)
&=
\prod_{i=1}^k
\int_{\mathbb R^d}
\exp\left(
\frac{\lambda}{\sqrt d}
\left\langle
\mathbf b
-
\overline{\mathbf b}_i(\tau),
\mathbf v(\tau)
\right\rangle
\right)
\mu_{i,T}(\tau,\mathrm  d\mathbf b).
\end{aligned}
\]
Applying
\eqref{eq:conditional-block-mgf}
with $a
=
\frac{\lambda}{\sqrt d}$ 
to each factor yields
\[
\begin{aligned}
M_\lambda(\tau)
\le
\prod_{i=1}^k
\exp\left(
\frac{\lambda^2}{2d}
\right)
=
\exp\left(
\frac{\lambda^2k}{2d}
\right).
\end{aligned}
\]
Using
\eqref{eq:posterior-block-mean-kernel},
\eqref{eq:direction-doob-dynkin},
\eqref{eq:residual-block-decomposition},
and the regular conditional distribution identity for \(K_T\), we obtain
\begin{equation}
\begin{aligned}
\mathbb E_\Xi
\left[
\exp\left(
\lambda
\left\langle
\rvec r_T,
\rvec v_T
\right\rangle
\right)
\middle|
\mathcal G_T
\right]
=
M_\lambda(\mathsf T_T)
\le
\exp\left(
\frac{\lambda^2k}{2d}
\right)
\qquad
\mathbb P_\Xi\text{-almost surely}.
\end{aligned}
\label{eq:conditional-residual-mgf}
\end{equation}
Let \(u>0\).  For every \(\lambda>0\), conditional Markov's inequality,
applied to $\exp\left(
-\lambda
\left\langle
\rvec r_T,
\rvec v_T
\right\rangle
\right)$,  
gives
\[
\begin{aligned}
&
\mathbb P_\Xi
\left(
\left\langle
\rvec r_T,
\rvec v_T
\right\rangle
\le
-u
\;\middle|\;
\mathcal G_T
\right)
\\
&\qquad=
\mathbb P_\Xi
\left(
\exp\left(
-\lambda
\left\langle
\rvec r_T,
\rvec v_T
\right\rangle
\right)
\ge
e^{\lambda u}
\;\middle|\;
\mathcal G_T
\right)
\\
&\qquad\le
e^{-\lambda u}
\mathbb E_\Xi
\left[
\exp\left(
-\lambda
\left\langle
\rvec r_T,
\rvec v_T
\right\rangle
\right)
\middle|
\mathcal G_T
\right]
\\
&\qquad\le
\exp\left(
-\lambda u
+
\frac{\lambda^2k}{2d}
\right),
\end{aligned}
\]
where the final inequality follows from
\eqref{eq:conditional-residual-mgf}
with \(-\lambda\) in place of \(\lambda\). 
The exponent is minimized at $\lambda
=
\frac{du}{k}$.
Substituting this value gives
\[
\mathbb P_\Xi
\left(
\left\langle
\rvec r_T,
\rvec v_T
\right\rangle
\le
-u
\;\middle|\;
\mathcal G_T
\right)
\le
\exp\left(
-\frac{du^2}{2k}
\right)
\qquad
\mathbb P_\Xi\text{-almost surely}.
\]
This proves
\eqref{eq:conditional-residual-tail}.
Finally, taking expectations and using the tower property,
\[
\begin{aligned}
\mathbb P_\Xi
\left(
\left\langle
\rvec r_T,
\rvec v_T
\right\rangle
\le
-u
\right)=
\mathbb E_\Xi
\left[
\mathbb P_\Xi
\left(
\left\langle
\rvec r_T,
\rvec v_T
\right\rangle
\le
-u
\;\middle|\;
\mathcal G_T
\right)
\right]
\le
\exp\left(
-\frac{du^2}{2k}
\right).
\end{aligned}
\]
This proves
\eqref{eq:unconditional-residual-tail}.
\end{proof}

\subsection{Proof of Theorem~\ref{thm:one-step}}
\label{sec:one-step}

We prove Theorem~\ref{thm:one-step} in two stages.  First, we establish
the estimate for smooth full-dimensional block laws whose residual
potentials have finite global Hessian bounds.  These upper Hessian bounds
are used only to justify differentiation of Gaussian-fiber marginals;
all quantitative estimates are uniform in their numerical values.  This
gives Proposition~\ref{prop:one-step-smooth}.

Second, we pass to general \(1\)-strongly log-concave laws on affine
supports by Gaussian smoothing and deterministic priority perturbations.
The Gaussian perturbation produces smooth full-dimensional laws with
finite, though possibly large, residual Hessian bounds.  Since the smooth
estimate is uniform in those bounds, posterior-energy lower
semicontinuity permits passage to the limit.

Throughout this section, smooth refers to the probability laws of the
hidden blocks, not to the optimization objectives.  In particular, the
hard objectives $f_\Xi(\mathbf x)
=
\max_{1\le i\le k}
\left\langle
\rvec{a}_i,
\mathbf x
\right\rangle$ 
remain nonsmooth. 
We write
\[
\phi(t)
:=
\frac1{\sqrt{2\pi}}e^{-t^2/2},
\qquad
\Phi(t)
:=
\int_{-\infty}^t\phi(s)\,\mathrm  ds
\]
for the standard Gaussian density and distribution function.

\subsubsection{Smooth Full-Dimensional Case}

We first prove the one-step estimate for smooth full-dimensional block
laws with bounded residual Hessians.  The upper Hessian bounds enter only
through
Lemmas~\ref{lem:smooth-marginal-regularity}
and~\ref{lem:gaussian-fiber-domination}; the final constant is independent
of their numerical values.

\begin{proposition}[One-step selection, smooth bounded-Hessian case]
\label{prop:one-step-smooth}
Let $\rvec{u}_1,\ldots,\rvec{u}_k$ 
be independent random vectors with positive \(C^2\) densities
\[
p_i(\mathbf u)
=
\mathcal Z_i^{-1}
\exp\left(
-\frac12\|\mathbf u\|_2^2
-
Q_i(\mathbf u)
\right),
\qquad
\mathbf u\in\mathbb R^d,
\]
where
\(Q_i\in C^2(\mathbb R^d)\)
is convex and, for some finite \(L_i\),
\[
\mathbf O_d
\preceq
\nabla^2Q_i(\mathbf u)
\preceq
L_i\mathbf I_d
\qquad
\text{for every }\mathbf u\in\mathbb R^d.
\]
Fix $\mathbf v\in\mathbb S^{d-1}$ 
and define
\[
\rsc{R}_i
:=
\langle\rvec{u}_i,\mathbf v\rangle,
\quad
\rsc{Y}
:=
\max_{1\le i\le k}\rsc{R}_i,\quad \rsc{I}
:=
\iota((\rsc{R}_i)_{i=1}^k),
\quad\text{and} \quad
\rvec{s}
:=
\sum_{i=1}^k\rvec{u}_i.
\]
Then
\[
\mathbb E
\left\|
\mathbb E[
\rvec{s}\mid\rsc{I},\rsc{Y}]
-
\mathbb E\rvec{s}
\right\|_2^2
\le
C\log(ek),
\]
where \(C>0\) is universal and independent of
\(L_1,\ldots,L_k\).
\end{proposition}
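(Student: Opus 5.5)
The plan is to compute the conditional mean of $\rvec{s}$ blockwise, using the product structure. By independence and the smallest-index rule, on $\{\rsc I=i,\rsc Y=y\}$ the blocks remain conditionally independent. The winner block $\rvec u_i$ has the slice law of $\mu_i$ given $\rsc R_i=y$. Each loser $\rvec u_j$, $j\neq i$, has the tail law given $\rsc R_j\le y$; by smoothness, strict versus non-strict inequality is immaterial. Writing $F_j(y):=\mathbb P(\rsc R_j\le y)$,
\[
h_i(y):=\mathbb E[\rvec u_i\mid \rsc R_i=y]-\mathbb E\rvec u_i,\qquad
g_j(y):=\mathbb E[\rvec u_j\mid \rsc R_j\le y]-\mathbb E\rvec u_j,
\]
we get $\mathbb E[\rvec s\mid \rsc I,\rsc Y]-\mathbb E\rvec s=h_{\rsc I}(\rsc Y)+\sum_{j\ne \rsc I}g_j(\rsc Y)$. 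It then suffices to show separately that $\mathbb E\|h_{\rsc I}(\rsc Y)\|^2\lesssim\log(ek)$ and $\mathbb E\|\sum_{j\ne\rsc I}g_j(\rsc Y)\|^2\lesssim\log(ek)$. The Hessian bounds $L_i$ are used only to justify differentiating $y\mapsto h_i(y)$ and the marginal densities of $\rsc R_i$ under the integral. All constants come from the Brascamp--Lieb/Poincaré inequality, namely $\operatorname{Var}\langle\rvec u_j,\mathbf w\rangle\le1$, and from $1$-subgaussianity of centred linear functionals (Lemma~\ref{lem:slc-subgaussian-affine}). Neither depends on $L_i$.

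\emph{Loser tails.} For a unit $\mathbf w$, $\langle g_j(y),\mathbf w\rangle=\operatorname{Cov}(\langle\rvec u_j,\mathbf w\rangle,\mathbf 1\{\rsc R_j\le y\})/F_j(y)$. Let $\varepsilon_j:=1-F_j(y)$. Subgaussianity then gives $\|g_j(y)\|\lesssim \varepsilon_j\sqrt{\log(e/\varepsilon_j)}/(1-\varepsilon_j)$, and Cauchy--Schwarz gives the cruder bound $\|g_j(y)\|^2\le \varepsilon_j/(1-\varepsilon_j)$.

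The key input is an entropy identity for the observation law. Under $\Lambda_i(\mathrm dy)=\prod_{j\ne i}F_j(y)\,p_{\rsc R_i}(y)\,\mathrm dy$ one has
\[
\mathbb E\Bigl[\sum_{j\ne\rsc I}\log\tfrac1{F_j(\rsc Y)}\Bigr]\le \log k+O(1),
\]
because $\prod_{j\neq i}F_j(y)$ is the conditional probability that block $i$ wins given $\rsc R_i=y$. Averaging its negative logarithm against the winner's law is controlled by the entropy of $\rsc I$ together with $\int\Lambda_i\log(\Lambda_i/\lambda_i)$. Since $\sum_j\varepsilon_j\le\sum_j\log(1/F_j)$, this controls $\sum_j\varepsilon_j$ in expectation. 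Combined with the triangle inequality $\|\sum_j g_j\|\le\sum_j\|g_j\|$ and the $\varepsilon\sqrt{\log(1/\varepsilon)}$ bound, this should give $\mathbb E\|\sum_{j\ne\rsc I}g_j(\rsc Y)\|^2\lesssim \log(ek)$. The second moment needs care, and I would truncate on $\{\sum_j\varepsilon_j\le C\log(ek)\}$ and use the tail of $\log(1/\prod_j F_j)$.

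\emph{Winner slice.} I would prove a regression estimate of the form $\|h_i(y)\|^2\lesssim 1+(y-\mathbb E\rsc R_i)^2+\log^2\bigl(1/\mathrm{dens}_i(y)\bigr)$, where $\mathrm{dens}_i(y)$ is the density of $\rsc R_i$ normalised by its maximum. The approach is to differentiate $y\mapsto\mathbb E[\rvec u_i\mid\rsc R_i=y]$: the derivative is a slice covariance divided by a slice variance. I would then use that the marginal of $\rsc R_i$ is $1$-strongly log-concave (Prékopa) and that slices of $\mu_i$ remain $1$-strongly log-concave. Separately, subgaussian maxima give $\mathbb E(\rsc Y-\mathbb E\rsc R_{\rsc I})^2\lesssim\log(ek)$. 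The same entropy identity shows that $\rsc Y$ rarely lands where the winner's marginal density is exponentially small relative to $1/k$, so the log-density term is $O(\log k)$ on average.

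\emph{Main obstacle.} The hard step is the winner-slice bound off the $\mathbf v$-axis. The orthogonal part of $h_i(y)$ can, a priori, spike on thin slices where the slice covariance $\operatorname{Cov}(\rvec u_i,\rsc R_i\mid\rsc R_i=y)$ is not small relative to the slice variance. The paper's ``two-dimensional no-spike principle'' addresses exactly this. I would first try to reduce to the two-dimensional marginal of $(\rsc R_i,\langle\rvec u_i,\mathbf w\rangle)$ for the unit vector $\mathbf w\perp\mathbf v$ aligned with $h_i(y)$; this marginal is again $1$-strongly log-concave by Prékopa. On that marginal I would show that $|\langle h_i(y),\mathbf w\rangle|$ can be large only on a $y$-set of small $\rsc R_i$-mass, integrated against the density of $\rsc Y$. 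If the pointwise regression bound fails, I would fall back on bounding only the $\Lambda_i$-average, which is all that the final expectation requires.
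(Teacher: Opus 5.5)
Your decomposition $h_{\rsc I}(\rsc Y)+\sum_{j\ne\rsc I}g_j(\rsc Y)$ matches the paper's. However, the winner-slice step, which you flag as the main obstacle, is left unresolved, and the rest of the argument depends on it.

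\textbf{Winner slice.} As written, your target $\|h_i(y)\|^2\lesssim 1+(y-\mathbb E\rsc R_i)^2+\log^2(1/\mathrm{dens}_i(y))$ is too weak. For i.i.d.\ Gaussian-like blocks the normalised density at the typical winning level is about $1/k$, so this bound yields $\log^2 k$. What is needed is the pointwise bound $\|h_i(y)\|\le C\bigl(1+|\Phi^{-1}(F_i(y))|\bigr)$. Combined with the union bound $\mathbb P(|\Phi^{-1}(F_{\rsc I}(\rsc Y))|\ge r)\le 2k\Phi(-r)$, it gives $\log(ek)$.

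The missing mechanism is a derivative inequality in two dimensions. Fix a unit $\mathbf h\perp\mathbf v$, and write the joint law of $(\rsc R_i,\langle\rvec u_i,\mathbf h\rangle)$ with residual potential $Q$. Set $\Psi(s,y)=\log\int e^{-(w-s)^2/2-Q(y,w)}\,\mathrm dw$, which is concave by Pr\'ekopa.
\begin{itemize}
\item The conditional mean is $m(y)=\Psi_s(0,y)$, so $m'=\Psi_{sy}$.
\item $0\le-\Psi_{ss}\le1$, and the potential of $\rsc R_i$ satisfies $U''=1-\Psi_{yy}$.
\item The $2\times2$ determinant condition then gives $m'(y)^2\le U''(y)$.
\end{itemize}
This derivative bound, together with $1$-subgaussianity of $m(\rsc R_i)$ (by conditional Jensen), feeds a one-dimensional no-spike lemma. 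If the centred $m$ equals $R$ at $y_0$, Cauchy--Schwarz on the component of its $\{\,\cdot>R/2\}$ superlevel set containing $y_0$ forces $U'$ to rise by at least $R^2/(4\cdot\text{length})$ across that component. On the side selected by the sign of $U'(y_0)$, the tail beyond the component then carries at most a $4/R^2$ fraction of the component's mass. This places $y_0$ in an $e^{-cR^2}$ quantile tail.

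Your fallback of bounding only the $\Lambda_i$-average does not bypass this. The inequality $\Lambda_i\le\lambda_i$ alone gives $\sum_i\mathbb E\|h_i(\rsc R_i)\|^2$, which is of order $k$. Truncating would require a dimension-free tail for $\|h_i(\rsc R_i)\|$, and direction-wise subgaussianity does not supply one because the direction of $h_i(y)$ varies with $y$.

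\textbf{Loser tails.} This part is closer; your covariance-with-indicator bound via subgaussianity is a valid substitute for the paper's Talagrand $T_2$ argument. The problem is the scale you run it at.
\begin{itemize}
\item Your entropy estimate $\mathbb E\sum_{j\ne\rsc I}\log(1/F_j(\rsc Y))\le\log k+O(1)$ is true but far too loose. Since $\prod_jF_j$ is the distribution function of $\rsc Y$, the probability integral transform gives $\rsc L:=\sum_{j=1}^k\log(1/F_j(\rsc Y))\sim\Exp(1)$ exactly.
\item If you only know that $\Delta=\sum_{F_j>1/2}(1-F_j(\rsc Y))$ is of order $\log k$, and truncate at $C\log(ek)$, the entropy step $\sum_j\|g_j\|\lesssim\Delta\sqrt{\log(ek/\Delta)}$ is of order $\log^{3/2}k$. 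After squaring this is $\log^3 k$.
\item With $\Delta\le\rsc L$ instead, the sum is $\lesssim(1+\rsc L)\sqrt{\log(ek)}$, whose second moment is $O(\log(ek))$.
\end{itemize}
You also need a separate bound for losers with $F_j(\rsc Y)\le 1/2$, where both of your stated bounds blow up as $F_j\to0$. Applying the same subgaussian argument to the event $\{\rsc R_j\le y\}$ itself gives $\|g_j(y)\|\le\sqrt{2\log(1/F_j(y))}$. Summing over $\{F_j\le1/2\}$ then gives a total $\lesssim\rsc L$.
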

\begin{lemma}[One-dimensional max-selection estimate]
\label{lem:hazard-detailed}
Let $\rsc{R}_1,\ldots,\rsc{R}_k$ 
be independent real-valued random variables whose distribution functions $F_i(y)
:=
\mathbb P(\rsc{R}_i\le y)$ 
are continuous and strictly increasing.  Define
\[
\rsc{Y}
:=
\max_{1\le i\le k}\rsc{R}_i,
\quad
\rsc{I}
:=
\iota((\rsc{R}_i)_{i=1}^k),\quad\text{and} \quad z_i(y)
:=
\Phi^{-1}(F_i(y)). 
\]
Then
\begin{equation}
\mathbb E
\left[
\left(
1+
|z_{\rsc{I}}(\rsc{Y})|
+
\sum_{\ell\ne\rsc{I}}
\frac{
\phi(z_\ell(\rsc{Y}))
}{
\Phi(z_\ell(\rsc{Y}))
}
\right)^2
\right]
\le
C\log(ek).
\label{eq:hazard-detailed-conclusion}
\end{equation}
\end{lemma}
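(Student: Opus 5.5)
Since each $F_i$ is continuous and strictly increasing, the variables $\rsc{Z}_i:=z_i(\rsc{R}_i)=\Phi^{-1}(F_i(\rsc{R}_i))$ are i.i.d.\ standard Gaussians. Ties occur with probability zero, so $\rsc I$ is almost surely the unique maximizer. My plan is to reduce everything to explicit integrals against the joint law of $(\rsc I,\rsc Y)$ and then estimate them with Gaussian tail facts. By Minkowski, i.e.\ $(a+b+c)^2\le 3(a^2+b^2+c^2)$, it suffices to bound separately
\[
\mathbb E\,z_{\rsc I}(\rsc Y)^2
\quad\text{and}\quad
\mathbb E\Bigl(\sum_{\ell\ne\rsc I}h(z_\ell(\rsc Y))\Bigr)^2,
\qquad h:=\phi/\Phi .
\]

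\textbf{The winner term.} Here $z_{\rsc I}(\rsc Y)=\rsc Z_{\rsc I}$, and $|\rsc Z_{\rsc I}|\le\max_i|\rsc Z_i|$. This gives
\[
\mathbb E\,\rsc Z_{\rsc I}^2\le\mathbb E\max_i\rsc Z_i^2\le C\log(ek)
\]
by the standard Gaussian maximal inequality.

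\textbf{The loser term: exact density.} The joint law of $(\rsc I,\rsc Y)$ is
\[
\mathbb P(\rsc I=i,\rsc Y\in\mathrm dy)=\prod_{j\ne i}F_j(y)\,\mathrm dF_i(y).
\]
I would substitute $w=z_i(y)$, so that $\mathrm dF_i(y)=\phi(w)\,\mathrm dw$, and write $\zeta_\ell:=z_\ell(y)$. The loser functional becomes
\[
\sum_i\int \Bigl(\sum_{\ell\ne i}h(\zeta_\ell)\Bigr)^2\prod_{j\ne i}\Phi(\zeta_j)\,\phi(w)\,\mathrm dw .
\]
The factor $\Phi(\zeta_\ell)$ in the weight cancels against the denominator of $h$. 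This is precisely why the quantity is controllable: conditionally, each loser $\ell$ is a Gaussian truncated to $\{\rsc Z_\ell\le\zeta_\ell\}$, and $h(\zeta_\ell)=\phi(\zeta_\ell)/\Phi(\zeta_\ell)$ is the reverse hazard. Expanding the square gives diagonal terms $h(\zeta_\ell)^2$ and cross terms $h(\zeta_\ell)h(\zeta_m)$.

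I plan to handle each term with the following identity. Since $\phi(\zeta_\ell)\,\mathrm d\zeta_\ell=\mathrm dF_\ell$, the expression
\[
\prod_{j\ne i}\Phi(\zeta_j)\,h(\zeta_\ell)\,\phi(w)\,\mathrm dw
\]
equals the density of the event ``$\rsc R_i$ and $\rsc R_\ell$ both sit at the level $y$, while all others lie below $y$'', up to exchanging the roles of $i$ and $\ell$. Summing over $i$ then converts $\mathbb E\sum_{\ell\ne\rsc I}h(\zeta_\ell)$ into a second-order-statistic density integral. Its value should be of order $\mathbb E|\rsc Z_{(1)}|$-type quantities, that is $O(\sqrt{\log k})$. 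The cross terms give third-order-statistic integrals in the same way.

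\textbf{Main obstacle: the diagonal terms.} The identity only absorbs one factor of $h$, so $h(\zeta_\ell)^2$ leaves one uncancelled $h$. I would use the bound $h(\zeta)\le C(1+|\zeta|)$ for $\zeta\le 0$, and $h\le C$ for $\zeta\ge0$. The first bound needs the fact that the maximum dominates the loser, i.e.\ $\zeta_\ell$ is the Gaussian quantile of $\rsc Y$ under $F_\ell$.

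The difficulty is that the marginals $F_\ell$ differ, so $\zeta_\ell$ need not be comparable to $w$. To close the argument I would bound
\[
(1+|\zeta_\ell|)\le 1+|w|+|\zeta_\ell-w| ,
\]
and estimate $|\zeta_\ell|$ on the second-order event through the $\ell$-th Gaussian variable itself. The key observation is that, on that event, $\zeta_\ell$ is the realized value $\rsc Z_\ell$, because $\rsc R_\ell=y$. Its square is then dominated by $\max_i\rsc Z_i^2$, which gives $O(\log(ek))$.

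The plan therefore rests on two reductions. Reverse-hazard factors become order-statistic densities. Every leftover polynomial factor is a realized Gaussian coordinate, and so is controlled by the Gaussian maximal inequality.
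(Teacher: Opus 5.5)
Your winner-term argument is correct and matches the paper's union bound, since $z_{\rsc I}(\rsc Y)=\rsc Z_{\rsc I}$ with $\rsc Z_1,\ldots,\rsc Z_k$ i.i.d.\ $N(0,1)$. The loser term, however, is not established, and the identity you build it on does not give what you need. In $\prod_{j\ne i,\ell}\Phi(\zeta_j)\,\phi(\zeta_\ell)\,\phi(w)\,\mathrm dw$, the factor $\phi(\zeta_\ell)$ is the density of $\rsc Z_\ell$ with respect to $\mathrm d\zeta_\ell$, but the only integration variable is $w$. So this is at best a ``density'' of the null tie event $\{\rsc R_i=\rsc R_\ell=y\}$. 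It is not a second-order-statistic density, which would carry a factor $1-F_i(y)$ instead of a second density. Its total mass is the unknown $\E\sum_{\ell\ne\rsc I}h(z_\ell(\rsc Y))$, not $1$. What the computation rigorously yields is
\[
\E\bigl[\mathbf 1_{\{\rsc I\ne\ell\}}\,h(z_\ell(\rsc Y))\,g(\rsc Y)\bigr]
=\E\bigl[\phi(z_\ell(\rsc M_\ell))\,g(\rsc M_\ell)\bigr],
\qquad
\rsc M_\ell:=\max_{j\ne\ell}\rsc R_j .
\]
Here $z_\ell(\rsc M_\ell)=\Phi^{-1}(F_\ell(\rsc R_j))$ for the maximizing index $j\ne\ell$. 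When the marginals differ, this is not a realized Gaussian coordinate $\rsc Z_j=\Phi^{-1}(F_j(\rsc R_j))$. So there is no probability space on which ``$\zeta_\ell$ is the realized $\rsc Z_\ell$, hence dominated by $\max_i\rsc Z_i^2$.'' Read as a limit over near-tie events of probability $O(\epsilon)$, that step would need a Gaussian maximal inequality conditional on events whose probability tends to zero, and no such inequality is available.

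The parts you treat as routine are where the content of the lemma lies. The first-order mass is only asserted (``should be of order''). The cross terms, dismissed as handled ``in the same way'', are the dominant contribution, not the diagonal: for identical marginals the diagonal part is $O(\log k/k)$, while the $(k-1)(k-2)$ cross terms carry the full $\Theta(\log k)$. Bounding them for arbitrary heterogeneous $F_\ell$ requires a pointwise control of $\sum_\ell h(z_\ell(y))$, which your plan never supplies.

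The missing idea is the paper's. Set $\rsc U_\ell:=F_\ell(\rsc Y)$. Then $\rsc L:=-\sum_\ell\log\rsc U_\ell=-\log H(\rsc Y)\sim\Exp(1)$, because $H=\prod_\ell F_\ell$ is the continuous CDF of $\rsc Y$. Pointwise, indices with $\rsc U_\ell\le\frac12$ contribute at most $C\log(1/\rsc U_\ell)$ each, summing to at most $C\rsc L$. Indices with $\rsc U_\ell>\frac12$ contribute at most $C\rsc D_\ell\sqrt{\log(e/\rsc D_\ell)}$, where $\rsc D_\ell:=1-\rsc U_\ell\le-\log\rsc U_\ell$. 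Cauchy--Schwarz and the entropy bound turn their sum into $C\rsc{\Delta}\sqrt{\log(ek/\rsc{\Delta})}\le C(1+\rsc L)\sqrt{\log(ek)}$, with $\rsc{\Delta}:=\sum_\ell\rsc D_\ell\le\rsc L$. Hence $\sum_\ell h(z_\ell(\rsc Y))\le C(1+\rsc L)\sqrt{\log(ek)}$ almost surely, and squaring gives the second-moment bound with no expansion into diagonal and cross terms.
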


\begin{proof}[Proof of Lemma~\ref{lem:hazard-detailed}]
Since all summands are nonnegative,
\[
\sum_{\ell\ne\rsc{I}}
\frac{
\phi(z_\ell(\rsc{Y}))
}{
\Phi(z_\ell(\rsc{Y}))
}
\le
\sum_{\ell=1}^k
\frac{
\phi(z_\ell(\rsc{Y}))
}{
\Phi(z_\ell(\rsc{Y}))
}.
\]
Define $\rsc{U}_\ell
:=
F_\ell(\rsc{Y})$ for $\ell\in[k]$,  
and
\[
\Lambda(u)
:=
\frac{
\phi(\Phi^{-1}(u))
}{
u
},
\qquad
u\in(0,1).
\]
Since every \(F_\ell\) is strictly increasing, $0
<
\rsc{U}_\ell
<
1$  almost surely.  
Moreover,
\[
\frac{
\phi(z_\ell(\rsc{Y}))
}{
\Phi(z_\ell(\rsc{Y}))
}
=
\Lambda(\rsc{U}_\ell).
\]
The distribution function of $\rsc{Y}
=
\max_{1\le i\le k}\rsc{R}_i$
is
\[
H(y)
:=
\mathbb P(\rsc{Y}\le y)
=
\prod_{\ell=1}^k
F_\ell(y),
\]
by independence.  Since \(H\) is continuous,
Lemma~\ref{lem:PIT} gives $H(\rsc{Y})
\sim
\operatorname{Unif}(0,1)$. 
Consequently,
\[
\rsc{L}
:=
-\log H(\rsc{Y})
=
\sum_{\ell=1}^k
-\log\rsc{U}_\ell
\sim
\operatorname{Exp}(1).
\]

Define the random index sets
\[
\mathcal I_-
:=
\left\{
\ell\in[k]:
\rsc{U}_\ell\le\frac12
\right\},
\qquad
\mathcal I_+
:=
\left\{
\ell\in[k]:
\rsc{U}_\ell>\frac12
\right\}.
\]

For $\ell\in\mathcal I_-$, 
the first estimate in
Lemma~\ref{lem:normal-hazard} gives
\[
\Lambda(\rsc{U}_\ell)
\le
C
\sqrt{
\log\frac1{\rsc{U}_\ell}
}.
\]
Since $\log\frac1{\rsc{U}_\ell}
\ge
\log2$,  
we have
\[
\sqrt{
\log\frac1{\rsc{U}_\ell}
}
\le
C
\log\frac1{\rsc{U}_\ell}.
\]
Therefore
\begin{equation}
\begin{aligned}
\sum_{\ell\in\mathcal I_-}
\Lambda(\rsc{U}_\ell)
\le
C
\sum_{\ell\in\mathcal I_-}
\log\frac1{\rsc{U}_\ell}
\le
C\rsc{L}.
\end{aligned}
\label{eq:hazard-small-u}
\end{equation}

For
\(\ell\in\mathcal I_+\), define $\rsc{D}_\ell
:=
1-\rsc{U}_\ell
\in
\left(
0,\frac12
\right)$,  
and set $\rsc{\Delta}
:=
\sum_{\ell\in\mathcal I_+}
\rsc{D}_\ell$.  
The second estimate in
Lemma~\ref{lem:normal-hazard}, applied with $\delta
=
\rsc{D}_\ell$, 
gives
\[
\Lambda(\rsc{U}_\ell)
\le
C
\rsc{D}_\ell
\sqrt{
\log\frac e{\rsc{D}_\ell}
}.
\]

If $\rsc{\Delta}=0,$ 
then
\(\mathcal I_+=\varnothing\)
and the corresponding sum is zero.  Suppose henceforth that $\rsc{\Delta}>0$. 
By Cauchy--Schwarz,
\begin{equation}\label{eq:hazard-large-u-cauchy1}
\begin{aligned}
\sum_{\ell\in\mathcal I_+}
\rsc{D}_\ell
\sqrt{
\log\frac e{\rsc{D}_\ell}
}
\le
\left(
\sum_{\ell\in\mathcal I_+}
\rsc{D}_\ell
\right)^{1/2}
\left(
\sum_{\ell\in\mathcal I_+}
\rsc{D}_\ell
\log\frac e{\rsc{D}_\ell}
\right)^{1/2}.
\end{aligned}
\end{equation}

Define $p_\ell
:=
\frac{
\rsc{D}_\ell
}{
\rsc{\Delta}
}$ for  $\ell\in\mathcal I_+$. 
Then $\sum_{\ell\in\mathcal I_+}
p_\ell
=
1.$
Moreover,
\[
\begin{aligned}
\sum_{\ell\in\mathcal I_+}
\rsc{D}_\ell
\log\frac e{\rsc{D}_\ell}=
\rsc{\Delta}
\left[
1
+
\log\frac1{\rsc{\Delta}}
+
\sum_{\ell\in\mathcal I_+}
p_\ell
\log\frac1{p_\ell}
\right].
\end{aligned}
\]
By Lemma~\ref{lem:entropy},
\[
\sum_{\ell\in\mathcal I_+}
p_\ell
\log\frac1{p_\ell}
\le
\log|\mathcal I_+|
\le
\log k.
\]
Hence
\[
\sum_{\ell\in\mathcal I_+}
\rsc{D}_\ell
\log\frac e{\rsc{D}_\ell}
\le
\rsc{\Delta}
\log\frac{ek}{\rsc{\Delta}}.
\]
Combining this estimate with
\eqref{eq:hazard-large-u-cauchy1}
gives
\[
\sum_{\ell\in\mathcal I_+}
\rsc{D}_\ell
\sqrt{
\log\frac e{\rsc{D}_\ell}
}
\le
\rsc{\Delta}
\sqrt{
\log\frac{ek}{\rsc{\Delta}}
}.
\]
Therefore
\begin{equation}
\sum_{\ell\in\mathcal I_+}
\Lambda(\rsc{U}_\ell)
\le
C
\rsc{\Delta}
\sqrt{
\log\frac{ek}{\rsc{\Delta}}
}.
\label{eq:hazard-large-u}
\end{equation}
By Lemma~\ref{lem:one-minus-log},
\[
\rsc{D}_\ell
=
1-\rsc{U}_\ell
\le
-\log\rsc{U}_\ell.
\]
Thus
\[
\rsc{\Delta}
\le
\sum_{\ell\in\mathcal I_+}
-\log\rsc{U}_\ell
\le
\rsc{L}.
\]
Lemma~\ref{lem:delta-L}, applied pointwise with $\Delta
=
\rsc{\Delta}$ and $L
=
\rsc{L}$,  
therefore gives
\[
\rsc{\Delta}
\sqrt{
\log\frac{ek}{\rsc{\Delta}}
}
\le
C
(1+\rsc{L})
\sqrt{\log(ek)}.
\]
Together with
\eqref{eq:hazard-large-u},
\[
\sum_{\ell\in\mathcal I_+}
\Lambda(\rsc{U}_\ell)
\le
C
(1+\rsc{L})
\sqrt{\log(ek)}.
\]
Combining this with
\eqref{eq:hazard-small-u},
we obtain
\[
\sum_{\ell=1}^k
\Lambda(\rsc{U}_\ell)
\le
C
(1+\rsc{L})
\sqrt{\log(ek)}.
\]
Since $\rsc{L}
\sim
\operatorname{Exp}(1)$, 
we have $\mathbb E(1+\rsc{L})^2
<
\infty$. 
Consequently,
\begin{equation}
\mathbb E
\left[
\left(
\sum_{\ell=1}^k
\frac{
\phi(z_\ell(\rsc{Y}))
}{
\Phi(z_\ell(\rsc{Y}))
}
\right)^2
\right]
\le
C\log(ek).
\label{eq:hazard-sum-second-moment}
\end{equation}

It remains to control $z_{\rsc{I}}(\rsc{Y})$. 
For every \(r\ge0\),
\[
\begin{aligned}
\mathbb P
\left(
z_{\rsc{I}}(\rsc{Y})
\ge
r
\right)
&=
\sum_{i=1}^k
\mathbb P
\left(
\rsc{I}=i,\,
z_i(\rsc{Y})\ge r
\right)
\\
&=
\sum_{i=1}^k
\mathbb P
\left(
\rsc{I}=i,\,
z_i(\rsc{R}_i)\ge r
\right)
\\
&\le
\sum_{i=1}^k
\mathbb P
\left(
z_i(\rsc{R}_i)\ge r
\right).
\end{aligned}
\]
By Lemma~\ref{lem:PIT}, $F_i(\rsc{R}_i)
\sim
\operatorname{Unif}(0,1)$,  
and hence
\[
z_i(\rsc{R}_i)
=
\Phi^{-1}
\left(
F_i(\rsc{R}_i)
\right)
\sim
N(0,1).
\]
Therefore
\[
\mathbb P
\left(
z_{\rsc{I}}(\rsc{Y})
\ge r
\right)
\le
k\Phi(-r).
\]
The same argument gives
\[
\mathbb P
\left(
z_{\rsc{I}}(\rsc{Y})
\le-r
\right)
\le
k\Phi(-r).
\]
Thus
\[
\mathbb P
\left(
\left|
z_{\rsc{I}}(\rsc{Y})
\right|
\ge r
\right)
\le
\min\left\{
1,
2k\Phi(-r)
\right\}.
\]
Lemma~\ref{lem:gaussian-max-moment} now yields
\begin{equation}
\mathbb E
\left[
z_{\rsc{I}}(\rsc{Y})^2
\right]
\le
C\log(ek).
\label{eq:selected-quantile-second-moment}
\end{equation}

Finally, using
\[
(a+b+c)^2
\le
3a^2+3b^2+3c^2,
\]
together with
\eqref{eq:hazard-sum-second-moment}
and
\eqref{eq:selected-quantile-second-moment},
proves
\eqref{eq:hazard-detailed-conclusion}.
\end{proof}

\begin{proof}[Proof of Proposition~\ref{prop:one-step-smooth}]
For every \(i\in[k]\), define the deterministic mean vector $\overline{\mathbf u}_i
:=
\mathbb E\rvec{u}_i$. 
Since $\rvec{s}
=
\sum_{i=1}^k
\rvec{u}_i$,  
we have $\mathbb E\rvec{s}
=
\sum_{i=1}^k
\overline{\mathbf u}_i$.   
Recall that $\rsc{R}_i
=
\left\langle
\rvec{u}_i,
\mathbf v
\right\rangle$.  
The scalar random variable
\(\rsc{R}_i\)
has density
\[
\varphi_i(y)
:=
\int_{\mathbf v^\perp}
p_i(y\mathbf v+\mathbf r)
\,\mathrm  d\sigma_{\mathbf v^\perp}(\mathbf r),
\qquad
y\in\mathbb R,
\]
and distribution function $F_i(y)
:=
\mathbb P(
\rsc{R}_i\le y)$.  
The positivity of \(p_i\) implies $\varphi_i(y)>0$ for every  $y\in\mathbb R$.   
Moreover, the bounded-Hessian assumption, together with
Lemmas~\ref{lem:smooth-marginal-regularity}
and~\ref{lem:gaussian-fiber-domination}, implies that
\(\varphi_i\) is continuous.  Consequently, $0
<
F_i(y)
<
1$ for every  $y\in\mathbb R$.  
Define $z_i(y)
:=
\Phi^{-1}(F_i(y))$.  
For every \(y\in\mathbb R\), define the canonical winner-slice
regression vector by
\begin{equation}
\mathbf b_i^{=}(y)
:=
\frac{
\displaystyle
\int_{\mathbf v^\perp}
\left(
y\mathbf v
+
\mathbf r
-
\overline{\mathbf u}_i
\right)
p_i(y\mathbf v+\mathbf r)
\,\mathrm  d\sigma_{\mathbf v^\perp}(\mathbf r)
}{
\displaystyle
\varphi_i(y)
}.
\label{eq:smooth-winner-regression-definition}
\end{equation}
This is the slice-density version of $\mathbb E
\left[
\rvec{u}_i-\overline{\mathbf u}_i
\mid
\rsc{R}_i=y
\right]$. 
Unlike an abstract conditional expectation at a point, the right-hand
side of
\eqref{eq:smooth-winner-regression-definition}
is canonically defined for every \(y\in\mathbb R\). 
Define also the loser-tail regression vector by
\begin{equation}
\mathbf b_i^{\le}(y)
:=
\frac{
\displaystyle
\mathbb E
\left[
\left(
\rvec{u}_i-\overline{\mathbf u}_i
\right)
\mathbf 1_{\{\rsc{R}_i\le y\}}
\right]
}{
F_i(y)
}.
\label{eq:smooth-loser-regression-definition}
\end{equation}
Equivalently, $\mathbf b_i^{\le}(y)
=
\mathbb E
\left[
\rvec{u}_i-\overline{\mathbf u}_i
\mid
\rsc{R}_i\le y
\right]$.  
Applying
Theorem~\ref{thm:GSR}
to
\(\rvec{u}_i\)
in the direction \(\mathbf v\) gives
\begin{equation}
\left\|
\mathbf b_i^{=}(y)
\right\|_2
\le
C
\left(
1+
|z_i(y)|
\right),
\qquad
y\in\mathbb R.
\label{eq:smooth-winner-regression}
\end{equation}
Similarly,
Lemma~\ref{lem:loser-tail}
gives
\begin{equation}
\left\|
\mathbf b_i^{\le}(y)
\right\|_2
\le
C
\frac{
\phi(z_i(y))
}{
\Phi(z_i(y))
},
\qquad
y\in\mathbb R.
\label{eq:smooth-loser-regression}
\end{equation} 
The random variables $\rsc{R}_1,\ldots,\rsc{R}_k$ 
are independent and have continuous densities.  Hence ties occur with
probability zero, and
\[
\rsc{I}
=
\argmax_{1\le i\le k}
\rsc{R}_i
\qquad
\text{almost surely}.
\]
For \(i\in[k]\), the joint law of
\((\rsc{I},\rsc{Y})\)
on
\(\{i\}\times\mathbb R\)
has density
\begin{equation}
q_i(y)
:=
\varphi_i(y)
\prod_{\ell\ne i}
F_\ell(y).
\label{eq:smooth-IY-density}
\end{equation}
Indeed, block \(i\) attains the maximum at level \(y\) precisely when
\(\rsc{R}_i\in \mathrm dy\) and all other projections are at most \(y\). 
For every Borel set
\(E\subseteq\mathbb R^d\), define the canonical winner-slice kernel
\[
\nu_{i,y}^{=}(E)
:=
\frac{
\displaystyle
\int_{\mathbf v^\perp}
\mathbf 1_E(y\mathbf v+\mathbf r)
p_i(y\mathbf v+\mathbf r)
\,\mathrm d\sigma_{\mathbf v^\perp}(\mathbf r)
}{
\displaystyle
\varphi_i(y)
}.
\]
For every
\(\ell\ne i\), define the loser-tail kernel
\[
\nu_{\ell,y}^{\le}(E)
:=
\frac{
\mathbb P
\left(
\rvec{u}_\ell\in E,\,
\rsc{R}_\ell\le y
\right)
}{
F_\ell(y)
}.
\]
Both are probability kernels in \(y\).

We next identify the conditional law of the blocks given the selected
index and maximum value.  Let $E
=
E_1\times\cdots\times E_k$ 
be a product rectangle in
\((\mathbb R^d)^k\), and let
\(A\subseteq\mathbb R\)
be Borel.  Independence gives
\[
\begin{aligned}
&
\mathbb P
\left(
\rvec{u}_j\in E_j
\text{ for all }j,\,
\rsc{I}=i,\,
\rsc{Y}\in A
\right)
\\
&\quad=
\int_A
\nu_{i,y}^{=}(E_i)
\prod_{\ell\ne i}
\nu_{\ell,y}^{\le}(E_\ell)
\,
\varphi_i(y)
\prod_{\ell\ne i}
F_\ell(y)
\,\mathrm dy
\\
&\quad=
\int_A
\left[
\nu_{i,y}^{=}
\otimes
\bigotimes_{\ell\ne i}
\nu_{\ell,y}^{\le}
\right](E)
q_i(y)
\,\mathrm dy.
\end{aligned}
\]
A monotone-class argument extends this identity from product rectangles
to every Borel set $E\subseteq(\mathbb R^d)^k$.  
Consequently,
\begin{equation}
\Law
\left(
\rvec{u}_1,\ldots,\rvec{u}_k
\mid
\rsc{I}=i,\rsc{Y}=y
\right)
=
\nu_{i,y}^{=}
\otimes
\bigotimes_{\ell\ne i}
\nu_{\ell,y}^{\le}
\label{eq:smooth-selection-product-law}
\end{equation}
for
\(q_i(y)\,\mathrm dy\)-almost every \(y\), and hence for
\(\mathbb P_{(\rsc{I},\rsc{Y})}\)-almost every \((i,y)\).

By the definitions of the regression vectors,
\[
\int_{\mathbb R^d}
\left(
\mathbf u-\overline{\mathbf u}_i
\right)
\nu_{i,y}^{=}(\mathrm d\mathbf u)
=
\mathbf b_i^{=}(y),
\]
and, for every
\(\ell\ne i\),
\[
\int_{\mathbb R^d}
\left(
\mathbf u-\overline{\mathbf u}_\ell
\right)
\nu_{\ell,y}^{\le}(\mathrm d\mathbf u)
=
\mathbf b_\ell^{\le}(y).
\]
Therefore
\eqref{eq:smooth-selection-product-law}
implies
\begin{equation}
\mathbb E
\left[
\rvec{s}
\mid
\rsc{I}=i,\rsc{Y}=y
\right]
-
\mathbb E\rvec{s}
=
\mathbf b_i^{=}(y)
+
\sum_{\ell\ne i}
\mathbf b_\ell^{\le}(y)
\label{eq:smooth-conditional-sum-regression}
\end{equation}
for
\(\mathbb P_{(\rsc{I},\rsc{Y})}\)-almost every \((i,y)\).

Taking norms in
\eqref{eq:smooth-conditional-sum-regression}
and using
\eqref{eq:smooth-winner-regression}
and
\eqref{eq:smooth-loser-regression},
we obtain
\[
\begin{aligned}
&
\left\|
\mathbb E
\left[
\rvec{s}
\mid
\rsc{I},\rsc{Y}
\right]
-
\mathbb E\rvec{s}
\right\|_2
\\
&\qquad\le
\left\|
\mathbf b_{\rsc{I}}^{=}(\rsc{Y})
\right\|_2
+
\sum_{\ell\ne\rsc{I}}
\left\|
\mathbf b_\ell^{\le}(\rsc{Y})
\right\|_2
\\
&\qquad\le
C
\left(
1
+
\left|
z_{\rsc{I}}(\rsc{Y})
\right|
+
\sum_{\ell\ne\rsc{I}}
\frac{
\phi(z_\ell(\rsc{Y}))
}{
\Phi(z_\ell(\rsc{Y}))
}
\right)
\qquad
\text{almost surely}.
\end{aligned}
\]
Squaring, taking expectations, and applying
Lemma~\ref{lem:hazard-detailed}
gives
\[
\mathbb E
\left\|
\mathbb E
\left[
\rvec{s}
\mid
\rsc{I},\rsc{Y}
\right]
-
\mathbb E\rvec{s}
\right\|_2^2
\le
C\log(ek).
\]
The constant \(C\) is universal and does not depend on
\(L_1,\ldots,L_k\).  This proves the proposition.
\end{proof}

\subsubsection{Closure to Affine Supports and Nonsmooth Posterior Laws}

We now complete the proof of
Theorem~\ref{thm:one-step}
by passing from the smooth bounded-Hessian result to general
\(1\)-strongly log-concave laws on affine supports.  Gaussian
perturbations produce smooth full-dimensional laws, while deterministic
priority perturbations resolve ties according to the fixed
smallest-index selector.

\begin{proof}[Completion of the proof of Theorem~\ref{thm:one-step}]
Retain the notation 
from Theorem~\ref{thm:one-step}.  Let $\mu_i
:=
\Law(\rvec u_i)$ for $i\in[k]$.  
The strong log-concavity assumption implies that every
\(\rvec u_i\) has finite moments of all orders; in particular, all
\(L^2\)-quantities below are finite. 
Let $\breve{\boldsymbol\zeta}_1,\ldots,
\breve{\boldsymbol\zeta}_k
\overset{\mathrm{i.i.d.}}{\sim}
N(\mathbf 0_d,\mathbf I_d)$ 
be independent of $\rvec u_1,\ldots,\rvec u_k$.  
Set $\kappa_i
:=
-i$ for  $i\in[k].$ 
For
\(\eta,\rho>0\), define
\[
\rvec u_i^{(\eta,\rho)}
:=
\rvec u_i
+
\eta\breve{\boldsymbol\zeta}_i
+
\rho\kappa_i\mathbf v,
\]
\[
\rsc R_i^{(\eta,\rho)}
:=
\left\langle
\rvec u_i^{(\eta,\rho)},
\mathbf v
\right\rangle,\  \rsc Y^{(\eta,\rho)}
:=
\max_{1\le i\le k}
\rsc R_i^{(\eta,\rho)},\  \rsc I^{(\eta,\rho)}
:=
\iota\left(
(\rsc R_i^{(\eta,\rho)})_{i=1}^k
\right),\  \rvec s^{(\eta,\rho)}
:=
\sum_{i=1}^k
\rvec u_i^{(\eta,\rho)}.
\]
For every \(i\in[k]\), the random vector
\(\rvec u_i^{(\eta,\rho)}\)
has density
\begin{equation}
p_{i,\eta,\rho}(\mathbf y)
:=
\int_{H_i}
\frac{1}{(2\pi\eta^2)^{d/2}}
\exp\left(
-\frac{
\|\mathbf y-\rho\kappa_i\mathbf v-\mathbf u\|_2^2
}{
2\eta^2
}
\right)
\mu_i(\mathrm d\mathbf u),
\qquad
\mathbf y\in\mathbb R^d.
\label{eq:closure-smoothed-density}
\end{equation}
Indeed, conditionally on
\(\rvec u_i=\mathbf u\),
the perturbed vector has law
\[
N\left(
\mathbf u+\rho\kappa_i\mathbf v,
\eta^2\mathbf I_d
\right),
\]
and
\eqref{eq:closure-smoothed-density}
is obtained by integrating the conditional Gaussian density with respect
to \(\mu_i\).

Since the Gaussian kernel is strictly positive and smooth,
\[
p_{i,\eta,\rho}
\in
C^\infty(\mathbb R^d),
\qquad
p_{i,\eta,\rho}(\mathbf y)>0.
\]
Define the total negative log-density
\[
\mathcal V_{i,\eta,\rho}(\mathbf y)
:=
-\log
p_{i,\eta,\rho}(\mathbf y).
\]
The potential used in
Lemma~\ref{lem:gaussian-smoothing-slc}
may differ from
\(\mathcal V_{i,\eta,\rho}\)
by an additive constant only, so the Hessian bounds from that lemma apply
unchanged.  Moreover, the deterministic translation
\(\rho\kappa_i\mathbf v\)
does not change Hessians.  Hence
\begin{equation}
\alpha_\eta\mathbf I_d
\preceq
\nabla^2
\mathcal V_{i,\eta,\rho}(\mathbf y)
\preceq
\frac1{\eta^2}\mathbf I_d,
\qquad
\alpha_\eta
:=
\frac1{1+\eta^2}.
\label{eq:closure-total-hessian}
\end{equation}

Define the normalized random vectors
\[
\rvec u_{i,\mathrm{nor}}^{(\eta,\rho)}
:=
\sqrt{\alpha_\eta}
\,\rvec u_i^{(\eta,\rho)}.
\]
Their densities satisfy the change-of-variables formula
\[
p_{i,\eta,\rho}^{\mathrm{nor}}(\mathbf w)
=
\alpha_\eta^{-d/2}
p_{i,\eta,\rho}
\left(
\frac{\mathbf w}{\sqrt{\alpha_\eta}}
\right).
\]
Define
\begin{equation}
Q_{i,\eta,\rho}(\mathbf w)
:=
\mathcal V_{i,\eta,\rho}
\left(
\frac{\mathbf w}{\sqrt{\alpha_\eta}}
\right)
-
\frac12\|\mathbf w\|_2^2.
\label{eq:closure-normalized-residual-potential}
\end{equation}
Then
\begin{equation}
p_{i,\eta,\rho}^{\mathrm{nor}}(\mathbf w)
=
\left(
\mathcal Z_{i,\eta,\rho}^{\mathrm{nor}}
\right)^{-1}
\exp\left(
-\frac12\|\mathbf w\|_2^2
-
Q_{i,\eta,\rho}(\mathbf w)
\right),
\label{eq:closure-normalized-density}
\end{equation}
where one may take
\[
\mathcal Z_{i,\eta,\rho}^{\mathrm{nor}}
=
\alpha_\eta^{d/2}.
\]
The numerical value of this normalizer is immaterial.

By the chain rule,
\[
\nabla^2Q_{i,\eta,\rho}(\mathbf w)
=
\frac1{\alpha_\eta}
\nabla^2
\mathcal V_{i,\eta,\rho}
\left(
\frac{\mathbf w}{\sqrt{\alpha_\eta}}
\right)
-
\mathbf I_d.
\]
The lower Hessian bound in
\eqref{eq:closure-total-hessian}
gives
\[
\nabla^2Q_{i,\eta,\rho}(\mathbf w)
\succeq
\mathbf O_d.
\]
The upper bound gives
\[
\begin{aligned}
\nabla^2Q_{i,\eta,\rho}(\mathbf w)
\preceq
\left(
\frac1{\alpha_\eta\eta^2}
-
1
\right)
\mathbf I_d
=
\left(
\frac{1+\eta^2}{\eta^2}
-
1
\right)
\mathbf I_d
=
\frac1{\eta^2}\mathbf I_d.
\end{aligned}
\]
Thus
\begin{equation}
\mathbf O_d
\preceq
\nabla^2Q_{i,\eta,\rho}(\mathbf w)
\preceq
\frac1{\eta^2}\mathbf I_d.
\label{eq:closure-normalized-residual-hessian}
\end{equation}
Consequently, the independent normalized blocks
\[
\rvec u_{1,\mathrm{nor}}^{(\eta,\rho)},
\ldots,
\rvec u_{k,\mathrm{nor}}^{(\eta,\rho)}
\]
satisfy all assumptions of
Proposition~\ref{prop:one-step-smooth}.

Define
\[
\rsc R_{i,\mathrm{nor}}^{(\eta,\rho)}
:=
\left\langle
\rvec u_{i,\mathrm{nor}}^{(\eta,\rho)},
\mathbf v
\right\rangle
=
\sqrt{\alpha_\eta}
\,\rsc R_i^{(\eta,\rho)},
\]
\[
\rsc Y_{\mathrm{nor}}^{(\eta,\rho)}
:=
\max_i
\rsc R_{i,\mathrm{nor}}^{(\eta,\rho)}
=
\sqrt{\alpha_\eta}
\,\rsc Y^{(\eta,\rho)},
\]
and
\[
\rvec s_{\mathrm{nor}}^{(\eta,\rho)}
:=
\sum_{i=1}^k
\rvec u_{i,\mathrm{nor}}^{(\eta,\rho)}
=
\sqrt{\alpha_\eta}
\,\rvec s^{(\eta,\rho)}.
\]
Positive scaling does not change the maximizing index, so
\[
\iota\left(
(\rsc R_{i,\mathrm{nor}}^{(\eta,\rho)})_{i=1}^k
\right)
=
\rsc I^{(\eta,\rho)}.
\]
Moreover,
\[
\sigma\left(
\rsc I^{(\eta,\rho)},
\rsc Y_{\mathrm{nor}}^{(\eta,\rho)}
\right)
=
\sigma\left(
\rsc I^{(\eta,\rho)},
\rsc Y^{(\eta,\rho)}
\right),
\]
because
\(\alpha_\eta>0\)
is deterministic.

Proposition~\ref{prop:one-step-smooth}
therefore gives
\[
\mathbb E
\left\|
\mathbb E
\left[
\rvec s_{\mathrm{nor}}^{(\eta,\rho)}
\mid
\rsc I^{(\eta,\rho)},
\rsc Y^{(\eta,\rho)}
\right]
-
\mathbb E
\rvec s_{\mathrm{nor}}^{(\eta,\rho)}
\right\|_2^2
\le
C\log(ek).
\]
Since $\rvec s_{\mathrm{nor}}^{(\eta,\rho)}
=
\sqrt{\alpha_\eta}
\,\rvec s^{(\eta,\rho)}$,  
we obtain
\begin{equation}
\mathbb E
\left\|
\mathbb E
\left[
\rvec s^{(\eta,\rho)}
\mid
\rsc I^{(\eta,\rho)},
\rsc Y^{(\eta,\rho)}
\right]
-
\mathbb E
\rvec s^{(\eta,\rho)}
\right\|_2^2
\le
\frac{C}{\alpha_\eta}
\log(ek).
\label{eq:closure-smooth-energy}
\end{equation}
For $0<\eta\le1$, 
we have $\alpha_\eta
=
\frac1{1+\eta^2}
\ge
\frac12$.  
Hence
\begin{equation}
\mathbb E
\left\|
\mathbb E
\left[
\rvec s^{(\eta,\rho)}
\mid
\rsc I^{(\eta,\rho)},
\rsc Y^{(\eta,\rho)}
\right]
-
\mathbb E
\rvec s^{(\eta,\rho)}
\right\|_2^2
\le
2C\log(ek).
\label{eq:closure-uniform-energy}
\end{equation}

Choose deterministic sequences
\[
\rho_n\downarrow0,
\qquad
\eta_n\downarrow0,
\qquad
\frac{\eta_n}{\rho_n}\longrightarrow0,
\qquad
\eta_n\le1.
\]
For example, one may take $\rho_n
=
n^{-1}$ and $\eta_n
=
n^{-2}$.

We first verify convergence of the hidden sum.  By construction,
\[
\rvec s^{(\eta_n,\rho_n)}
=
\rvec s
+
\eta_n
\sum_{i=1}^k
\breve{\boldsymbol\zeta}_i
+
\rho_n
\left(
\sum_{i=1}^k
\kappa_i
\right)
\mathbf v.
\]
Therefore
\begin{equation}
\rvec s^{(\eta_n,\rho_n)}
\longrightarrow
\rvec s
\qquad
\text{in }L^2.
\label{eq:closure-s-L2}
\end{equation}
For every \(i\in[k]\),
\[
\rsc R_i^{(\eta_n,\rho_n)}
=
\rsc R_i
+
\eta_n
\left\langle
\breve{\boldsymbol\zeta}_i,
\mathbf v
\right\rangle
+
\rho_n\kappa_i,
\]
and hence
\[
\rsc R_i^{(\eta_n,\rho_n)}
\longrightarrow
\rsc R_i
\qquad
\text{almost surely}.
\]
Since the maximum map on \(\mathbb R^k\) is continuous,
\begin{equation}
\rsc Y^{(\eta_n,\rho_n)}
\longrightarrow
\rsc Y
\qquad
\text{almost surely}.
\label{eq:closure-Y-as}
\end{equation}

We next prove convergence of the selected index.  Fix an outcome in the
probability-one event on which all preceding scalar convergences hold.
Set
\[
m
:=
\max_{1\le i\le k}
\rsc R_i,
\qquad
i_\star
:=
\min
\left\{
i:
\rsc R_i=m
\right\}.
\]

For every \(j\) satisfying
\(\rsc R_j<m\),
the gap $m-\rsc R_j$ 
is strictly positive.  Since there are finitely many indices and all
perturbations converge to zero, every such \(j\) remains strictly below
\(i_\star\) for all sufficiently large \(n\).

Now suppose that $j>i_\star$ and $\rsc R_j=m$.  
Then
\[
\begin{aligned}
&
\rsc R_{i_\star}^{(\eta_n,\rho_n)}
-
\rsc R_j^{(\eta_n,\rho_n)}
\\
&\quad=
\rho_n
\left(
\kappa_{i_\star}-\kappa_j
\right)
+
\eta_n
\left\langle
\breve{\boldsymbol\zeta}_{i_\star}
-
\breve{\boldsymbol\zeta}_j,
\mathbf v
\right\rangle
\\
&\quad=
\rho_n
(j-i_\star)
+
\eta_n
\left\langle
\breve{\boldsymbol\zeta}_{i_\star}
-
\breve{\boldsymbol\zeta}_j,
\mathbf v
\right\rangle.
\end{aligned}
\]
Dividing by \(\rho_n>0\) gives
\[
j-i_\star
+
\frac{\eta_n}{\rho_n}
\left\langle
\breve{\boldsymbol\zeta}_{i_\star}
-
\breve{\boldsymbol\zeta}_j,
\mathbf v
\right\rangle
\longrightarrow
j-i_\star
>
0.
\]
Thus the preceding difference is positive for all sufficiently large
\(n\).  Since no tied index is smaller than \(i_\star\), the perturbed
selector is eventually equal to \(i_\star\).  Therefore
\begin{equation}
\rsc I^{(\eta_n,\rho_n)}
\longrightarrow
\rsc I
\qquad
\text{almost surely}.
\label{eq:closure-I-as}
\end{equation}

Define the observation random elements
\[
\mathsf W_n
:=
\left(
\rsc I^{(\eta_n,\rho_n)},
\rsc Y^{(\eta_n,\rho_n)}
\right),
\qquad
\mathsf W
:=
(\rsc I,\rsc Y).
\]
They take values in the Polish space $[k]\times\mathbb R$,  
where \([k]\) has the discrete topology.  Equations
\eqref{eq:closure-Y-as}
and
\eqref{eq:closure-I-as}
give
\[
\mathsf W_n
\longrightarrow
\mathsf W
\qquad
\text{almost surely}.
\]

Applying
Lemma~\ref{lem:posterior-lsc}
with $\rvec u_n
=
\rvec s^{(\eta_n,\rho_n)}$,  $\rvec u
=
\rvec s$ 
and observations $\mathsf W_n$, $\mathsf W$,  
gives
\[
\begin{aligned}
&
\mathbb E
\left\|
\mathbb E
\left[
\rvec s
\mid
\rsc I,\rsc Y
\right]
-
\mathbb E\rvec s
\right\|_2^2
\\
&\quad\le
\liminf_{n\to\infty}
\mathbb E
\left\|
\mathbb E
\left[
\rvec s^{(\eta_n,\rho_n)}
\mid
\rsc I^{(\eta_n,\rho_n)},
\rsc Y^{(\eta_n,\rho_n)}
\right]
-
\mathbb E
\rvec s^{(\eta_n,\rho_n)}
\right\|_2^2.
\end{aligned}
\]
Using
\eqref{eq:closure-uniform-energy},
we conclude that
\[
\mathbb E
\left\|
\mathbb E
\left[
\rvec s
\mid
\rsc I,\rsc Y
\right]
-
\mathbb E\rvec s
\right\|_2^2
\le
2C\log(ek).
\]
Absorbing the numerical factor \(2\) into the universal constant proves
Theorem~\ref{thm:one-step}.
\end{proof}

\subsubsection{Winner-Slice Regression for Smooth Laws}
\label{sec:GSR}

The winner-slice estimate below is used in the smooth full-dimensional
proof of the one-step selection theorem.  The finite upper Hessian bound
is needed only to justify differentiability of Gaussian-fiber marginals;
the resulting estimate is uniform in its numerical value. 

\begin{theorem}[High-dimensional slice regression, smooth full-dimensional case]
\label{thm:GSR}
Let \(\rvec g\) have a positive \(C^2\) density
\[
p(\mathbf g)
=
\mathcal Z^{-1}
\exp\left(
-\frac12\|\mathbf g\|_2^2
-
Q(\mathbf g)
\right),
\qquad
\mathbf g\in\mathbb R^d,
\]
where \(Q\in C^2(\mathbb R^d)\) is convex and, for some finite \(L_Q\),
\[
\mathbf O_d
\preceq
\nabla^2Q(\mathbf g)
\preceq
L_Q\mathbf I_d
\qquad
\text{for every }\mathbf g\in\mathbb R^d.
\]
Fix $\mathbf v\in\mathbb S^{d-1}$, 
and define $\rsc R
:=
\langle\rvec g,\mathbf v\rangle$ and $F_R(y)
:=
\mathbb P(\rsc R\le y)$.  
Define the marginal density
\[
p_R(y)
:=
\int_{\mathbf v^\perp}
p(y\mathbf v+\mathbf r)
\,\sigma_{\mathbf v^\perp}(\mathrm d\mathbf r),
\]
and the canonical slice barycenter
\[
\boldsymbol\beta(y)
:=
\frac{
\displaystyle
\int_{\mathbf v^\perp}
(y\mathbf v+\mathbf r)
p(y\mathbf v+\mathbf r)
\,\sigma_{\mathbf v^\perp}(\mathrm d\mathbf r)
}{
p_R(y)
}.
\]
Let $\overline{\mathbf g}
:=
\mathbb E\rvec g$.  
Then there exists a universal constant \(C>0\), independent of \(L_Q\),
such that
\[
\left\|
\boldsymbol\beta(y)
-
\overline{\mathbf g}
\right\|_2
\le
C
\left(
1+
\left|
\Phi^{-1}(F_R(y))
\right|
\right)
\]
for every \(y\in\mathbb R\).
\end{theorem}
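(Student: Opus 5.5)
The plan is to split $\boldsymbol\beta(y)-\overline{\mathbf g}$ into a component along $\mathbf v$ and a component in $\mathbf v^\perp$, and to bound each by $C(1+|z|)$, where $z:=\Phi^{-1}(F_R(y))$.

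\emph{Component along $\mathbf v$.} Every point of the slice satisfies $\langle y\mathbf v+\mathbf r,\mathbf v\rangle=y$, so $\langle\boldsymbol\beta(y),\mathbf v\rangle=y$ and this component equals $y-\mathbb E\rsc R$. By Pr\'ekopa's theorem, $\rsc R$ has a $1$-strongly log-concave density on $\mathbb R$. The monotone transport $T=F_R^{-1}\circ\Phi$ sends $N(0,1)$ to $\Law(\rsc R)$, and by Caffarelli's contraction theorem in one dimension it is $1$-Lipschitz. Since $y=T(z)$, this gives $|y-T(0)|\le|z|$. Moreover $|T(0)-\mathbb E\rsc R|\le\mathbb E|T(0)-T(G)|\le\mathbb E|G|\le1$ for $G\sim N(0,1)$. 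Hence this component is at most $1+|z|$.

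\emph{Component in $\mathbf v^\perp$.} Let $\mathbf w(y)$ be the orthogonal projection of $\boldsymbol\beta(y)-\overline{\mathbf g}$ onto $\mathbf v^\perp$. If $\mathbf w(y)\ne\mathbf 0$, set $\mathbf u:=\mathbf w(y)/\|\mathbf w(y)\|_2$, so that $\|\mathbf w(y)\|_2=m_{\mathbf u}(y)-\mathbb E\langle\rvec g,\mathbf u\rangle$ with $m_{\mathbf u}(y):=\mathbb E[\langle\rvec g,\mathbf u\rangle\mid \rsc R=y]$, understood via the canonical slice densities. This is where the problem becomes two-dimensional. The pair $(\rsc R,\langle\rvec g,\mathbf u\rangle)$ has a $1$-strongly log-concave density $q(y,x)=e^{-W(y,x)}$ on $\mathbb R^2$, again by Pr\'ekopa. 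The bounded Hessian assumption, through Lemmas~\ref{lem:smooth-marginal-regularity} and~\ref{lem:gaussian-fiber-domination}, is used only to ensure $W$ is $C^2$ and that the slice formulas are continuous in $y$, so every $y$ is admissible. It therefore suffices to prove a two-dimensional no-spike statement: for every $1$-strongly log-concave density on $\mathbb R^2$,
\[
\bigl|m(y)-\mathbb E X\bigr|\le C\bigl(1+|\Phi^{-1}(F_R(y))|\bigr),
\qquad m(y):=\mathbb E[X\mid R=y].
\]

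\emph{The no-spike estimate, and the expected obstacle.} I would argue geometrically around the mode $(y_0,x_0)$ of $q$. First, the conditional law of $X$ given $R=y$ is $1$-strongly log-concave in $x$, so its mean lies within $O(1)$ of its mode $x^*(y)$, and the slice mass sits in a window of width $O(1)$ around $x^*(y)$. Second, strong log-concavity gives $q(y,x)\le q(y_0,x_0)\,e^{-\|(y,x)-(y_0,x_0)\|^2/2}$. Integrating over that window gives
\[
p_R(y)\lesssim q(y_0,x_0)\,e^{-(x^*(y)-x_0)^2/2}.
\]
In the other direction, I would lower-bound $p_R(y)/p_R(y_0)$ by $c\,e^{-C(1+z^2)}$, using the one-dimensional Caffarelli map from the first step together with standard hazard bounds for $1$-strongly log-concave densities. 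I would also compare $q(y_0,x_0)$ with $p_R(y_0)$ by the same window argument, using that the slice at $y_0$ has conditional variance at most $1$ and contains the mode. Combining these gives $(x^*(y)-x_0)^2\le C(1+z^2)$. Finally, $|x_0-\mathbb E X|=O(1)$ for a $1$-strongly log-concave law, which yields the claim.

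The main obstacle is this chain of density comparisons. The peak value $q(y_0,x_0)$ can be arbitrarily large when the law is thin in some direction, so every comparison must be normalized by ratios such as $p_R(y)/p_R(y_0)$ rather than by absolute density values. I expect the delicate point to be proving that the $x$-window carrying the mass of the $y$-slice has width bounded below uniformly in $y$. If that fails, the fallback is to differentiate $m$ directly: $m'(y)=-\operatorname{Cov}_y(X,\partial_yW)$. I would then bound this covariance via the Brascamp--Lieb inequality, using only $\partial_{xx}W\ge1$, and integrate $m'$ from $y_0$ to $y$, checking that the constants remain independent of $L_Q$.
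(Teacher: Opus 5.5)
Your step for the component along $\mathbf v$ is correct, and it is genuinely simpler than the paper's. The paper pairs $\rsc R$ with an independent $N(0,1)$ coordinate and runs the pair through Theorem~\ref{thm:2d} and Lemma~\ref{lem:quantile-log}. Your $1$-Lipschitz monotone map $T=F_R^{-1}\circ\Phi$ gives $|y-\mathbb E\rsc R|\le 1+|z|$ at once. Your reduction of the orthogonal part to a two-dimensional statement is also the paper's.

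\textbf{The gap.} The two-dimensional estimate carries the whole difficulty, and neither of your routes establishes it. In the mode-based route, you combine $p_R(y)\lesssim q(y_0,x_0)e^{-(x^*(y)-x_0)^2/2}$ with a lower bound on $p_R(y)/p_R(y_0)$. This closes only if $q(y_0,x_0)\lesssim p_R(y_0)$, i.e.\ only if the slice through the mode has $x$-width bounded below. That fails uniformly in $L_Q$. For $Q(y,x)=\frac L2x^2$ one has $q(y_0,x_0)/p_R(y_0)\asymp\sqrt L$, so your chain leaves an additive $\log L$ in the bound on $(x^*(y)-x_0)^2$. The fact you invoke, conditional variance at most $1$, bounds slice widths from above. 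It therefore yields only the reverse inequality $p_R(y_0)\lesssim q(y_0,x_0)$.

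\textbf{The fallback.} It lacks the key ingredient. A Brascamp--Lieb bound on $m'(y)=-\operatorname{Cov}_y(X,\partial_yW)$ that uses only $\partial_{xx}W\ge1$ cannot be uniform. Take $Q(y,x)=\frac L2(x-ay)^2$. Then $m'\equiv La/(1+L)\to a$ as $L\to\infty$. Integrating this over the window $|y-y_0|\lesssim 1+|z|$ that your first step supplies gives $a(1+|z|)$. The true size is $O(1+|z|)$ only because the $\rsc R$-marginal, with density $\propto e^{-U}$, then has $U''\approx 1+a^2$. So $m'$ must be compared with $U''$ pointwise.

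The paper does exactly this. Write $W=\frac{y^2+x^2}{2}+Q$ and $\Psi(s,y):=\log\int e^{-(x-s)^2/2-Q(y,x)}\,\mathrm dx$; Pr\'ekopa's theorem makes $\Psi$ jointly concave. One has $m'(y)=\Psi_{sy}(0,y)$ and $0\le-\Psi_{ss}(0,y)=1-\operatorname{Var}_y(X)\le1$. The $2\times2$ determinant inequality then gives $m'(y)^2\le-\Psi_{yy}(0,y)\le U''(y)$.

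Even with this inequality, you still need a mechanism that turns derivative control into a pointwise quantile bound. The paper's is the no-spike Lemma~\ref{lem:nospike}. It is fed by the subgaussian tail of $m(\rsc R)-\mathbb E X$, which follows from conditional Jensen and the $1$-strong log-concavity of the $X$-marginal. An excursion of $m$ of height $\rho$ over an interval forces $U'$ to increase by $\gtrsim\rho^2/\text{length}$ there. This places the excursion in a tail of mass at most $e^{-c\rho^2}$.
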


\begin{theorem}[Two-dimensional slice estimate, smooth full-dimensional case]
\label{thm:2d}
Let $(\rsc{Y},\rsc{W})$ 
have a positive \(C^2\) density
\[
p_{Y,W}(y,w)
=
\mathcal Z_Q^{-1}
\exp\left(
-\frac12(y^2+w^2)
-
Q(y,w)
\right),
\qquad
(y,w)\in\mathbb R^2,
\]
where $Q\in C^2(\mathbb R^2)$ 
is convex and, for some finite \(L_Q\),
\[
\mathbf O_2
\preceq
\nabla^2Q(y,w)
\preceq
L_Q\mathbf I_2
\qquad
\text{for every }(y,w)\in\mathbb R^2.
\]

Define $p_Y(y)
:=
\int_{\mathbb R}
p_{Y,W}(y,w)\,\mathrm dw$,  $F_Y(y)
:=
\mathbb P(\rsc{Y}\le y)$ 
and define the canonical conditional mean
\[
m(y)
:=
\frac{
\displaystyle
\int_{\mathbb R}
w
\exp\left(
-\frac12w^2-Q(y,w)
\right)
\,\mathrm dw
}{
\displaystyle
\int_{\mathbb R}
\exp\left(
-\frac12w^2-Q(y,w)
\right)
\,\mathrm dw
}.
\]
Then $m\in C^1(\mathbb R)$,  
and there exists a universal constant \(C>0\), independent of \(L_Q\),
such that
\begin{equation}
\bigl(
y-\mathbb E\rsc{Y}
\bigr)^2
+
\bigl(
m(y)-\mathbb E\rsc{W}
\bigr)^2
\le
C\left(
1+
\log
\frac1{
\min\{
F_Y(y),
1-F_Y(y)
\}
}
\right)
\label{eq:two-dimensional-slice-bound}
\end{equation}
for every \(y\in\mathbb R\).
\end{theorem}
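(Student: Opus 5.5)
The bound splits into a statement about the marginal $\rsc{Y}$ and a statement about the regression curve $y\mapsto m(y)$. I plan to control both through one quantity, the Tukey half-space depth of the point $(y,m(y))$ under the joint law.

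\emph{Basic facts.} The law of $(\rsc{Y},\rsc{W})$ is $1$-strongly log-concave. Hence, by Pr\'ekopa--Leindler, the marginal of $\rsc{Y}$ is $1$-strongly log-concave, and so is every slice law $\rsc{W}\mid\rsc{Y}=y$, whose mean is $m(y)$. The hypothesis $\nabla^2Q\preceq L_Q\mathbf I_2$ is used only for regularity. It justifies differentiating under the integral sign (as in Lemmas~\ref{lem:smooth-marginal-regularity} and~\ref{lem:gaussian-fiber-domination}), which gives $m\in C^1$ with
\[
m'(y)=-\operatorname{Cov}\bigl(\rsc{W},\partial_yQ(y,\rsc{W})\mid \rsc{Y}=y\bigr).
\]
No quantitative estimate will use $L_Q$. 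For the first term, a $1$-strongly log-concave law on $\mathbb R$ satisfies $\mathbb P(\rsc{Y}-\mathbb E\rsc{Y}\ge t)\le e^{-t^2/2}$, and the same bound holds for the lower tail; this is the subgaussian bound of Lemma~\ref{lem:slc-subgaussian-affine} in dimension one. Inverting it gives $(y-\mathbb E\rsc{Y})^2\le 2\log\bigl(1/\min\{F_Y(y),1-F_Y(y)\}\bigr)$ whenever $|y-\mathbb E\rsc{Y}|$ exceeds an absolute constant. For the remaining $y$ the bound holds trivially, since then $(y-\mathbb E\rsc Y)^2\le C$.

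\emph{Second term via depth.} Write $\delta(y):=\min\{F_Y(y),1-F_Y(y)\}$ and $\mathbf p_y:=(y,m(y))$. I will prove a depth lower bound: every closed half-plane containing $\mathbf p_y$ has probability at least $c\,\delta(y)^{C_0}$. Once this holds, the $2$-dimensional subgaussian bound, applied in the unit direction $\mathbf e$ of $\mathbf p_y-(\mathbb E\rsc{Y},\mathbb E\rsc{W})$, gives
\[
c\,\delta(y)^{C_0}\le \mathbb P\bigl(\langle (\rsc{Y},\rsc{W})-\mathbb E(\rsc{Y},\rsc{W}),\mathbf e\rangle\ge \|\mathbf p_y-\mathbb E(\rsc{Y},\rsc{W})\|\bigr)\le e^{-\|\mathbf p_y-\mathbb E(\rsc{Y},\rsc{W})\|^2/2}.
\]
Taking logarithms yields both terms of \eqref{eq:two-dimensional-slice-bound} at once. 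This is the precise form of the ``no spike'' principle: a near-degenerate Gaussian shows that $|m'|$ itself can be huge, of order $\Sigma_{YY}^{-1/2}$, so the control must be in quantile units and not through a Lipschitz bound on $m$.

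\emph{The depth lower bound (main obstacle).} Vertical half-planes have mass exactly $F_Y(y)$ or $1-F_Y(y)$, so only tilted half-planes need work. Consider
\[
\{(y',w):\,w\ge m(y)+s(y'-y)\}.
\]
Restricted to the slice $y'=y$, Gr\"unbaum's inequality for the one-dimensional log-concave slice law gives mass at least $1/e$ above its mean. I would then spread this over the strip $y'\in[y,y+h]$ (or $[y-h,y]$, whichever side the tilt favors). I will choose $h$ as the $\delta(y)/2$ quantile increment of $\rsc Y$, so that the strip has $\rsc Y$-mass at least $c\,\delta(y)$. To keep a constant fraction of each slice in the strip above the tilted line, I need the slice means $m(y')$ not to move against the tilt too fast. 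The tool for this is the log-concavity of the joint density, in the form of Pr\'ekopa-type convexity of $y'\mapsto$ (slice law), combined with the slice-wise $1$-strong log-concavity. Together these should make the slice mass above any fixed level $m(y)+s(y'-y)$ a log-concave function of $y'$ with value at least $1/e$ at $y'=y$.

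If this convexity argument does not close for steep slopes $s$, my fallback is to work with $G(y):=\mathbb E[(\rsc{W}-\mathbb E\rsc{W})\mathbf 1_{\{\rsc{Y}\le y\}}]$, which satisfies $G'(y)=p_Y(y)\bigl(m(y)-\mathbb E\rsc{W}\bigr)$. Here $|G(y)|\lesssim F_Y(y)\sqrt{\log(e/F_Y(y))}$ by subgaussianity of $\rsc W$, the same loser-tail estimate as Lemma~\ref{lem:loser-tail}. I would combine this with monotonicity of the sign of $m-\mathbb E\rsc W$ furnished by log-concavity, and convert the integral bound into the pointwise one.
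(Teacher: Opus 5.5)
Your first-term argument is fine (the subgaussian inversion in fact works for every $y$). The reduction is also valid: a lower bound $c\,\delta(y)^{C_0}$ on the depth of $(y,m(y))$, combined with the two-dimensional subgaussian tail, would give the whole estimate. For Gaussians the depth bound even holds with $C_0=1$.

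\textbf{The gap.} The depth lemma carries the entire $m$-term, and it is not proved; the mechanism you sketch does not work.
\begin{itemize}
\item Pr\'ekopa gives log-concavity of the unnormalized mass $N(y'):=\int\mathbf 1\{w\ge m(y)+s(y'-y)\}\,p_{Y,W}(y',w)\,\mathrm dw$ and of $p_Y$. It does not give log-concavity of their ratio, which is the conditional probability you need, and that ratio is not log-concave in general. Take the uniform law on $\{0\le y'\le 1,\ |w|\le y'\}$, suitably rescaled and Gaussian-weighted, and the line $w=y'-\tfrac12$ through $(\tfrac12,m(\tfrac12))=(\tfrac12,0)$. The ratio is essentially $1/(4y')$ on $(\tfrac14,1]$, which is log-convex.
\item More fundamentally, even a log-concave function equal to $1/e$ at $y$ can collapse immediately on both sides. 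So no lower bound follows for the mass above the tilted line in a strip of $\rsc Y$-mass $\asymp\delta(y)$.
\item Such a bound needs to know that $m(y')$ cannot move far across the strip, measured in $\rsc Y$-quantile units. That is exactly what the theorem asserts, so the step is circular.
\item The fallback has the same hole. $m-\mathbb E\rsc W$ can change sign twice for (strongly) log-concave laws: near a triangular support with an apex, $m$ rises and then falls. Converting integral control of $G$ into pointwise control of $m$ again requires regularity of $m$ in quantile units.
\end{itemize}

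\textbf{The missing idea: derivative control in quantile units.} As you note, $m$ has no Euclidean Lipschitz bound. It does have one in the intrinsic metric $\sqrt{U''(y)}\,\mathrm dy$, where $U=-\log p_Y$. The paper obtains it by tilting the Gaussian part. Set $\Psi(s,y):=\log\int e^{-\frac12(w-s)^2-Q(y,w)}\,\mathrm dw$.
\begin{itemize}
\item $\Psi$ is concave by Pr\'ekopa, since the exponent is jointly concave in $(s,y,w)$.
\item $m(y)=\Psi_s(0,y)$, $U''(y)=1-\Psi_{yy}(0,y)$, and $-\Psi_{ss}(0,y)=1-\operatorname{Var}(\rsc W\mid\rsc Y=y)\in[0,1]$.
\item The determinant inequality for $-\nabla^2\Psi\succeq 0$ then gives $m'(y)^2\le(-\Psi_{ss})(-\Psi_{yy})\le U''(y)$.
\end{itemize}
For Gaussians this reads $\Sigma_{WY}^2/\Sigma_{YY}^2\le 1/\Sigma_{YY}$, which your near-degenerate example nearly saturates.

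\textbf{How the paper closes.} It uses a one-dimensional no-spike lemma.
\begin{itemize}
\item $h:=m-\mathbb E\rsc W$ has $h(\rsc Y)$ subgaussian, by conditional Jensen and $1$-strong log-concavity of the $\rsc W$-marginal.
\item Suppose $h(y_0)=R$ is large. Apply Cauchy--Schwarz on the component of $\{h>R/2\}$ containing $y_0$, together with $h'^2\le U''$. This forces $|U'|\ge R^2/(4\ell)$ at an endpoint at distance $\ell$ from $y_0$, on the side where $p_Y$ decreases.
\item Hence the tail beyond that endpoint is at most $4R^{-2}$ times the mass of the component. That mass is $\lesssim e^{-cR^2}$, so $\min\{F_Y(y_0),1-F_Y(y_0)\}\lesssim e^{-cR^2}$.
\end{itemize}
Without this derivative control, or an independent proof of the depth lemma, your argument does not bound $(m(y)-\mathbb E\rsc W)^2$.
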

\begin{proof}[Proof of Theorem~\ref{thm:GSR}]
Set $\boldsymbol\Delta(y)
:=
\boldsymbol\beta(y)
-
\overline{\mathbf g}$.  
We first control the component parallel to \(\mathbf v\). 
Choose an isometric linear map
\[
\mathcal J_{\mathbf v}:
\mathbb R^{d-1}
\to
\mathbf v^\perp,
\]
with the usual zero-dimensional interpretation when \(d=1\), and define
\[
\mathcal E_1:
\mathbb R
\to
\mathbb R^d,
\qquad
\mathcal E_1r
=
r\mathbf v.
\]
Then the density of \(\rsc R\) can be written as
\[
p_R(r)
\propto
e^{-r^2/2}
A_1(r),
\]
where
\[
A_1(r)
:=
\int_{\mathbb R^{d-1}}
\exp\left(
-\frac12\|\mathbf z\|_2^2
-
Q\left(
\mathcal E_1r
+
\mathcal J_{\mathbf v}\mathbf z
\right)
\right)
\,\mathrm d\mathbf z.
\]
Under the preceding zero dimensional convention, when \(d=1\),
\[
A_1(r)
=
\exp\left(
-
Q(\mathcal E_1r)
\right).
\]
Thus the invocation of
Lemma~\ref{lem:gaussian-fiber-domination}
also covers \(d=1\). For $d>1$, 
by Lemma~\ref{lem:gaussian-fiber-domination},
\[
\widetilde Q_1(r)
:=
-\log A_1(r)
\]
belongs to \(C^2(\mathbb R)\), is convex, and satisfies
\[
0
\le
\widetilde Q_1''(r)
\le
L_Q.
\]
Consequently,
\[
p_R(r)
\propto
\exp\left(
-\frac12r^2
-
\widetilde Q_1(r)
\right).
\]
Let $\rsc{\zeta}_0
\sim
N(0,1)$ 
be independent of \(\rsc R\).  The pair $(\rsc R,\rsc{\zeta}_0)$ 
has density proportional to
\[
\exp\left(
-\frac12(r^2+z^2)
-
\widetilde Q_1(r)
\right).
\]
Its residual potential $(r,z)
\longmapsto
\widetilde Q_1(r)$ 
is convex, \(C^2\), and satisfies
\[
\mathbf O_2
\preceq
\nabla^2
\bigl[
\widetilde Q_1(r)
\bigr]
\preceq
L_Q\mathbf I_2.
\]
Theorem~\ref{thm:2d} therefore gives
\[
\left|
y-\mathbb E\rsc R
\right|^2
\le
C
\left(
1+
\log
\frac1{
\min\{
F_R(y),
1-F_R(y)
\}
}
\right).
\]

Since $\left\langle
\boldsymbol\beta(y),
\mathbf v
\right\rangle
=
y$ 
and $\left\langle
\overline{\mathbf g},
\mathbf v
\right\rangle
=
\mathbb E\rsc R$,  
we obtain
\[
\left|
\left\langle
\boldsymbol\Delta(y),
\mathbf v
\right\rangle
\right|^2
\le
C
\left(
1+
\log
\frac1{
\min\{
F_R(y),
1-F_R(y)
\}
}
\right).
\]
By Lemma~\ref{lem:quantile-log},
\begin{equation}
\left|
\left\langle
\boldsymbol\Delta(y),
\mathbf v
\right\rangle
\right|
\le
C
\left(
1+
\left|
\Phi^{-1}(F_R(y))
\right|
\right).
\label{eq:GSR-parallel-component}
\end{equation}

 If \(d=1\), this proves the theorem.  Assume henceforth that \(d\ge2\). 
Fix a unit vector $\mathbf h
\in
\mathbf v^\perp$,  
and define $\rsc Y_{\mathbf h}
:=
\langle\rvec g,\mathbf v\rangle$ and $\rsc W_{\mathbf h}
:=
\langle\rvec g,\mathbf h\rangle$. 
Choose an isometric map $\mathcal J_{\mathbf v,\mathbf h}:
\mathbb R^{d-2}
\to
\{\mathbf v,\mathbf h\}^\perp$, 
and define
\[
\mathcal E_{\mathbf h}:
\mathbb R^2
\to
\mathbb R^d,
\qquad
\mathcal E_{\mathbf h}(y,w)
=
y\mathbf v+w\mathbf h.
\]
Since \(\mathbf v\) and \(\mathbf h\) are orthonormal, $\mathcal E_{\mathbf h}^\ast
\mathcal E_{\mathbf h}
=
\mathbf I_2$. 
The density of
\((\rsc Y_{\mathbf h},\rsc W_{\mathbf h})\)
has the form
\[
p_{\mathbf h}(y,w)
\propto
e^{-(y^2+w^2)/2}
A_{\mathbf h}(y,w),
\]
where
\[
A_{\mathbf h}(y,w)
:=
\int_{\mathbb R^{d-2}}
\exp\left(
-\frac12\|\mathbf z\|_2^2
-
Q\left(
\mathcal E_{\mathbf h}(y,w)
+
\mathcal J_{\mathbf v,\mathbf h}\mathbf z
\right)
\right)
\,\mathrm d\mathbf z.
\]
Let $\widetilde Q_{\mathbf h}(y,w)
:=
-\log A_{\mathbf h}(y,w)$.  
When \(d=2\), the integral is over
\(\mathbb R^0\), and hence
\[
A_{\mathbf h}(y,w)
=
\exp\left(
-
Q\left(
\mathcal E_{\mathbf h}(y,w)
\right)
\right).
\]
Therefore $\widetilde Q_{\mathbf h}(y,w)
=
Q\left(
\mathcal E_{\mathbf h}(y,w)
\right)$,  
and the stated convexity and Hessian bounds follow directly. For $d>2$, 
by Lemma~\ref{lem:gaussian-fiber-domination}, $\widetilde Q_{\mathbf h}(y,w)$ 
belongs to \(C^2(\mathbb R^2)\), is convex, and satisfies
\[
\begin{aligned}
\mathbf O_2
\preceq
\nabla^2
\widetilde Q_{\mathbf h}(y,w)
\preceq
L_Q
\mathcal E_{\mathbf h}^\ast
\mathcal E_{\mathbf h}
=
L_Q\mathbf I_2.
\end{aligned}
\]
Thus
\[
p_{\mathbf h}(y,w)
=
\mathcal Z_{\mathbf h}^{-1}
\exp\left(
-\frac12(y^2+w^2)
-
\widetilde Q_{\mathbf h}(y,w)
\right)
\]
satisfies the assumptions of Theorem~\ref{thm:2d}.

The canonical conditional mean of
\(\rsc W_{\mathbf h}\) given
\(\rsc Y_{\mathbf h}=y\) is $\left\langle
\boldsymbol\beta(y),
\mathbf h
\right\rangle$,  
while $\mathbb E\rsc W_{\mathbf h}
=
\left\langle
\overline{\mathbf g},
\mathbf h
\right\rangle$. 
Applying Theorem~\ref{thm:2d} gives
\[
\left|
\left\langle
\boldsymbol\Delta(y),
\mathbf h
\right\rangle
\right|^2
\le
C
\left(
1+
\log
\frac1{
\min\{
F_R(y),
1-F_R(y)
\}
}
\right).
\]
By Lemma~\ref{lem:quantile-log},
\[
\left|
\left\langle
\boldsymbol\Delta(y),
\mathbf h
\right\rangle
\right|
\le
C
\left(
1+
\left|
\Phi^{-1}(F_R(y))
\right|
\right).
\]
Taking the supremum over all unit
\(\mathbf h\in\mathbf v^\perp\), we obtain
\begin{equation}
\left\|
P_{\mathbf v^\perp}
\boldsymbol\Delta(y)
\right\|_2
\le
C
\left(
1+
\left|
\Phi^{-1}(F_R(y))
\right|
\right).
\label{eq:GSR-orthogonal-component}
\end{equation}

Finally,
\[
\left\|
\boldsymbol\Delta(y)
\right\|_2^2
=
\left|
\left\langle
\boldsymbol\Delta(y),
\mathbf v
\right\rangle
\right|^2
+
\left\|
P_{\mathbf v^\perp}
\boldsymbol\Delta(y)
\right\|_2^2.
\]
Combining
\eqref{eq:GSR-parallel-component}
and
\eqref{eq:GSR-orthogonal-component}
proves the theorem.
\end{proof}

\subsubsection{Loser-Tail Regression}
\label{sec:loser-tail}

The winner-slice estimate controls the conditional barycenter of the
block that attains the maximum.  For each losing block, the relevant
conditioning event is instead a lower-tail event of the form
\[
\{\rsc R\le y\}.
\]
Unlike the equality event
\(\{\rsc R=y\}\),
this event has positive probability and is handled by ordinary event
conditioning.  The following estimate is stated in the smooth
full-dimensional setting used in
Proposition~\ref{prop:one-step-smooth}.  Its proof uses only the
quadratic transport inequality implied by \(1\)-strong log-concavity and
does not use an upper Hessian bound.

\begin{lemma}[Lower-tail barycenter bound]
\label{lem:loser-tail}
Let \(\rvec g\) have a positive \(C^2\) density
\[
p(\mathbf g)
=
\mathcal Z^{-1}
\exp\left(
-\frac12\|\mathbf g\|_2^2
-
Q(\mathbf g)
\right),
\qquad
\mathbf g\in\mathbb R^d,
\]
where $Q\in C^2(\mathbb R^d)$ 
is convex.  Fix $\mathbf v\in\mathbb S^{d-1}$, $\rsc R
:=
\langle\rvec g,\mathbf v\rangle$,  
and define $F_R(y)
:=
\mathbb P(\rsc R\le y)$.  
Then $0<F_R(y)<1$ for every   $y\in\mathbb R$. 
Set $z(y)
:=
\Phi^{-1}(F_R(y))$.  
There exists a universal constant \(C>0\) such that
\begin{equation}
\left\|
\mathbb E
\left[
\rvec g-\mathbb E\rvec g
\mid
\rsc R\le y
\right]
\right\|_2
\le
C
\frac{
\phi(z(y))
}{
\Phi(z(y))
},
\qquad
y\in\mathbb R.
\label{eq:loser-tail-barycenter}
\end{equation}
\end{lemma}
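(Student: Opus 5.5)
Since $\mathbb E[\rvec g-\mathbb E\rvec g\mid\rsc R\le y]$ is a conditional barycenter on the event $E_y:=\{\rsc R\le y\}$, the plan is a duality argument controlled by the quadratic transport inequality for $1$-strongly log-concave laws. Let $\mu$ be the law of $\rvec g$ and $\nu_y:=\mu(\cdot\mid E_y)$. Positivity of the density immediately gives $0<F_R(y)<1$, because both $E_y$ and its complement have positive Lebesgue measure. Next, for a unit vector $\mathbf h$, the function $\mathbf g\mapsto\langle\mathbf g,\mathbf h\rangle$ is $1$-Lipschitz. Kantorovich duality together with Talagrand's inequality $W_1\le W_2\le\sqrt{2\KL(\nu\|\mu)}$ (valid for $1$-strongly log-concave $\mu$ via Bakry--\'Emery and Otto--Villani) therefore gives
\[
\bigl|\langle\mathbb E_{\nu_y}\mathbf g-\mathbb E_\mu\mathbf g,\mathbf h\rangle\bigr|\le\sqrt{2\KL(\nu_y\|\mu)}=\sqrt{2\log\tfrac1{F_R(y)}}.
\]
Taking the supremum over $\mathbf h$ bounds the barycenter norm by $\sqrt{2\log(1/F_R(y))}=\sqrt{2\log(1/\Phi(z))}$. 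On its own this is not the claimed hazard bound.

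The hazard $\phi(z)/\Phi(z)$ behaves like $|z|\asymp\sqrt{2\log(1/\Phi(z))}$ as $z\to-\infty$, so the crude bound is adequate on the lower range $z\le 0$. There one checks, by an elementary Mills-ratio inequality of the type recorded in Lemma~\ref{lem:normal-hazard}, that $\sqrt{\log(1/\Phi(z))}\le C\,\phi(z)/\Phi(z)$ for $z\le0$. The hazard tends to $\sqrt{2/\pi}$ at $z=0$, and $\log(1/\Phi)$ is bounded near $0$, so the inequality holds.

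The main obstacle is the upper range $z>0$, where $F_R(y)\to1$ and the target hazard is of order $\phi(z)\approx 1-F_R(y)$, whereas the entropy bound only gives $\sqrt{2(1-F)}$. To fix this I would write
\[
\mathbb E[\rvec g-\overline{\mathbf g};E_y]=-\mathbb E[\rvec g-\overline{\mathbf g};E_y^c],
\]
so the barycenter equals $-\frac{1-F}{F}\,\mathbb E[\rvec g-\overline{\mathbf g}\mid E_y^c]$. Applying the same transport bound to the upper tail $\mu(\cdot\mid E_y^c)$ yields
\[
\|\mathbb E[\rvec g-\overline{\mathbf g}\mid E_y^c]\|\le\sqrt{2\log\tfrac{1}{1-F}}.
\]
The barycenter norm is then at most $2(1-F)\sqrt{2\log\frac1{1-F}}$ when $F\ge1/2$. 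Since $1-\Phi(z)\le\phi(z)/z$ and $\log\frac1{1-\Phi(z)}\asymp z^2$ for large $z$, the right side is of order $\phi(z)$ in the limit. However, there is an extra factor $\sqrt{\log}$ in the intermediate range. I therefore expect to need the sharper statement that $(1-\Phi(z))\sqrt{\log\frac1{1-\Phi(z)}}\le C\phi(z)$ for all $z\ge0$, which holds because $(1-\Phi(z))\cdot(1+z)\lesssim\phi(z)$. Combining the two ranges with $\Phi(z)\ge1/2$ gives \eqref{eq:loser-tail-barycenter} with a universal constant. No upper Hessian bound enters, because only Talagrand's inequality for $1$-strongly log-concave measures is used.
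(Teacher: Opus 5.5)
Your proposal is correct and follows essentially the same route as the paper. The paper uses the Talagrand $T_2$ entropy--transport bound to get $\sqrt{2\log(1/F_R(y))}$ on the lower-tail barycenter, and the complementary-event identity $\overline{\mathbf g}_A-\overline{\mathbf g}=-\frac{q_y}{p_y}(\overline{\mathbf g}_B-\overline{\mathbf g})$ to handle $F_R(y)\to1$. It then uses a Mills-ratio comparison (Lemma~\ref{lem:tail-to-hazard}, proved by exactly your two-range case split) to turn the minimum of the two bounds into $C\phi(z)/\Phi(z)$. The only difference is cosmetic: you pass through $W_1$ Kantorovich duality, whereas the paper bounds the barycenter shift directly by a coupling and Jensen.
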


\begin{proof}[Proof of Lemma~\ref{lem:loser-tail}]
The total potential
\[
V(\mathbf g)
:=
\frac12\|\mathbf g\|_2^2
+
Q(\mathbf g)
\]
satisfies
\[
\nabla^2V(\mathbf g)
=
\mathbf I_d
+
\nabla^2Q(\mathbf g)
\succeq
\mathbf I_d.
\]
Thus the law $\mu
:=
\Law(\rvec g)$ 
is \(1\)-strongly log-concave.

Since \(p\) is strictly positive on all of \(\mathbb R^d\), both open
halfspaces
\[
\left\{
\mathbf g:
\langle\mathbf g,\mathbf v\rangle<y
\right\}
\quad\text{and}\quad
\left\{
\mathbf g:
\langle\mathbf g,\mathbf v\rangle>y
\right\}
\]
have positive \(\mu\)-measure for every finite \(y\).  Consequently, $0<F_R(y)<1$. 

Fix $y\in\mathbb R$,  
and define
\[
A_y
:=
\{\rsc R\le y\},
\qquad
p_y
:=
\mathbb P(A_y)
=
F_R(y),
\qquad
q_y
:=
1-p_y.
\]
Then $0<p_y,q_y<1$.  
Let $\nu_A
:=
\Law(\rvec g\mid A_y).$ 
For every Borel set
\(E\subseteq\mathbb R^d\),
\[
\nu_A(E)
=
\frac{
\mu(E\cap A_y)
}{
p_y
}.
\]
Hence
\[
\frac{
\mathrm d\nu_A
}{
\mathrm d\mu
}(\mathbf g)
=
\frac{
\mathbf 1_{A_y}(\mathbf g)
}{
p_y
}
\qquad
\mu\text{-almost everywhere}.
\]
Since \(\nu_A\) is supported on \(A_y\),
\begin{equation}
\begin{aligned}
\KL(
\nu_A\|\mu)
=
\int_{\mathbb R^d}
\log
\left(
\frac{
\mathrm d\nu_A
}{
\mathrm d\mu
}
\right)
\nu_A(\mathrm d\mathbf g)
=
\log\frac1{p_y}.
\end{aligned}
\label{eq:loser-tail-KL}
\end{equation}
The quadratic transport inequality for \(1\)-strongly log-concave
measures gives
\begin{equation}
W_2^2(
\nu_A,\mu)
\le
2
\KL(
\nu_A\|\mu).
\label{eq:loser-tail-T2}
\end{equation}
Let
\(\Gamma\)
be any coupling of \(\nu_A\) and \(\mu\).  Then
\[
\begin{aligned}
\left\|
\int_{\mathbb R^d}
\mathbf x\,
\nu_A(\mathrm d\mathbf x)
-
\int_{\mathbb R^d}
\mathbf y\,
\mu(\mathrm d\mathbf y)
\right\|_2
&=
\left\|
\int_{\mathbb R^d\times\mathbb R^d}
(\mathbf x-\mathbf y)
\,
\Gamma(\mathrm d\mathbf x,\mathrm d\mathbf y)
\right\|_2
\\
&\le
\int_{\mathbb R^d\times\mathbb R^d}
\|\mathbf x-\mathbf y\|_2
\,
\Gamma(\mathrm d\mathbf x,\mathrm d\mathbf y)
\\
&\le
\left[
\int_{\mathbb R^d\times\mathbb R^d}
\|\mathbf x-\mathbf y\|_2^2
\,
\Gamma(\mathrm d\mathbf x,\mathrm d\mathbf y)
\right]^{1/2}.
\end{aligned}
\]
Taking the infimum over all couplings gives
\[
\left\|
\mathbb E[
\rvec g\mid A_y]
-
\mathbb E\rvec g
\right\|_2
\le
W_2(
\nu_A,\mu).
\]
Combining this with
\eqref{eq:loser-tail-KL}
and
\eqref{eq:loser-tail-T2},
we obtain
\begin{equation}
\left\|
\mathbb E[
\rvec g\mid A_y]
-
\mathbb E\rvec g
\right\|_2
\le
\sqrt{
2\log\frac1{p_y}
}.
\label{eq:loser-tail-direct}
\end{equation}

This estimate is useful when \(p_y\) is not close to one.  To obtain the
complementary estimate, define $\nu_B
:=
\Law(
\rvec g\mid A_y^c)$ 
and set
\[
\overline{\mathbf g}_A
:=
\mathbb E[
\rvec g\mid A_y],
\qquad
\overline{\mathbf g}_B
:=
\mathbb E[
\rvec g\mid A_y^c],
\qquad
\overline{\mathbf g}
:=
\mathbb E\rvec g.
\]
Since
\[
\overline{\mathbf g}
=
p_y\overline{\mathbf g}_A
+
q_y\overline{\mathbf g}_B,
\]
we have
\[
\overline{\mathbf g}_A
-
\overline{\mathbf g}
=
-\frac{q_y}{p_y}
\left(
\overline{\mathbf g}_B
-
\overline{\mathbf g}
\right).
\]
Applying the preceding entropy--transport argument to the event
\(A_y^c\), whose probability is \(q_y\), gives
\[
\left\|
\overline{\mathbf g}_B
-
\overline{\mathbf g}
\right\|_2
\le
\sqrt{
2\log\frac1{q_y}
}.
\]
Therefore
\begin{equation}
\left\|
\overline{\mathbf g}_A
-
\overline{\mathbf g}
\right\|_2
\le
\frac{q_y}{p_y}
\sqrt{
2\log\frac1{q_y}
}.
\label{eq:loser-tail-complement}
\end{equation}

Let $z
:=
z(y)
=
\Phi^{-1}(p_y)$.  
Then $p_y
=
\Phi(z)$ and  $q_y
=
\Phi(-z)$. 
Combining
\eqref{eq:loser-tail-direct}
and
\eqref{eq:loser-tail-complement},
we obtain
\[
\begin{aligned}
\left\|
\mathbb E
\left[
\rvec g-\mathbb E\rvec g
\mid
\rsc R\le y
\right]
\right\|_2
\quad\le
\min\left\{
\sqrt{
2\log
\frac1{\Phi(z)}
},
\,
\frac{
\Phi(-z)
}{
\Phi(z)
}
\sqrt{
2\log
\frac1{\Phi(-z)}
}
\right\}.
\end{aligned}
\]
Lemma~\ref{lem:tail-to-hazard}
therefore yields \eqref{eq:loser-tail-barycenter}:
\[
\left\|
\mathbb E
\left[
\rvec g-\mathbb E\rvec g
\mid
\rsc R\le y
\right]
\right\|_2
\le
C
\frac{
\phi(z)
}{
\Phi(z)
}.
\]
\end{proof}

\subsubsection{Proof of the Two-Dimensional Slice Estimate}
\label{sec:2dproof}
The proof of Theorem~\ref{thm:2d} reduces to controlling two
one-dimensional displacement functions.  The following no-spike lemma
converts a subgaussian tail bound for \(h(\rsc{Y})\), together with the
pointwise derivative control
\[
h'(y)^2\le U''(y),
\]
into a pointwise bound expressed in terms of the lower and upper quantiles
of \(\rsc{Y}\).  We will apply the lemma first to $h_0(y)=y-\mathbb E\rsc{Y}$ 
and then to $h(y)=m(y)-\mathbb E\rsc{W}$.

\begin{lemma}[No-spike lemma, smooth form]
\label{lem:nospike}
Let \(\rsc{Y}\) have a positive density
\[
p_Y(y)
=
\mathcal Z_U^{-1}
e^{-U(y)},
\qquad
\mathcal Z_U
:=
\int_{\mathbb R}
e^{-U(t)}
\,\mathrm dt
<
\infty,
\]
where $U\in C^2(\mathbb R)$ 
is convex.  Define $F_Y(y)
:=
\mathbb P(\rsc{Y}\le y)$,  
and let $h:\mathbb R\to\mathbb R$ 
be continuously differentiable. 
Assume that, for some constants
\(c_0>0\) and \(C_0<\infty\),
\begin{equation}
\mathbb P
\left(
|h(\rsc{Y})|
\ge
r
\right)
\le
C_0e^{-c_0r^2},
\qquad
r\ge0,
\label{eq:no-spike-tail-assumption}
\end{equation}
and
\begin{equation}
h'(y)^2
\le
U''(y),
\qquad
y\in\mathbb R.
\label{eq:no-spike-derivative-assumption}
\end{equation}
Then there exists
\(C=C(c_0,C_0)\)
such that
\begin{equation}
|h(y)|^2
\le
C\left(
1+
\log
\frac1{
\min\{
F_Y(y),
1-F_Y(y)
\}
}
\right)
\label{eq:no-spike-conclusion}
\end{equation}
for every \(y\in\mathbb R\).
\end{lemma}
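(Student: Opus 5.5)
The plan is a contradiction argument. If $|h|$ were large at a point $y_0$, the derivative control \eqref{eq:no-spike-derivative-assumption} would keep it large on a neighbouring interval carrying probability comparable to $\min\{F_Y(y_0),1-F_Y(y_0)\}$. The subgaussian tail \eqref{eq:no-spike-tail-assumption} then forbids this unless $|h(y_0)|^2\lesssim 1+\log\frac1{\min\{F_Y(y_0),1-F_Y(y_0)\}}$.

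\emph{Reduction.} By the symmetry $y\mapsto -y$, which replaces $U$ by $U(-\cdot)$ and $h$ by $h(-\cdot)$ and preserves both hypotheses, it suffices to treat $p:=F_Y(y_0)\le \tfrac12$. Set $M:=|h(y_0)|$.

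\emph{Step 1: oscillation of $h$.} For any $y_1\ge y_0$, Cauchy--Schwarz together with \eqref{eq:no-spike-derivative-assumption} gives
\[
|h(y)-h(y_0)|\le \int_{y_0}^{y}\sqrt{U''(t)}\,\mathrm dt
\le \sqrt{(y_1-y_0)\bigl(U'(y_1)-U'(y_0)\bigr)}
\qquad\text{for } y\in[y_0,y_1].
\]
This uses that $U'$ is nondecreasing, so $U''\ge0$. Hence, if
\[
(y_1-y_0)\bigl(U'(y_1)-U'(y_0)\bigr)\le M^2/4,
\]
then $|h|\ge M/2$ on $[y_0,y_1]$, and therefore
\[
\mathbb P(|h(\rsc Y)|\ge M/2)\ge F_Y(y_1)-F_Y(y_0).
\]

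\emph{Step 2: choice of $y_1$.} If $p\le\tfrac14$, choose $y_1$ with $F_Y(y_1)=2p$, so that the interval carries mass exactly $p$. If $p\in(\tfrac14,\tfrac12]$, choose $y_1$ with $F_Y(y_1)=\tfrac34$, so the interval carries mass at least $\tfrac14$. It then remains to prove the purely one-dimensional log-concave estimate
\begin{equation}
(y_1-y_0)\bigl(U'(y_1)-U'(y_0)\bigr)\le C_1\Bigl(1+\log\tfrac1p\Bigr),
\tag{$\ast$}
\end{equation}
with $C_1$ universal. Granting $(\ast)$, suppose $M^2\ge 4C_1(1+\log\frac1p)$ and $M^2\ge \frac4{c_0}\log\frac{2C_0}{\min\{p,1/4\}}$. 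Then
\[
\min\{p,\tfrac14\}\le C_0e^{-c_0M^2/4}\le \tfrac12\min\{p,\tfrac14\},
\]
which is a contradiction. So $M^2$ is bounded by the maximum of the two thresholds, and this is \eqref{eq:no-spike-conclusion} with $C$ depending only on $c_0$ and $C_0$.

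\emph{Step 3: proving $(\ast)$, the main obstacle.} I would use two standard facts about one-dimensional log-concave laws: $F_Y$ is log-concave, and the reversed hazard $r(y):=p_Y(y)/F_Y(y)$ is nonincreasing.
\begin{enumerate}[label=(\roman*)]
\item Concavity of $\log F_Y$ gives $\log 2=\log F_Y(y_1)-\log F_Y(y_0)\ge (y_1-y_0)\,r(y_1)$, so $y_1-y_0\le \log 2/r(y_1)$.
\item For the slope increment, I would first show $U'(y_1)\le C r(y_1)$. When $F_Y(y_1)\le\tfrac34$, the upper tail beyond $y_1$ has mass at least $\tfrac14$, and log-concavity makes $p_Y(y_1)$, $|U'(y_1)|$ and the hazard comparable up to constants.
\item The lower bound $-U'(y_0)\lesssim r(y_0)$ is the delicate part. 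I would write
\[
-U'(y_0)=\frac{p_Y'(y_0)}{p_Y(y_0)}
\]
and compare it with $r(y_0)$ via the inequality $p_Y'F_Y\le p_Y^2$, which is exactly log-concavity of $F_Y$.
\item Finally, $r(y_0)/r(y_1)\lesssim 1+\log\frac1p$ should follow from a Gaussian-type comparison of log-concave lower tails: the reversed hazard grows at most like $\sqrt{\log(1/F_Y)}$ times the scale at the median. This is the step I expect to be hardest.
\end{enumerate}
If this route fails, I would fall back on the scale-free normalization $u=\Phi^{-1}(F_Y(y))$. There the standard log-concave bound $U'' \le$ (density scale)$^2\cdot(1+u^2)$ in quantile coordinates would give $(\ast)$ directly.
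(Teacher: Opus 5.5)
\textbf{Gap in Step 3(iv).} Your reduction, Step~1 and the contradiction in Step~2 are correct, so the proof depends entirely on $(\ast)$, and that is where the genuine gap is. Items (i)--(iii) are fine.
\begin{itemize}
\item Item (iii) is in fact immediate: log-concavity of $F_Y$ gives $-U'(y_0)=p_Y'(y_0)/p_Y(y_0)\le r(y_0)$.
\item In (ii) you only need the one-sided bound $U'(y_1)_+\le p_Y(y_1)/(1-F_Y(y_1))\le 3r(y_1)$. Comparability of $|U'(y_1)|$ with the hazard is false at the mode, but it is not needed.
\end{itemize}
Together these give $(y_1-y_0)\bigl(U'(y_1)-U'(y_0)\bigr)\lesssim 1+r(y_0)/r(y_1)$. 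So (iv) carries the whole content, and you do not prove it.

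The justification you sketch for (iv) is false for general log-concave laws. For $U(y)=y^4$ one has $r(y)\approx 4|y|^3\approx 4\bigl(\log(1/F_Y(y))\bigr)^{3/4}$ as $y\to-\infty$, while $r$ at the median is a constant. So the reversed hazard is not $O\bigl(\sqrt{\log(1/F_Y)}\bigr)$ times the median scale.

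The fallback also fails. There is no pointwise bound on $U''$ in terms of the density scale: a mollified kink ($U(y)\approx|y|$ near $0$) makes $U''(0)$ arbitrarily large while barely moving the quantiles. The lemma assumes no upper bound on $U''$; in the application the only one is the non-uniform $L_Q$.

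\textbf{Repair.} The missing step is easy through the profile $I(t):=p_Y(F_Y^{-1}(t))$.
\begin{itemize}
\item Since $I'(t)=-U'(F_Y^{-1}(t))$ is nonincreasing, $I$ is concave and nonnegative on $(0,1)$.
\item Hence $I(2p)\ge\frac{1-2p}{1-p}I(p)$, and $r(y_0)/r(y_1)=2I(p)/I(2p)\le 3$ for $p\le\frac14$.
\item The same argument gives $r(y_0)/r(y_1)\le 9$ when $F_Y(y_1)=\frac34$.
\end{itemize}
This yields $(\ast)$ with a universal constant, with no $\log(1/p)$ needed, and closes your argument. With that repair your route is a genuine alternative to the paper's.

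\textbf{Comparison with the paper.} The paper never needs a curvature budget like $(\ast)$.
\begin{itemize}
\item It takes the connected component $(a,b)$ of $\{h>R/2\}$ containing $y_0$.
\item Cauchy--Schwarz on both sides forces $U'(b)-U'(y_0)\ge R^2/(4(b-y_0))$ and $U'(y_0)-U'(a)\ge R^2/(4(y_0-a))$.
\item It then chooses the side by the sign of $U'(y_0)$, not by which tail is smaller. On that side $p_Y$ is monotone between $y_0$ and the endpoint.
\item The convexity tail bound $1-F_Y(b)\le p_Y(b)/U'(b)$ then shows the tail beyond $y_0$ is at most twice the mass of the superlevel set.
\end{itemize}
Your version fixes the interval by quantiles and pays with an extra one-dimensional log-concave estimate. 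The paper's lets $h$ choose the interval and uses only monotonicity of $U'$ and $p_Y$.
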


\begin{proof}[Proof of Lemma~\ref{lem:nospike}]
Let $\mu_Y
:=
\Law(\rsc{Y})$ 
denote the law of \(\rsc{Y}\).  Fix an arbitrary point $y_0\in\mathbb R$, 
and set $R
:=
|h(y_0)|$.  
If \(R=0\), there is nothing to prove.

Define
\[
\widetilde h
:=
\begin{cases}
h,
&
h(y_0)>0,
\\
-h,
&
h(y_0)<0.
\end{cases}
\]
Then $\widetilde h(y_0)=R$,  $\widetilde h'(y)^2=h'(y)^2$,  
and $|\widetilde h(\rsc{Y})|
=
|h(\rsc{Y})|$.  
Thus
\(\widetilde h\)
satisfies the same tail and derivative assumptions as \(h\).  It is
therefore enough to prove the claim under the normalization  $h(y_0)=R>0$.  
Choose a constant $R_0\ge2$ 
sufficiently large, depending only on \(c_0\) and \(C_0\).  If $0<R\le R_0$,  
then
\[
1+
\log
\frac1{
\min\{
F_Y(y_0),
1-F_Y(y_0)
\}
}
\ge1,
\]
and hence
\[
R^2
\le
R_0^2
\left(
1+
\log
\frac1{
\min\{
F_Y(y_0),
1-F_Y(y_0)
\}
}
\right).
\]
Thus the conclusion follows after enlarging the constant.  We may
therefore assume throughout the remainder of the proof that $R>R_0$.  

Define the open superlevel set
\[
A_R
:=
\left\{
y\in\mathbb R:
h(y)>\frac R2
\right\}.
\]
Since \(h\) is continuous,
\(A_R\) is open and contains \(y_0\).  By
\eqref{eq:no-spike-tail-assumption},
\[
\begin{aligned}
\mu_Y(A_R)
\le
\mathbb P
\left(
|h(\rsc{Y})|
\ge
\frac R2
\right)
\le
C_0
\exp\left(
-\frac{c_0R^2}{4}
\right).
\end{aligned}
\]
Consequently, there exist constants
\(C_1,c_1>0\),
depending only on \(C_0,c_0\), such that
\begin{equation}
\mu_Y(A_R)
\le
C_1e^{-c_1R^2}.
\label{eq:no-spike-superlevel-mass}
\end{equation}

Let $J=(a,b)$ 
be the connected component of \(A_R\) containing \(y_0\), where either
endpoint is allowed to be infinite. 
Suppose first that $b=+\infty.$ 
Then $[y_0,\infty)
\subseteq
J,$
and hence
\[
1-F_Y(y_0)
\le
\mu_Y(J)
\le
C_1e^{-c_1R^2}.
\]
Similarly, if $a=-\infty,$ 
then $(-\infty,y_0]
\subseteq
J$ 
and therefore
\[
F_Y(y_0)
\le
\mu_Y(J)
\le
C_1e^{-c_1R^2}.
\]

It remains to consider the case
\[
-\infty<a<y_0<b<\infty.
\]
By continuity of \(h\) and maximality of the connected component \(J\),
\[
h(a)
=
h(b)
=
\frac R2.
\]
Set $\alpha
:=
y_0-a$ and $\beta
:=
b-y_0.$ 
Then $\alpha,\beta>0$. 
Since $h(y_0)-h(a)
=
\frac R2$, 
 Cauchy--Schwarz gives
\[
\begin{aligned}
\frac R2
=
\left|
\int_a^{y_0}
h'(t)\,\mathrm dt
\right|
\le
\sqrt{\alpha}
\left(
\int_a^{y_0}
h'(t)^2\,\mathrm dt
\right)^{1/2}.
\end{aligned}
\]
Hence
\begin{equation}
\int_a^{y_0}
h'(t)^2\,\mathrm dt
\ge
\frac{R^2}{4\alpha}.
\label{eq:no-spike-left-energy}
\end{equation}
Similarly,
\begin{equation}
\int_{y_0}^b
h'(t)^2\,\mathrm dt
\ge
\frac{R^2}{4\beta}.
\label{eq:no-spike-right-energy}
\end{equation}

Using
\eqref{eq:no-spike-derivative-assumption},
we obtain
\begin{equation}
\begin{aligned}
U'(y_0)-U'(a)
=
\int_a^{y_0}
U''(t)\,\mathrm dt
\ge
\int_a^{y_0}
h'(t)^2\,\mathrm dt
\ge
\frac{R^2}{4\alpha},
\end{aligned}
\label{eq:no-spike-left-curvature}
\end{equation}
and
\begin{equation}
\begin{aligned}
U'(b)-U'(y_0)
=
\int_{y_0}^b
U''(t)\,\mathrm dt
\ge
\int_{y_0}^b
h'(t)^2\,\mathrm dt
\ge
\frac{R^2}{4\beta}.
\end{aligned}
\label{eq:no-spike-right-curvature}
\end{equation}

We now distinguish two cases according to the sign of \(U'(y_0)\).

\paragraph{Case 1: \(U'(y_0)\ge0\).}

By
\eqref{eq:no-spike-right-curvature},
\[
U'(b)
\ge
\frac{R^2}{4\beta}
>
0.
\]
Since \(U'\) is nondecreasing and \(U'(y_0)\ge0\), the function \(U\)
is nondecreasing on \([y_0,b]\).  Therefore
\[
p_Y(t)
\ge
p_Y(b),
\qquad
t\in[y_0,b],
\]
and hence
\begin{equation}
\mu_Y([y_0,b])
\ge
\beta p_Y(b).
\label{eq:no-spike-right-interval-mass}
\end{equation}
For \(t\ge b\), convexity gives
\[
U(t)
\ge
U(b)+U'(b)(t-b).
\]
Consequently,
\[
\begin{aligned}
1-F_Y(b)
&=
\int_b^\infty
p_Y(t)\,\mathrm dt
\\
&\le
p_Y(b)
\int_0^\infty
e^{-U'(b)s}\,\mathrm ds
\\
&=
\frac{p_Y(b)}{U'(b)}
\\
&\le
\frac{4\beta p_Y(b)}{R^2}
\\
&\le
\frac4{R^2}
\mu_Y([y_0,b]),
\end{aligned}
\]
where the last inequality uses
\eqref{eq:no-spike-right-interval-mass}.
Therefore
\[
\begin{aligned}
1-F_Y(y_0)
&=
\mu_Y([y_0,b])
+
1-F_Y(b)
\\
&\le
\left(
1+\frac4{R^2}
\right)
\mu_Y([y_0,b])
\\
&\le
2\mu_Y(J),
\end{aligned}
\]
because \(R\ge R_0\ge2\).  By
\eqref{eq:no-spike-superlevel-mass},
\begin{equation}
1-F_Y(y_0)
\le
2C_1e^{-c_1R^2}.
\label{eq:no-spike-right-quantile}
\end{equation}

\paragraph{Case 2: \(U'(y_0)<0\).}

By
\eqref{eq:no-spike-left-curvature},
\[
\begin{aligned}
-U'(a)
=
-U'(y_0)
+
U'(y_0)-U'(a)
\ge
\frac{R^2}{4\alpha}.
\end{aligned}
\]
Since \(U'\) is nondecreasing and \(U'(y_0)<0\), the function \(U\) is
nonincreasing on \([a,y_0]\).  Therefore
\[
p_Y(t)
\ge
p_Y(a),
\qquad
t\in[a,y_0],
\]
and hence
\begin{equation}
\mu_Y([a,y_0])
\ge
\alpha p_Y(a).
\label{eq:no-spike-left-interval-mass}
\end{equation}
For \(t\le a\), convexity gives
\[
\begin{aligned}
U(t)
\ge
U(a)+U'(a)(t-a)
=
U(a)+(-U'(a))(a-t).
\end{aligned}
\]
Consequently,
\[
\begin{aligned}
F_Y(a)
&=
\int_{-\infty}^a
p_Y(t)\,\mathrm dt
\\
&\le
p_Y(a)
\int_0^\infty
e^{-(-U'(a))s}\,\mathrm ds
\\
&=
\frac{p_Y(a)}{-U'(a)}
\\
&\le
\frac{4\alpha p_Y(a)}{R^2}
\\
&\le
\frac4{R^2}
\mu_Y([a,y_0]),
\end{aligned}
\]
where the last inequality uses
\eqref{eq:no-spike-left-interval-mass}.
Therefore
\[
\begin{aligned}
F_Y(y_0)
&=
F_Y(a)
+
\mu_Y([a,y_0])
\\
&\le
\left(
1+\frac4{R^2}
\right)
\mu_Y([a,y_0])
\\
&\le
2\mu_Y(J).
\end{aligned}
\]
Using
\eqref{eq:no-spike-superlevel-mass},
\begin{equation}
F_Y(y_0)
\le
2C_1e^{-c_1R^2}.
\label{eq:no-spike-left-quantile}
\end{equation}

The unbounded cases and
\eqref{eq:no-spike-right-quantile}--\eqref{eq:no-spike-left-quantile}
show that, in every case,
\[
\min\{
F_Y(y_0),
1-F_Y(y_0)
\}
\le
C_2e^{-c_2R^2}
\]
for constants \(C_2,c_2>0\) depending only on \(C_0,c_0\).  Taking
logarithms gives
\[
c_2R^2
\le
\log C_2
+
\log
\frac1{
\min\{
F_Y(y_0),
1-F_Y(y_0)
\}
}.
\]
After enlarging the constant,
\[
R^2
\le
C
\left(
1+
\log
\frac1{
\min\{
F_Y(y_0),
1-F_Y(y_0)
\}
}
\right).
\]
Since $R
=
|h(y_0)|$ 
and \(y_0\in\mathbb R\) was arbitrary, this proves
\eqref{eq:no-spike-conclusion}.
\end{proof}

\begin{proof}[Proof of Theorem~\ref{thm:2d}]
By assumption,
\[
Q\in C^2(\mathbb R^2),
\qquad
\mathbf O_2
\preceq
\nabla^2Q(y,w)
\preceq
L_Q\mathbf I_2
\]
for every
\((y,w)\in\mathbb R^2\).

For $(s,y)\in\mathbb R^2,$ 
define
\[
A(s,y)
:=
\int_{\mathbb R}
\exp\left(
-\frac12(w-s)^2
-
Q(y,w)
\right)
\,\mathrm dw,
\]
and
\[
\Psi(s,y)
:=
\log A(s,y).
\]
Also set $A_0(y)
:=
A(0,y)$. 

The function
\[
G_2((s,y),w)
:=
\frac12(w-s)^2
+
Q(y,w)
\]
satisfies the domination condition of
Lemma~\ref{lem:smooth-marginal-regularity}
by
Lemma~\ref{lem:gaussian-fiber-domination}.
Consequently,
\[
A\in C^2(\mathbb R^2),
\qquad
\Psi\in C^2(\mathbb R^2).
\]
In particular, all differentiations of \(A\) and \(\Psi\) below are
justified.

Moreover, the function
\[
(s,y,w)
\longmapsto
-\frac12(w-s)^2
-
Q(y,w)
\]
is jointly concave.  By Prékopa's theorem, $(s,y)
\longmapsto
A(s,y)$ 
is log-concave.  Hence \(\Psi\) is concave:
\begin{equation}
\nabla^2\Psi(s,y)
\preceq
\mathbf O_2.
\label{eq:two-d-Psi-concavity}
\end{equation}

\paragraph{Marginal potential of \(\rsc{Y}\).}

The marginal density of \(\rsc{Y}\) satisfies
\[
\begin{aligned}
p_Y(y)
=
\int_{\mathbb R}
p_{Y,W}(y,w)
\,\mathrm dw
\propto
e^{-y^2/2}
A_0(y)
=
\exp\left(
-\frac12y^2
+
\Psi(0,y)
\right).
\end{aligned}
\]
Writing
\[
p_Y(y)
=
\mathcal Z_Y^{-1}
e^{-U(y)},
\]
we have
\begin{equation}
U(y)
=
\frac12y^2
-
\Psi(0,y)
+
\mathrm{const}.
\label{eq:two-d-marginal-potential}
\end{equation}
Therefore
\begin{equation}
U''(y)
=
1
-
\Psi_{yy}(0,y)
\ge
1,
\label{eq:two-d-marginal-curvature}
\end{equation}
because \(\Psi\) is concave.

Thus the marginal law of \(\rsc{Y}\) is
\(1\)-strongly log-concave.

\paragraph{Displacement in the first coordinate.}

Define
\[
h_0(y)
:=
y-\mathbb E\rsc{Y}.
\]
By
Lemma~\ref{lem:slc-subgaussian-affine},
applied to the one-dimensional law of \(\rsc{Y}\),
\[
\mathbb E
\exp\left(
\lambda h_0(\rsc{Y})
\right)
\le
e^{\lambda^2/2},
\qquad
\lambda\in\mathbb R.
\]
Consequently,
\[
\mathbb P
\left(
|h_0(\rsc{Y})|
\ge
r
\right)
\le
2e^{-r^2/2},
\qquad
r\ge0.
\]
Furthermore,
\[
h_0'(y)^2
=
1
\le
U''(y)
\]
by
\eqref{eq:two-d-marginal-curvature}.
Applying
Lemma~\ref{lem:nospike}
to \(h_0\) gives
\begin{equation}
\left|
y-\mathbb E\rsc{Y}
\right|^2
\le
C\left(
1+
\log
\frac1{
\min\{
F_Y(y),
1-F_Y(y)
\}
}
\right).
\label{eq:two-d-first-coordinate-bound}
\end{equation}

\paragraph{Subgaussianity of the conditional-mean displacement.}

Define $h(y)
:=
m(y)-\mathbb E\rsc{W}$.   
We first prove that $h(\rsc{Y})$ 
is subgaussian.

The marginal density of \(\rsc{W}\) can be written as
\[
p_W(w)
\propto
e^{-w^2/2}
B(w),
\]
where
\[
B(w)
:=
\int_{\mathbb R}
\exp\left(
-\frac12y^2
-
Q(y,w)
\right)
\,\mathrm dy.
\]
The function
\[
(y,w)
\longmapsto
-\frac12y^2
-
Q(y,w)
\]
is jointly concave.  Prékopa's theorem therefore implies that \(B\) is
log-concave.  Hence the marginal law of \(\rsc{W}\) is
\(1\)-strongly log-concave.

By
Lemma~\ref{lem:slc-subgaussian-affine},
\begin{equation}
\mathbb E
\exp\left(
\lambda
\left(
\rsc{W}
-
\mathbb E\rsc{W}
\right)
\right)
\le
e^{\lambda^2/2},
\qquad
\lambda\in\mathbb R.
\label{eq:two-d-W-marginal-mgf}
\end{equation}

The density-ratio definition of \(m\) gives a canonical version of the
conditional mean, and therefore
\[
m(\rsc{Y})
=
\mathbb E
\left[
\rsc{W}
\mid
\rsc{Y}
\right]
\qquad
\text{almost surely}.
\]
Consequently,
\[
h(\rsc{Y})
=
\mathbb E
\left[
\rsc{W}
-
\mathbb E\rsc{W}
\mid
\rsc{Y}
\right].
\]
Conditional Jensen's inequality and
\eqref{eq:two-d-W-marginal-mgf}
give
\[
\begin{aligned}
\mathbb E
e^{\lambda h(\rsc{Y})}
&=
\mathbb E
\exp\left(
\lambda
\mathbb E
\left[
\rsc{W}
-
\mathbb E\rsc{W}
\mid
\rsc{Y}
\right]
\right)
\\
&\le
\mathbb E
\mathbb E
\left[
\exp\left(
\lambda
\left(
\rsc{W}
-
\mathbb E\rsc{W}
\right)
\right)
\middle|
\rsc{Y}
\right]
\\
&=
\mathbb E
\exp\left(
\lambda
\left(
\rsc{W}
-
\mathbb E\rsc{W}
\right)
\right)
\\
&\le
e^{\lambda^2/2}.
\end{aligned}
\]
Hence
\begin{equation}
\mathbb P
\left(
|h(\rsc{Y})|
\ge
r
\right)
\le
2e^{-r^2/2},
\qquad
r\ge0.
\label{eq:two-d-conditional-mean-tail}
\end{equation}

\paragraph{Derivative control for the conditional mean.}

For every
\((s,y)\in\mathbb R^2\),
define the probability measure
\[
\nu_{s,y}(\mathrm dw)
:=
A(s,y)^{-1}
\exp\left(
-\frac12(w-s)^2
-
Q(y,w)
\right)
\,\mathrm dw.
\]
Define
\[
\overline w(s,y)
:=
\int_{\mathbb R}
w
\,\nu_{s,y}(\mathrm dw)
\]
and
\[
\operatorname{Var}_{s,y}(w)
:=
\int_{\mathbb R}
\left(
w-\overline w(s,y)
\right)^2
\nu_{s,y}(\mathrm dw).
\]

Differentiation with respect to \(s\) gives
\begin{equation}
\Psi_s(s,y)
=
\overline w(s,y)
-
s.
\label{eq:two-d-Psi-s}
\end{equation}
At \(s=0\),
\[
\nu_{0,y}(\mathrm dw)
=
A_0(y)^{-1}
\exp\left(
-\frac12w^2
-
Q(y,w)
\right)
\,\mathrm dw,
\]
which is the canonical conditional law of
\(\rsc{W}\) given
\(\rsc{Y}=y\).
Therefore
\begin{equation}
\Psi_s(0,y)
=
m(y).
\label{eq:two-d-m-as-Psi-s}
\end{equation}
Since
\(\Psi\in C^2(\mathbb R^2)\),
this proves that $m\in C^1(\mathbb R)$,  
with
\begin{equation}
m'(y)
=
\Psi_{sy}(0,y).
\label{eq:two-d-m-prime}
\end{equation}
A second differentiation with respect to \(s\) gives
\begin{equation}
\Psi_{ss}(s,y)
=
\operatorname{Var}_{s,y}(w)
-
1.
\label{eq:two-d-Psi-ss}
\end{equation}
By
\eqref{eq:two-d-Psi-concavity},
\[
-\nabla^2\Psi(0,y)
\succeq
\mathbf O_2.
\]
The determinant condition for this
\(2\times2\) positive semidefinite matrix gives
\begin{equation}
\Psi_{sy}(0,y)^2
\le
\left[
-\Psi_{ss}(0,y)
\right]
\left[
-\Psi_{yy}(0,y)
\right].
\label{eq:two-d-Hessian-determinant}
\end{equation}
By
\eqref{eq:two-d-Psi-ss},
\[
-\Psi_{ss}(0,y)
=
1
-
\operatorname{Var}_{0,y}(w).
\]
Concavity of \(\Psi\) implies
\[
-\Psi_{ss}(0,y)
\ge
0,
\]
while nonnegativity of variance gives
\[
-\Psi_{ss}(0,y)
\le
1.
\]
Thus
\begin{equation}
0
\le
-\Psi_{ss}(0,y)
\le
1.
\label{eq:two-d-Psi-ss-range}
\end{equation}
Combining
\eqref{eq:two-d-m-prime},
\eqref{eq:two-d-Hessian-determinant},
and
\eqref{eq:two-d-Psi-ss-range},
we obtain
\[
\begin{aligned}
m'(y)^2
=
\Psi_{sy}(0,y)^2
\le
\left[
-\Psi_{ss}(0,y)
\right]
\left[
-\Psi_{yy}(0,y)
\right]
\le
-\Psi_{yy}(0,y).
\end{aligned}
\]
On the other hand,
\eqref{eq:two-d-marginal-potential}
gives
\[
U''(y)
=
1
-
\Psi_{yy}(0,y)
\ge
-\Psi_{yy}(0,y).
\]
Therefore
\begin{equation}
h'(y)^2
=
m'(y)^2
\le
U''(y).
\label{eq:two-d-h-prime-bound}
\end{equation}

Applying
Lemma~\ref{lem:nospike}
to \(h\), using
\eqref{eq:two-d-conditional-mean-tail}
and
\eqref{eq:two-d-h-prime-bound},
gives
\begin{equation}
\left|
m(y)-\mathbb E\rsc{W}
\right|^2
\le
C\left(
1+
\log
\frac1{
\min\{
F_Y(y),
1-F_Y(y)
\}
}
\right).
\label{eq:two-d-conditional-mean-bound}
\end{equation}

Finally, adding
\eqref{eq:two-d-first-coordinate-bound}
and
\eqref{eq:two-d-conditional-mean-bound},
and enlarging the universal constant \(C\), gives
\[
\bigl(
y-\mathbb E\rsc{Y}
\bigr)^2
+
\bigl(
m(y)-\mathbb E\rsc{W}
\bigr)^2
\le
C\left(
1+
\log
\frac1{
\min\{
F_Y(y),
1-F_Y(y)
\}
}
\right).
\]
This is exactly
\eqref{eq:two-dimensional-slice-bound},
and the proof is complete.
\end{proof}

\begin{figure}[t]
\centering

\begin{minipage}[t]{0.48\linewidth}
\centering
\vspace{0pt}
\includegraphics[
    width=\linewidth,
    height=0.235\textheight,
    keepaspectratio
]{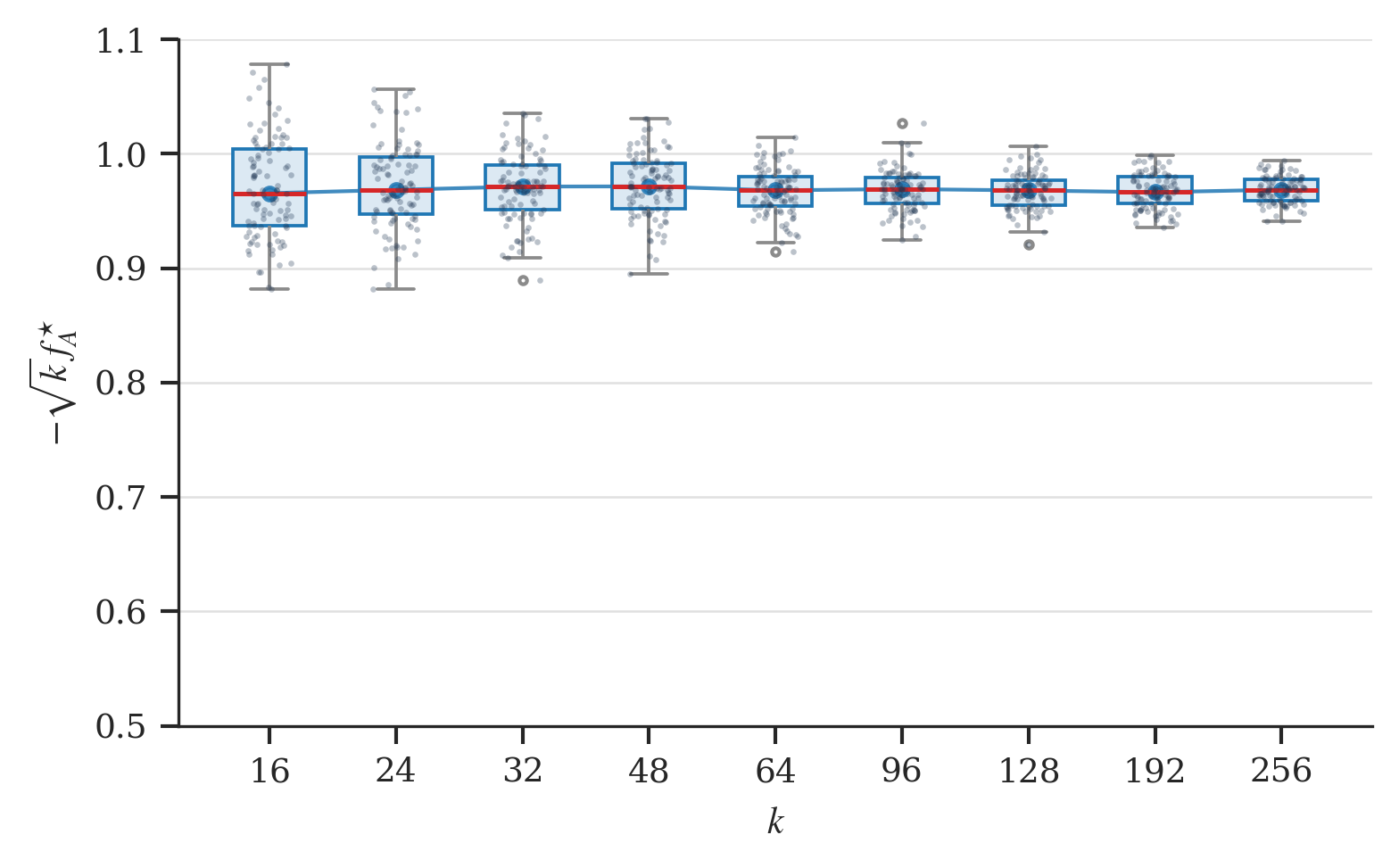}

\smallskip
\textbf{(a)} Optimum scale.
\end{minipage}
\hfill
\begin{minipage}[t]{0.48\linewidth}
\centering
\vspace{0pt}
\includegraphics[
    width=\linewidth,
    height=0.235\textheight,
    keepaspectratio
]{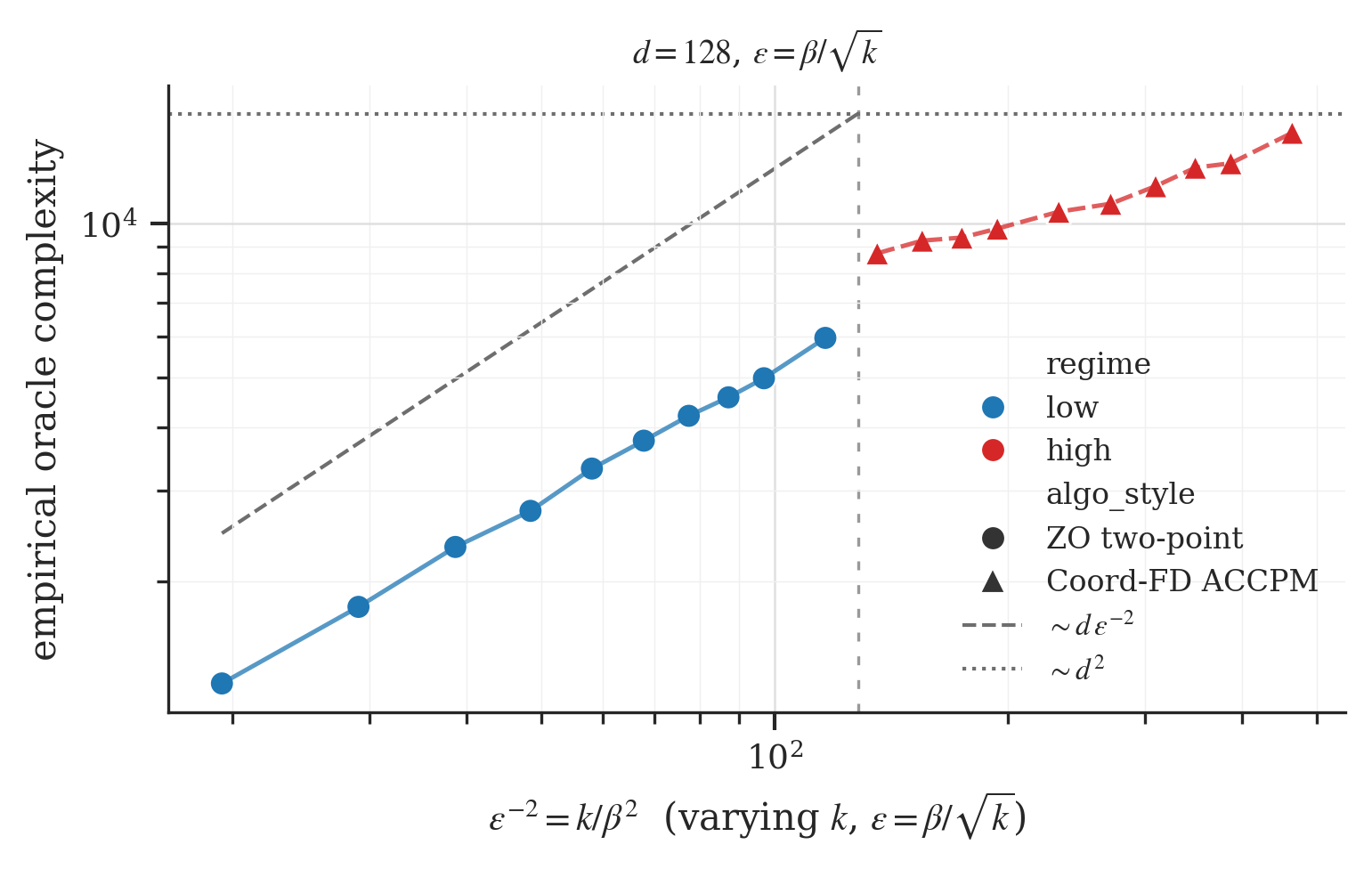}

\smallskip
\textbf{(b)} Accuracy transition.
\end{minipage}

\vspace{3mm}

\begin{minipage}[t]{0.48\linewidth}
\centering
\vspace{0pt}
\includegraphics[
    width=\linewidth,
    height=0.285\textheight,
    keepaspectratio
]{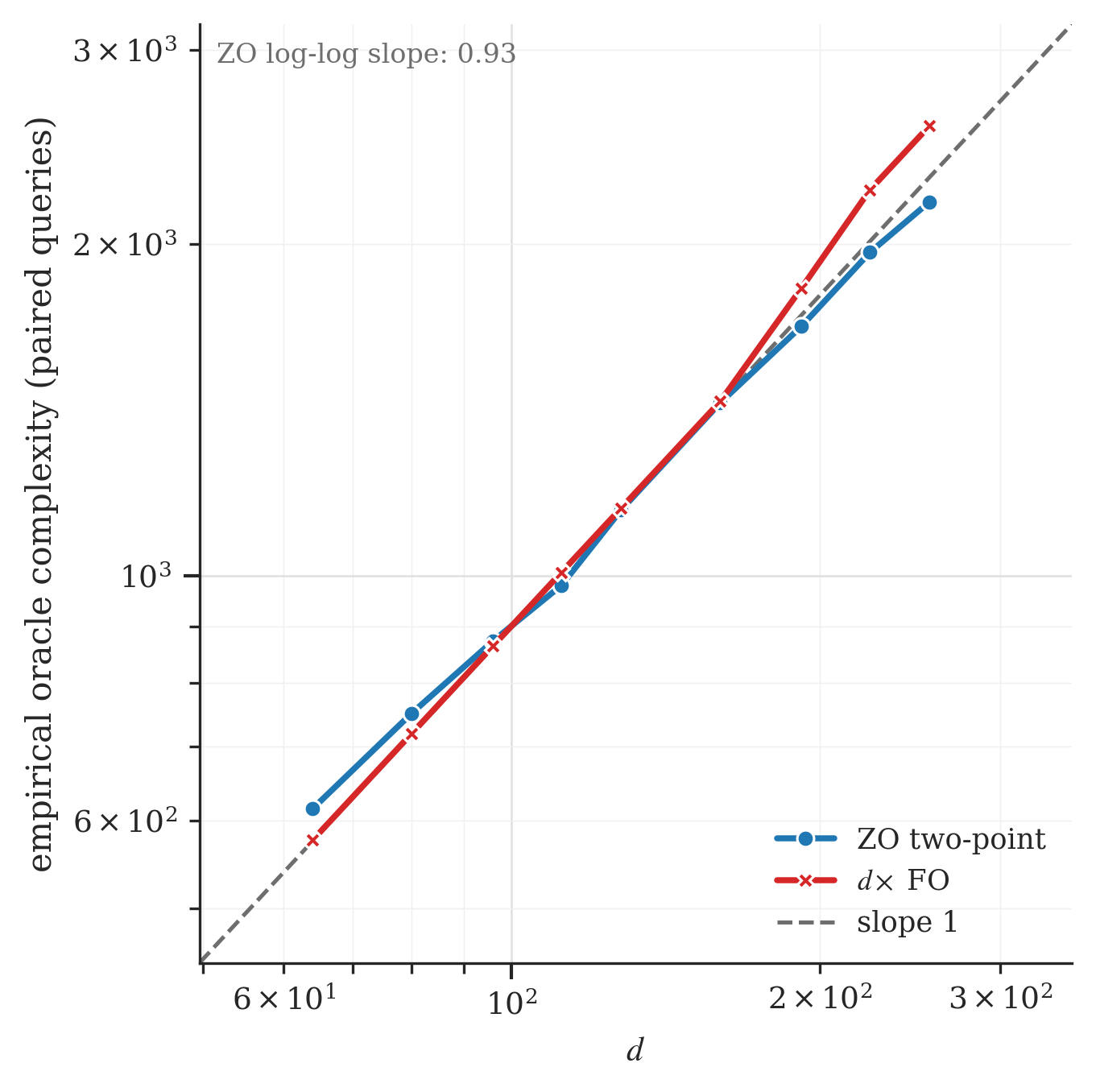}

\smallskip
\textbf{(c)} Dimension factor.
\end{minipage}
\hfill
\begin{minipage}[t]{0.48\linewidth}
\vspace{0pt}
\small
\refstepcounter{figure}
\textbf{Figure~\thefigure:}
Numerical illustrations on the random support function hard family.
\textbf{(a)} The normalized optimum scale
\(-\sqrt{k}f_{\rmat{A}}^\star\) remains of constant order over the tested values of
\(k\), supporting the scale \(f_{\rmat{A}}^\star\asymp-k^{-1/2}\).
\textbf{(b)} With \(d\) fixed and \(\epsilon^{-2}\) increasing, the
observed complexity initially follows the \(d\epsilon^{-2}\) scale. Around
the \(d^2\) scale, the coordinate finite-difference analytic-center
cutting-plane implementation, denoted Coord-FD ACCPM, shows a slower growing
nearly saturated behavior, consistent with the high-accuracy
\(\widetilde O(d^2)\) scale.
\textbf{(c)} In the regime \(\epsilon^{-2}\lesssim d\), the evaluation complexity of the two-point zeroth-order method is close
to \(d\) times the first-order subgradient baseline, illustrating the
dimension factor predicted by the theory.
\label{fig:numerical-illustrations}
\end{minipage}
\end{figure}

\section{Numerical Illustrations}
\label{sec:numerical}

The lower bounds proved in this paper are worst case oracle lower bounds
and are not established by experiments. The purpose of the following
numerical illustrations is instead to show that the random support function
family used in the proof exhibits the predicted geometry and that standard
exact value methods display the corresponding scaling behavior.

For each pair \((d,k)\), we generate independent truncated Gaussian blocks
\[
        \rvec{b}_i
        \sim
        N(\mathbf 0_d,\mathbf I_d)
        \mid
        \left\{
        \|\rvec{b}_i\|_2
        \le
        2\sqrt d
        \right\},
        \qquad
        \rvec{a}_i
        =
        \rvec{b}_i/\sqrt d,
\]
and define
\[
        f_{\rmat{A}}(\mathbf x)
        =
        \max_{1\le i\le k}
        \left\langle
        \rvec{a}_i,
        \mathbf x
        \right\rangle,
        \qquad
        \mathbf x\in B_2^d.
\]
The optimum value
\(f_{\rmat{A}}^\star\)
is computed from the dual quadratic
program
\[
        f_{\rmat{A}}^\star
        =
        -\sqrt{
        \min_{\mathbf p\in\Delta_k}
        \left\|
        \rmat{A}^{\top}\mathbf p
        \right\|_2^2
        }.
\]
All complexities are counted in scalar exact function value evaluations.

We report three numerical illustrations. First, we plot
\(-\sqrt{k}f_{\rmat{A}}^\star\) over random instances. This quantity remains of
constant order, numerically close to one, confirming the scale
\(f_{\rmat{A}}^\star\asymp-k^{-1/2}\)
used in the lower bound construction. 
Second, fixing \(d\), we vary \(\epsilon^{-2}\) across the transition
near \(d\). For \(\epsilon^{-2}\lesssim d\), the two-point zeroth-order
method follows the \(d\epsilon^{-2}\) scaling. For
\(\epsilon^{-2}\gtrsim d\), we also run a coordinate finite difference
analytic center cutting plane method, denoted Coord-FD ACCPM. This method
is used
to illustrate the high-accuracy behavior suggested by the
\(\widetilde O(d^2)\) evaluation oracle upper bound theory. The resulting
curve flattens after the \(d^2\) scale, consistent with the saturation
predicted by the full scale theory.
Third, in the regime
\(\epsilon^{-2}\lesssim d\), we compare the empirical oracle complexity of
a two-point zeroth-order method with \(d\) times the query complexity of a
projected first-order subgradient baseline. The two curves are close,
illustrating the expected linear dimension penalty of scalar value access.

\appendix

\section{Auxiliary Analytic and Probabilistic Estimates}

This appendix collects the analytic and probabilistic estimates used in the proof.

\subsection{Probability Integral Transform}

\begin{lemma}[Probability integral transform]
\label{lem:PIT}
Let \(\rsc Y\) have continuous CDF \(H\). Then $H(\rsc Y)\sim\Unif(0,1)$.  
Consequently, $-\log H(\rsc Y)\sim\Exp(1)$.
\end{lemma}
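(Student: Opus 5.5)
The plan is to prove the uniform law first, then pass to the exponential by a deterministic change of variables. Throughout, write $H^{-}(u):=\inf\{y\in\mathbb R: H(y)\ge u\}$ for $u\in(0,1)$, the generalized inverse of $H$. Since $H$ is nondecreasing and right-continuous with limits $0$ and $1$ at $\mp\infty$, this is well defined and finite. It satisfies the Galois relation $H(y)\ge u \iff y\ge H^{-}(u)$. Continuity of $H$ gives the additional identity $H(H^{-}(u))=u$: right-continuity yields $H(H^{-}(u))\ge u$, and continuity from the left at $H^{-}(u)$ yields $H(H^{-}(u))\le u$.

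Fix $u\in(0,1)$. We claim $\mathbb P(H(\rsc Y)\ge u)=1-u$. By the Galois relation, $\{H(\rsc Y)\ge u\}=\{\rsc Y\ge H^{-}(u)\}$. Therefore
\[
\mathbb P\bigl(H(\rsc Y)\ge u\bigr)=1-\mathbb P\bigl(\rsc Y<H^{-}(u)\bigr)=1-H\bigl(H^{-}(u)-\bigr).
\]
Continuity of $H$ makes the left limit equal to $H(H^{-}(u))=u$, so the probability is $1-u$.

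From this, $\mathbb P(H(\rsc Y)<u)=u$ for every $u\in(0,1)$. Taking limits gives $\mathbb P(H(\rsc Y)\le u)=u$, since continuity of $u\mapsto u$ handles the atoms. As $H(\rsc Y)\in[0,1]$, this identifies the law as $\operatorname{Unif}(0,1)$.

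For the second claim, note that $H(\rsc Y)>0$ almost surely, because $\mathbb P(H(\rsc Y)=0)=0$ by uniformity. Hence $-\log H(\rsc Y)$ is almost surely finite. For $t\ge0$,
\[
\mathbb P\bigl(-\log H(\rsc Y)>t\bigr)=\mathbb P\bigl(H(\rsc Y)<e^{-t}\bigr)=e^{-t},
\]
which is the $\operatorname{Exp}(1)$ tail.

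The only delicate point is the step $\{H(\rsc Y)\ge u\}=\{\rsc Y\ge H^{-}(u)\}$ together with the use of continuity, which removes the left-limit discrepancy. Everything else is routine. No flatness assumption on $H$ is needed: on flat stretches of $H$ the variable $\rsc Y$ has zero mass, and the Galois-relation argument handles this automatically.
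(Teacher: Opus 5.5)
Your proof is correct and follows essentially the same route as the paper. Both arguments pick a quantile point of $H$ at which continuity forces $H=u$ and read off the law of $H(\rsc Y)$. The paper uses the upper quantile $q(u)=\sup\{y:H(y)\le u\}$, so $\{H(\rsc Y)\le u\}=\{\rsc Y\le q(u)\}$ gives $\mathbb P(H(\rsc Y)\le u)=H(q(u))=u$ directly. Your lower inverse $H^{-}(u)$ instead works with $\{H(\rsc Y)\ge u\}$, which costs you the left-limit step and the extra limit from $<u$ to $\le u$. The passage to $\Exp(1)$ is the same in both.
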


\begin{proof}[Proof of Lemma~\ref{lem:PIT}]
Fix \(u\in(0,1)\), and define the upper quantile
\[
q(u)
:=
\sup
\left\{
y\in\mathbb R:
H(y)\le u
\right\}.
\]
Because $\lim_{y\to-\infty}H(y)=0$ and $\lim_{y\to+\infty}H(y)=1$ 
the set in the preceding display is nonempty and bounded above, so
\(q(u)\in\mathbb R\).
Choose a sequence
\(y_n\uparrow q(u)\)
such that
\(H(y_n)\le u\).
By continuity,
\[
H(q(u))
=
\lim_{n\to\infty}H(y_n)
\le u.
\]
On the other hand, for every \(n\ge1\), $q(u)+\frac1n>q(u)$, 
and hence, by the definition of \(q(u)\), $H\left(q(u)+\frac1n\right)>u$. 
Continuity again gives
\[
H(q(u))
=
\lim_{n\to\infty}
H\left(q(u)+\frac1n\right)
\ge u.
\]
Therefore $H(q(u))=u$.  
Since \(H\) is nondecreasing,
\[
H(y)\le u
\quad\Longleftrightarrow\quad
y\le q(u).
\]
Consequently,
\[
\begin{aligned}
\mathbb P
\left(
H(\rsc Y)\le u
\right)
=
\mathbb P
\left(
\rsc Y\le q(u)
\right)
=
H(q(u))
=
u.
\end{aligned}
\]
Thus $H(\rsc Y)
\sim
\operatorname{Unif}(0,1)$. 

Now set $\rsc U:=H(\rsc Y).$ 
For every \(t\ge0\),
\[
\begin{aligned}
\mathbb P
\left(
-\log\rsc U\le t
\right)
=
\mathbb P
\left(
\rsc U\ge e^{-t}
\right)=
1-e^{-t}.
\end{aligned}
\]
Hence $-\log H(\rsc Y)
\sim
\operatorname{Exp}(1)$. 
\end{proof}
\subsection{Gaussian Tail and Inverse Mills Estimates}

\begin{lemma}[Two sided Mills ratio bound]
\label{lem:mills}
There exist absolute constants \(0<c<C<\infty\) such that, for all \(r\ge0\),
\begin{equation}
c\frac{\phi(r)}{1+r}
        \le
        \Phi(-r)
        \le
        C\frac{\phi(r)}{1+r}.
        \label{eq:A-1}
\end{equation}
\end{lemma}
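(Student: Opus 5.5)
The plan is to prove the two-sided bound \eqref{eq:A-1} by splitting the range of \(r\) into a compact piece \([0,1]\) and a tail piece \([1,\infty)\). On the tail piece we use the classical Gordon--Mills inequalities; on the compact piece we use continuity and positivity. Throughout write \(\Phi(-r)=\int_r^\infty\phi(s)\,\mathrm ds\), and recall \(\phi'(s)=-s\phi(s)\).

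\emph{Upper bound on the tail.} For \(r>0\) and \(s\ge r\) we have \(s/r\ge1\). Hence
\[
\Phi(-r)\le\int_r^\infty\frac{s}{r}\phi(s)\,\mathrm ds=\frac{\phi(r)}{r}.
\]
For \(r\ge1\) this gives \(\Phi(-r)\le 2\phi(r)/(1+r)\), since \(1+r\le 2r\) there.

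\emph{Lower bound on the tail.} Differentiate \(g(s):=\phi(s)/s\), which gives \(g'(s)=-\phi(s)(1+s^{-2})\). Integrating from \(r\) to \(\infty\) yields
\[
\frac{\phi(r)}{r}=\int_r^\infty(1+s^{-2})\phi(s)\,\mathrm ds\le\left(1+r^{-2}\right)\Phi(-r).
\]
Therefore \(\Phi(-r)\ge r\phi(r)/(1+r^2)\). For \(r\ge1\) we have \(1+r^2\le(1+r)^2\) and \(r\ge(1+r)/2\), so
\[
\frac{r}{1+r^2}\ge\frac{r}{(1+r)^2}\ge\frac{1}{2(1+r)}.
\]
This gives the lower bound with constant \(1/2\).

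\emph{Compact piece.} On \([0,1]\), the ratio \(\Phi(-r)(1+r)/\phi(r)\) is continuous and strictly positive. It therefore attains a positive minimum and a finite maximum. The extremes can even be made explicit: \(\Phi(-r)\in[\Phi(-1),1/2]\), \((1+r)\in[1,2]\), and \(\phi(r)\in[\phi(1),\phi(0)]\).

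Taking \(c\) as the smaller of \(1/2\) and the compact minimum, and \(C\) as the larger of \(2\) and the compact maximum, proves \eqref{eq:A-1}.

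There is no real obstacle here. The only step needing care is to avoid the \(1/r\) singularity at \(r=0\), and the split at \(r=1\) handles that.
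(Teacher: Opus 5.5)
Your proof is correct. Your upper bound is the paper's argument: the tail estimate $\Phi(-r)\le\phi(r)/r\le 2\phi(r)/(1+r)$ for $r\ge1$, plus crude constant bounds on $[0,1]$.

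The lower bound takes a different route. The paper proves it for all $r\ge0$ at once by integrating over the short window $[r,r+h]$ with $h=1/(1+r)$. It uses that $\phi$ is decreasing and that $\phi(r+h)/\phi(r)=e^{-rh-h^2/2}\ge e^{-3/2}$. This gives $c=e^{-3/2}$ with no case split.

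You instead derive Gordon's inequality $\Phi(-r)\ge r\phi(r)/(1+r^2)$ from the derivative of $\phi(s)/s$. Because $r/(1+r^2)$ vanishes at $r=0$, this bound cannot be compared to $1/(1+r)$ near the origin. So you must handle $[0,1]$ separately by compactness or by your explicit bounds.

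What each approach buys:
\begin{itemize}
\item Your route is asymptotically sharp: combined with the upper tail bound, it shows $r\Phi(-r)/\phi(r)\to1$. The window argument, with this choice of $h$, does not.
\item The paper's route is shorter and treats the whole half-line uniformly.
\end{itemize}
Since the lemma only asks for absolute constants, either argument is enough.
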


\begin{proof}[Proof of Lemma~\ref{lem:mills}]
For the lower bound, set $h=\frac1{1+r}$. 
Then
\[
        \Phi(-r)
        =
        \int_r^\infty\phi(t)\,\mathrm dt
        \ge
        \int_r^{r+h}\phi(t)\,\mathrm dt.
\]
Since \(\phi\) is decreasing on \([0,\infty)\),
\[
        \int_r^{r+h}\phi(t)\,\mathrm dt
        \ge
        h\phi(r+h).
\]
Moreover,
\[
        \frac{\phi(r+h)}{\phi(r)}
        =
        \exp\left(-rh-\frac{h^2}{2}\right).
\]
Because $rh=\frac r{1+r}\le1$ and $h^2\le1$, 
we get $\phi(r+h)\ge e^{-3/2}\phi(r)$. 
Thus
\[
        \Phi(-r)\ge e^{-3/2}\frac{\phi(r)}{1+r}.
\]

For the upper bound, first suppose \(r\ge1\). Then
\[
        \Phi(-r)
        =
        \int_r^\infty \phi(t)\,\mathrm dt
        \le
        \frac1r\int_r^\infty t\phi(t)\,\mathrm dt.
\]
Since \(\phi'(t)=-t\phi(t)\), $\int_r^\infty t\phi(t)\,\mathrm dt=\phi(r)$. 
Thus
\[
        \Phi(-r)\le \frac{\phi(r)}r\le 2\frac{\phi(r)}{1+r}.
\]
If \(0\le r\le1\), then \(\Phi(-r)\le1/2\), while $\frac{\phi(r)}{1+r}\ge \frac{\phi(1)}2$. 
Hence
\[
        \Phi(-r)\le C\frac{\phi(r)}{1+r}
\]
also for \(0\le r\le1\). This proves the upper bound.
\end{proof}

\begin{lemma}[Gaussian Chernoff tail]
\label{lem:gaussian-chernoff}
For \(\breve{\zeta}\sim N(0,1)\) and \(r\ge0\),
\begin{equation}
\Phi(-r)=\Pp(\breve{\zeta} \le -r)\le e^{-r^2/2}.
        \label{eq:A-2}
\end{equation}
\end{lemma}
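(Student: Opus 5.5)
The plan is the standard exponential-moment (Chernoff) argument for the Gaussian lower tail. By symmetry of the standard normal law, $\Phi(-r)=\Pp(\breve\zeta\le -r)=\Pp(\breve\zeta\ge r)$, so it suffices to bound the upper tail. For every $\lambda>0$, Markov's inequality applied to the nonnegative random variable $e^{\lambda\breve\zeta}$ gives
\[
\Pp(\breve\zeta\ge r)\le e^{-\lambda r}\,\E e^{\lambda\breve\zeta}.
\]

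The next step is to evaluate the Gaussian moment generating function. Completing the square, $\lambda t-t^2/2=\lambda^2/2-(t-\lambda)^2/2$, so
\[
\E e^{\lambda\breve\zeta}=\int_{\mathbb R}e^{\lambda t}\phi(t)\,\mathrm dt=e^{\lambda^2/2}\int_{\mathbb R}\phi(t-\lambda)\,\mathrm dt=e^{\lambda^2/2}.
\]
Hence $\Phi(-r)\le\exp(-\lambda r+\lambda^2/2)$ for every $\lambda>0$.

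Finally, optimize over $\lambda$. For $r>0$ the choice $\lambda=r$ minimizes the exponent and gives $\Phi(-r)\le e^{-r^2/2}$. When $r=0$ the claim reads $\Phi(0)=1/2\le 1$, which is trivial, so \eqref{eq:A-2} holds for all $r\ge0$.

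None of these steps poses a real obstacle; the only point needing care is the $r=0$ endpoint, which is handled directly. An alternative route would be the Mills ratio bound of Lemma~\ref{lem:mills}, but that only gives a non-explicit constant $C$ in front, whereas \eqref{eq:A-2} requires constant one. That is why the direct Chernoff computation is the preferred approach.
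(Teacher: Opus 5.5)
Your proposal is correct and is essentially the paper's argument: Markov's inequality applied to an exponential, the Gaussian moment generating function $e^{\lambda^2/2}$, and the choice $\lambda=r$. The paper applies Markov to $e^{-\lambda\breve\zeta}$ directly instead of first passing to the upper tail by symmetry. Your separate handling of $r=0$ fills a small point the paper skips, since its choice $\lambda=r$ is not strictly positive there.
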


\begin{proof}[Proof of Lemma~\ref{lem:gaussian-chernoff}]
For any \(\lambda>0\),
\[
        \Pp(\breve{\zeta}\le -r)
        =
        \Pp(e^{-\lambda \breve{\zeta}}\ge e^{\lambda r})
        \le
        e^{-\lambda r}\E e^{-\lambda \breve{\zeta}}
        =
        e^{-\lambda r+\lambda^2/2}.
\]
Choosing \(\lambda=r\) gives the claim.
\end{proof}

\begin{lemma}[Gaussian quantile density ratio estimates]
\label{lem:normal-hazard}
Define $\Lambda(u)=\frac{\phi(\Phi^{-1}u)}{u}$ and $u\in(0,1)$.
Then there is an absolute constant \(C>0\) such that
\begin{equation}
\Lambda(u)\le C\sqrt{-\log u},
        \qquad 0<u\le\frac{1}{2},
        \label{eq:A-3}
\end{equation}
and, writing \(\delta=1-u\),
\begin{equation}
\Lambda(u)\le C\delta\sqrt{\log(e/\delta)},
        \qquad \frac{1}{2}<u<1.
        \label{eq:A-4}
\end{equation}
\end{lemma}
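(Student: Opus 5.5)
The plan is to substitute $r=\Phi^{-1}(u)$, so that $u=\Phi(r)$ and $\Lambda(u)=\phi(r)/\Phi(r)$, and treat the two ranges of $u$ separately. Everything will reduce to the two-sided Mills ratio bound of Lemma~\ref{lem:mills} together with elementary comparisons between $\phi$, $\Phi$, and $\log$.

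\textbf{The range $0<u\le\frac12$.} Here $r\le0$; write $r=-s$ with $s\ge0$. Then $u=\Phi(-s)$ and $\Lambda(u)=\phi(s)/\Phi(-s)$. The lower bound in Lemma~\ref{lem:mills} gives $\Lambda(u)\le c^{-1}(1+s)$. It therefore suffices to show $1+s\lesssim\sqrt{-\log u}$. Since $u\le\frac12$, we have $-\log u\ge\log 2$, which absorbs the constant $1$. For the term $s$, the upper bound of Lemma~\ref{lem:mills} gives $u\le C\phi(s)/(1+s)\le C'e^{-s^2/2}$. Hence $s^2/2\le\log C'-\log u\lesssim -\log u$, again using $-\log u\ge\log2$ to absorb the additive constant. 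This yields \eqref{eq:A-3}.

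\textbf{The range $\frac12<u<1$.} Here $r>0$ and $\delta=1-u=\Phi(-r)\in(0,\frac12)$. Since $\Phi(r)>\frac12$, we get $\Lambda(u)\le2\phi(r)$. The lower bound of Lemma~\ref{lem:mills} gives $\phi(r)\le c^{-1}(1+r)\delta$, so it remains to show $1+r\lesssim\sqrt{\log(e/\delta)}$. The Chernoff bound, Lemma~\ref{lem:gaussian-chernoff}, gives $\delta\le e^{-r^2/2}$, hence $r\le\sqrt{2\log(1/\delta)}\le\sqrt{2\log(e/\delta)}$. Also $1\le\sqrt{\log(e/\delta)}$. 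Combining these gives \eqref{eq:A-4}.

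The only delicate point is tracking which side of the Mills inequality is needed in each step. The lower Mills bound controls $\phi/\Phi$ from above. The upper Mills bound (or Chernoff) converts the quantile $|r|$ into a logarithmic quantity. No other obstacle is expected.
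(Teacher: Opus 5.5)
Your proof is correct and follows essentially the same route as the paper: substitute $u=\Phi(r)$, use the lower Mills bound to control $\phi/\Phi$ by $1+|r|$, then turn $|r|$ into a logarithm via a Gaussian tail bound, with an identical second range. The only difference is in the range $0<u\le\frac12$: you use the upper Mills bound and absorb the extra constant using $-\log u\ge\log 2$, whereas the paper applies the Chernoff bound $\Phi(-s)\le e^{-s^2/2}$ directly and so avoids that constant.
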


\begin{proof}[Proof of Lemma~\ref{lem:normal-hazard}]
First suppose \(0<u\le1/2\). Write $u=\Phi(-r)$, $r\ge0$. 
Then 
\[\Lambda(u)=\frac{\phi(r)}{\Phi(-r)}\].  
By the lower bound in Lemma~\ref{lem:mills}, $\Phi(-r)\ge c\frac{\phi(r)}{1+r}$,  
hence 
\[ \Lambda(u)\le C(1+r) \].
By Lemma~\ref{lem:gaussian-chernoff}, $u=\Phi(-r)\le e^{-r^2/2}$,  
so $-\log u\ge r^2/2$. 
Since \(u\le1/2\), also \(-\log u\ge\log2\). Therefore $1+r\le C\sqrt{-\log u}$. 
This proves \eqref{eq:A-3}.

Now suppose \(1/2<u<1\). Let $\delta=1-u$.
Write $u=\Phi(r)$, $r>0$.
Then
\[
        \delta=\Phi(-r),
        \qquad
        \Lambda(u)=\frac{\phi(r)}{\Phi(r)}\le2\phi(r).
\]
By the lower bound in Lemma~\ref{lem:mills}, $\delta=\Phi(-r)\ge c\frac{\phi(r)}{1+r}$, 
so $ \phi(r)\le C\delta(1+r)$.  
By Lemma~\ref{lem:gaussian-chernoff},
\[
        \delta=\Phi(-r)\le e^{-r^2/2},
\]
so
\[
        r\le C\sqrt{\log(e/\delta)}.
\]
Thus $ \Lambda(u)\le C\delta(1+r)
        \le C\delta\sqrt{\log(e/\delta)}.$
This proves \eqref{eq:A-4}.
\end{proof}

\begin{lemma}[Tail to density-ratio comparison]
\label{lem:tail-to-hazard}
For every \(z\in\R\),
\[
        \min\left\{
        \sqrt{2\log\frac1{\Phi(z)}},
        \frac{\Phi(-z)}{\Phi(z)}
        \sqrt{2\log\frac1{\Phi(-z)}}
        \right\}
        \le
        C\frac{\phi(z)}{\Phi(z)}.
\]
\end{lemma}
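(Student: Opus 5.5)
The idea is to show that whichever term achieves the minimum, it is at most a constant multiple of $\phi(z)/\Phi(z)$. We split into the two regimes $z\le 0$ and $z>0$, and use a different branch of the minimum in each. The only inputs are the two-sided Mills ratio bound of Lemma~\ref{lem:mills} and the Chernoff tail of Lemma~\ref{lem:gaussian-chernoff}.

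\emph{Case $z\le 0$.} Write $z=-r$ with $r\ge0$ and use the first branch, $\sqrt{2\log(1/\Phi(-r))}$.

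First, the upper bound in Lemma~\ref{lem:mills} gives $\Phi(-r)\le C\phi(r)/(1+r)$, so
\[
\frac{\phi(z)}{\Phi(z)}=\frac{\phi(r)}{\Phi(-r)}\ge c(1+r).
\]
Second, the lower bound $\Phi(-r)\ge c\,\phi(r)/(1+r)$ gives
\[
\log\frac1{\Phi(-r)}\le \frac{r^2}{2}+\log(1+r)+C'.
\]
Since $\Phi(-r)\le 1/2$, the left side is at least $\log 2$, and the right side is at most $C(1+r)^2$. Hence
\[
\sqrt{2\log\frac1{\Phi(-r)}}\le C(1+r)\le C''\,\frac{\phi(z)}{\Phi(z)}.
\]

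\emph{Case $z>0$.} Use the second branch. Here $\Phi(z)\in(1/2,1)$, so it suffices to show
\[
\Phi(-z)\sqrt{2\log\frac1{\Phi(-z)}}\le C\phi(z).
\]
The upper Mills bound gives $\Phi(-z)\le C\phi(z)/(1+z)$. It remains to bound the logarithm by $(1+z)^2$, which is where the lower Mills bound enters:
\[
\log\frac1{\Phi(-z)}\le \frac{z^2}{2}+\log(1+z)+C'\le C(1+z)^2.
\]
Here we use that $\log(1/\Phi(-z))\ge \log 2>0$ is bounded below, so the additive constants can be absorbed. Multiplying the two estimates,
\[
\Phi(-z)\sqrt{2\log\frac1{\Phi(-z)}}\le C\,\frac{\phi(z)}{1+z}\,(1+z)=C\phi(z)\le 2C\,\frac{\phi(z)}{\Phi(z)}.
\]

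There is no genuine obstacle. The only point needing care is to use the correct direction of the Mills inequality in each place: the lower bound on $\Phi(-r)$ controls the logarithm, while the upper bound controls the prefactor.
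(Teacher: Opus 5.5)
Your proof is correct and is essentially the paper's argument. The paper also splits on the sign of $z$, uses the first branch for $z\le 0$ and the second for $z>0$, lets the lower Mills bound control the logarithm, and lets the upper Mills bound control the prefactor. The only cosmetic difference is that you work with $1+z$ throughout the case $z>0$, which avoids the paper's sub-split into $0<z\le 1$ and $z\ge 1$; the factor $2$ in your last step is also unnecessary, since dividing by $\Phi(z)$ gives the claim directly.
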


\begin{proof}[Proof of Lemma~\ref{lem:tail-to-hazard}]
First suppose \(z\le0\). Write \(z=-r\), \(r\ge0\). Then $\Phi(z)=\Phi(-r)$ and  $\phi(z)=\phi(r)$.  
By the lower bound in Lemma~\ref{lem:mills}, $\Phi(-r)\ge c\frac{\phi(r)}{1+r}$. 
Therefore, $\frac1{\Phi(-r)}
        \le
        C\frac{1+r}{\phi(r)}$.  
Taking logarithms,
\[
        \log\frac1{\Phi(-r)}
        \le
        C+\log(1+r)+\log\frac1{\phi(r)}.
\]
Since $\phi(r)=\frac1{\sqrt{2\pi}}e^{-r^2/2}$,  
we have $ \log\frac1{\phi(r)}
        =
        \frac{r^2}{2}+\frac{1}{2}\log(2\pi)$. 
Moreover,
\[
        \log(1+r)\le r\le\frac{1+r^2}{2},
        \qquad r\ge0.
\]
Hence
\begin{equation}
\sqrt{2\log\frac1{\Phi(-r)}}\le C(1+r).
        \label{eq:A-5}
\end{equation}
Now use the upper bound in Lemma~\ref{lem:mills}: $\Phi(-r)\le C\frac{\phi(r)}{1+r}$.  
Rearranging,
\begin{equation}
\frac{\phi(r)}{\Phi(-r)}\ge c(1+r).
        \label{eq:A-6}
\end{equation}
Combining \eqref{eq:A-5} and \eqref{eq:A-6},
\[
        \sqrt{2\log\frac1{\Phi(z)}}
        =
        \sqrt{2\log\frac1{\Phi(-r)}}
        \le
        C\frac{\phi(r)}{\Phi(-r)}
        =
        C\frac{\phi(z)}{\Phi(z)}.
\]

Now suppose \(z>0\). Write \(r=z>0\). We prove
\begin{equation}
\Phi(-r)\sqrt{2\log\frac1{\Phi(-r)}}\le C\phi(r).
        \label{eq:A-7}
\end{equation}
If \(0<r\le1\), then \(\Phi(-r)\le1/2\),
\[
        \log\frac1{\Phi(-r)}\le \log\frac1{\Phi(-1)}=O(1),
\]
and \(\phi(r)\ge\phi(1)>0\). Thus \eqref{eq:A-7} holds.

If \(r\ge1\), then the upper bound in Lemma~\ref{lem:mills} gives
\[
        \Phi(-r)\le C\frac{\phi(r)}{1+r}\le C\frac{\phi(r)}r.
\]
From the first part of the proof, $\sqrt{2\log\frac1{\Phi(-r)}}\le Cr$.  
Therefore
\[
        \Phi(-r)\sqrt{2\log\frac1{\Phi(-r)}}
        \le
        C\frac{\phi(r)}r\cdot r
        =
        C\phi(r).
\]
Dividing by \(\Phi(z)\) yields
\[
        \frac{\Phi(-z)}{\Phi(z)}
        \sqrt{2\log\frac1{\Phi(-z)}}
        \le
        C\frac{\phi(z)}{\Phi(z)}.
\]
This proves the lemma.
\end{proof}

\begin{lemma}[Quantile-log comparison]
\label{lem:quantile-log}
For every \(u\in(0,1)\),
\[
        1+\log\frac1{\min\{u,1-u\}}
        \le
        C(1+|\Phi^{-1}(u)|^2).
\]
Consequently, $  \left(
        1+\log\frac1{\min\{u,1-u\}}
        \right)^{1/2}
        \le
        C(1+|\Phi^{-1}(u)|)$.
\end{lemma}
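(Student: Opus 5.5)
By symmetry \(\Phi^{-1}(1-u)=-\Phi^{-1}(u)\), and the left-hand side depends only on \(\min\{u,1-u\}\), so it suffices to treat \(0<u\le\frac12\). Write \(u=\Phi(-r)\) with \(r=|\Phi^{-1}(u)|\ge0\). The claim then reduces to
\[
\log\frac1{\Phi(-r)}\le C(1+r^2),\qquad r\ge0 .
\]

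This bound is exactly what was established as \eqref{eq:A-5} in the proof of Lemma~\ref{lem:tail-to-hazard}. The argument there uses the lower Mills bound of Lemma~\ref{lem:mills}, namely \(\Phi(-r)\ge c\,\phi(r)/(1+r)\). Taking logarithms gives
\[
\log\frac1{\Phi(-r)}\le C+\log(1+r)+\frac{r^2}{2}+\frac12\log(2\pi).
\]
Combining this with \(\log(1+r)\le r\le(1+r^2)/2\) yields \(\log\frac1{\Phi(-r)}\le C(1+r^2)\). Adding \(1\) to both sides and enlarging \(C\) gives the first displayed inequality.

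For the consequence, take square roots and use \(\sqrt{1+r^2}\le 1+r\). The constant becomes \(\sqrt C\), which we rename.

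There is no real obstacle here. The only care needed is the reduction to \(u\le\frac12\) and the appeal to the lower Mills bound, both of which were already established earlier in the paper.
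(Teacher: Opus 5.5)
Your proof is correct and matches the paper's argument: the paper likewise writes the smaller tail as $\Phi(-|z|)$ with $z=\Phi^{-1}(u)$, applies the lower Mills bound of Lemma~\ref{lem:mills}, and absorbs $\log(1+|z|)+\log(1/\phi(z))$ into $C(1+z^2)$, treating the two signs of $z$ symmetrically as you do. Your square-root step via $\sqrt{1+r^2}\le 1+r$ for the consequence is a detail the paper leaves implicit.
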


\begin{proof}[Proof of Lemma~\ref{lem:quantile-log}]
Let \(z=\Phi^{-1}(u)\). If \(z\ge0\), then \(\min\{u,1-u\}=1-u=\Phi(-z)\). The lower bound in Lemma~\ref{lem:mills} gives $  \Phi(-z)\ge c\frac{\phi(z)}{1+z}$. 
Thus
\[
        \log\frac1{\Phi(-z)}
        \le
        C+\log(1+z)+\log\frac1{\phi(z)}
        \le
        C(1+z^2).
\]
The case \(z<0\) is identical with \(r=-z\).
\end{proof}

\subsection{Elementary Entropy and Summation Bounds}

\begin{lemma}
\label{lem:one-minus-log}
For every \(u\in(0,1]\), $1-u\le -\log u.$
\end{lemma}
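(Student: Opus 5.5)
The inequality $1-u\le -\log u$ for $u\in(0,1]$ is elementary. I would derive it from the concavity of $\log$, or equivalently from a one-variable monotonicity argument. Define $g(u):=-\log u-(1-u)$ on $(0,1]$. Then $g(1)=0$, and
\[
g'(u)=-\frac1u+1=\frac{u-1}{u}\le0\qquad\text{for }u\in(0,1].
\]
So $g$ is nonincreasing on $(0,1]$, which gives $g(u)\ge g(1)=0$ for every $u\in(0,1]$. This is exactly the claim.

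Alternatively, I would invoke the tangent-line inequality $\log u\le u-1$, valid for all $u>0$. It holds because $\log$ is concave and $u-1$ is its tangent line at $u=1$. Negating both sides gives $-\log u\ge 1-u$.

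No step poses a genuine obstacle. The only point to check is the endpoint $u=1$, where both sides vanish. The restriction $u>0$ ensures $-\log u$ is finite, and that is the only reason the domain excludes $0$. In the application within Lemma~\ref{lem:hazard-detailed}, the variable is $\rsc U_\ell\in(0,1)$ almost surely, so the lemma applies pointwise.
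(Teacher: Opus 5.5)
Your proof is correct and matches the paper's: both set $g(u)=-\log u-(1-u)$, observe $g'(u)=1-\frac1u\le 0$ on $(0,1]$ and $g(1)=0$, and conclude that $g\ge 0$. Your alternative via the tangent-line inequality $\log u\le u-1$ is an equivalent restatement of the same fact.
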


\begin{proof}[Proof of Lemma~\ref{lem:one-minus-log}]
Let $ g(u)=-\log u-(1-u).$ 
Then
\[
        g'(u)=1-\frac1u\le0
        \qquad (0<u\le1),
\]
and \(g(1)=0\). Hence \(g(u)\ge0\) for \(0<u\le1\).
\end{proof}

\begin{lemma}[Entropy bound]
\label{lem:entropy}
Let \(m\ge1\).
If \(p_1,\ldots,p_m\ge0\) and
\(\sum_{j=1}^m p_j=1\), then
\[
\sum_{j=1}^m
p_j\log\frac1{p_j}
\le
\log m,
\]
with the convention
\(0\log(1/0)=0\).
\end{lemma}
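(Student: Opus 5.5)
The plan is to prove the entropy bound with Jensen's inequality for the concave function \(\log\), after discarding the zero-weight indices.

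First, let \(S:=\{j\in[m]:p_j>0\}\). Under the convention \(0\log(1/0)=0\), the left-hand side equals \(\sum_{j\in S}p_j\log\frac1{p_j}\). Since \(\sum_{j\in S}p_j=1\), the set \(S\) is nonempty and \(|S|\le m\).

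Second, the weights \((p_j)_{j\in S}\) form a probability vector. Concavity of \(\log\) on \((0,\infty)\) therefore gives
\[
\sum_{j\in S}p_j\log\frac1{p_j}
\le
\log\Bigl(\sum_{j\in S}p_j\cdot\frac1{p_j}\Bigr)
=
\log|S|
\le
\log m.
\]
The final inequality uses monotonicity of \(\log\).

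An alternative route gives the same result: apply the elementary inequality \(\log x\le x-1\) with \(x=\frac{1}{mp_j}\). This yields \(\sum_{j\in S}p_j\log\frac{1}{mp_j}\le\sum_{j\in S}\bigl(\frac1m-p_j\bigr)\le 0\), which is again the claimed bound.

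There is no real obstacle here. The only point needing care is the convention for zero weights, and restricting the sum to \(S\) handles it.
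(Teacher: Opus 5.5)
Your proof is correct and is essentially the paper's argument: the paper also restricts to the support and applies Jensen's inequality for $\log$, phrased as nonnegativity of $D(p\|q)$ against the uniform vector $q_j=1/m$, that is, with $1/(mp_j)$ in place of your $1/p_j$. Your direct version gives the intermediate bound $\log|S|$, which the paper's route expresses as $\log(|J|/m)\le 0$.
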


\begin{proof}[Proof of Lemma~\ref{lem:entropy}]
Let $q_j
:=
\frac1m$ and $J
:=
\left\{
j\in[m]:
p_j>0
\right\}$.  
Since
\(\sum_jp_j=1\),
the set \(J\) is nonempty.  Define $D(p\|q)
:=
\sum_{j\in J}
p_j
\log\frac{p_j}{q_j}$. 
By concavity of the logarithm,
\[
\begin{aligned}
-D(p\|q)=
\sum_{j\in J}
p_j
\log\frac{q_j}{p_j}
\le
\log\left(
\sum_{j\in J}
p_j
\frac{q_j}{p_j}
\right)
=
\log\left(
\sum_{j\in J}
q_j
\right)
\le
0.
\end{aligned}
\]
Hence
\(D(p\|q)\ge0\). 
Since
\(q_j=1/m\),
\[
\begin{aligned}
D(p\|q)
=
\sum_{j\in J}
p_j\log p_j
+
\log m
=
\log m
-
\sum_{j=1}^m
p_j\log\frac1{p_j},
\end{aligned}
\]
where the zero terms are interpreted according to the stated
convention.  Rearranging proves the claim.
\end{proof}

\begin{lemma}
\label{lem:delta-L}
For \(0\le\Delta\le k\) and \(L\ge\Delta\),
\[
        \Delta\sqrt{\log(ek/\Delta)}
        \le
        C(1+L)\sqrt{\log(ek)}.
\]
The expression is interpreted as \(0\) when \(\Delta=0\).
\end{lemma}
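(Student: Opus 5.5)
The plan is to split according to whether \(\Delta\) is small or large relative to \(k\). The interesting region is \(0<\Delta\le k\). The two ingredients are that the function \(g(\Delta):=\Delta\sqrt{\log(ek/\Delta)}\) is nondecreasing on \((0,k]\), and that \(\Delta\le L\).

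First I will check monotonicity. Write \(g(\Delta)^2=\Delta^2\log(ek/\Delta)\). Then
\[
\frac{\mathrm d}{\mathrm d\Delta}\bigl[\Delta^2\log(ek/\Delta)\bigr]
=2\Delta\log(ek/\Delta)-\Delta
=\Delta\bigl(2\log(ek/\Delta)-1\bigr).
\]
This is nonnegative because \(\log(ek/\Delta)\ge1\) on \((0,k]\). Hence \(g(\Delta)\le g(\min\{L,k\})\). I also note that \(g(\Delta)\to0\) as \(\Delta\downarrow0\), which is consistent with the convention \(g(0)=0\).

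Next, set \(M:=\min\{L,k\}\), so \(0\le M\le k\) and \(g(M)\le\max\{g(1)\mathbf 1_{\{M\le1\}},\,g(M)\mathbf 1_{\{M>1\}}\}\).
\begin{itemize}
\item If \(M\le1\): monotonicity gives \(g(M)\le g(1)=\sqrt{\log(ek)}\) when \(k\ge1\). If instead \(M\le k<1\), then \(g(M)\le g(k)=k\le1\le\sqrt{\log(ek)}\) in the relevant range \(k\ge1\); the paper always has \(k\ge1\).
\item If \(M>1\): then \(\log(ek/M)\le\log(ek)\), so \(g(M)\le M\sqrt{\log(ek)}\le L\sqrt{\log(ek)}\).
\end{itemize}
In both cases \(g(\Delta)\le(1+L)\sqrt{\log(ek)}\), so the lemma holds with \(C=1\).

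The only delicate point is making sure the logarithm stays at least \(1\), so that the derivative computation is valid and no factor larger than \(\log(ek)\) appears. This is guaranteed by \(\Delta\le k\). It is also the reason the hypothesis \(\Delta\le k\) is needed: in the application, \(\rsc\Delta\) is a sum of at most \(k\) terms, each in \((0,1/2)\), so \(\rsc\Delta\le k\) holds there. I expect no real obstacle beyond this case split.
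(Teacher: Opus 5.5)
Your argument is correct and is essentially the paper's. Both rest on the monotonicity of $t\mapsto t\sqrt{\log(ek/t)}$ wherever $\log(ek/t)\ge 1$, together with the bounds $\log(ek/t)\le\log(ek)$ and $t\le L$ once $t>1$. The only difference is that you first push $\Delta$ up to $\min\{L,k\}$ before splitting at $1$, whereas the paper splits on $\Delta$ directly.

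Drop the sub-case $k<1$ in your first bullet. There $\sqrt{\log(ek)}<1$, so your claimed inequality $1\le\sqrt{\log(ek)}$ fails; the lemma itself is false for such $k$, e.g.\ $k=1/e$. This does not affect the result, since $k$ is a positive integer throughout the paper.
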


\begin{proof}[Proof of Lemma~\ref{lem:delta-L}]
Assume \(\Delta>0\). If \(\Delta\le1\), define
\[
        f(t)=t\sqrt{\log(ek/t)},
        \qquad 0<t\le1.
\]
Let $\ell(t)=\log(ek/t)$. 
Then
\[
        f'(t)=\sqrt{\ell(t)}-\frac1{2\sqrt{\ell(t)}}.
\]
Since \(t\le1\), \(\ell(t)\ge\log(ek)\ge1\), and hence \(f'(t)\ge0\). Therefore
\[
        \Delta\sqrt{\log(ek/\Delta)}
        \le
        \sqrt{\log(ek)}
        \le
        (1+L)\sqrt{\log(ek)}.
\]

If \(\Delta>1\), then $\log(ek/\Delta)\le\log(ek)$, 
and since \(\Delta\le L\),
\[
        \Delta\sqrt{\log(ek/\Delta)}
        \le
        L\sqrt{\log(ek)}
        \le
        (1+L)\sqrt{\log(ek)}.
\]
\end{proof}

\begin{lemma}[Second moment from Gaussian union tail]
\label{lem:gaussian-max-moment}
Suppose a real random variable \(\breve{\zeta}\) satisfies, for all \(r\ge0\), $ \Pp(|\breve{\zeta}|\ge r)\le \min\{1,2k\Phi(-r)\}$.  
Then $ \E \breve{\zeta}^2\le C\log(ek)$.  
\end{lemma}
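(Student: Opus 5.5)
The plan is the layer-cake formula, splitting the integral at a threshold $r_0$ of order $\sqrt{\log(ek)}$:
\[
\E\breve\zeta^2=\int_0^\infty 2r\,\Pp(|\breve\zeta|\ge r)\,\mathrm dr
\le\int_0^{r_0}2r\,\mathrm dr+\int_{r_0}^\infty 4kr\,\Phi(-r)\,\mathrm dr .
\]
On $[0,r_0]$ we use the trivial bound $\Pp(|\breve\zeta|\ge r)\le 1$. On $[r_0,\infty)$ we use the union-type bound $2k\Phi(-r)$.

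The first piece equals $r_0^2$. For the second piece, note that $\phi'(r)=-r\phi(r)$, so $\frac{\mathrm d}{\mathrm dr}\bigl(\phi(r)-r\Phi(-r)\bigr)=-\Phi(-r)$. Integrating by parts therefore gives
\[
\int_{r_0}^\infty r\Phi(-r)\,\mathrm dr
=\tfrac12\bigl[(r_0^2+1)\Phi(-r_0)-r_0\phi(r_0)\bigr]
\le\tfrac12(1+r_0^2)\Phi(-r_0).
\]
Alternatively, the Chernoff bound of Lemma~\ref{lem:gaussian-chernoff} gives $\Phi(-r)\le e^{-r^2/2}$, so $\int_{r_0}^\infty r e^{-r^2/2}\,\mathrm dr=e^{-r_0^2/2}$ and the tail piece is at most $4k e^{-r_0^2/2}$. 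I would use this simpler route.

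Now choose $r_0:=\sqrt{2\log(ek)}$. Then $k e^{-r_0^2/2}=k/(ek)=1/e$. Combining the two pieces,
\[
\E\breve\zeta^2\le 2\log(ek)+4/e\le C\log(ek),
\]
where the last step uses $\log(ek)\ge1$ for $k\ge1$.

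The only step needing any care is choosing the threshold $r_0$ so that the factor $k$ from the union bound is exactly cancelled by the Gaussian tail. Beyond that, the argument is routine and I expect no real obstacle; in particular the Mills-ratio refinement of Lemma~\ref{lem:mills} is not needed.
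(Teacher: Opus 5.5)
Your proof is correct and matches the paper's argument: the layer-cake formula, the trivial bound below a threshold of order $\sqrt{\log(ek)}$, and the Chernoff bound $\Phi(-r)\le e^{-r^2/2}$ above it; the paper's threshold $R=\sqrt{2\log(2ek)}$ rather than $\sqrt{2\log(ek)}$ is an immaterial difference. The integration-by-parts identity in your unused aside has a sign error: in fact $\int_{r_0}^\infty r\Phi(-r)\,\mathrm dr=\tfrac12\bigl[(1-r_0^2)\Phi(-r_0)+r_0\phi(r_0)\bigr]$, and your expression is $\Phi(-r_0)$ minus this. This does not affect the route you actually use.
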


\begin{proof}[Proof of Lemma~\ref{lem:gaussian-max-moment}]
By the tail integral formula,
\[
        \E \breve{\zeta}^2
        =
        \int_0^\infty 2r\,\Pp(|\breve{\zeta}|\ge r)\,\mathrm dr.
\]
Let $R=\sqrt{2\log(2ek)}$.  
Then
\[
        \int_0^R2r\,\mathrm dr=R^2=O(\log(ek)).
\]
For \(r\ge R\), Lemma~\ref{lem:gaussian-chernoff} gives \(\Phi(-r)\le e^{-r^2/2}\), so
\[
\begin{aligned}
        \int_R^\infty2r\,\Pp(|\breve{\zeta}|\ge r)\,\mathrm dr
        \le
        \int_R^\infty4kr e^{-r^2/2}\,\mathrm dr
        =
        4k e^{-R^2/2}
        \le C.
\end{aligned}
\]
Thus \(\E \breve{\zeta}^2\le C\log(ek)\).
\end{proof}

\subsection{Strong Log-Concavity, Smoothing, and Posterior Energy}

\begin{lemma}[Smoothness of dominated Gaussian-convex marginals]
\label{lem:smooth-marginal-regularity}
Let \(p\ge1\) and \(q\ge0\). Let $G:
\mathbb R^p\times\mathbb R^q
\to
\mathbb R$ 
be convex and \(C^2\).  Assume that, for every compact set $\mathcal K
\subseteq
\mathbb R^p$ 
and every pair of directions $\mathbf a,\mathbf b
\in
\mathbb R^p$, 
there exists $M_{\mathcal K,\mathbf a,\mathbf b}
\in
L^1(\mathbb R^q)$
such that, for all
\(\mathbf t\in\mathcal K\)
and
\(\mathbf r\in\mathbb R^q\),
\begin{equation}
\begin{aligned}
&
e^{-G(\mathbf t,\mathbf r)}
+
\left|
\partial_{\mathbf a}
G(\mathbf t,\mathbf r)
\right|
e^{-G(\mathbf t,\mathbf r)}
\\
&\quad+
\left(
\left|
\partial_{\mathbf a}G(\mathbf t,\mathbf r)
\,
\partial_{\mathbf b}G(\mathbf t,\mathbf r)
\right|
+
\left|
\partial_{\mathbf a\mathbf b}
G(\mathbf t,\mathbf r)
\right|
\right)
e^{-G(\mathbf t,\mathbf r)}
\\
&\le
M_{\mathcal K,\mathbf a,\mathbf b}(\mathbf r).
\end{aligned}
\label{eq:MD-condition}
\end{equation}
Define $A(\mathbf t)
:=
\int_{\mathbb R^q}
e^{-G(\mathbf t,\mathbf r)}
\,\mathrm d\mathbf r$. 
Then \(A\) is positive and belongs to
\(C^2(\mathbb R^p)\).  Moreover, \(A\) is log-concave, so $\Psi(\mathbf t)
:=
\log A(\mathbf t)$ 
belongs to \(C^2(\mathbb R^p)\) and is concave.  In particular,
\[\nabla^2\Psi(\mathbf t)
\preceq
\mathbf O_p.\]
\end{lemma}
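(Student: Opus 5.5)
The plan is to prove Lemma~\ref{lem:smooth-marginal-regularity} in three stages: positivity, $C^2$ regularity by differentiation under the integral sign, and log-concavity by Prékopa's theorem. The regularity of $\Psi=\log A$ and the inequality $\nabla^2\Psi\preceq\mathbf O_p$ then follow at once.

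Positivity is immediate. The integrand $e^{-G(\mathbf t,\mathbf r)}$ is strictly positive and continuous in $\mathbf r$. The domination hypothesis \eqref{eq:MD-condition}, applied with $\mathcal K=\{\mathbf t\}$, gives $e^{-G(\mathbf t,\cdot)}\le M_{\mathcal K,\mathbf a,\mathbf b}\in L^1$. Hence $0<A(\mathbf t)<\infty$.

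For regularity, fix a compact ball $\mathcal K$ and directions $\mathbf a,\mathbf b$. Since $G\in C^2$, the integrand is $C^2$ in $\mathbf t$, with
\[
\partial_{\mathbf a}e^{-G}=-\partial_{\mathbf a}G\,e^{-G},
\qquad
\partial_{\mathbf a\mathbf b}e^{-G}=\bigl(\partial_{\mathbf a}G\,\partial_{\mathbf b}G-\partial_{\mathbf a\mathbf b}G\bigr)e^{-G}.
\]
Both are dominated on $\mathcal K\times\mathbb R^q$ by the integrable function $M_{\mathcal K,\mathbf a,\mathbf b}$. The standard theorem on differentiating parameter integrals (mean value theorem plus dominated convergence) then gives two facts. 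First, $A$ has first directional derivatives $-\int\partial_{\mathbf a}G\,e^{-G}\,\mathrm d\mathbf r$. Second, these are themselves differentiable with second derivatives $\int(\partial_{\mathbf a}G\partial_{\mathbf b}G-\partial_{\mathbf a\mathbf b}G)e^{-G}\,\mathrm d\mathbf r$. Dominated convergence also shows these second derivatives are continuous in $\mathbf t$, using continuity of the integrand and the uniform majorant on $\mathcal K$. Running over the coordinate directions gives $A\in C^2(\mathbb R^p)$. Since $A>0$ and $\log$ is smooth on $(0,\infty)$, the chain rule gives $\Psi\in C^2$.

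For log-concavity, the map $(\mathbf t,\mathbf r)\mapsto e^{-G(\mathbf t,\mathbf r)}$ is log-concave on $\mathbb R^{p+q}$ because $G$ is jointly convex. Prékopa's theorem says that integrating out $\mathbf r$ preserves log-concavity, so $A$ is log-concave. If $q=0$, the integral is evaluation and $A=e^{-G}$ directly. Therefore $\Psi$ is concave. Because $\Psi$ is $C^2$, concavity is equivalent to $\nabla^2\Psi(\mathbf t)\preceq\mathbf O_p$ at every point, which is the claim.

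The main obstacle is the bookkeeping in the differentiation step: the majorant must be uniform in $\mathbf t$ over a neighborhood, not merely at a single point. The plan handles this by always working on a compact ball $\mathcal K$ around the point of interest, which is exactly the uniformity the hypothesis supplies. Continuity of the second derivative also needs joint continuity of $\partial_{\mathbf a\mathbf b}G$ and $\partial_{\mathbf a}G$, which holds because $G\in C^2$. The Prékopa step can be cited as is.
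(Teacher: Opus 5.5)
Your proposal is correct and follows the paper's proof. It gets finiteness and positivity from the domination hypothesis with $\mathcal K=\{\mathbf t\}$, $C^2$ regularity by differentiating under the integral sign with the uniform majorant, log-concavity from Prékopa's theorem (with the trivial $q=0$ case), and $\nabla^2\Psi\preceq\mathbf O_p$ from concavity of the $C^2$ function $\Psi$. The only differences are ordering: the paper treats $q=0$ separately at the outset and invokes Prékopa before the regularity step.
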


\begin{proof}[Proof of Lemma~\ref{lem:smooth-marginal-regularity}]
Taking
\(\mathcal K=\{\mathbf t\}\)
in
\eqref{eq:MD-condition},
the first term on the left-hand side gives an integrable majorant for
\(\mathbf r\mapsto e^{-G(\mathbf t,\mathbf r)}\).
Hence $A(\mathbf t)<\infty$. 
Since the integrand is strictly positive, $A(\mathbf t)>0.$

If \(q=0\), then, under the zero-dimensional convention, we have  $A(\mathbf t)
=
e^{-G(\mathbf t,0)}$ and $\Psi(\mathbf t)
=
-G(\mathbf t,0)$.  
Hence
\(A\in C^2(\mathbb R^p)\).
Moreover,
\(\mathbf t\mapsto G(\mathbf t,0)\)
is convex, so
\(\Psi\) is concave and
\[
\nabla^2\Psi(\mathbf t)
\preceq
\mathbf O_p.
\]
Thus the conclusion is immediate when \(q=0\).

Assume henceforth that \(q\ge1\).   
The function $(\mathbf t,\mathbf r)
\to
e^{-G(\mathbf t,\mathbf r)}$ 
is jointly log-concave because \(G\) is convex.  Hence Prékopa's theorem
implies that \(A\) is log-concave.  Therefore $\Psi
=
\log A$ 
is concave. 
It remains to justify the \(C^2\)-regularity.  Set
\[
F(\mathbf t,\mathbf r)
:=
e^{-G(\mathbf t,\mathbf r)}.
\]
For every direction
\(\mathbf a\in\mathbb R^p\),
\[
\partial_{\mathbf a}
F(\mathbf t,\mathbf r)
=
-
\partial_{\mathbf a}
G(\mathbf t,\mathbf r)
e^{-G(\mathbf t,\mathbf r)}.
\]
For directions
\(\mathbf a,\mathbf b\in\mathbb R^p\),
\[
\partial_{\mathbf a\mathbf b}
F(\mathbf t,\mathbf r)
=
\left[
\partial_{\mathbf a}G(\mathbf t,\mathbf r)
\partial_{\mathbf b}G(\mathbf t,\mathbf r)
-
\partial_{\mathbf a\mathbf b}G(\mathbf t,\mathbf r)
\right]
e^{-G(\mathbf t,\mathbf r)}.
\]
The domination condition
\eqref{eq:MD-condition}
permits differentiation under the integral sign.  Thus
\[
\partial_{\mathbf a}A(\mathbf t)
=
-
\int_{\mathbb R^q}
\partial_{\mathbf a}G(\mathbf t,\mathbf r)
e^{-G(\mathbf t,\mathbf r)}
\,\mathrm d\mathbf r,
\]
and
\[
\begin{aligned}
\partial_{\mathbf a\mathbf b}A(\mathbf t)=
\int_{\mathbb R^q}
\left[
\partial_{\mathbf a}G(\mathbf t,\mathbf r)
\partial_{\mathbf b}G(\mathbf t,\mathbf r)
-
\partial_{\mathbf a\mathbf b}G(\mathbf t,\mathbf r)
\right]
\cdot
e^{-G(\mathbf t,\mathbf r)}
\,\mathrm d\mathbf r.
\end{aligned}
\]
The same domination, together with dominated convergence, implies that
these derivatives are continuous.  Hence $A\in C^2(\mathbb R^p)$.  
Since \(A>0\), $\Psi
=
\log A$ 
also belongs to \(C^2(\mathbb R^p)\), and
\[
\partial_{\mathbf a\mathbf b}\Psi(\mathbf t)
=
\frac{
\partial_{\mathbf a\mathbf b}A(\mathbf t)
}{
A(\mathbf t)
}
-
\frac{
\partial_{\mathbf a}A(\mathbf t)
\partial_{\mathbf b}A(\mathbf t)
}{
A(\mathbf t)^2
}.
\]
Finally, since \(\Psi\) is \(C^2\) and concave,
\[
\nabla^2\Psi(\mathbf t)
\preceq
\mathbf O_p.
\]
\end{proof}

\begin{lemma}[Domination for Gaussian-fiber marginals]
\label{lem:gaussian-fiber-domination}
Let $Q\in C^2(\mathbb R^m)$ 
be convex, and suppose that, for some finite \(L\),
\[
\mathbf O_m
\preceq
\nabla^2Q(\mathbf z)
\preceq
L\mathbf I_m,
\qquad
\mathbf z\in\mathbb R^m.
\] 
First, let \(m,p\ge1\) and \(q\ge0\). Let $\mathbf t\in\mathbb R^p$, $\mathbf r\in\mathbb R^q$ 
and let $\mathcal A:
\mathbb R^p
\to
\mathbb R^m$, $\mathcal B:
\mathbb R^q
\to
\mathbb R^m$ 
be linear maps.  Define
\[
G_1(\mathbf t,\mathbf r)
:=
\frac12\|\mathbf r\|^2
+
Q\left(
\mathcal A\mathbf t
+
\mathcal B\mathbf r
\right).
\]

Second, in the case \(m=2\), let $\mathbf t
=
(s,y)
\in
\mathbb R^2$ for  $w\in\mathbb R$, 
and define
\[
G_2((s,y),w)
:=
\frac12(w-s)^2
+
Q(y,w).
\]

Both \(G_1\) and \(G_2\) satisfy the domination condition
\eqref{eq:MD-condition}
on every compact parameter set.

Moreover, if $A_1(\mathbf t)
:=
\int_{\mathbb R^q}
e^{-G_1(\mathbf t,\mathbf r)}
\,\mathrm d\mathbf r$ 
and $\widetilde Q_1(\mathbf t)
:=
-\log A_1(\mathbf t)$,  
then $\widetilde Q_1
\in
C^2(\mathbb R^p)$ 
is convex and
\begin{equation}
\mathbf O_p
\preceq
\nabla^2\widetilde Q_1(\mathbf t)
\preceq
L\mathcal A^\ast\mathcal A
\preceq
L\|\mathcal A\|_{\mathrm{op}}^2
\mathbf I_p.
\label{eq:marginal-residual-hessian}
\end{equation}
\end{lemma}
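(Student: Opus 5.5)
The statement to prove is Lemma~\ref{lem:gaussian-fiber-domination}. The plan is to verify the domination condition \eqref{eq:MD-condition} directly from the two-sided Hessian bound on \(Q\). Lemma~\ref{lem:smooth-marginal-regularity} then handles regularity and concavity, and a covariance identity gives the upper Hessian bound on \(\widetilde Q_1\).

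\textbf{Domination.} Since \(Q\) is convex with \(\nabla^2 Q\preceq L\mathbf I_m\), we have \(Q(\mathbf z)\ge Q(\mathbf 0)+\langle\nabla Q(\mathbf 0),\mathbf z\rangle\) and \(\|\nabla Q(\mathbf z)\|\le\|\nabla Q(\mathbf 0)\|+L\|\mathbf z\|\).

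For \(G_1\), the first bound gives \(e^{-G_1}\le C e^{-\|\mathbf r\|^2/2+c(\|\mathbf t\|+\|\mathbf r\|)}\). Next, \(\partial_{\mathbf a}G_1=\langle\nabla Q(\mathcal A\mathbf t+\mathcal B\mathbf r),\mathcal A\mathbf a\rangle\) grows at most linearly in \((\mathbf t,\mathbf r)\). Finally, \(|\partial_{\mathbf a\mathbf b}G_1|\le L\|\mathcal A\mathbf a\|\|\mathcal A\mathbf b\|\). Uniformly over \(\mathbf t\) in a compact \(\mathcal K\), all terms in \eqref{eq:MD-condition} are therefore bounded by \(C_{\mathcal K}(1+\|\mathbf r\|)^2 e^{-\|\mathbf r\|^2/2+c\|\mathbf r\|}\), which is integrable on \(\mathbb R^q\). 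The case \(q=0\) is trivial.

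For \(G_2\), the same argument applies with the Gaussian factor \(e^{-(w-s)^2/2}\) and \(s\) ranging over a compact set. Here \(\partial_s G_2=s-w\) and \(\partial_y G_2,\partial_w\)-type terms are linear in \(w\), and all second derivatives are bounded by \(1+L\).

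\textbf{Marginal potential.} Lemma~\ref{lem:smooth-marginal-regularity} gives \(A_1>0\), \(A_1\in C^2\), and \(\log A_1\) concave. Hence \(\widetilde Q_1\in C^2\) is convex, so \(\nabla^2\widetilde Q_1\succeq\mathbf O_p\). For the upper bound I would use the formula for \(\partial_{\mathbf a\mathbf b}\Psi\) from Lemma~\ref{lem:smooth-marginal-regularity}. Let \(\nu_{\mathbf t}\) be the probability measure on \(\mathbb R^q\) with density proportional to \(e^{-G_1(\mathbf t,\cdot)}\). Then
\[
\partial_{\mathbf a\mathbf a}\widetilde Q_1(\mathbf t)=\mathbb E_{\nu_{\mathbf t}}\bigl[\partial_{\mathbf a\mathbf a}G_1\bigr]-\operatorname{Var}_{\nu_{\mathbf t}}\bigl(\partial_{\mathbf a}G_1\bigr)\le\mathbb E_{\nu_{\mathbf t}}\bigl[\langle\nabla^2Q\,\mathcal A\mathbf a,\mathcal A\mathbf a\rangle\bigr]\le L\|\mathcal A\mathbf a\|^2 .
\]
Dropping the variance term is valid because it is nonnegative. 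This yields \(\nabla^2\widetilde Q_1\preceq L\mathcal A^\ast\mathcal A\preceq L\|\mathcal A\|_{\mathrm{op}}^2\mathbf I_p\).

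\textbf{Main obstacle.} The only delicate point is making the majorant uniform in \(\mathbf t\in\mathcal K\). The linear exponential factor \(e^{c\|\mathbf r\|}\) coming from the affine lower bound on \(Q\) must be beaten by the Gaussian \(e^{-\|\mathbf r\|^2/2}\). This requires that no \(\mathbf r\)-direction is left without Gaussian confinement, which holds because the full \(\frac12\|\mathbf r\|^2\) appears in \(G_1\), and because \((w-s)^2\) appears in \(G_2\).
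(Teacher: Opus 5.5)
Your proposal is correct and follows the paper's route. The affine lower bound on \(Q\) and the linear growth of \(\nabla Q\) give a Gaussian-type integrable majorant; the paper absorbs the linear term to obtain \(e^{-\|\mathbf r\|^2/4}\) (resp.\ \(e^{-w^2/8}\)), which is equivalent to your \(e^{-\|\mathbf r\|^2/2+c\|\mathbf r\|}\). Then Lemma~\ref{lem:smooth-marginal-regularity} and the log-partition Hessian identity, with the nonnegative covariance term dropped, give convexity and \(\nabla^2\widetilde Q_1\preceq L\mathcal A^\ast\mathcal A\), where you derive that identity from the explicit formula for \(\partial_{\mathbf a\mathbf b}\Psi\) in Lemma~\ref{lem:smooth-marginal-regularity} rather than stating it directly as the paper does.
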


\begin{proof}[Proof of Lemma~\ref{lem:gaussian-fiber-domination}]
If \(q=0\), then
\(\mathcal B\) is the unique zero-dimensional linear map and $G_1(\mathbf t,0)
=
Q(\mathcal A\mathbf t)$. 
Hence
\[
A_1(\mathbf t)
=
e^{-Q(\mathcal A\mathbf t)},
\qquad
\widetilde Q_1(\mathbf t)
=
Q(\mathcal A\mathbf t).
\]
The domination condition is immediate, because on every compact
\(\mathcal K\subseteq\mathbb R^p\)
all relevant derivatives are bounded and
\(L^1(\mathbb R^0)\)
consists simply of finite functions at the unique point. 
Moreover, $\nabla^2\widetilde Q_1(\mathbf t)
=
\mathcal A^\ast
\nabla^2Q(\mathcal A\mathbf t)
\mathcal A$,  
and therefore $\mathbf O_p
\preceq
\nabla^2\widetilde Q_1(\mathbf t)
\preceq
L\mathcal A^\ast\mathcal A$.  
Thus all conclusions hold when \(q=0\). 

Assume henceforth that \(q\ge1\). 
We first consider \(G_1\). 
Fix a compact set $\mathcal K
\subseteq
\mathbb R^p$  
and directions $\mathbf a,\mathbf b
\in
\mathbb R^p$.  
Set $R_{\mathcal K}
:=
\sup_{\mathbf t\in\mathcal K}
\|\mathcal A\mathbf t\|
<
\infty$. 
Since
\[
\mathbf O_m
\preceq
\nabla^2Q
\preceq
L\mathbf I_m,
\]
the gradient of \(Q\) is globally \(L\)-Lipschitz.  Hence
\[
\|\nabla Q(\mathbf z)\|
\le
\|\nabla Q(\mathbf 0_m)\|
+
L\|\mathbf z\|,
\qquad
\mathbf z\in\mathbb R^m.
\]
For $\mathbf z
=
\mathcal A\mathbf t
+
\mathcal B\mathbf r$, 
with
\(\mathbf t\in\mathcal K\),
this gives
\begin{equation}\label{eq:gaussian-fiber-gradient-growth}
\begin{aligned}
\left\|
\nabla Q(
\mathcal A\mathbf t+\mathcal B\mathbf r)
\right\|
&\le
\|\nabla Q(\mathbf 0_m)\|
+
L
\left\|
\mathcal A\mathbf t+\mathcal B\mathbf r
\right\|
\\
&\le
\|\nabla Q(\mathbf 0_m)\|
+
LR_{\mathcal K}
+
L\|\mathcal B\|_{\mathrm{op}}
\|\mathbf r\|
\\
&\le
C_{\mathcal K}
\left(
1+\|\mathbf r\|
\right).
\end{aligned}
\end{equation}
By convexity of \(Q\),
\[
Q(\mathbf z)
\ge
Q(\mathbf 0_m)
+
\left\langle
\nabla Q(\mathbf 0_m),
\mathbf z
\right\rangle.
\]
Therefore
\[
\begin{aligned}
G_1(\mathbf t,\mathbf r)
&\ge
\frac12\|\mathbf r\|^2
+
Q(\mathbf 0_m)
+
\left\langle
\nabla Q(\mathbf 0_m),
\mathcal A\mathbf t+\mathcal B\mathbf r
\right\rangle
\\
&\ge
\frac12\|\mathbf r\|^2
-
C_{\mathcal K}
-
\left\|
\mathcal B^\ast\nabla Q(\mathbf 0_m)
\right\|
\|\mathbf r\|.
\end{aligned}
\]
Using
\[
c\|\mathbf r\|
\le
\frac14\|\mathbf r\|^2
+
c^2,
\]
we obtain
\[
G_1(\mathbf t,\mathbf r)
\ge
\frac14\|\mathbf r\|^2
-
C_{\mathcal K}'.
\]
Consequently,
\begin{equation}
e^{-G_1(\mathbf t,\mathbf r)}
\le
C_{\mathcal K}''
e^{-\|\mathbf r\|^2/4},
\qquad
\mathbf t\in\mathcal K.
\label{eq:G1-basic-envelope}
\end{equation}

The directional parameter derivatives are
\[
\partial_{\mathbf a}
G_1(\mathbf t,\mathbf r)
=
\left\langle
\nabla Q(
\mathcal A\mathbf t+\mathcal B\mathbf r),
\mathcal A\mathbf a
\right\rangle,
\]
and
\[
\partial_{\mathbf a\mathbf b}
G_1(\mathbf t,\mathbf r)
=
\left\langle
\mathcal A\mathbf a,
\nabla^2Q(
\mathcal A\mathbf t+\mathcal B\mathbf r)
\mathcal A\mathbf b
\right\rangle.
\]
By
\eqref{eq:gaussian-fiber-gradient-growth},
\[
\left|
\partial_{\mathbf a}
G_1(\mathbf t,\mathbf r)
\right|
\le
C_{\mathcal K,\mathbf a}
\left(
1+\|\mathbf r\|
\right),
\]
while
\[
\left|
\partial_{\mathbf a\mathbf b}
G_1(\mathbf t,\mathbf r)
\right|
\le
L
\|\mathcal A\mathbf a\|
\|\mathcal A\mathbf b\|.
\]
Together with
\eqref{eq:G1-basic-envelope},
this gives
\[
\begin{aligned}
&
e^{-G_1}
+
\left|
\partial_{\mathbf a}G_1
\right|
e^{-G_1}
\\
&\quad+
\left(
\left|
\partial_{\mathbf a}G_1
\partial_{\mathbf b}G_1
\right|
+
\left|
\partial_{\mathbf a\mathbf b}G_1
\right|
\right)
e^{-G_1}
\\
&\le
C_{\mathcal K,\mathbf a,\mathbf b}
\left(
1+\|\mathbf r\|^2
\right)
e^{-\|\mathbf r\|^2/4}.
\end{aligned}
\]
The right-hand side is integrable over \(\mathbb R^q\), proving the
domination condition for \(G_1\).

We now consider \(G_2\).  Fix a compact set $\mathcal K
\subseteq
\mathbb R^2$,  
and write
\[
\mathbf a
=
(a_s,a_y),
\qquad
\mathbf b
=
(b_s,b_y).
\]
Using
\[
\frac12(w-s)^2
\ge
\frac14w^2
-
\frac12s^2
\]
and absorbing the linear term in \(w\), we obtain
\[
G_2((s,y),w)
\ge
\frac18w^2
-
C_{\mathcal K}'.
\]
Thus
\[
e^{-G_2((s,y),w)}
\le
C_{\mathcal K}''
e^{-w^2/8}.
\]

The parameter derivatives are
\[
\partial_{\mathbf a}
G_2((s,y),w)
=
-a_s(w-s)
+
a_y
\partial_yQ(y,w),
\]
and
\[
\partial_{\mathbf a\mathbf b}
G_2((s,y),w)
=
a_sb_s
+
a_yb_y
\partial_{yy}Q(y,w).
\]
The global Hessian bound gives
\[
|\partial_yQ(y,w)|
\le
C_{\mathcal K}(1+|w|),
\]
and
\[
|\partial_{yy}Q(y,w)|
\le
L.
\]
Hence all terms in
\eqref{eq:MD-condition}
are bounded by
\[
C_{\mathcal K,\mathbf a,\mathbf b}
(1+w^2)e^{-w^2/8},
\]
which is integrable.  This proves the domination assertion for \(G_2\).

It remains to establish
\eqref{eq:marginal-residual-hessian}.
Since \(G_1\) is jointly convex, $(\mathbf t,\mathbf r)
\to
e^{-G_1(\mathbf t,\mathbf r)}$ 
is jointly log-concave.  Prékopa's theorem implies that \(A_1\) is
log-concave, and therefore $\widetilde Q_1
=
-\log A_1$ 
is convex.

The domination assertion and
Lemma~\ref{lem:smooth-marginal-regularity}
give $A_1,\widetilde Q_1
\in
C^2(\mathbb R^p).$ 
Since \(\widetilde Q_1\) is convex,
\[
\nabla^2\widetilde Q_1(\mathbf t)
\succeq
\mathbf O_p.
\] 
For each \(\mathbf t\), define
\[
\nu_{\mathbf t}(\mathrm d\mathbf r)
:=
A_1(\mathbf t)^{-1}
e^{-G_1(\mathbf t,\mathbf r)}
\,\mathrm d\mathbf r.
\]
Differentiating the log-partition function gives
\[
\nabla\widetilde Q_1(\mathbf t)
=
\mathbb E_{\nu_{\mathbf t}}
\left[
\nabla_{\mathbf t}
G_1(\mathbf t,\mathbf r)
\right],
\]
and
\begin{equation}
\begin{aligned}
\nabla^2\widetilde Q_1(\mathbf t)
&=
\mathbb E_{\nu_{\mathbf t}}
\left[
\nabla_{\mathbf t}^2
G_1(\mathbf t,\mathbf r)
\right]
\\
&\quad-
\operatorname{Cov}_{\nu_{\mathbf t}}
\left(
\nabla_{\mathbf t}
G_1(\mathbf t,\mathbf r)
\right).
\end{aligned}
\label{eq:log-partition-hessian}
\end{equation}
Here
\[
\nabla_{\mathbf t}
G_1(\mathbf t,\mathbf r)
=
\mathcal A^\ast
\nabla Q(
\mathcal A\mathbf t+\mathcal B\mathbf r),
\]
and
\[
\nabla_{\mathbf t}^2
G_1(\mathbf t,\mathbf r)
=
\mathcal A^\ast
\nabla^2Q(
\mathcal A\mathbf t+\mathcal B\mathbf r)
\mathcal A.
\]
A covariance matrix is positive semidefinite.  Hence
\[
\begin{aligned}
\nabla^2\widetilde Q_1(\mathbf t)
&\preceq
\mathbb E_{\nu_{\mathbf t}}
\left[
\mathcal A^\ast
\nabla^2Q(
\mathcal A\mathbf t+\mathcal B\mathbf r)
\mathcal A
\right]
\\
&\preceq
L\mathcal A^\ast\mathcal A
\\
&\preceq
L\|\mathcal A\|_{\mathrm{op}}^2
\mathbf I_p.
\end{aligned}
\]
This completes the proof.
\end{proof}

\begin{lemma}[Exponential moments from Brascamp--Lieb]
\label{lem:bl-herbst}
Let \(\mu\) have a positive \(C^2\) density on \(\mathbb R^r\) of the
form
\[
\mu(\mathrm d\mathbf u)
=
\mathcal Z^{-1}
e^{-V(\mathbf u)}
\,\mathrm d\mathbf u,
\qquad
\nabla^2V(\mathbf u)
\succeq
\alpha\mathbf I_r
\]
for some \(\alpha>0\).  Let $\rvec u
\sim
\mu.$
Then, for every
\(\mathbf w\in\mathbb R^r\)
and every
\(\lambda\in\mathbb R\),
\[
\mathbb E_\mu
\exp\left(
\lambda
\left\langle
\rvec u-\mathbb E_\mu\rvec u,
\mathbf w
\right\rangle
\right)
\le
\exp\left(
\frac{
\lambda^2\|\mathbf w\|^2
}{
2\alpha
}
\right).
\]
\end{lemma}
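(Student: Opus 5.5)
By replacing \(\rvec u\) with \(\rvec u-\mathbb E_\mu\rvec u\), which only translates the potential, we may assume \(\mathbb E_\mu\rvec u=\mathbf 0\). We then follow Herbst's argument, using the Brascamp--Lieb variance inequality as the functional input. Fix \(\mathbf w\) and set \(g(\mathbf u):=\langle\mathbf u,\mathbf w\rangle\). Define
\[
\Lambda(\lambda):=\log\mathbb E_\mu e^{\lambda g(\rvec u)}.
\]
This quantity is finite for every \(\lambda\). Indeed, \(\nabla^2V\succeq\alpha\mathbf I_r\) gives \(V(\mathbf u)\ge V(\mathbf 0)+\langle\nabla V(\mathbf 0),\mathbf u\rangle+\frac\alpha2\|\mathbf u\|^2\), so \(e^{-V}\) has a Gaussian upper envelope. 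The same envelope justifies differentiating twice under the integral, so \(\Lambda\) is \(C^2\) with \(\Lambda(0)=0\) and \(\Lambda'(0)=\mathbb E_\mu g=0\).

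The key identity is \(\Lambda''(\lambda)=\operatorname{Var}_{\mu_\lambda}(g)\), where \(\mu_\lambda\) is the tilted measure with density proportional to \(e^{\lambda g-V}\). The potential of \(\mu_\lambda\) is \(V-\lambda g\). Since \(g\) is linear, this potential has the same Hessian as \(V\), so it still satisfies \(\nabla^2(V-\lambda g)\succeq\alpha\mathbf I_r\), and the density is positive and \(C^2\). We then apply the Brascamp--Lieb inequality to \(\mu_\lambda\):
\[
\operatorname{Var}_{\mu_\lambda}(g)\le\mathbb E_{\mu_\lambda}\langle(\nabla^2V)^{-1}\nabla g,\nabla g\rangle\le\frac{\|\mathbf w\|^2}{\alpha}.
\]
This gives \(\Lambda''\le\|\mathbf w\|^2/\alpha\) everywhere. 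Integrating twice from \(0\), with \(\Lambda(0)=\Lambda'(0)=0\), yields \(\Lambda(\lambda)\le\lambda^2\|\mathbf w\|^2/(2\alpha)\). Exponentiating gives the claim.

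The main obstacle is the regularity needed to invoke Brascamp--Lieb. Its standard form requires only a \(C^2\) strictly convex potential and a function \(g\) in \(H^1(\mu_\lambda)\). Since \(g\) is linear, it has Gaussian-dominated moments under the envelope above, so this condition holds. The same Gaussian envelope, with its constant uniform for \(\lambda\) in compact sets, supplies the domination needed to differentiate \(\Lambda\) under the integral sign. If we prefer to avoid Brascamp--Lieb altogether, an alternative is Caffarelli's contraction theorem: \(\mu\) is the pushforward of \(N(\mathbf 0,\alpha^{-1}\mathbf I_r)\) under a \(1\)-Lipschitz map \(T\). Then \(g\circ T\) is \(\|\mathbf w\|\)-Lipschitz for a Gaussian, and Gaussian Lipschitz concentration gives the same moment-generating-function bound. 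We will use the Brascamp--Lieb route as the primary argument.
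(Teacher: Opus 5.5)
Your proposal is correct and follows the paper's proof essentially step for step: Herbst's argument applied to the tilted measure $\mu_\lambda$, whose potential has the same Hessian as $V$, Brascamp--Lieb to get $\Lambda''\le\|\mathbf w\|^2/\alpha$, and integration from $\Lambda(0)=\Lambda'(0)=0$. Your Gaussian-envelope justification of finiteness and of differentiating under the integral supplies details the paper only asserts. Integrating the Taylor remainder directly also covers both signs of $\lambda$, so you do not need the paper's $\mathbf w\mapsto-\mathbf w$ reflection.
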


\begin{proof}[Proof of Lemma~\ref{lem:bl-herbst}]
Define
\[
\psi(\lambda)
:=
\log
\mathbb E_\mu
\exp\left(
\lambda
\left\langle
\rvec u-\mathbb E_\mu\rvec u,
\mathbf w
\right\rangle
\right).
\]
Strong convexity implies that the exponential moment is finite for every
\(\lambda\in\mathbb R\).

Define the tilted probability measure
\[
\mu_\lambda(\mathrm d\mathbf u)
:=
\frac{
e^{\lambda\langle\mathbf u,\mathbf w\rangle}
}{
\mathbb E_\mu
e^{\lambda\langle\rvec u,\mathbf w\rangle}
}
\mu(\mathrm d\mathbf u).
\]
Its potential is
\[
V_\lambda(\mathbf u)
=
V(\mathbf u)
-
\lambda
\langle\mathbf u,\mathbf w\rangle
+
\mathrm{const},
\]
and hence
\[
\nabla^2V_\lambda(\mathbf u)
=
\nabla^2V(\mathbf u)
\succeq
\alpha\mathbf I_r.
\]

Differentiating the log-partition function gives
\[
\psi''(\lambda)
=
\operatorname{Var}_{\mu_\lambda}
\left(
\left\langle
\rvec u,\mathbf w
\right\rangle
\right).
\]
Brascamp--Lieb, applied under \(\mu_\lambda\), yields
\[
\psi''(\lambda)
\le
\frac{\|\mathbf w\|^2}{\alpha}.
\]
Since
\[
\psi(0)=\psi'(0)=0,
\]
for \(\lambda\ge0\),
\[
\begin{aligned}
\psi(\lambda)
&=
\int_0^\lambda
(\lambda-s)
\psi''(s)
\,\mathrm ds
\\
&\le
\frac{
\lambda^2\|\mathbf w\|^2
}{
2\alpha
}.
\end{aligned}
\]
The case \(\lambda<0\) follows by replacing
\(\mathbf w\) with \(-\mathbf w\).
\end{proof}

\begin{lemma}[Gaussian smoothing and Hessian bounds]
\label{lem:gaussian-smoothing-slc}
Let $H
\subseteq
\mathbb R^d$
be a nonempty affine subspace, and let \(\rvec u\) have law
\[
\mu(\mathrm d\mathbf x)
=
\mathcal Z_H^{-1}
e^{-\mathcal V_H(\mathbf x)}
\,\sigma_H(\mathrm d\mathbf x),
\]
where
\[
\mathcal V_H:
H
\to
\mathbb R\cup\{+\infty\}
\]
is closed and proper, and
\[
\mathbf x
\longmapsto
\mathcal V_H(\mathbf x)
-
\frac12\|\mathbf x\|^2
\]
is convex on \(H\). 
Let $\breve{\boldsymbol\zeta}
\sim
N(\mathbf 0_d,\mathbf I_d)$
be independent of \(\rvec u\), and, for \(\eta>0\), set $\rvec u_\eta
:=
\rvec u
+
\eta
\breve{\boldsymbol\zeta}$.  
Define
\begin{equation}
\mathcal V_\eta(\mathbf y)
:=
-\log
\int_H
\exp\left(
-\mathcal V_H(\mathbf x)
-
\frac{
\|\mathbf y-\mathbf x\|^2
}{
2\eta^2
}
\right)
\,\sigma_H(\mathrm d\mathbf x).
\label{eq:smoothing-potential-explicit}
\end{equation}
Then \(\rvec u_\eta\) has a positive \(C^\infty\) density $p_\eta(\mathbf y)
=
\mathcal Z_\eta^{-1}
e^{-\mathcal V_\eta(\mathbf y)}$ 
on \(\mathbb R^d\), where $\mathcal Z_\eta
=
\mathcal Z_H
(2\pi\eta^2)^{d/2}$. 
Moreover,
\begin{equation}
\frac1{1+\eta^2}
\mathbf I_d
\preceq
\nabla^2\mathcal V_\eta(\mathbf y)
\preceq
\frac1{\eta^2}
\mathbf I_d,
\qquad
\mathbf y\in\mathbb R^d.
\label{eq:smoothing-two-sided-hessian}
\end{equation}

Set $\alpha_\eta
:=
\frac1{1+\eta^2}$, $\rvec u_{\eta,\mathrm{nor}}
:=
\sqrt{\alpha_\eta}\,
\rvec u_\eta$, 
and define
\begin{equation}
Q_\eta(\mathbf w)
:=
\mathcal V_\eta
\left(
\frac{\mathbf w}{\sqrt{\alpha_\eta}}
\right)
-
\frac12\|\mathbf w\|^2.
\label{eq:normalized-residual-potential}
\end{equation}
Then \(\rvec u_{\eta,\mathrm{nor}}\) has density
\[
p_{\eta,\mathrm{nor}}(\mathbf w)
=
\mathcal Z_{\eta,\mathrm{nor}}^{-1}
\exp\left(
-\frac12\|\mathbf w\|^2
-
Q_\eta(\mathbf w)
\right),
\]
where $\mathcal Z_{\eta,\mathrm{nor}}
=
\alpha_\eta^{d/2}
\mathcal Z_\eta$,  
and \(Q_\eta\in C^\infty(\mathbb R^d)\) is convex with
\begin{equation}
\mathbf O_d
\preceq
\nabla^2Q_\eta(\mathbf w)
\preceq
\frac1{\eta^2}
\mathbf I_d.
\label{eq:normalized-residual-hessian}
\end{equation}
\end{lemma}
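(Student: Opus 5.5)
The plan is to write the law of \(\rvec u_\eta\) as a Gaussian convolution, read off positivity and smoothness, and then get the two Hessian bounds from the log-partition representation of \(\mathcal V_\eta\). Conditionally on \(\rvec u=\mathbf x\), the vector \(\rvec u_\eta\) is \(N(\mathbf x,\eta^2\mathbf I_d)\). Integrating this density against \(\mu\) gives
\[
p_\eta(\mathbf y)=\mathcal Z_H^{-1}(2\pi\eta^2)^{-d/2}\int_H e^{-\mathcal V_H(\mathbf x)-\|\mathbf y-\mathbf x\|^2/(2\eta^2)}\,\sigma_H(\mathrm d\mathbf x),
\]
which is \(\mathcal Z_\eta^{-1}e^{-\mathcal V_\eta(\mathbf y)}\) with the stated normalizer. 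Positivity follows because the integrand is positive on the set where \(\mathcal V_H<\infty\), and that set has positive \(\sigma_H\)-measure since \(\mathcal Z_H>0\). For \(C^\infty\) regularity, note that \(\mathcal V_H\) grows at least quadratically on \(H\) (it is strongly convex), so every \(\mathbf y\)-derivative of the Gaussian kernel is dominated locally uniformly in \(\mathbf y\) by a polynomial times \(e^{-\mathcal V_H}\), which is integrable. Dominated convergence then allows differentiation under the integral to any order.

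For the upper bound, expand \(\|\mathbf y-\mathbf x\|^2/(2\eta^2)\) so that \(\mathcal V_\eta(\mathbf y)=\|\mathbf y\|^2/(2\eta^2)-\log\int e^{\langle\mathbf y,\mathbf x\rangle/\eta^2}\,\mathrm d\nu(\mathbf x)\) for a fixed finite measure \(\nu\). Differentiating the log-partition function twice gives
\[
\nabla^2\mathcal V_\eta(\mathbf y)=\eta^{-2}\mathbf I_d-\eta^{-4}\operatorname{Cov}_{\pi_{\mathbf y}}(\mathbf x),
\]
where \(\pi_{\mathbf y}\) is the tilted posterior on \(H\). Since a covariance matrix is positive semidefinite, \(\nabla^2\mathcal V_\eta\preceq\eta^{-2}\mathbf I_d\).

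The lower bound \(\nabla^2\mathcal V_\eta\succeq(1+\eta^2)^{-1}\mathbf I_d\) is the main step. I would first reduce to the full-dimensional case. Write \(H=\mathbf h_0+L\) and decompose \(\mathbf y=\mathbf y_L+\mathbf y_\perp\), with \(\mathbf y_\perp\) in \(L^\perp\) measured relative to \(\mathbf h_0\). The Gaussian kernel then factors: the \(L^\perp\) part gives exactly a quadratic \(\|\mathbf y_\perp-(\mathbf h_0)_\perp\|^2/(2\eta^2)\), whose Hessian is \(\eta^{-2}\ge(1+\eta^2)^{-1}\). On \(L\) the problem is a convolution of a measure with density \(e^{-\mathcal V}\) on \(\mathbb R^{r}\), where \(\mathcal V-\tfrac12\|\cdot\|^2\) is convex, with \(N(0,\eta^2\mathbf I_r)\). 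To handle this, write \(e^{-\mathcal V}=e^{-\|\mathbf x\|^2/2}g(\mathbf x)\) with \(g\) log-concave. Completing the square in \(-\|\mathbf x\|^2/2-\|\mathbf y-\mathbf x\|^2/(2\eta^2)\) separates a factor \(e^{-\|\mathbf y\|^2/(2(1+\eta^2))}\) from an integral \(\int g(\mathbf x)\,e^{-\frac{1+\eta^2}{2\eta^2}\|\mathbf x-\mathbf y/(1+\eta^2)\|^2}\,\mathrm d\mathbf x\). The integrand is jointly log-concave in \((\mathbf x,\mathbf y)\), so by Prékopa's theorem the integral is log-concave in \(\mathbf y\). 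Hence \(\mathcal V_\eta-\|\mathbf y\|^2/(2(1+\eta^2))\) is convex, which is the claimed lower bound. The delicate points are the degenerate cases \(\dim H=0\) (the density is then Gaussian and the bound is direct) and making sure the affine offset does not spoil the factorization.

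The normalized statements then follow by the chain rule exactly as in the closure argument above. The density of \(\rvec u_{\eta,\mathrm{nor}}\) is obtained by change of variables, which gives the normalizer \(\alpha_\eta^{d/2}\mathcal Z_\eta\). The Hessian transforms as \(\nabla^2Q_\eta=\alpha_\eta^{-1}\nabla^2\mathcal V_\eta(\cdot/\sqrt{\alpha_\eta})-\mathbf I_d\). The two-sided bound \eqref{eq:smoothing-two-sided-hessian} therefore gives \(\mathbf O_d\preceq\nabla^2Q_\eta\preceq\bigl(\tfrac{1+\eta^2}{\eta^2}-1\bigr)\mathbf I_d=\eta^{-2}\mathbf I_d\). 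Convexity of \(Q_\eta\) follows from the lower bound, and smoothness is inherited from \(\mathcal V_\eta\).
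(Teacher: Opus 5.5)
Your proposal is correct and follows the paper's proof essentially step for step. The steps are: the Gaussian-convolution formula for the density; the covariance identity $\nabla^2\mathcal V_\eta=\eta^{-2}\mathbf I_d-\eta^{-4}\operatorname{Cov}_{\pi_{\mathbf y}}(\mathbf x)$ for the upper bound; completing the square plus Pr\'ekopa's theorem for the lower bound; and the chain rule for $Q_\eta$. The only difference is your orthogonal splitting $H=\mathbf h_0+L$. The paper skips it by applying the completing-the-square identity directly in $\mathbb R^d$ and invoking Pr\'ekopa after an affine isometric parametrization of $H$, treating $\dim H=0$ separately. Both routes work.
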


\begin{proof}[Proof of Lemma~\ref{lem:gaussian-smoothing-slc}]
If
\(\dim H=0\),
write
\(H=\{\mathbf x_0\}\).
Then
\(\rvec u=\mathbf x_0\)
almost surely and $\rvec u_\eta
\sim
N(
\mathbf x_0,
\eta^2\mathbf I_d
)$. 
In this case,
\[
\mathcal V_\eta(\mathbf y)
=
\frac{
\|\mathbf y-\mathbf x_0\|_2^2
}{
2\eta^2
}
+
\mathrm{const},
\]
so $\nabla^2\mathcal V_\eta(\mathbf y)
=
\frac1{\eta^2}\mathbf I_d$.  
Since $\frac1{\eta^2}
\ge
\frac1{1+\eta^2}$,  
the two sided Hessian bound follows.  The normalized residual potential
also satisfies
\[
\nabla^2Q_\eta
=
\frac1{\eta^2}\mathbf I_d.
\]
Thus all conclusions hold in the zero-dimensional case.

Assume henceforth that
\(\dim H\ge1\). 
By convolution with the
\(N(\mathbf 0_d,\eta^2\mathbf I_d)\)
density, the law of \(\rvec u_\eta\) has density
\[
\begin{aligned}
p_\eta(\mathbf y)
&=
\int_H
\frac1{(2\pi\eta^2)^{d/2}}
\exp\left(
-\frac{
\|\mathbf y-\mathbf x\|^2
}{
2\eta^2
}
\right)
\mu(\mathrm d\mathbf x)
\\
&=
\frac1{
\mathcal Z_H
(2\pi\eta^2)^{d/2}
}
\int_H
\exp\left(
-\mathcal V_H(\mathbf x)
-
\frac{
\|\mathbf y-\mathbf x\|^2
}{
2\eta^2
}
\right)
\,\sigma_H(\mathrm d\mathbf x).
\end{aligned}
\]
Thus $p_\eta(\mathbf y)
=
\mathcal Z_\eta^{-1}
e^{-\mathcal V_\eta(\mathbf y)}$.  
The Gaussian kernel is strictly positive, so $p_\eta(\mathbf y)>0$ 
for every \(\mathbf y\).  Every derivative of the Gaussian kernel is a
polynomial times the same Gaussian kernel.  Since \(\mu\) is finite,
differentiation under the integral sign is valid to every order.  Hence
\[
p_\eta,\mathcal V_\eta
\in
C^\infty(\mathbb R^d).
\]

Define
\[
\mathcal W_H(\mathbf x)
:=
\mathcal V_H(\mathbf x)
-
\frac12\|\mathbf x\|^2.
\]
By assumption, \(\mathcal W_H\) is convex on \(H\).  Let $\alpha_\eta
=
\frac1{1+\eta^2}$.  
Completing the square gives
\begin{equation}
\frac12\|\mathbf x\|^2
+
\frac{
\|\mathbf y-\mathbf x\|^2
}{
2\eta^2
}
=
\frac{\alpha_\eta}{2}
\|\mathbf y\|^2
+
\frac{
\|\mathbf x-\alpha_\eta\mathbf y\|^2
}{
2\eta^2\alpha_\eta
}.
\label{eq:smoothing-complete-square}
\end{equation}
Consequently,
\[
e^{-\mathcal V_\eta(\mathbf y)}
=
e^{-\alpha_\eta\|\mathbf y\|^2/2}
A_\eta(\mathbf y),
\]
where
\[
A_\eta(\mathbf y)
:=
\int_H
\exp\left(
-\mathcal W_H(\mathbf x)
-
\frac{
\|\mathbf x-\alpha_\eta\mathbf y\|^2
}{
2\eta^2\alpha_\eta
}
\right)
\,\sigma_H(\mathrm d\mathbf x).
\]

After an affine isometric parametrization of \(H\), the integrand above
is jointly log-concave in the affine coordinate and \(\mathbf y\).
Prékopa's theorem therefore implies that \(A_\eta\) is log-concave.
Hence $-\log A_\eta$ 
is convex.  Since
\[
\mathcal V_\eta(\mathbf y)
=
\frac{\alpha_\eta}{2}
\|\mathbf y\|^2
-
\log A_\eta(\mathbf y),
\]
we obtain
\[
\nabla^2\mathcal V_\eta(\mathbf y)
\succeq
\alpha_\eta\mathbf I_d.
\]

For the upper Hessian bound, define the posterior probability measure
\[
\pi_{\eta,\mathbf y}(\mathrm d\mathbf x)
:=
\frac{
\exp\left(
-\frac{
\|\mathbf y-\mathbf x\|^2
}{
2\eta^2
}
\right)
\mu(\mathrm d\mathbf x)
}{
\displaystyle
\int_H
\exp\left(
-\frac{
\|\mathbf y-\mathbf x'\|^2
}{
2\eta^2
}
\right)
\mu(\mathrm d\mathbf x')
}.
\]
Let
\[
\mathbf m_\eta(\mathbf y)
:=
\int_H
\mathbf x\,
\pi_{\eta,\mathbf y}(\mathrm d\mathbf x).
\]
Differentiation of the Gaussian convolution gives
\[
\nabla\mathcal V_\eta(\mathbf y)
=
\frac1{\eta^2}
\left(
\mathbf y-\mathbf m_\eta(\mathbf y)
\right).
\]
Moreover,
\[
D\mathbf m_\eta(\mathbf y)
=
\frac1{\eta^2}
\operatorname{Cov}_{\pi_{\eta,\mathbf y}}(\mathbf x).
\]
Therefore
\[
\nabla^2\mathcal V_\eta(\mathbf y)
=
\frac1{\eta^2}\mathbf I_d
-
\frac1{\eta^4}
\operatorname{Cov}_{\pi_{\eta,\mathbf y}}(\mathbf x).
\]
Since covariance matrices are positive semidefinite,
\[
\nabla^2\mathcal V_\eta(\mathbf y)
\preceq
\frac1{\eta^2}\mathbf I_d.
\]
This proves
\eqref{eq:smoothing-two-sided-hessian}.

Finally, the change-of-variables formula gives
\[
p_{\eta,\mathrm{nor}}(\mathbf w)
=
\alpha_\eta^{-d/2}
p_\eta
\left(
\frac{\mathbf w}{\sqrt{\alpha_\eta}}
\right).
\]
Using
\eqref{eq:normalized-residual-potential},
this becomes
\[
p_{\eta,\mathrm{nor}}(\mathbf w)
=
\mathcal Z_{\eta,\mathrm{nor}}^{-1}
\exp\left(
-\frac12\|\mathbf w\|^2
-
Q_\eta(\mathbf w)
\right).
\]
Furthermore,
\[
\nabla^2Q_\eta(\mathbf w)
=
\frac1{\alpha_\eta}
\nabla^2\mathcal V_\eta
\left(
\frac{\mathbf w}{\sqrt{\alpha_\eta}}
\right)
-
\mathbf I_d.
\]
The lower bound in
\eqref{eq:smoothing-two-sided-hessian}
gives
\[
\nabla^2Q_\eta(\mathbf w)
\succeq
\mathbf O_d.
\]
The upper bound gives
\[
\begin{aligned}
\nabla^2Q_\eta(\mathbf w)
\preceq
\left(
\frac1{\alpha_\eta\eta^2}
-
1
\right)
\mathbf I_d
=
\left(
\frac{1+\eta^2}{\eta^2}
-
1
\right)
\mathbf I_d
=
\frac1{\eta^2}
\mathbf I_d.
\end{aligned}
\]
This proves
\eqref{eq:normalized-residual-hessian}.
\end{proof}

\begin{lemma}[Subgaussian linear marginals on affine supports]
\label{lem:slc-subgaussian-affine}
Let $H
=
\mathbf b_0+L
\subseteq
\mathbb R^d$ 
be a nonempty affine subspace, and let \(\rvec u\) have law
\[
\mu(\mathrm d\mathbf u)
=
\mathcal Z_H^{-1}
e^{-\mathcal V_H(\mathbf u)}
\,\sigma_H(\mathrm d\mathbf u),
\]
where \(\mathcal V_H\) is closed and proper and
\[
\mathbf u
\longmapsto
\mathcal V_H(\mathbf u)
-
\frac12\|\mathbf u\|^2
\]
is convex on \(H\).  Let $\mathcal P_L$ 
denote the orthogonal projection onto the direction subspace $L
=
H-H$.  
Then, for every
\(\mathbf v\in\mathbb R^d\)
and
\(\lambda\in\mathbb R\),
\begin{equation}
\mathbb E
\exp\left(
\lambda
\left\langle
\rvec u-\mathbb E\rvec u,
\mathbf v
\right\rangle
\right)
\le
\exp\left(
\frac{
\lambda^2
\|\mathcal P_L\mathbf v\|^2
}{
2
}
\right).
\label{eq:affine-subgaussian}
\end{equation}
In particular, if
\(\|\mathbf v\|\le1\),
the right-hand side is at most
\(e^{\lambda^2/2}\).  If
\(\dim H=0\),
the centered linear functional vanishes identically.
\end{lemma}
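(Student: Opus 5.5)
The plan is to reduce to the full-dimensional smooth case on the direction subspace and then invoke Lemma~\ref{lem:bl-herbst}. If \(\dim H=0\), then \(\rvec u\) is almost surely constant, so the centered functional vanishes and there is nothing to prove. Otherwise, let \(r:=\dim L\ge1\) and choose an isometry \(\mathbf U\in\mathbb R^{d\times r}\) with \(\mathbf U\mathbf U^\top=\mathcal P_L\). Writing \(\rvec u=\mathbf b_0+\mathbf U\rvec z\), the coordinate \(\rvec z\) has density proportional to \(e^{-W(\mathbf z)}\) on \(\mathbb R^r\), where \(W(\mathbf z):=\mathcal V_H(\mathbf b_0+\mathbf U\mathbf z)\). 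Since \(\|\mathbf b_0+\mathbf U\mathbf z\|^2\) equals \(\|\mathbf z\|^2\) plus a function affine in \(\mathbf z\), the map \(W-\tfrac12\|\mathbf z\|^2\) is convex. The linear functional becomes
\[
\langle\rvec u-\mathbb E\rvec u,\mathbf v\rangle=\langle\rvec z-\mathbb E\rvec z,\mathbf U^\top\mathbf v\rangle,
\qquad
\|\mathbf U^\top\mathbf v\|=\|\mathcal P_L\mathbf v\|.
\]
It therefore suffices to prove, for a \(1\)-strongly log-concave law on \(\mathbb R^r\) possibly with a convex indicator part, that
\[
\mathbb E\exp\bigl(\lambda\langle\rvec z-\mathbb E\rvec z,\mathbf w\rangle\bigr)\le \exp\bigl(\lambda^2\|\mathbf w\|^2/2\bigr).
\]

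Lemma~\ref{lem:bl-herbst} requires a positive \(C^2\) density with \(\nabla^2 V\succeq\alpha\mathbf I\), but \(W\) may be infinite or nonsmooth. To handle this, I will smooth with Lemma~\ref{lem:gaussian-smoothing-slc}, applied in \(\mathbb R^r\) with \(H=\mathbb R^r\). The variable \(\rvec z_\eta:=\rvec z+\eta\breve{\boldsymbol\zeta}\) has a positive \(C^\infty\) density whose potential satisfies \(\nabla^2\mathcal V_\eta\succeq(1+\eta^2)^{-1}\mathbf I_r\). Applying Lemma~\ref{lem:bl-herbst} with \(\alpha_\eta=1/(1+\eta^2)\) gives
\[
\mathbb E\exp\bigl(\lambda\langle\rvec z_\eta-\mathbb E\rvec z_\eta,\mathbf w\rangle\bigr)\le\exp\bigl((1+\eta^2)\lambda^2\|\mathbf w\|^2/2\bigr).
\]
Because \(\mathbb E\rvec z_\eta=\mathbb E\rvec z\) and \(\breve{\boldsymbol\zeta}\) is independent of \(\rvec z\), the left side factorizes as
\[
\mathbb E\exp\bigl(\lambda\langle\rvec z-\mathbb E\rvec z,\mathbf w\rangle\bigr)\cdot e^{\eta^2\lambda^2\|\mathbf w\|^2/2}.
\]
Dividing by the Gaussian factor leaves the bound \(\exp(\lambda^2\|\mathbf w\|^2/2)\) exactly, for every \(\eta>0\), so no limit is even needed. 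The one point to check is finiteness of the left side, which follows because \(\rvec z\) has Gaussian tails, since its density is dominated by \(e^{-\|\mathbf z\|^2/2}\) times an exponential-affine factor.

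I expect the main obstacle to be bookkeeping rather than analysis. The affine reparametrization must correctly preserve the hypotheses: closedness, properness, and the convexity of \(W-\tfrac12\|\cdot\|^2\), where the cross term from \(\mathbf b_0\) is only affine. The finiteness and normalization facts needed to invoke Lemma~\ref{lem:gaussian-smoothing-slc} on \(\mathbb R^r\) must also be verified. The final assertion, for \(\|\mathbf v\|\le1\), then follows from \(\|\mathcal P_L\mathbf v\|\le\|\mathbf v\|\).
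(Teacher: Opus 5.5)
Your proposal is correct and follows the paper's proof essentially step for step. Both arguments use the isometric reparametrization $\rvec u=\mathbf b_0+\mathcal U\rvec z$ with $\mathbf w=\mathcal U^\ast\mathbf v$, then Gaussian smoothing via Lemma~\ref{lem:gaussian-smoothing-slc} to obtain a smooth $(1+\eta^2)^{-1}$-strongly log-concave law, then Lemma~\ref{lem:bl-herbst}, and finally exact cancellation of the independent Gaussian factor with no limit needed. Your extra checks only make explicit what the paper leaves implicit: the cross term from $\mathbf b_0$ is affine, so $W-\tfrac12\|\cdot\|^2$ stays convex, and the moment generating function is finite, which in fact follows automatically from the factorization because the smoothed side is finite.
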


\begin{proof}[Proof of Lemma~\ref{lem:slc-subgaussian-affine}]
Let $r
:=
\dim L$,  
and choose a linear isometry $\mathcal U:
\mathbb R^r
\to
L$. 
If \(r=0\), then \(\rvec u\) is deterministic and the conclusion is
immediate.

Write $\rvec u
=
\mathbf b_0
+
\mathcal U\rvec z$,  
where \(\rvec z\) is \(1\)-strongly log-concave on
\(\mathbb R^r\).  Set $\mathbf w
:=
\mathcal U^\ast\mathbf v$.  
Then
\[
\left\langle
\rvec u-\mathbb E\rvec u,
\mathbf v
\right\rangle
=
\left\langle
\rvec z-\mathbb E\rvec z,
\mathbf w
\right\rangle.
\]

Let $\breve{\boldsymbol\zeta}
\sim
N(\mathbf 0_r,\mathbf I_r)$ 
be independent of \(\rvec z\), and define
\[
\rvec z_\tau
:=
\rvec z
+
\tau
\breve{\boldsymbol\zeta},
\qquad
\tau>0.
\]
By
Lemma~\ref{lem:gaussian-smoothing-slc},
the law of \(\rvec z_\tau\) is smooth and
\[
(1+\tau^2)^{-1}
\text{-strongly log-concave}.
\]
Applying
Lemma~\ref{lem:bl-herbst}
gives
\[
\mathbb E
\exp\left(
\lambda
\left\langle
\rvec z_\tau-\mathbb E\rvec z_\tau,
\mathbf w
\right\rangle
\right)
\le
\exp\left(
\frac{
\lambda^2(1+\tau^2)
\|\mathbf w\|^2
}{
2
}
\right).
\]

Since $\mathbb E\rvec z_\tau
=
\mathbb E\rvec z$ 
and the two summands are independent,
\[
\begin{aligned}
\mathbb E
\exp\left(
\lambda
\left\langle
\rvec z_\tau-\mathbb E\rvec z_\tau,
\mathbf w
\right\rangle
\right)
=
\mathbb E
\exp\left(
\lambda
\left\langle
\rvec z-\mathbb E\rvec z,
\mathbf w
\right\rangle
\right)
\exp\left(
\frac{
\lambda^2\tau^2
\|\mathbf w\|^2
}{
2
}
\right).
\end{aligned}
\]
Cancelling the Gaussian factor gives
\[
\mathbb E
\exp\left(
\lambda
\left\langle
\rvec z-\mathbb E\rvec z,
\mathbf w
\right\rangle
\right)
\le
\exp\left(
\frac{
\lambda^2
\|\mathbf w\|^2
}{
2
}
\right).
\]
Finally, $\|\mathbf w\|
=
\|\mathcal U^\ast\mathbf v\|
=
\|\mathcal P_L\mathbf v\|$. 
This proves
\eqref{eq:affine-subgaussian}.
\end{proof}

\begin{lemma}[Variational characterization of conditional-mean energy]
\label{lem:conditional-mean-variational}
Let \(\mathsf Z\) be a Polish space, let
\(\mathsf W\) be a Borel random element taking values in
\(\mathsf Z\), and let $\rvec h
\in
L^2(\Omega;\mathbb R^d)$.  
Here $\mathbb E[
\rvec h\mid\mathsf W]$ 
denotes $\mathbb E[
\rvec h\mid\sigma(\mathsf W)].$ 
Then
\begin{equation}
\begin{aligned}
\mathbb E
\left\|
\mathbb E[
\rvec h\mid\mathsf W]
\right\|_2^2
=
\sup_{\varphi\in C_b(\mathsf Z;\mathbb R^d)}
\Biggl\{
2\mathbb E
\left\langle
\rvec h,
\varphi(\mathsf W)
\right\rangle
-
\mathbb E
\left\|
\varphi(\mathsf W)
\right\|_2^2
\Biggr\}.
\end{aligned}
\label{eq:conditional-mean-variational}
\end{equation}
\end{lemma}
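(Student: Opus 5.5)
The identity is the Hilbert-space fact that the squared norm of an orthogonal projection equals the supremum of $2\langle h,\varphi\rangle-\|\varphi\|^2$ over the projected subspace. The only subtlety is the restriction to bounded continuous test functions, which we handle by density. Write $\rvec m:=\mathbb E[\rvec h\mid\mathsf W]$. By Doob--Dynkin there is a Borel $g:\mathsf Z\to\mathbb R^d$ with $\rvec m=g(\mathsf W)$ almost surely, and $g\in L^2(\mathsf Z,\mathbb P_{\mathsf W};\mathbb R^d)$ because $\mathbb E\|\rvec m\|^2\le\mathbb E\|\rvec h\|^2<\infty$ by conditional Jensen.

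\emph{Upper bound.} Fix $\varphi\in C_b(\mathsf Z;\mathbb R^d)$. Then $\varphi(\mathsf W)$ is bounded and $\sigma(\mathsf W)$-measurable, so the tower property gives
\[
\mathbb E\langle\rvec h,\varphi(\mathsf W)\rangle=\mathbb E\langle\rvec m,\varphi(\mathsf W)\rangle .
\]
Completing the square yields
\[
2\mathbb E\langle\rvec h,\varphi(\mathsf W)\rangle-\mathbb E\|\varphi(\mathsf W)\|^2
=\mathbb E\|\rvec m\|^2-\mathbb E\|\rvec m-\varphi(\mathsf W)\|^2
\le\mathbb E\|\rvec m\|^2 .
\]
Hence the supremum is at most the left-hand side of \eqref{eq:conditional-mean-variational}.

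\emph{Lower bound.} By the same identity, it suffices to find $\varphi_n\in C_b(\mathsf Z;\mathbb R^d)$ with $\mathbb E\|g(\mathsf W)-\varphi_n(\mathsf W)\|^2\to0$. This is a density statement: $C_b(\mathsf Z)$ is dense in $L^2(\mathsf Z,\nu)$ for every finite Borel measure $\nu$ on a metric space, applied coordinatewise with $\nu=\mathbb P_{\mathsf W}$.

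To prove the density, note first that simple functions are dense in $L^2(\nu)$, so it suffices to approximate indicators $\mathbf 1_B$ of Borel sets $B$. Every finite Borel measure on a metric space is regular, so there exist a closed set $F\subseteq B$ and an open set $U\supseteq B$ with $\nu(U\setminus F)<\delta$. The Urysohn-type function
\[
\psi(z)=\frac{\operatorname{dist}(z,U^c)}{\operatorname{dist}(z,U^c)+\operatorname{dist}(z,F)}
\]
is continuous, takes values in $[0,1]$, equals $1$ on $F$ and $0$ off $U$. Therefore $\|\psi-\mathbf 1_B\|_{L^2(\nu)}^2\le\nu(U\setminus F)<\delta$. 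If $U^c$ is empty, take $\psi\equiv1$; if $F$ is empty, take $\psi\equiv0$.

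This density step is the only point where the topology of $\mathsf Z$ enters, and it is the main (mild) obstacle. Only metrizability and the regularity of finite Borel measures on metric spaces are needed; Polishness is more than enough.

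Combining the two bounds, the supremum equals $\mathbb E\|\rvec m\|^2$, which proves \eqref{eq:conditional-mean-variational}. It is also attained approximately along $\varphi_n$.
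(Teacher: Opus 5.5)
Your proposal is correct and follows the paper's proof essentially step for step. Both use Doob--Dynkin to write the conditional mean as $g(\mathsf W)$, complete the square for the upper bound, and use density of $C_b(\mathsf Z;\mathbb R^d)$ in $L^2(\nu;\mathbb R^d)$ for the reverse inequality. The only difference is that you prove the density step explicitly via closed/open regularity and Urysohn-type functions, correctly noting that metrizability suffices, whereas the paper simply invokes density for Polish spaces.
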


\begin{proof}[Proof of Lemma~\ref{lem:conditional-mean-variational}] 
Let $\nu
:=
\Law(\mathsf W)$.  
By the Doob--Dynkin lemma, there exists $\mathbf m
\in
L^2(\nu;\mathbb R^d)$
such that
\[
\mathbb E[
\rvec h\mid\mathsf W]
=
\mathbf m(\mathsf W)
\qquad
\text{almost surely}.
\]

For every $\varphi
\in
C_b(\mathsf Z;\mathbb R^d)$, 
the random vector
\(\varphi(\mathsf W)\)
is \(\sigma(\mathsf W)\)-measurable.  Hence
\[
\mathbb E
\left\langle
\rvec h,
\varphi(\mathsf W)
\right\rangle
=
\mathbb E
\left\langle
\mathbf m(\mathsf W),
\varphi(\mathsf W)
\right\rangle.
\]
Therefore
\[
\begin{aligned}
&
2\mathbb E
\left\langle
\rvec h,
\varphi(\mathsf W)
\right\rangle
-
\mathbb E
\left\|
\varphi(\mathsf W)
\right\|_2^2
\\
&\quad=
\mathbb E
\left\|
\mathbf m(\mathsf W)
\right\|_2^2
-
\mathbb E
\left\|
\mathbf m(\mathsf W)
-
\varphi(\mathsf W)
\right\|_2^2
\\
&\quad\le
\mathbb E
\left\|
\mathbf m(\mathsf W)
\right\|_2^2.
\end{aligned}
\]
This proves the upper bound in
\eqref{eq:conditional-mean-variational}.

Because \(\mathsf Z\) is Polish, every Borel probability measure on
\(\mathsf Z\) is regular, and $C_b(\mathsf Z;\mathbb R^d)$ 
is dense in $L^2(\nu;\mathbb R^d)$.  
Hence there exists a sequence $\varphi_n
\in
C_b(\mathsf Z;\mathbb R^d)$ 
such that
\[
\varphi_n
\to
\mathbf m
\qquad
\text{in }L^2(\nu;\mathbb R^d).
\]
Consequently,
\[
\mathbb E
\left\|
\mathbf m(\mathsf W)
-
\varphi_n(\mathsf W)
\right\|_2^2
\to0.
\]
Substituting \(\varphi_n\) into the preceding identity and passing to the
limit proves the reverse inequality.
\end{proof}

\begin{lemma}[Lower semicontinuity of posterior energy]
\label{lem:posterior-lsc}
Let $\rvec u_n$, $\rvec u$
be square-integrable random vectors in \(\mathbb R^d\), and let $\mathsf W_n$, $\mathsf W$
be random elements in a Polish space \(\mathsf Z\).  Suppose that
\[
\rvec u_n
\longrightarrow
\rvec u
\quad
\text{in}\quad L^2, \quad \text{and}\quad \mathsf W_n
\longrightarrow
\mathsf W
\quad
\text{almost surely}.
\]
Then
\[
\mathbb E
\left\|
\mathbb E[
\rvec u\mid\mathsf W]
-
\mathbb E\rvec u
\right\|^2
\le
\liminf_{n\to\infty}
\mathbb E
\left\|
\mathbb E[
\rvec u_n\mid\mathsf W_n]
-
\mathbb E\rvec u_n
\right\|^2.
\]
\end{lemma}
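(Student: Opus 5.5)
The plan is to reduce the statement to the variational characterization in Lemma~\ref{lem:conditional-mean-variational}, which writes posterior energy as a supremum of functionals that are continuous under the assumed convergences. A supremum of such functionals is lower semicontinuous.

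First, center the vectors. Set $\rvec h_n:=\rvec u_n-\mathbb E\rvec u_n$ and $\rvec h:=\rvec u-\mathbb E\rvec u$. Since $\mathbb E\rvec u_n$ is deterministic, $\mathbb E[\rvec u_n\mid\mathsf W_n]-\mathbb E\rvec u_n=\mathbb E[\rvec h_n\mid\mathsf W_n]$, and the same holds for the limit. Convergence $\rvec u_n\to\rvec u$ in $L^2$ implies $\mathbb E\rvec u_n\to\mathbb E\rvec u$, so $\rvec h_n\to\rvec h$ in $L^2$. It therefore suffices to show that
\[
\mathbb E\bigl\|\mathbb E[\rvec h\mid\mathsf W]\bigr\|_2^2\le\liminf_{n\to\infty}\mathbb E\bigl\|\mathbb E[\rvec h_n\mid\mathsf W_n]\bigr\|_2^2 .
\]

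Next, fix an arbitrary $\varphi\in C_b(\mathsf Z;\mathbb R^d)$ and put
\[
J_n(\varphi):=2\mathbb E\langle\rvec h_n,\varphi(\mathsf W_n)\rangle-\mathbb E\|\varphi(\mathsf W_n)\|_2^2,
\qquad
J(\varphi):=2\mathbb E\langle\rvec h,\varphi(\mathsf W)\rangle-\mathbb E\|\varphi(\mathsf W)\|_2^2 .
\]
We show $J_n(\varphi)\to J(\varphi)$. Continuity of $\varphi$ and $\mathsf W_n\to\mathsf W$ almost surely give $\varphi(\mathsf W_n)\to\varphi(\mathsf W)$ almost surely. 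These variables are bounded by $\|\varphi\|_\infty$, so dominated convergence gives $\mathbb E\|\varphi(\mathsf W_n)\|^2\to\mathbb E\|\varphi(\mathsf W)\|^2$ and also $\varphi(\mathsf W_n)\to\varphi(\mathsf W)$ in $L^2$. For the cross term, split
\[
\langle\rvec h_n,\varphi(\mathsf W_n)\rangle-\langle\rvec h,\varphi(\mathsf W)\rangle
=\langle\rvec h_n-\rvec h,\varphi(\mathsf W_n)\rangle+\langle\rvec h,\varphi(\mathsf W_n)-\varphi(\mathsf W)\rangle .
\]
The first piece has expectation at most $\|\varphi\|_\infty\,\|\rvec h_n-\rvec h\|_{L^2}\to0$. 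The second piece tends to zero by Cauchy--Schwarz together with the $L^2$ convergence just established.

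Finally, Lemma~\ref{lem:conditional-mean-variational} applied to $(\rvec h_n,\mathsf W_n)$ gives $J_n(\varphi)\le\mathbb E\|\mathbb E[\rvec h_n\mid\mathsf W_n]\|^2$ for every $n$. Passing to the limit yields $J(\varphi)\le\liminf_n\mathbb E\|\mathbb E[\rvec h_n\mid\mathsf W_n]\|^2$. Since $\varphi$ was arbitrary, taking the supremum over $\varphi$ and applying Lemma~\ref{lem:conditional-mean-variational} to $(\rvec h,\mathsf W)$ identifies the supremum with the left-hand side, which proves the statement.

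The only delicate point is the cross term $\langle\rvec h,\varphi(\mathsf W_n)-\varphi(\mathsf W)\rangle$. Almost sure convergence of $\mathsf W_n$ alone does not give convergence of conditional expectations, and that is exactly why we work with continuous test functions and the bounded convergence of $\varphi(\mathsf W_n)$ rather than with the conditional expectations directly.
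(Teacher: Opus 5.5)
Your proposal is correct and follows the paper's proof essentially step for step. The common steps are: centering $\rvec h_n,\rvec h$; the functionals $J_n(\varphi)$ and $J(\varphi)$ over $\varphi\in C_b(\mathsf Z;\mathbb R^d)$, shown to converge via bounded convergence and the same two-term split of the cross term; and Lemma~\ref{lem:conditional-mean-variational}, used for the inequality at each $n$ and for the identity in the limit.
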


\begin{proof}[Proof of Lemma~\ref{lem:posterior-lsc}]
Define
\[
\rvec h_n
:=
\rvec u_n
-
\mathbb E\rvec u_n,
\qquad
\rvec h
:=
\rvec u
-
\mathbb E\rvec u.
\]
Since $\rvec u_n
\to
\rvec u$ in $L^2$
we have
\[
\begin{aligned}
\left\|
\rvec h_n-\rvec h
\right\|_{L^2}
\le
\left\|
\rvec u_n-\rvec u
\right\|_{L^2}
+
\left\|
\mathbb E(
\rvec u_n-\rvec u)
\right\|_2
\le
2
\left\|
\rvec u_n-\rvec u
\right\|_{L^2}
\to0.
\end{aligned}
\]
Thus $\rvec h_n
\to
\rvec h$ in $L^2$.
For $\varphi
\in
C_b(\mathsf Z;\mathbb R^d)$,  
define
\[
\begin{aligned}
J_n(\varphi)
:=
&
2\mathbb E
\left\langle
\rvec h_n,
\varphi(\mathsf W_n)
\right\rangle
-
\mathbb E
\left\|
\varphi(\mathsf W_n)
\right\|_2^2,
\\
J(\varphi)
:=
&
2\mathbb E
\left\langle
\rvec h,
\varphi(\mathsf W)
\right\rangle
-
\mathbb E
\left\|
\varphi(\mathsf W)
\right\|_2^2.
\end{aligned}
\] 
Since $\mathsf W_n
\to
\mathsf W$ almost surely 
and \(\varphi\) is continuous,
\[
\varphi(\mathsf W_n)
\to
\varphi(\mathsf W)
\qquad
\text{almost surely}.
\]
Since \(\varphi\) is bounded, dominated convergence gives
\[
\varphi(\mathsf W_n)
\to
\varphi(\mathsf W)
\qquad
\text{in }L^2.
\]
It follows that
\[
\mathbb E
\left\|
\varphi(\mathsf W_n)
\right\|_2^2
\to
\mathbb E
\left\|
\varphi(\mathsf W)
\right\|_2^2.
\]
Moreover,
\[
\begin{aligned}
&
\left|
\mathbb E
\left\langle
\rvec h_n,
\varphi(\mathsf W_n)
\right\rangle
-
\mathbb E
\left\langle
\rvec h,
\varphi(\mathsf W)
\right\rangle
\right|
\\
&\le
\left\|
\rvec h_n-\rvec h
\right\|_{L^2}
\left\|
\varphi(\mathsf W_n)
\right\|_{L^2}+
\left\|
\rvec h
\right\|_{L^2}
\left\|
\varphi(\mathsf W_n)
-
\varphi(\mathsf W)
\right\|_{L^2}\longrightarrow
0.
\end{aligned}
\]
Therefore $J_n(\varphi)
\to
J(\varphi).$ 
By
Lemma~\ref{lem:conditional-mean-variational},
\[
\mathbb E
\left\|
\mathbb E[
\rvec h_n\mid\mathsf W_n]
\right\|_2^2
=
\sup_{\psi\in C_b(\mathsf Z;\mathbb R^d)}
J_n(\psi).
\]
Hence, for every fixed
\(\varphi\in C_b(\mathsf Z;\mathbb R^d)\),
\[
\mathbb E
\left\|
\mathbb E[
\rvec h_n\mid\mathsf W_n]
\right\|_2^2
\ge
J_n(\varphi).
\]
Taking the lower limit gives
\[
\liminf_{n\to\infty}
\mathbb E
\left\|
\mathbb E[
\rvec h_n\mid\mathsf W_n]
\right\|_2^2
\ge
J(\varphi).
\]
Taking the supremum over
\(\varphi\in C_b(\mathsf Z;\mathbb R^d)\)
and applying the variational identity again yields
\[
\liminf_{n\to\infty}
\mathbb E
\left\|
\mathbb E[
\rvec h_n\mid\mathsf W_n]
\right\|_2^2
\ge
\mathbb E
\left\|
\mathbb E[
\rvec h\mid\mathsf W]
\right\|_2^2.
\]
Finally,
\[
\mathbb E[
\rvec h_n\mid\mathsf W_n]
=
\mathbb E[
\rvec u_n\mid\mathsf W_n]
-
\mathbb E\rvec u_n,
\]
and
\[
\mathbb E[
\rvec h\mid\mathsf W]
=
\mathbb E[
\rvec u\mid\mathsf W]
-
\mathbb E\rvec u.
\]
This proves the lemma.
\end{proof}

\section{External Analytic and Probabilistic Tools}

We use the following standard external results in the forms stated below.

\begin{enumerate}[label=(\roman*)]

\item
\textbf{Prékopa's theorem.~\citep{prekopa1973logarithmic}}
Let $F:
\mathbb R^p\times\mathbb R^q
\to
[0,\infty)$ 
be jointly log-concave and assume that $0
<
\int_{\mathbb R^q}
F(\mathbf x,\mathbf y)
\,\mathrm d\mathbf y
<
\infty$
for every \(\mathbf x\in\mathbb R^p\).  Then the marginal function $\mathbf x
\to
\int_{\mathbb R^q}
F(\mathbf x,\mathbf y)
\,\mathrm d\mathbf y$ 
is log-concave.

\item
\textbf{Brascamp--Lieb variance inequality.~
\citep{brascamp1976extensions}}
Let \(\mu\) have density $\mu(\mathrm d\mathbf x)
=
\mathcal Z^{-1}
e^{-V(\mathbf x)}
\,\mathrm d\mathbf x$ 
on \(\mathbb R^d\), where
\(V\in C^2(\mathbb R^d)\) and $\nabla^2V(\mathbf x)
\succeq
\alpha\mathbf I_d$ 
for some \(\alpha>0\).  Then, for every
\(\mathbf w\in\mathbb R^d\),
\[
\operatorname{Var}_\mu
\left(
\langle\rvec x,\mathbf w\rangle
\right)
\le
\mathbb E_\mu
\left[
\left\langle
\mathbf w,
\bigl(\nabla^2V(\rvec x)\bigr)^{-1}
\mathbf w
\right\rangle
\right]
\le
\frac{\|\mathbf w\|_2^2}{\alpha},
\qquad
\rvec x\sim\mu.
\]

\item
\textbf{Talagrand \(T_2\) inequality for strongly log-concave
measures.~\citep{otto2000generalization}}
Let \(\mu\) have density $\mu(\mathrm d\mathbf x)
=
\mathcal Z^{-1}
e^{-V(\mathbf x)}
\,\mathrm d\mathbf x$
on \(\mathbb R^d\), where $\nabla^2V(\mathbf x)
\succeq
\alpha\mathbf I_d$ 
for some \(\alpha>0\).  Then, for every probability measure
\(\nu\ll\mu\), $W_2^2(\nu,\mu)
\le
\frac{2}{\alpha}
\KL(\nu\|\mu)$.  
In particular, for a \(1\)-strongly log-concave measure, $W_2^2(\nu,\mu)
\le
2\KL(\nu\|\mu)$.  

\item
\textbf{Gaussian smallest singular value.~
\citep[ Exercise~7.13 (b)]{Vershynin_2026}}
Let $\rmat G
\in
\mathbb R^{m\times n}$, $1\le m\le n$,  
have independent \(N(0,1)\) entries.  Let $s_{\min}(\rmat G)
:=
s_m(\rmat G)$ 
denote its smallest nonzero singular value.  There exists a universal
constant \(c_{\mathrm{sg}}>0\) such that, for every \(t\ge0\),
\[
\mathbb P
\left(
s_{\min}(\rmat G)
\le
\sqrt n-\sqrt m-t
\right)
\le
2e^{-c_{\mathrm{sg}}t^2}.
\]
This is the wide-matrix form of the cited result, obtained by applying
the corresponding tall-matrix statement to
\(\rmat G^\top\).

\item
\textbf{Gaussian norm concentration.~
\citep[Theorem~3.1.1]{Vershynin_2026}}
Let $\rvec g
\sim
N(\mathbf 0_d,\mathbf I_d)$.  
There exists a universal constant
\(c_{\mathrm{norm}}>0\)
such that
\[
\mathbb P
\left(
\|\rvec g\|_2
\ge
2\sqrt d
\right)
\le
e^{-c_{\mathrm{norm}}d}.
\]

\end{enumerate}

\section{Smallest Singular Value}
\label{app:sv}

The following theorem is the spectral input used in
Lemma~\ref{lem:negative-optimum-event}.  Recall that
\(\gamma_d^{\mathrm{tr}}\)
denotes the standard Gaussian law conditioned on the Euclidean ball of
radius \(2\sqrt d\).

\begin{theorem}[Spectral event for the truncated Gaussian design]
\label{thm:sv-truncation}
There exists a universal integer
\(d_0\ge1\)
such that the following holds for every $d\ge d_0$ 
and every integer \(k\) satisfying $1
\le
k
\le
\frac d{16}$.  

Let $\rvec b_1,\ldots,\rvec b_k
\overset{\mathrm{i.i.d.}}{\sim}
\gamma_d^{\mathrm{tr}}$, $\rvec a_i
:=
\frac{\rvec b_i}{\sqrt d}$,  
and define $\rmat A
:=
\begin{pmatrix}
\rvec a_1,
\ldots,
\rvec a_k
\end{pmatrix}^\top
\in
\mathbb R^{k\times d}$.  
Then
\begin{equation}
\mathbb P_\Xi
\left(
\rmat A\rmat A^\top
\succeq
\frac14\mathbf I_k
\right)
\ge
\frac78.
\label{eq:sv-truncation-main}
\end{equation}
Consequently, in the notation of
Lemma~\ref{lem:negative-optimum-event},
one may take $c_{\mathrm{sv}}
=
\frac14.$ 
\end{theorem}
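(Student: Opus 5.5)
The plan is to couple the truncated design with an untruncated Gaussian design and then use the wide-matrix smallest singular value bound from the external tools. Let $\rmat G\in\mathbb R^{k\times d}$ have i.i.d. $N(0,1)$ entries with rows $\rvec g_1,\ldots,\rvec g_k$, and let $\mathcal T:=\{\|\rvec g_i\|_2\le2\sqrt d\ \forall i\}$. Conditionally on $\mathcal T$, the rows are i.i.d. $\gamma_d^{\mathrm{tr}}$, since the conditioning event is a product event. Hence $\Law(\sqrt d\,\rmat A)=\Law(\rmat G\mid\mathcal T)$, and for the event $E:=\{\rmat G\rmat G^\top\succeq \tfrac d4\mathbf I_k\}$ we get
\[
\mathbb P_\Xi\Bigl(\rmat A\rmat A^\top\succeq\tfrac14\mathbf I_k\Bigr)
=\mathbb P(E\mid\mathcal T)\ge\mathbb P(E\cap\mathcal T)\ge 1-\mathbb P(E^c)-\mathbb P(\mathcal T^c).
\]
This avoids dividing by $\mathbb P(\mathcal T)$.

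Next, bound the two failure probabilities. By Gaussian norm concentration and a union bound, $\mathbb P(\mathcal T^c)\le k e^{-c_{\mathrm{norm}}d}\le \tfrac d{16}e^{-c_{\mathrm{norm}}d}$, which is at most $1/16$ once $d\ge d_0$ is large. For $E$, note that $\rmat G\rmat G^\top\succeq s_{\min}(\rmat G)^2\mathbf I_k$, so it suffices to have $s_{\min}(\rmat G)\ge\sqrt d/2$. With $k\le d/16$ we have $\sqrt d-\sqrt k\ge\tfrac34\sqrt d$. Taking $t=\sqrt d/4$ in the wide-matrix singular value bound gives
\[
\mathbb P\bigl(s_{\min}(\rmat G)\le\sqrt d/2\bigr)\le 2e^{-c_{\mathrm{sg}}d/16},
\]
which is also at most $1/16$ for large $d$. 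Combining the two bounds yields probability at least $7/8$, with $d_0$ depending only on $c_{\mathrm{norm}},c_{\mathrm{sg}}$.

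The only delicate step is the coupling identity: the conditional law of the whole matrix given $\mathcal T$ must be the $k$-fold product of $\gamma_d^{\mathrm{tr}}$. This follows from independence of the rows together with the product form of $\mathcal T$, by checking the law on rectangles. After that, everything is the choice of constants, which I expect to be routine. The conclusion $c_{\mathrm{sv}}=1/4$ then feeds directly into Lemma~\ref{lem:negative-optimum-event}.
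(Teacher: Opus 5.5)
Your proposal is correct and follows the paper's proof essentially step for step. You couple $\sqrt d\,\rmat A$ with an untruncated Gaussian matrix conditioned on the product row-truncation event, then use the wide-matrix smallest-singular-value bound with $t=\sqrt d/4$ and Gaussian norm concentration with a union bound to make each failure probability at most $1/16$. The only difference is cosmetic: you use $\mathbb P(E\mid\mathcal T)\ge\mathbb P(E\cap\mathcal T)$ and land exactly on $7/8$, whereas the paper bounds $\mathbb P(E^c\mid\mathcal R)\le\mathbb P(E^c)/\mathbb P(\mathcal R)\le 1/15$ and obtains $14/15$.
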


\begin{proof}[Proof of Theorem~\ref{thm:sv-truncation}]
Let $\rvec g_1,\ldots,\rvec g_k
\overset{\mathrm{i.i.d.}}{\sim}
N(\mathbf 0_d,\mathbf I_d)$ 
be unconditioned standard Gaussian random vectors, and define
\[
\rmat G
:=
\begin{pmatrix}
\rvec g_1,
\ldots,
\rvec g_k
\end{pmatrix}^\top
\in
\mathbb R^{k\times d},
\qquad
\rmat A^{\mathrm G}
:=
\frac1{\sqrt d}
\rmat G.
\]
Since \(k\le d\), the smallest nonzero singular value of
\(\rmat G\) is $s_{\min}(\rmat G)
=
s_k(\rmat G)$.  
By the Gaussian smallest-singular-value estimate stated above, for every
\(t\ge0\),
\begin{equation}
\mathbb P
\left(
s_k(\rmat G)
\le
\sqrt d-\sqrt k-t
\right)
\le
2e^{-c_{\mathrm{sg}}t^2}.
\label{eq:sv-standard}
\end{equation}
Set $t
:=
\frac{\sqrt d}{4}$.  
Since $k
\le
\frac d{16}$,  
we have $\sqrt k
\le
\frac{\sqrt d}{4}$,  
and therefore
\[
\sqrt d-\sqrt k-t
\ge
\sqrt d
-
\frac{\sqrt d}{4}
-
\frac{\sqrt d}{4}
=
\frac{\sqrt d}{2}.
\]
It follows from
\eqref{eq:sv-standard}
that
\begin{equation}
\mathbb P
\left(
s_k(\rmat G)
<
\frac{\sqrt d}{2}
\right)
\le
2
\exp\left(
-\frac{c_{\mathrm{sg}}d}{16}
\right).
\label{eq:unconditioned-spectral-failure}
\end{equation}

On the complementary event,
\[
\lambda_{\min}
\left(
\rmat G\rmat G^\top
\right)
=
s_k(\rmat G)^2
\ge
\frac d4.
\]
Hence
\[
\rmat A^{\mathrm G}
\bigl(
\rmat A^{\mathrm G}
\bigr)^\top
=
\frac1d
\rmat G\rmat G^\top
\succeq
\frac14\mathbf I_k.
\]

Define the row-truncation event
\[
\mathcal R
:=
\bigcap_{i=1}^k
\left\{
\|\rvec g_i\|_2
\le
2\sqrt d
\right\}.
\]
By Gaussian norm concentration and the union bound,
\begin{align}
\mathbb P(\mathcal R^c)
\le
\sum_{i=1}^k
\mathbb P
\left(
\|\rvec g_i\|_2
>
2\sqrt d
\right)
\nonumber
\le
k
e^{-c_{\mathrm{norm}}d}
\nonumber
\le
\frac d{16}
e^{-c_{\mathrm{norm}}d}.
\label{eq:row-truncation-failure}
\end{align}

Choose \(d_0\) sufficiently large that, for every \(d\ge d_0\),
\[
2
\exp\left(
-\frac{c_{\mathrm{sg}}d}{16}
\right)
\le
\frac1{16},
\]
and
\[
\frac d{16}
e^{-c_{\mathrm{norm}}d}
\le
\frac1{16}.
\]
Then
\begin{equation}
\mathbb P(\mathcal R)
\ge
\frac{15}{16},
\label{eq:row-truncation-probability}
\end{equation}
and
\begin{equation}
\mathbb P
\left(
\rmat A^{\mathrm G}
\bigl(
\rmat A^{\mathrm G}
\bigr)^\top
\not\succeq
\frac14\mathbf I_k
\right)
\le
\frac1{16}.
\label{eq:unconditioned-spectral-probability}
\end{equation}

Because the vectors
\(\rvec g_1,\ldots,\rvec g_k\)
are independent and
\(\mathcal R\)
is the intersection of the corresponding rowwise truncation events, the
conditional law $\Law
\left(
\rvec g_1,\ldots,\rvec g_k
\mid
\mathcal R
\right)$ 
is the product measure $\left(
\gamma_d^{\mathrm{tr}}
\right)^{\otimes k}$. 
Consequently,
\[
\Law
\left(
\rmat A^{\mathrm G}
\mid
\mathcal R
\right)
=
\Law(\rmat A).
\]

Therefore,
\begin{align}
&
\mathbb P_\Xi
\left(
\rmat A\rmat A^\top
\not\succeq
\frac14\mathbf I_k
\right)
\nonumber\\
&\quad=
\mathbb P
\left(
\rmat A^{\mathrm G}
\bigl(
\rmat A^{\mathrm G}
\bigr)^\top
\not\succeq
\frac14\mathbf I_k
\;\middle|\;
\mathcal R
\right)
\nonumber\\
&\quad\le
\frac{
\mathbb P
\left(
\rmat A^{\mathrm G}
\bigl(
\rmat A^{\mathrm G}
\bigr)^\top
\not\succeq
\frac14\mathbf I_k
\right)
}{
\mathbb P(\mathcal R)
}
\nonumber\\
&\quad\le
\frac{1/16}{15/16}
=
\frac1{15}.
\end{align}
Thus
\[
\mathbb P_\Xi
\left(
\rmat A\rmat A^\top
\succeq
\frac14\mathbf I_k
\right)
\ge
\frac{14}{15}
\ge
\frac78.
\]
This proves
\eqref{eq:sv-truncation-main}.
\end{proof}
\vskip 0.2in
\bibliography{main}

\end{document}